\newtheorem*{theorem*}{Theorem}
\newtheorem*{question*}{Question}
\newtheorem*{corollary*}{Corollary}
\newtheorem*{theoremA}{Theorem A}
\newtheorem*{theoremB}{Theorem B}
\newtheorem*{theoremC}{Theorem C}
\newtheorem*{main-theorem}{Main Theorem}
\newtheorem{theorem}[subsubsection]{Theorem}
\newtheorem{lemma}[theorem]{Lemma}
\newtheorem{fact}[theorem]{Fact}
\newtheorem{claim}[theorem]{Claim}
\newtheorem{proposition}[theorem]{Proposition}
\newtheorem{corollary}[theorem]{Corollary}
\theoremstyle{definition}
\newtheorem{definition}[theorem]{Definition}
\newtheorem{convention}[theorem]{Convention}
\newtheorem{assumption}[theorem]{Section assumption}
\newtheorem{notation}[theorem]{Notation}
\newtheorem{remark}[theorem]{Remark}
\newtheorem{lem-def}[theorem]{Lemma-definition}
\newtheorem{examples}[theorem]{Examples}
\newtheorem{question}[theorem]{Question}
\newcommand{\R}{\mathbb R}
\newcommand{\Q}{\mathbb Q}
\newcommand{\N}{\mathbb N}
\newcommand{\Z}{\mathbb Z}
\newcommand{\cF}{\mathscr F}
\newcommand{\cN}{\mathcal N}
\newcommand{\cM}{\mathcal M}
\newcommand{\cU}{\mathcal U}
\newcommand{\OO}{\mathcal{O}}
\newcommand{\cL}{\mathcal{L}}
\newcommand{\cA}{\mathcal{A}}
\newcommand{\cB}{\mathcal{B}}
\newcommand{\cK}{\mathcal{K}}
\newcommand{\res}{\mathrm{res}}
\newcommand{\Div}{\mathrm{div}}
\newcommand{\Def}{\mathrm{def}}
\newcommand{\eq}{\mathrm{eq}}
\newcommand{\dcl}{\mathrm{dcl}}
\newcommand{\acl}{\mathrm{acl}}
\newcommand{\RCF}{\mathrm{RCF}}
\newcommand{\ACF}{\mathrm{ACF}}
\newcommand{\ACVF}{\mathrm{ACVF}}
\newcommand{\DOAG}{\mathrm{DOAG}}
\newcommand{\RCVF}{\mathrm{RCVF}}
\newcommand{\PCF}{p\mathrm{CF}}
\newcommand{\Th}{\mathrm{Th}}
\newcommand{\bdd}{\mathrm{bdd}}
\newcommand{\conv}{\mathrm{conv}}
\newcommand{\D}[1]{{S^\mathrm{def}_{#1}}}
\newcommand{\tp}{\mathrm{tp}}
\newcommand{\id}{\mathrm{id}}
\newcommand{\SE}{\mathcal{S}\mathcal{E}}
\newcommand{\rv}{\mathrm{rv}}
\newcommand{\RV}{\mathbf{RV}}
\newcommand{\RVV}{\mathrm{RV}}
\newcommand{\BP}{\mathrm{bp}}
\newcommand{\triv}{\mathrm{triv}}
\newcommand{\Aut}{\mathrm{Aut}}
\newcommand{\vd}{\mathrm{vd}}
\newcommand{\ld}{\mathrm{ld}}
\newcommand{\alg}{\mathrm{alg}}
\newcommand{\qftp}{\mathrm{qftp}}
\newcommand{\Ring}{\mathrm{ring}}
\newcommand{\og}{\mathrm{og}}
\newcommand{\ac}{\mathrm{ac}}
\newcommand{\Pas}{\mathrm{Pas}}
\newcommand{\Res}{\mathrm{Res}}
\newcommand{\VF}{\mathbf{VF}}
\newcommand{\RF}{\mathbf{k}}
\newcommand{\VG}{\mathbf{\Gamma}}
\newcommand{\VGi}{\mathbf{\Gamma_\infty}}
\newcommand{\cLr}{\cL_{\mathrm{ring}}}
\newcommand{\cLog}{\cL_{\mathrm{og}}}
\newcommand{\cLovf}{\cL_{\mathrm{ovf}}}
\newcommand{\cLpcf}{\cL_{\mathrm{pcf}}}
\newcommand{\cLpres}{\cL_{\mathrm{Pres}}}
\newcommand{\cLmac}{\cL_{\mathrm{Mac}}}
\newcommand{\cLgk}{\cL_{\Gamma k}}
\newcommand{\cLrvg}{\cL_{\mathrm{RV}, \Gamma}}
\newcommand{\bA}{\mathbf{A}}
\newcommand{\bB}{\mathbf{B}}
\newcommand{\bC}{\mathbf{C}}
\DeclareMathAlphabet{\mathscrbf}{OMS}{mdugm}{b}{n}
\newcommand{\bbA}{\pmb{\mathscrbf{A}}}
\DeclareMathAlphabet{\mathscrbf}{OMS}{mdugm}{b}{n}
\newcommand{\sepa}{\text{\tiny$\mathcal{S}\mathcal{E}$}}
\newcommand{\forkindep}[1][]{%
  \mathrel{
    \mathop{
      \vcenter{
        \hbox{\oalign{\noalign{\kern-.3ex}\hfil$\vert$\hfil\cr
              \noalign{\kern-.7ex}
              $\smile$\cr\noalign{\kern-.3ex}}}
      }
    }\displaylimits_{#1}
  }
}
\def\Ind#1#2{#1\setbox0=\hbox{$#1x$}\kern\wd0\hbox to 0pt{\hss$#1\mid$\hss}
\lower.9\ht0\hbox to 0pt{\hss$#1\smile$\hss}\kern\wd0}
\def\ind{\mathop{\mathpalette\Ind{}}}
\def\notind#1#2{#1\setbox0=\hbox{$#1x$}\kern\wd0\hbox to 0pt{\mathchardef
\nn=12854\hss$#1\nn$\kern1.4\wd0\hss}\hbox to
0pt{\hss$#1\mid$\hss}\lower.9\ht0 \hbox to
0pt{\hss$#1\smile$\hss}\kern\wd0}
\def\widebreve{\mathpalette\wide@breve}
\def\wide@breve#1#2{\sbox\z@{$#1#2$}%
     \mathop{\vbox{\m@th\ialign{##\crcr
\kern0.08em\brevefill#1{0.8\wd\z@}\crcr\noalign{\nointerlineskip}%
                    $\hss#1#2\hss$\crcr}}}\limits}
\def\brevefill#1#2{$\m@th\sbox\tw@{$#1($}%
  \hss\resizebox{#2}{\wd\tw@}{\rotatebox[origin=c]{90}{\upshape(}}\hss$}
\title{Beautiful pairs}
\author[Pablo Cubides Kovascics]{Pablo Cubides Kovacsics}
\address{Departamento de Matemáticas, Universidad de los Andes, 
Carrera 1 \# 18A - 12, 
111711, Bogot\'a, Colombia}
\email{p.cubideskovacsics@uniandes.edu.co}
\thanks{}
\author[Martin Hils]{Martin Hils}
\address{Institut f\"{u}r Mathematische Logik und Grundlagenforschung, Universit\"{a}t M\"{u}nster, Einsteinstr. 62, D-48149 M\"{u}nster, Germany}
\email{hils@uni-muenster.de}
\thanks{}
\author[Jinhe Ye]{Jinhe Ye}
\address{School of Mathematics, Nanjing University, Nanjing, Jiangsu, China, 210093}
\email{jinhe.ye@nju.edu.cn}
\thanks{PCK was funded by the German Research Foundation (DFG) via the individual research grant No.~426488848 \textit{Archimedische und nicht-archimedische Stratifizierungen höherer Ordnung}.\\
\indent MH was partially supported by the German Research Foundation (DFG) via HI 2004/1-1 (part of the French-German ANR-DFG project GeoMod) and under Germany's Excellence Strategy EXC 2044-390685587, \textit{Mathematics M\"unster: Dynamics-Geometry-Structure}.\\ 
\indent JY was partially supported by GeoMod AAPG2019 (ANR-DFG), \textit{Geometric and
Combinatorial Configurations in Model Theory}. \\
\indent This research was supported through the programme \textit{Research in Pairs} by the Mathematisches Forschungsinstitut Oberwolfach in March 2021. We thank them for their hospitality.
}
 \dedicatory{Dedicated to Bruno Poizat on the occasion
     of his $75$th birthday}
\subjclass[2020]{Primary 03C45, 03C10, Secondary 03C64, 12J10, 12L12}
\keywords{Belles paires, beautiful pairs, valued fields, pro-definability, Ax-Kochen-Ershov principle}
\begin{document}
\begin{abstract} We introduce an abstract framework to study certain classes of stably embedded pairs of models of a complete $\cL$-theory $T$, called \textit{beautiful pairs}, which comprises Poizat's belles paires of stable structures and van den Dries-Lewenberg's tame pairs of o-minimal structures. Using an amalgamation construction, we relate several properties of beautiful pairs with properties analogous to properties in Fra\"{i}ss\'e classes. 

After characterizing beautiful pairs of various theories of ordered abelian groups and valued fields, including the theories of algebraically closed, $p$-adically closed and real closed valued fields, we show an Ax-Kochen-Ershov type result for beautiful pairs of henselian valued fields. 
As an application, we derive strict pro-definability of particular classes of definable types. When $T$ is one of the theories of valued fields mentioned above, the corresponding classes of types are related to classical geometric spaces and our main result specializes to their strict pro-definability. Most notably, we exhibit the strict pro-definability of a natural space of types associated to Huber's analytification. In this way, we also recover a result of Hrushovski-Loeser on the strict pro-definability of stably dominated types in algebraically closed valued fields, which corresponds to Berkovich's analytification.
\end{abstract}
\maketitle

\setcounter{secnumdepth}{4}

\setcounter{tocdepth}{1}
{
  \hypersetup{linkcolor=black}
  \tableofcontents
}


\newcounter{eqn}

\normalem

\section{Introduction}
In their seminal work \cite{HL}, E. Hrushovski and F. Loeser presented a novel perspective on non-archimedean semi-algebraic geometry, with far-reaching implications for the topology of semi-algebraic subsets of Berkovich analytifications of algebraic varieties. Their approach introduces certain spaces of definable types as a model-theoretic counterpart of the analytification of an algebraic variety, which they call its \emph{stable completion}. These spaces possess the remarkable property of being a \emph{strict pro-definable set}, i.e., a projective limit of definable sets with surjective transition maps. This structural property endows the spaces with a definable structure that is in strong resemblance with familiar definable sets and enables Hrushovski and Loeser to leverage several model-theoretic tools, leading to the aforementioned applications.

Is there also a model-theoretic analogue of Huber's adic spaces \cite{Adic}? More specifically, is there a strict pro-definable structure on such spaces? In \cite{cubi-ye}, the first and third authors provided a partial positive answer to this question by showing that the natural candidate for such a space has the structure of a pro-definable set. However, the crucial issue of \emph{strict} pro-definability, i.e., the surjectivity of transition maps, proved to be significantly more intricate and remained unresolved. Thanks to the formalism presented in this paper, we can now provide a complete positive answer not only for Huber's analytification of an algebraic variety but also for other notable geometric spaces such as the real analytification of semi-algebraic sets as defined in \cite{jell_etal}. In other words, our work lays the foundations for a model-theoretic approach to non-archimedean geometry in various contexts.

Besides the geometric motivation, the formalism introduced here also holds significant model-theoretic importance. Rather than studying spaces of definable types directly, we instead investigate stably embedded pairs of models of a given theory $T$, following the approach outlined in \cite{cubi-ye}. The study of pairs of models of a complete theory is a well-established topic in model theory, dating back to early results of A. Robinson \cite{Robinson} on pairs of algebraically closed fields. B. Poizat \cite{poizat} later introduced the notion of \emph{belles paires} of models of a complete stable theory and extended Robinson's results to this context. Y. Baisalov and Poizat \cite{BaPo98} subsequently built on the work of D. Marker and C. Steinhorn \cite{MS} and A. Pillay \cite{Pillay_def} to develop similar ideas for stably embedded pairs of o-minimal structures, i.e., elementary pairs $(N, M)$ in which the trace in $M$ of every $N$-definable set is again $M$-definable. The formalism presented here unifies the stable and o-minimal settings into a more general context of stably embedded pairs of models of a complete theory $T$. In keeping with Poizat's terminology, we refer to the pairs under consideration as \emph{beautiful pairs}.

The following subsection provides a brief summary of the paper's main results.

\subsection{Summary of main results and new concepts}

The main theorem of the present article is the following (see later Theorem \ref{cor:acvf-strictpro}, Theorem \ref{cor:rcvf-strictpro} and Theorem \ref{thm:pcf-strictpro}). 

\begin{main-theorem} The model-theoretic spaces of types which are counterparts of the following spaces are strict pro-definable:
\begin{enumerate}[label=(\arabic*)] 
    \item the analytification of an algebraic variety in the sense of Huber \cite{Adic}, 
    \item the real analytification of a (real) semi-algebraic set in the sense of \cite{jell_etal},
    \item the Zariski-Riemann space of an algebraic variety~\cite{Zariski-Riemann},  
    \item the $p$-adic spectrum of a $p$-adic algebraic variety in the sense of \cite{ERob-padic}. 
\end{enumerate}
\end{main-theorem}

The precise meaning of the model-theoretic counterpart is in complete analogy to~\cite[Chapter 14]{HL}. To illustrate, in (1) above, when $K$ is an algebraically closed maximally complete  non-archimedean field with value group $\R$ and $V$ is an algebraic variety over $K$, then the space of bounded types concentrating on $V$ and definable over $K$, denoted $\widetilde{V}(K)$, corresponds precisely to Huber's adic space of $V$. 

The proof of the Main Theorem uses properties of beautiful pairs of certain valued fields. We establish an Ax-Kochen-Ershov principle for such pairs in particular. However, the general machinery of beautiful pairs turns out to work in a completely abstract framework, encompassing not only valued fields but also other interesting structures and their respective theories. As a result, this abstract formalism yields numerous significant findings about the corresponding first-order structures. We will now describe our technical setup for beautiful pairs in complete generality.

Given a complete first-order $\cL$-theory $T$, we distinguish collections of well-behaved classes of stably embedded pairs of $\cL$-structures (\sepa-pairs for short, see Definition \ref{def:se-pairs}), called \emph{natural classes} (Definition \ref{def:natural-class-K}) and, given such a natural class $\cK$ and an infinite cardinal $\lambda$, we define the notion of \emph{$\lambda$-$\cK$-beautiful pairs} as those pairs which are $\lambda$-generic (Definition \ref{def:bpair}). We say a pair is \emph{$\cK$-beautiful} if it is $\lambda$-$\cK$-beautiful for some $\lambda>|T|$.   To have an intuition, the reader may think of natural classes and $\lambda$-$\cK$-beautiful pairs in analogy to Fra\"iss\'e classes and Fra\"iss\'e limits, respectively. Poizat's treatment of beautiful pairs of models of a stable theory transfers surprisingly smoothly to this more general context. However, some new phenomena arise from the fact that in an unstable theory not all types are definable. 
In a way, our framework generalizes the stable case in a direction which is orthogonal to the generalization to so-called ``lovely pairs'', introduced by I. Ben-Yaacov, A.  Pillay and E. Vassiliev in \cite{BYPV}. 

The following summarizes some of the main results about beautiful pairs (see later Theorem \ref{thm:charct-beaut-pairs}, Corollary \ref{cor:elem-equiv} and Theorem \ref{thm:qe}) in this general context. Recall that for an infinite cardinal $\lambda$, an $\cL_{\infty,\lambda}$-sentence is a sentence using symbols in $\cL$ with arbitrary conjunctions and disjunctions and fewer than $\lambda$-many quantifiers. Moreover, $\cM\equiv_{\infty,\lambda}\cN$ means $\cM$ and $\cN$ satisfy the same $\cL_{\infty,\lambda}$-sentences. Equivalently, $\cM\equiv_{\infty,\lambda}\cN$ means that the system of partial elementary maps between substructures of $\cM$ and $\cN$ of size $<\lambda$ is non-empty and has the back-and-forth property.

\medskip

\begin{theoremA} Let $\cK$ be a natural class. Then the following holds.
\begin{enumerate}[label=(\arabic*)]
    \item $\cK$ has the amalgamation property\footnote{Recall that $\cK$ has the amalgamation property if for any diagram $B\leftarrow A\rightarrow C$ in $\cK$, there are $B\rightarrow D\leftarrow C$ in $\cK$ making the diagram commute.} if and only if $\lambda$-$\cK$-beautiful pairs exist for all $\lambda\geqslant |T|^+$. Moreover, in this case, $\cK$ has the extension property (see Definition~\ref{def:ext_prop}) if and only if all beautiful pairs are elementary pairs of models of $T$.
    \item $\cM\equiv_{\infty,\lambda} \cN$ for any two $\lambda$-$\cK$-beautiful pairs $\cM$ and $\cN$.   Therefore, all $\cK$-beautiful pairs are elementarily equivalent. 
    \item If there is a $\lambda$-$\cK$-beautiful pair which is $\lambda$-saturated for  $\lambda\geqslant |T|^+$ (we will say in this case that $\cK$ has beauty transfer), then the common theory of $\cK$-beautiful pairs $T_\BP(\cK)$ admits quantifier elimination (in the language of beautiful pairs, see Definition \ref{def:Lbp}), and the predicate $P$ is stably embedded and pure (i.e., there is no new induced structure). 
    \end{enumerate}
\end{theoremA}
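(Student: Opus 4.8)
The overall idea is to treat richness as a genericity condition in a Fra\"iss\'e-style setup, so that amalgamation produces rich pairs, richness produces back-and-forth systems, and those systems yield elementary equivalence and, once saturation is available, quantifier elimination.

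\textbf{(1).} I would prove the two equivalences separately. For ``amalgamation property $\Rightarrow$ existence'', I run a transfinite chain construction: starting from any pair in $\cK$, I build a continuous increasing chain $(\cM_i)_i$ of members of $\cK$ in which each successor step resolves one \emph{richness request}, namely a small sub-pair $\cA$ of the current $\cM_i$ together with a $\cK$-extension $\cA \hookrightarrow \cB$, by amalgamating $\cB$ with $\cM_i$ over $\cA$ using the amalgamation property. A bookkeeping argument enumerating all such requests of size ${<}\lambda$ guarantees every request arising along the chain is eventually met; the union is again in $\cK$ (by the closure properties of natural classes) and is $\lambda$-rich, i.e.\ $\lambda$-$\cK$-beautiful. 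For the converse ``existence $\Rightarrow$ amalgamation property'', given $\cA \hookrightarrow \cB$ and $\cA \hookrightarrow \cC$ in $\cK$ with $\cA$ small, I embed $\cA$ into a $\lambda$-$\cK$-beautiful pair $\cM$ and apply richness twice to realize both extensions inside $\cM$ over the image of $\cA$, exhibiting an amalgam over $\cA$ within a member of $\cK$. For the ``moreover'' clause I would show that each of the two conditions is equivalent to solvability of the one-sided extension problem inside a beautiful pair: the extension property provides, for a model of $T$ and an admissible $\cK$-extension of it, a realization which richness absorbs into the beautiful pair and which forces the small model to sit elementarily inside the large one; conversely, if the small model is always an elementary submodel, restricting a beautiful pair recovers the extensions demanded by the extension property.

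\textbf{(2).} This is a pure back-and-forth. Let $I$ be the set of isomorphisms between sub-pairs of $\cM$ and of $\cN$ generated by fewer than $\lambda$ elements. Richness of both $\cM$ and $\cN$ yields the extension property for $I$ on either side, so $I$ witnesses $\cM \equiv_{\infty,\lambda} \cN$. Since $\equiv_{\infty,\lambda}$ refines elementary equivalence, all $\lambda$-$\cK$-beautiful pairs share one complete theory $T_\BP(\cK)$; and as a $\lambda'$-$\cK$-beautiful pair with $\lambda' \geqslant \lambda$ is in particular $\lambda$-$\cK$-beautiful, this extends to all $\cK$-beautiful pairs.

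\textbf{(3).} Beauty transfer upgrades (2) to a quantifier-elimination test. I would use the criterion that $T_\BP(\cK)$ has QE once every embedding of an $\cL_{\mathrm{bp}}$-substructure of a model into a sufficiently saturated model extends to an elementary embedding. Given an arbitrary $\cM \models T_\BP(\cK)$, a substructure $\cA \subseteq \cM$, and an embedding $\cA \hookrightarrow \cN$ into a $\lambda$-saturated $\lambda$-$\cK$-beautiful pair $\cN$ with $\lambda > |\cM|$ (supplied by beauty transfer), I extend to an elementary embedding $\cM \to \cN$ by a transfinite forth construction, adding one element of $\cM$ at a time. The single forth step---finding, for $a \in \cM$ and the current partial isomorphism $f \colon \cA' \to \cB' \subseteq \cN$, a matching element of $\cN$---is where richness and saturation combine: for $a$ in the distinguished submodel one realizes the transported quantifier-free type using saturation of the (saturated) small part of $\cN$, while for the remaining elements richness of $\cN$ supplies a realization respecting the pair structure. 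This gives QE for $T_\BP(\cK)$ in $\cL_{\mathrm{bp}}$. Purity and stable embeddedness of $P$ then follow: by the design of $\cL_{\mathrm{bp}}$ the restriction to $P$ of a quantifier-free formula reduces to an $\cL$-formula evaluated in the small model, so by QE the structure induced on $P$ is exactly its $\cL$-structure, and combining QE with the stable embeddedness already present in the pairs of $\cK$ shows the trace on $P$ of any $\cL_{\mathrm{bp}}$-definable set is $P$-definable.

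The step I expect to be the main obstacle is the quantifier elimination in (3), precisely because the single forth step must drive \emph{two} different extension mechanisms coherently: richness, which extends by the ``new'' elements forced by the pair structure, and $\lambda$-saturation, which extends within the base theory $T$. Making precise how an $\cL_{\mathrm{bp}}$-substructure isomorphism decomposes into a base part governed by $T$ and a part governed by the distinguished submodel $P$, and checking that the two realizations can be produced without spoiling one another, is where the genuine work lies; by contrast the cardinal bookkeeping in (1), the back-and-forth in (2), and the purity computation are comparatively routine.
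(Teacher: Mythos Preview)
Your treatment of parts (1) and (2) is correct and matches the paper: both are standard Fra\"iss\'e arguments, and the paper dispatches them in a couple of lines.

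For part (3), however, you have misidentified where the real work lies, and there is a genuine gap. You write that beauty transfer ``supplies'' a $\lambda$-saturated $\lambda$-$\cK$-beautiful pair $\cN$ with $\lambda > |\cM|$, for an arbitrary model $\cM$ of $T_\BP(\cK)$. But the hypothesis of (3) only gives you \emph{one} cardinal $\lambda_0 \geqslant |T|^+$ at which a $\lambda_0$-saturated $\lambda_0$-beautiful pair exists. To run your argument you need such pairs at arbitrarily large cardinals, and this is not free: it is precisely the content of the paper's key Lemma~\ref{lem:beauty-transfer-cardinal}, which shows that the existence of a single $\lambda_0$-saturated $\lambda_0$-beautiful pair forces \emph{every} $\lambda$-saturated model of $T_\BP(\cK)$ to be $\lambda$-beautiful, for every $\lambda \geqslant |T|^+$. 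The proof of that lemma is a compactness argument using an explicit partial type (the type $\Sigma(x)$ in the paper) that encodes the canonical-parameter functions $c_\varphi$; it is not difficult, but it is the substantive step you are missing.

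Once that lemma is in hand, your elaborate two-mechanism forth step becomes unnecessary. Any two $|T|^+$-saturated models of $T_\BP(\cK)$ are then $|T|^+$-beautiful, so by part (2) the family of partial isomorphisms between their small substructures is already a back-and-forth system, and QE is immediate. So the difficulty you flag at the end---coordinating richness with saturation in a single forth step---simply does not arise in the paper's route: saturation and beauty coincide globally, and the back-and-forth from (2) does all the work.
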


 Dually, one may represent natural classes in terms of global definable types (see Definition \ref{def:natuclass-F} and Lemma~\ref{lem:nat-to-nat}), and the following result is the desired link with strict pro-definability of spaces of definable types. 

\begin{theoremB}[Later Theorem \ref{thm:strictness}] Let $\cF$ be a natural subclass of definable types and let $\cK_\cF$ be its associated natural class. Suppose that $\cK_\cF$-beautiful pairs exist and $\cK_\cF$ has beauty transfer. Then $\cF$ is strict pro-definable.  \end{theoremB}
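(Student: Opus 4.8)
The plan is to deduce strictness of $\cF$ from the purity of the predicate $P$ in beautiful pairs, which beauty transfer makes available through Theorem A(3). Recall that $\cF$ is already pro-definable --- this is part of what it means to be a natural subclass of definable types (Definition \ref{def:natuclass-F}) --- and present it as $\cF = \varprojlim_\phi \cF_\phi$, where the coordinate attached to an $\cL$-formula $\phi(x,y)$ records the canonical code $c$ of the $\phi$-definition $d_p\phi(y) = \psi(y,c)$ of a type $p \in \cF$. By the usual criterion it is enough to prove that, for every $\phi$, the projection $\pi_\phi(\cF) \subseteq \cF_\phi$ is a definable set; one may then re-present $\cF$ with these images as coordinates and surjective transition maps.

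First I would fix the data supplied by beauty transfer: a $\lambda$-saturated $\lambda$-$\cK_\cF$-beautiful pair $\cM^* = (N^*, M^*)$ with $M^* = P(\cM^*)$, and, by Theorem A(3), quantifier elimination for $T_\BP(\cK_\cF)$ together with the fact that $P$ is stably embedded and pure. The next step is the dictionary between elements of the pair and types of $\cF$. Since the pair $\cM^*$ is stably embedded, every $a \in N^*$ has $\tp(a/M^*)$ definable, its $\phi$-definition being precisely the $\cL$-formula over $M^*$ cutting out the trace of $\phi(a,N^*)$ on $M^*$; and as $\cM^* \in \cK_\cF$, this type belongs to $\cF$. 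Conversely, any $p \in \cF$ over $M^*$ is definable over a subset of $M^*$ of size at most $|T| < \lambda$, so by $\lambda$-richness (Definition \ref{def:bpair}) it is realized by some $a \in N^*$.

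With this dictionary, for a code $c \in \cF_\phi(M^*)$ one has $c \in \pi_\phi(\cF)$ if and only if $\cM^* \models \theta(c)$, where
\[
  \theta(c) \; := \; \exists x\, \forall y\, \bigl(P(y) \to (\phi(x,y) \leftrightarrow \psi(y,c))\bigr)
\]
expresses that $\psi(y,c)$ computes the trace of $\phi(x,\cdot)$ on the predicate, i.e.\ is the $\phi$-definition of $\tp(x/M^*)$: the forward direction uses that a witness $a$ yields $p = \tp(a/M^*) \in \cF$ with $d_p\phi = \psi(\cdot,c)$, and the backward direction uses richness to realize a given $p$. Now $\pi_\phi(\cF)$ is the trace on $P^{|c|}$ of the $\cL_\BP$-definable set $\theta(\cM^*)$; since $P$ carries no new induced structure, this trace is cut out by an $\cL$-formula $\theta''(c)$ (quantifier elimination gives the same conclusion, by rewriting $\theta$ as a quantifier-free $\cL_\BP$-formula and restricting to $P$). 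Hence $\pi_\phi(\cF) = \theta''(M^*)$ is definable, and since pro-definability of $\cF$ is already functorial this identifies the image uniformly over all models. Ranging over $\phi$ gives strict pro-definability.

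The step I expect to be the main obstacle is establishing the dictionary of the second paragraph rigorously, that is, matching the infinitary datum ``a type $p \in \cF$ over $M^*$ with $\phi$-definition $c$'' with the first-order datum ``an element $x \in N^*$ whose $\phi$-trace over $P$ is $\psi(\cdot,c)$''. Two points need care: that stable embeddedness of the pair really identifies the $\phi$-definition of a realized type with the trace formula, so that $c$ is recovered losslessly; and that $\lambda$-richness genuinely realizes every type of $\cF$ over the large model $M^*$ (by reduction to small defining parameters), so that $\theta$ captures all of $\pi_\phi(\cF)$ and --- using $\cM^* \in \cK_\cF$ for the reverse inclusion --- nothing more. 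Once this equivalence is secured, purity of $P$ turns the pair-definable condition $\theta$ into the required $\cL$-formula essentially formally.
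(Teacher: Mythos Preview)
Your argument is essentially the paper's own proof, just expressed more verbosely. The paper also takes a large $\lambda$-$\cK_\cF$-beautiful pair $\cN$, observes that the projection $\pi_\psi(\tau_\cF(\cF_x(P(\cN))))$ coincides with the image $c_\psi(\cN)$ of the built-in $\cL_\BP$-function symbol $c_\psi$, and then invokes purity of $P$ (Theorem~\ref{thm:qe}(2)) to conclude this image is $\cL^\eq$-definable. Your formula $\theta(c)$ is precisely the statement $\exists x\,(c_\phi(x)=c)$, so the two arguments are the same up to whether one names the canonical-parameter function or spells it out; the ``dictionary'' you worry about is exactly what the equality $\pi_\psi(\tau_\cF(\cF_x(P(\cN))))=c_\psi(\cN)$ encodes, and it holds because $\cN\in\cK_\cF$ (so every realized type lies in $\cF$) and $\cN$ is $\lambda$-rich (so every $\cF$-type over $P(\cN)$ is realized).

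One correction: your claim that ``$\cF$ is already pro-definable --- this is part of what it means to be a natural subclass (Definition~\ref{def:natuclass-F})'' is not accurate. That definition only imposes finitariness, invariance, and closure under pushforward; pro-definability is not assumed there. This does not break your argument, since once you have shown that every finite projection $\pi_\phi(\cF)$ is definable, pro-definability follows as well (the image of $\tau_\cF$ is the inverse limit of these definable projections, with the transition maps coming from encoding finitely many formulas into one). But you should not cite Definition~\ref{def:natuclass-F} for it.
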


It is from Theorem B that the Main Theorem is derived. We also recover (Theorem \ref{cor:acvf-strictpro}) the strict pro-definability result of Hrushovski and Loeser \cite[Theorem 3.1.1]{HL} mentioned above. 

Along the way, we characterize all completions of stably embedded pairs of models of $T$ when $T$ is one of the following theories: the theory of divisible ordered abelian groups (Theorem \ref{thm:DOAG-complete}, Presburger arithmetic (Theorem~\ref{thm:pres-complete}), and any completion of algebraically (resp. real, resp. $p$-adically) closed valued fields (Corollary~\ref{cor:all-completions-acvf}, Corollary~\ref{rem:all-completions-rcvf}, and Theorem~\ref{thm:pcf-Beauty}). Moreover, we study beautiful pairs of certain henselian valued fields (called \emph{benign}) by considering their  $\RV$-theory, or even the theories of their residue field and value group. One of our main results consists in showing the following ``Ax-Kochen-Ershov principle for beauty'' (see also Theorem \ref{thm:RV-pairs}). For a benign valued field $(K,v)$ (see Definition \ref{def:benign}) with value group $\Gamma$ and residue field $k$, we consider definitional $\cL_\RVV$-expansions with an additional set of sorts $\bbA$ for $k^*/(k^*)^n$ for all $n\geqslant 0$ and $\VG$ for the value group (with the natural quotient maps). 
\begin{theoremC}[Later Theorem \ref{thm:red-to-k-gamma}]
Let $(K,v)$ be a benign valued field and let $T$ be its theory. Given natural classes $\cK_{\mathscr{A}}$ and $\cK_\Gamma$ of \sepa-pairs in the theories of the residue field $k$ (in the sorts $\bbA$) and of the value group $\Gamma$ (in the sort $\VG$), respectively, assume both $\cK_{\mathscr{A}}$-beautiful pairs and $\cK_\Gamma$-beautiful pairs exist, are elementary and satisfy beauty transfer. Let $\cK$ be the class of $\cL_P$-structures of valued fields $\cM\in \cK_\Def$ induced by $\cK_{\mathscr{A}}$ and $\cK_\Gamma$. Then, $\cK$-beautiful pairs exist. Moreover, $\cK$ has beauty transfer and $T_\BP(\cK)$ is axiomatized by the following conditions on an $\cL_P$-structure $\cM=(M,P(\cM))$:
\begin{itemize}
    \item $\VF(M)/\VF(P(\cM))$ is vs-defectless\footnote{See page~\pageref{VS-def} for the definition of vs-defectless.};
    \item $P(\cM)\preccurlyeq M\models T$;
    \item $(\bbA(M),\bbA(P(\cM)))\models T_\BP(\cK_{\mathscr{A}})$ and $(\VG(M),\VG(P(\cM)))\models T_\BP(\cK_\Gamma)$.
\end{itemize}
\end{theoremC}

Part of the proof consists in showing a similar reduction result for short exact sequences in the framework recently investigated by M. Aschenbrenner, A. Chernikov, A. Gehret and M. Ziegler \cite{AsChGeZi20} (see Theorem \ref{thm:exact}).

\subsection*{Structure of the paper}

The paper is organized as follows. In Section \ref{sec:BP-pairs-main} we introduce the abstract framework of beautiful pairs and prove some of their main properties. Moreover, we exhibit the relation with spaces of definable types and strict pro-definability. Section~\ref{sec:stable-omin} is devoted to showing how our abstract framework is related to previous work, both for stable and o-minimal theories. In most of the remaining sections we study particular theories and their corresponding beautiful pairs. Section~\ref{sec:bp-oag} focuses on ordered abelian groups. Beautiful pairs of short exact sequences are studied in Section~\ref{sec:SES}. In Section~\ref{sec:domination} we gather various domination results in valued fields. These results are later used in Section~\ref{sec:BP-val-fields} and Section~\ref{sec:Ax-Kochen-RV} to study beautiful pairs of various theories of henselian valued fields and prove the Main Theorem.  
\subsection*{Acknowledgement}We would like to thank Anand Pillay for directing us to the article by Y. Baisalov and B. Poizat \cite{BaPo98}. We would also like to thank Zhengqing He, Zixuan Zhu, Tingxiang Zou and in particular the anonymous referees for helpful comments on earlier versions of the paper.

\section{General theory of beautiful pairs}\label{sec:BP-pairs-main}

\subsection{Preliminaries and notation}\label{sec:notation}

Let $\cL$ be a possibly multi-sorted language, $T$ be a complete $\cL$-theory and let $\cU$ denote a universal domain (monster model) of~$T$. The sorts in~$\cL$ are called the \emph{real sorts}, while \emph{imaginary sorts} are~sorts in $\cL^\eq$. Given a subset $A\subseteq \cU$ we let $\langle A\rangle_\cL$ denote the $\cL$-substructure of $\cU$ generated by $A$ and often omit the subscript $\cL$ when it is clear from the context.  

Recall that given a subset $A\subseteq M\models T$, a type $p\in S_x(M)$ is \emph{$A$-definable} (or definable over $A$) if for every $\cL$-formula $\varphi(x,y)$ there is an $\cL(A)$-formula $d_p\varphi(y)$ such that for every $c\in M^y$
\[
\varphi(x,c)\in p(x) \Leftrightarrow \ M\models d_p\varphi(c). 
\]
The map $\varphi(x,y)\mapsto d_p\varphi(y)$ is called a \emph{scheme of definition for $p$}, and the formula $d_p\varphi(y)$ is called a \emph{$\varphi$-definition for $p$}. We say $p\in S_x(M)$ is \emph{definable} if it is $M$-definable. Given any set $B$ containing $M$, we use $p|B$ to denote the type $\{\varphi(x,b)\mid b\in B^{y}\text{ such that } \cU\models d_p\varphi(b)\}$. We let $S_x^\Def(A)$ denote the subset of $S_x(\cU)$ of $A$-definable types, and for an $A$-definable set $X$, we let $S_X^\Def(A)$ be the set of global $A$-definable types concentrating on $X$. We refer the reader to \cite[Section 1]{pillay83} for proofs and basic facts on definable types. 

\begin{definition}\label{def:uddt}
We say that~$T$ has \emph{uniform definability of definable types (over models)}, or $\mathrm{UDDT}$ in short, if for every $\cL$-formula $\varphi(x, y)$ there is an $\cL$-formula $\psi(y,z)$ such that for every model $M$ of $T$ and every definable type $p\in S_x(M)$ there is a $c\in M^z$ such that $\psi(y,c)$ is a $\varphi$-definition for $p$.  
\end{definition}

\begin{remark} An alternative definition of uniform definability of definable types would be to require a uniform scheme as in the previous definition for definable $\varphi$-types instead of definable types in $S_x(M)$. We will not consider this variant in the present article.   
\end{remark}

Unless otherwise stated, we assume in what follows that $T$ is complete, and $T$ and $T^\eq$ have quantifier elimination.

\subsection{Stably embedded pairs}\label{sec:pairal} 

We let $\cL_P$ denote the language of pairs of $\cL$-structures.

\begin{definition}\label{def:se-pairs} An $\cL_P$-structure $\cA=(A,P(\cA))$ is called a \emph{stably embedded pair}, in short, \emph{\sepa-pair}, if $P(\cA)\models T$, $P(\cA)\subseteq A\models T_{\forall}$ and $\tp_{\cL}(A/P(\cA))$ is definable. We use $\cK_\Def$ to denote the class of \sepa-pairs.
\end{definition}

\begin{remark}\label{rmk:UDDT}
By a straightforward compactness argument, a theory $T$ has $\mathrm{UDDT}$ if and only if $\cK_\Def$ is an elementary class.
\end{remark}

\begin{definition}
    Assuming that the class of all stably embedded elementary pairs of models of $T$ as $\cL_P$-structures (i.e., \sepa-pairs $\cM=(M,P(\cM))$ with $P(\cM)\preccurlyeq M\models T$) is $\cL_P$-elementary, we use $T_{\SE}$ to denote their common theory.
\end{definition}

\begin{notation}\label{notation:triv} \
\begin{itemize}
    \item Given a model $A\models T$, we associate to $A$ the \sepa-pair $(A,A)$ which we denote by $A_\triv$, the \emph{trivial \sepa-pair associated to $A$}.  
  \item   Let $\cA$  and $\cB$ be \sepa-pairs with $\cA\subseteq_{\cL_P}\cB$, i.e., with $\cA$ an $\cL_P$-substructure of $\cB$. We say $\cA$ is a \emph{bp-substructure} of $\cB$, and we write $\cA\subseteq_{\BP}\cB$, if $\tp_{\cL}(A/P(\cB))$ is $P(\cA)$-definable. We denote this definable extension of $\tp_{\cL}(A/P(\cA))$ to $P(\cB)$ by $\tp_{\cL}(A/P(\cA))|P(\cB)$. By definition, a \emph{bp-embedding} is an isomorphism onto a bp-substructure.
\end{itemize}
\end{notation}

The following lemma follows directly from the definition of \sepa-pairs.   

\begin{lem-def}[Base extension]\label{lem:base-change} Let $\cA$ be an \sepa-pair and $B$ be an $\cL$-elementary extension of $P(\cA)$. Then there is a unique amalgam $\cA_B$ of \sepa-pairs with bp-embeddings
\[
\begin{tikzcd}
             & \cA_B                       &              \\
\cA \arrow[ru,dotted,"g_1"] &                     & B_\triv \arrow[lu,dotted,"g_2"'] \\
             & P(\cA)_\triv \arrow[lu] \arrow[ru] &             
\end{tikzcd}
\]
such that $\mathcal{A}_B=(\langle g_1(A)\cup g_2(B)\rangle_{\cL},B)$. We call the structure $\cA_B$ \emph{the base extension of $\cA$ to $B$}. \qed 
\end{lem-def}

\begin{definition}\label{def:natural-class-K}
Let $\cK$ be a subclass of $\cK_{\mathrm{def}}$. We say $\cK$ is a \emph{natural} class if 
\begin{enumerate}
    \item $\cK$ is closed under isomorphism;
    \item $A_\triv\in \cK$ for any small $A\preccurlyeq \cU$; 
    \item $\cK$ is closed under $\dcl_{\cL}$, i.e., if $(A,P(\cA))\in\cK$, then $(\dcl_{\cL}(A),P(\cA))\in\cK$;
    \item $\cA\in \cK$ if and only if for any $\cA_0\subseteq \cA$ that contains $P(\cA)$ and is finitely generated over $P(\cA)$ as an $\cL$-structure, one has that $\cA_0 \in \cK$;
    \item $\cK$ is closed under base extension by any small $B\preccurlyeq\cU$;
    \item $\cK$ is closed under bp-substructures.
\end{enumerate}
For $\lambda\geqslant|T|^+$ we denote by $\cK_{<\lambda}$ the subclass of $\cK$ of elements of cardinality strictly less than $\lambda$. Abusing notation, we will use $\cK$ to denote both the class of structures or the category whose objects are the structures from $\cK$ and whose morphisms are the $\BP$-embeddings.
\end{definition}

Note that $\cK_\Def$ is a natural class. In addition, by conditions (iv) and (v) above, natural classes are closed under unions of chains of bp-embeddings.

\subsection{Natural classes and definable types}\label{sec:def-type}

In practice, we think of a natural class as induced by a given class of definable types. Let $\cF$ denote a class of global definable types. For $\cM\preccurlyeq \cU$ and a tuple of variables $x$ (possibly infinite), we let $\cF_x(M)$ denote the subset of $\cF$ of $M$-definable types in variables $x$. For $X$ an $M$-definable set, we let $\cF_X$ be the subclass of $\cF$ of types concentrating on $X$, and more generally, for $C\subseteq \cU^{eq}$ containing $M$, we let $\cF_X(C)$ be the subset of $\cF$ of $C$-definable types concentrating on $X$.

\begin{definition}\label{def:natuclass-F} A non-empty class $\cF$ of global definable types is called \emph{natural} if $\cF$ satisfies the following properties for every $\cM\preccurlyeq \cU$:
\begin{itemize}[leftmargin=*]
    \item (Finitary) For any tuple $x$ of variables, $p\in \cF_x(M)$ if and only if $p|_{x'}\in \cF_{x'}(M)$ for any $x'\subseteq x$ and $x'$ finite.
    \item (Invariance) Given $\sigma\in \Aut(\cU)$, $\sigma(\cF(M))=\cF(\sigma(M))$. 
    \item (Push forward) $\cF(M)$ is closed under push forwards by $M$-definable functions.
\end{itemize}
\end{definition}

\begin{definition} \
\begin{itemize}[leftmargin=*]
    \item Let $\cF$ be a natural class of global definable types. We define the class of \sepa-pairs $\cK_\cF$ associated to $\cF$ as follows: for an \sepa-pair $\cA$,
$\cA\in \cK_\cF$ if and only if $\tp_{\cL}(A/P(\cA))\in\cF(P(\cA))$. 
\item Let $\cK$ be a natural class. Let $\cF_\cK$ be the following class of global definable types. A global type $p(x)$ is in $\cF$ if there is $\cA\in \cK$ and $a\in A^x$ such that $p=\tp(a/P(\cA))|\cU$. 
\end{itemize} 
\end{definition}

The following lemma follows easily from the definitions. Its proof is left to the reader.  

\begin{lemma}\label{lem:nat-to-nat} If $\cF$ is a natural class of global definable types, then $\cK_\cF$ is a natural class. Conversely, if $\cK$ is a natural class, then $\cF_\cK$ is a natural class of global definable types. In addition, the functions $\cF\mapsto \cK_\cF$ and $\cK \mapsto \cF_\cK$ are inverses to each other.  \qed
\end{lemma}

\begin{examples}\label{eg:class} The following are major examples that will be considered in the paper.
\begin{enumerate}[leftmargin=*,label=(\arabic*)]
    \item The class $\cF=\D{}(\cU)$ is natural and the corresponding natural class is the class $\cK_\Def$ of all \sepa-pairs.  
    \item The subclass of definable types which are orthogonal to a given sort (in the sense of \cite[Section 2.5]{HL}), or to a given invariant type, is natural.
    \item The subclass of generically stable types/stably dominated types is natural. 
\end{enumerate}
\end{examples}

\subsection{Beautiful pairs}\label{sec:b-pairs}

\begin{definition}[Beautiful pairs]\label{def:bpair} Let $\cK$ be a natural class. For $\lambda\geqslant |T|^+$, an element $\cM\in\cK$ is called a \emph{$\lambda$-$\cK$-beautiful pair} if the following condition is satisfied:
\begin{itemize}\label{BP-condition}
    \item[(BP)] whenever there are $\BP$-embeddings $f\colon \cA \to \cM$ and $g\colon \cA \to \cB$ with $\cA,\cB\in \cK_{<\lambda}$, there is a $\BP$-embedding $h\colon \cB\to \cM$ such that $f=h\circ g$. 
\end{itemize}
We call an $\cL_P$-structure $\cM$ a \emph{$\cK$-beautiful} pair if it is a $\lambda$-$\cK$-beautiful pair for some $\lambda\geqslant|T|^+$, and we will omit the $\cK$ when $\cK=\cK_{\mathrm{def}}$.
\end{definition}

\begin{remark}\label{rem:small_saturated} 
   It follows from Definition \ref{def:natural-class-K}(ii) that if $\cM$ is a $\lambda$-$\cK$-beautiful pair of $T$, then $P(\cM)$ is a $\lambda$-saturated model of $T$. However, note that it might happen that $M$ is \emph{not} a model of $T$. Although, in most cases of interest $\cK$-beautiful pairs will satisfy that $P(\cM)\preccurlyeq M\models T$ i.e. $(M,P(\cM))$ is an elementary pair. As we will see later (see Corollary \ref{cor:elem-equiv}), either all $\cK$-beautiful pairs are elementary pairs or no $\cK$-beautiful pair is an elementary pair.
\end{remark}

The following is an equivalent way of defining beautiful pairs. The properties are slightly easier to verify.

\begin{lemma}\label{lem:b-pair-finite} A structure $\cM\in \cK$ is $\lambda$-$\cK$-beautiful if and only if the following holds: 
\begin{itemize}
    \item[(i)] $P(\cM)$ is a $\lambda$-saturated model of $T$; 
    \item[(ii)]  whenever there are $\BP$-embeddings $f\colon \cA\to \cM$ and $g\colon \cA\to \cB$ with $\cA\in\cK_{<\lambda}$, $g(P(\cA))=P(\cB)$ and $\cB=\langle g(\cA)\cup\{b\}\rangle$ for some $b\in \cB$, there is a $\BP$-embedding $h\colon \cB\to \cM$ such that $f=h\circ g$. 

\end{itemize}
\end{lemma}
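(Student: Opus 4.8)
The plan is to prove both implications of the equivalence. The forward direction is immediate: if $\cM$ is $\lambda$-$\cK$-beautiful, then the first bullet of Remark~\ref{rem:small_saturated} gives that $P(\cM)$ is a $\lambda$-saturated model of $T$, establishing (i), while (ii) is simply the special case of the amalgamation condition (BP) where we additionally require $g(P(\cA)) = P(\cB)$ and $\cB$ to be generated over $g(\cA)$ by a single element $b$. So the content lies entirely in the converse.

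For the converse, suppose $\cM \in \cK$ satisfies (i) and (ii), and suppose we are given $\cL_\BP$-embeddings $f\colon \cA \to \cM$ and $g\colon \cA \to \cB$ with $\cA, \cB \in \cK_\lambda$. I would like to build the required $h\colon \cB \to \cM$ by transfinite induction, at each step extending the embedding by adjoining one generator of $\cB$ at a time. The main issue is that condition (ii) only applies when the predicate parts already agree, i.e.\ when $g(P(\cA)) = P(\cB)$, whereas in general $P(\cB)$ may properly extend $g(P(\cA))$. The first step, therefore, is to reduce to the case where the predicate parts match. The plan is to use saturation of $P(\cM)$: since $P(\cA) \subseteq P(\cB)$ with $P(\cB)$ of cardinality less than $\lambda$ and $P(\cM)$ is $\lambda$-saturated, one can first realize $\tp_\cL(P(\cB)/f(P(\cA)))$ inside $P(\cM)$, thereby extending $f|_{P(\cA)}$ to an $\cL$-embedding of $P(\cB)$ into $P(\cM)$ compatible with $g$. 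Using the base extension construction (Lemma-definition~\ref{lem:base-change}) and the fact that $\cK$ is closed under base extension, I would replace $\cA$ by its base extension $\cA_{P(\cB)}$, obtaining a new pair with predicate part equal to $P(\cB)$ and an $\cL_\BP$-embedding into $\cM$ extending $f$. This reduces matters to the situation $g(P(\cA)) = P(\cB)$.

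Once the predicate parts agree, I would enumerate a generating set of the real part of $\cB$ over $g(\cA)$ as $(b_i)_{i < \kappa}$ for some $\kappa < \lambda$, and build an increasing chain of $\cL_\BP$-embeddings $h_i$ defined on the substructures $\cB_i := \langle g(\cA) \cup \{b_j : j < i\} \rangle_{\cL_\BP}$ (each lying in $\cK_\lambda$ by condition (iv) of Definition~\ref{def:natural-class-K} and having predicate part $P(\cB)$), by applying (ii) at each successor step to adjoin $b_i$; at limit stages I take unions, which remain $\cL_\BP$-embeddings since $\cK$ is closed under unions of chains. The colimit $h = \bigcup_i h_i$ is then an $\cL_\BP$-embedding $\cB \to \cM$ with $f = h \circ g$, as required. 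A technical point to verify at each successor step is that $\cB_{i+1} = \langle h_i(\cB_i) \cup \{b_i\}\rangle$ has predicate part equal to that of $\cB_i$, so that (ii) genuinely applies; this holds because adjoining a single real generator does not enlarge the $P$-part of an \sepa-pair.

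\textbf{Main obstacle.} The step I expect to require the most care is the reduction to equal predicate parts: one must check that realizing $\tp_\cL(P(\cB)/f(P(\cA)))$ inside the $\lambda$-saturated $P(\cM)$ and then forming the base extension $\cA_{P(\cB)}$ does in fact yield an $\cL_\BP$-embedding into $\cM$ extending $f$, i.e.\ that the canonical-parameter functions $c_\varphi$ are respected. This is where the precise definition of the language $\cL_\BP$ and the uniqueness clause in Lemma-definition~\ref{lem:base-change} are essential.
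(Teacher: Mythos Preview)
Your proposal is correct and follows essentially the same approach as the paper's proof: the forward direction is immediate from Remark~\ref{rem:small_saturated} and specialization of (BP), and for the converse the paper also first reduces to $g(P(\cA))=P(\cB)$ via $\lambda$-saturation of $P(\cM)$ together with the base extension of Lemma-definition~\ref{lem:base-change}, and then builds the embedding by transfinite induction adjoining one real generator at a time using (ii), taking unions at limits. The only slip is notational: in your successor step you wrote $\cB_{i+1}=\langle h_i(\cB_i)\cup\{b_i\}\rangle$, but $h_i(\cB_i)$ lives in $\cM$; you mean $\cB_{i+1}=\langle \cB_i\cup\{b_i\}\rangle$ as a substructure of $\cB$.
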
 
\begin{proof} Suppose $\cM$ is a $\lambda$-$\cK$-beautiful pair. Definition \ref{def:bpair} clearly implies (ii), and condition (i) follows by Remark \ref{rem:small_saturated}. 

For the converse, let $f\colon \cA\to \cM$ and $g\colon \cA \to \cB$ be $\BP$-embeddings with $\cB\in\cK_{<\lambda}$. Without loss of generality, suppose $f$ is just inclusion.  

\emph{Step 1:} We may suppose $g(P(\cA))=P(\cB)$. Indeed, since $P(\cM)$ is $\lambda$-saturated by (i), there is an $\cL$-embedding $s\colon P(\cB)\to P(\cM)$ such that $(s\circ g)_{|P(\cA)} = \id_{P(\cA)}$. Let $\cA_{P(\cB)}$ be the base extension of $\cA$ to $P(\cB)$. By Lemma-definition~\ref{lem:base-change}, we may replace the base in our original amalgamation problem by $\cA_{P(\cB)}$. Solving for $\cA_{P(\cB)}$, $\cB$ and $\cM$ gives also a solution for $\cA$, $\cB$ and $\cM$. 

\medskip

\emph{Step 2:} Suppose by Step 1, that $g(P(\cA))=P(\cB)$. Let $(b_\alpha)_{\alpha<\kappa}$ be an enumeration of $B\setminus P(\cB)$. Inductively define \sepa-pairs $\cB_i\in\cK$
\begin{enumerate}[label=(\arabic*)]
    \item $\cB_0 \coloneqq\langle\cA\cup\{b_0\}\rangle$, 
    \item $\cB_{\alpha+1} \coloneqq \langle\cB_\alpha\cup\{b_{\alpha+1}\}\rangle$,
    \item $\cB_{\alpha} \coloneqq \bigcup_{\beta<\alpha} \cB_\beta$ for $\alpha$ limit.
\end{enumerate}

Note that for each $\alpha <\kappa$, since $\cK$ satisfies axiom (iv) in Definition~\ref{def:natural-class-K}, $\cB_\alpha\in \cK$. By (ii), there is a $\BP$-embedding $h_0\colon \cB_0\to \cM$ such that $h_0(P(\cB_0))=P(\cA)$. By induction and (ii), for each $\alpha<\kappa$ there is a $\BP$-embedding $h_\alpha\colon \cB_\alpha\to \cM$ such that $h_\alpha(P(\cB_\alpha))=P(\cA)$ and moreover, for $\alpha<\beta<\kappa$, $h_\beta$ extends $h_\alpha$. 
\end{proof}

In analogy to the usual Fra\"iss\'e theory, assuming $\cK$ has the joint embedding property (JEP)\footnote{ Recall that $\cK$ has the joint embedding property if for any $A,B\in \cK$, there is $C\in \cK$ and $\BP$-embeddings in $\cK$ such that $A\rightarrow C\leftarrow B$.} and the amalgamation property (AP), one may construct $\cK$-beautiful pairs. Note that our assumptions on $\cK$ entail that AP implies JEP. Indeed, it follows from the fact that substructures of $\cU$ satisfy JEP that any JEP problem in $\cK$ can be converted to an AP problem in $\cK$. Actually, the tensor product of definable types shows that $\cK_{\mathrm{def}}$ always satisfies JEP. However, in order to ensure that $\cK$-beautiful pairs are elementary pairs we need to impose on $\cK$ the following additional property:

\begin{definition}\label{def:ext_prop} We say that $\cK$ has the \emph{extension property} (EP) if the following holds. Given a single variable $x$, a structure $\cA$ from $\cK$ and a consistent (with $T$) $\cL(A)$-formula $\varphi(x)$, there is $\cB\in \cK$, a bp-embedding $f\colon \cA\to \cB$ and $b\in B$ satisfying $\varphi(x)$. 
\end{definition}

\begin{theorem}\label{thm:charct-beaut-pairs} The following are equivalent:
\begin{enumerate}[label=(\arabic*)]
    \item $\cK$ has the amalgamation property;
    \item $\lambda$-$\cK$-beautiful pairs exist for all $\lambda\geqslant |T|^+$.
\end{enumerate}
Moreover, assuming the above equivalent conditions hold, $\cK$-beautiful pairs are elementary pairs if and only if $\cK$ has the extension property.
\end{theorem}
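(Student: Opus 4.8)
The plan is to read the equivalence $(1)\Leftrightarrow(2)$ as a Fra\"iss\'e--J\'onsson amalgamation/richness statement, and the ``moreover'' clause as an existential-closedness argument resting on the quantifier elimination assumed in Convention~\ref{conv:udef} (via model completeness, once $M\models T$ is known, $P(\cM)\preccurlyeq M$ is automatic).

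For $(2)\Rightarrow(1)$, fix an amalgamation problem $g_1\colon\cA\to\cB_1$, $g_2\colon\cA\to\cB_2$ in $\cK$ and choose $\lambda$ with $\cA,\cB_1,\cB_2\in\cK_\lambda$; let $\cM$ be a $\lambda$-$\cK$-beautiful pair. First I would embed $\cA$ into $\cM$: the small trivial pair $\langle\emptyset\rangle_\triv=(\dcl^\eq(\emptyset)\cap\cU,\dcl^\eq(\emptyset))$ lies in $\cK_\lambda$ by clause~(ii) of Definition~\ref{def:natural-class-K} and maps canonically into both $\cA$ and $\cM$, so (BP) produces $e\colon\cA\to\cM$. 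Applying (BP) twice more, to $(e,g_1)$ and to $(e,g_2)$, yields $h_i\colon\cB_i\to\cM$ with $h_ig_i=e$. Then $\cC:=\langle h_1(\cB_1)\cup h_2(\cB_2)\rangle\subseteq\cM$ lies in $\cK$ (natural classes are closed under substructures) and solves the problem since $h_1g_1=e=h_2g_2$. The converse $(1)\Rightarrow(2)$ is the construction already flagged before the statement: using AP (hence JEP) and closure under unions of chains, I would build a continuous chain by transfinite recursion, at successor stages amalgamating in a $\cB\in\cK_\lambda$ prescribed by a bookkeeping of all amalgamation problems over small subpairs, and interleaving amalgamations with trivial pairs that realize $T$-types over small subsets of $P$ to force $\lambda$-saturation; the union is the desired $\lambda$-$\cK$-beautiful pair.

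For the ``moreover'', assume $(1)$/$(2)$. Suppose first $\cK$ has EP and let $\cM$ be any $\cK$-beautiful pair. Since $\cM\in\cK_\Def$, enumerating $M$ and realizing $\tp_\cL(M/P(\cM))$ in a model of $T$ exhibits $M$ as an $\cL$-substructure of a model of $T$, so $M\models T_\forall$. To see $M$ is existentially closed, take a quantifier-free $\varphi(x,\bar a)$ with $\bar a\in M$ that is realized in some model of $T$ extending $M$; set $\cA:=\pdcl$ of $\bar a$ together with $P(\cM)$, a member of $\cK_\lambda$, apply EP to obtain $\cB\in\cK$ with $b\in B$ satisfying $\varphi$, shrink to $\langle\cA\cup\{b\}\rangle$, and pull $b$ back into $M$ by (BP); thus $M$ realizes $\varphi$. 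As $T$ has QE it is the model companion of $T_\forall$, so an existentially closed model of $T_\forall$ is a model of $T$; hence $M\models T$, and model completeness gives $P(\cM)\preccurlyeq M$, i.e.\ $\cM$ is an elementary pair.

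Conversely, assume every $\cK$-beautiful pair is an elementary pair, and let $\cA\in\cK$ and a consistent $\cL(A)$-formula $\varphi(x)$ be given, which by QE I may take quantifier-free. Choosing a $\lambda$-$\cK$-beautiful pair $\cM$ with $\lambda>|A|$ and embedding $\cA\hookrightarrow\cM$, the hypothesis gives $M\models T$; since $\varphi$ is consistent with the quantifier-free diagram of $A$ and $M\models T$ contains that diagram, QE shows $M\models\exists x\,\varphi(x)$, so some $b\in M$ realizes $\varphi$. Then $\cB:=\langle\cA\cup\{b\}\rangle\subseteq\cM$ lies in $\cK$ by substructure closure and witnesses EP. The main obstacle I anticipate is the forward ``moreover'' direction: correctly packaging the interplay of EP and (BP) into existential closedness of $M$ and invoking the model-companion characterisation of models of $T$, while ensuring that every realization produced can be pulled back into $\cK$ through closure under substructures; the bookkeeping in $(1)\Rightarrow(2)$ is routine but must be organised so that every small amalgamation problem and every $P$-type is addressed cofinally.
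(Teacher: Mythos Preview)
Your argument is essentially correct and follows the same route as the paper's (very terse) proof: standard Fra\"iss\'e for $(1)\Leftrightarrow(2)$, and for the ``moreover'' clause the combination of EP with (BP) to realize consistent formulas inside $M$. Two points deserve correction.

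First, in your EP $\Rightarrow$ elementary argument you set ``$\cA:=\pdcl$ of $\bar a$ together with $P(\cM)$'' and claim $\cA\in\cK_\lambda$. But $P(\cM)$ is $\lambda$-saturated (Remark~\ref{rem:small_saturated}), hence of cardinality $\geq\lambda$, so such an $\cA$ is not in $\cK_\lambda$ and (BP) does not apply. The fix is immediate: take $\cA=\langle\bar a\rangle_{\cL_\BP}\subseteq\cM$, which has size $\leq|T|<\lambda$; EP and (BP) then go through exactly as you describe. With this correction your existential-closedness argument is the paper's ``Tarski--Vaught'' argument unpacked (embed $M$ into a model $N$ of $T$ via the definable type, then use EP and (BP) to pull realizations of quantifier-free formulas back into $M$).

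Second, for elementary $\Rightarrow$ EP you reduce to quantifier-free $\varphi$ and shrink to $\cB=\langle\cA\cup\{b\}\rangle$. This forces you to worry about whether $\varphi$ and its quantifier-free equivalent agree in the possibly non-model $B$. The paper avoids this by simply taking $\cB=\cM$: since $M\models T$ by hypothesis, the (arbitrary) consistent $\varphi$ is realized in $M$ directly, and EP follows without any reduction.
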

\begin{proof} 
The equivalence between (1) and (2) uses standard arguments typically employed in Fra\"{i}ss\'e theory. So suppose (1) and (2) hold. 

For the left-to-right implication, let $\cA\in \cK$ and $\varphi(x)$ be a consistent $\cL(A)$-formula. Take $\lambda$ sufficiently big so that $\cA$ embeds into a $\lambda$-$\cK$-beautiful pair $\cM$. Note that $\cM$ is an elementary pair by assumption. Taking $\cM$ as $\cB$ shows EP, since $\varphi(M)\neq\emptyset$. The converse follows from the Tarski-Vaught test using EP over $\cA$ with respect to $\varphi(x)$. 
\end{proof}

We will later give examples of theories and natural classes for which EP fails (see later Section \ref{sec:tree}).

\begin{corollary}\label{cor:elem-equiv} Let $\lambda\geqslant |T|^+$, $\cM, \cN\in \cK$ and suppose that $\cM$ is $\lambda$-$\cK$-beautiful. Then, the following are equivalent: 
\begin{enumerate}[label=(\arabic*)]
    \item $\cN$ is $\lambda$-$\cK$-beautiful; 
    \item the set of partial isomorphisms between bp-substructures of $\cM$ and $\cN$ of size smaller than $\lambda$ (that are in $\cK$ by definition) has the back-and-forth property;
    \item $\cM\equiv_{\infty,\lambda} \cN$.
\end{enumerate} 
In particular, $\cM\equiv_{\infty,\lambda} \cN$ for any two $\lambda$-$\cK$-beautiful pairs $\cM$ and $\cN$, and therefore, all $\cK$-beautiful pairs are elementarily 
equivalent. 
\end{corollary}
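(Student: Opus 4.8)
\emph{Approach.} The statement is the amalgamation-theoretic analogue of Karp's characterization of $\equiv_{\infty,\lambda}$: being $\lambda$-$\cK$-beautiful is exactly $\lambda$-richness, and richness is precisely what drives a $\lambda$-back-and-forth system. The plan is to let $I$ be the set of all isomorphisms $p\colon\cA\to\cA'$ between substructures $\cA\subseteq\cM$ and $\cA'\subseteq\cN$ with $|\cA|,|\cA'|<\lambda$ (all lying in $\cK$, since natural classes are closed under substructures), and to prove that condition (1) is equivalent to $I$ being a nonempty system with the $\lambda$-back-and-forth property, i.e.\ that every $p\in I$ extends to a member of $I$ absorbing any prescribed subset of size $<\lambda$ of $\cM$ (resp.\ $\cN$) into its domain (resp.\ range). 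The ``in particular'' clause would then follow from the standard fact that a nonempty $\lambda$-back-and-forth system witnesses $\cM\equiv_{\infty,\lambda}\cN$.

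\emph{Direction $(1)\Rightarrow(2)$.} Assuming both $\cM$ and $\cN$ are $\lambda$-$\cK$-beautiful, I would first note $I\neq\emptyset$: the $\cL_\BP$-substructures of $\cM$ and of $\cN$ generated by $\emptyset$ are isomorphic, being the common prime substructure determined by $T$ (and $T^\eq$) through completeness and quantifier elimination. For the forth step I would fix $p\colon\cA\to\cA'$ in $I$ and a subset $A\subseteq M$ with $|A|<\lambda$, form $\cB=\langle\cA\cup A\rangle_{\cL_\BP}\subseteq\cM$ (which lies in $\cK_\lambda$ since $|\cB|\leqslant|\cA|+|A|+|\cL_\BP|<\lambda$), and apply (BP) for $\cN$ to the base $\cA$, the inclusion $g\colon\cA\hookrightarrow\cB$ and the composite $f\colon\cA\xrightarrow{p}\cA'\hookrightarrow\cN$; the resulting $h\colon\cB\to\cN$ with $h\circ g=f$ is, onto its image, the desired extension of $p$ with $A\subseteq\dom h$. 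The back step is entirely symmetric, now using that $\cM$ is $\lambda$-$\cK$-beautiful.

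\emph{Direction $(2)\Rightarrow(1)$.} Here I would simply verify (BP) for $\cN$ (which by Definition \ref{def:bpair}, together with $\cN\in\cK$, is all that is required). Given $f\colon\cA\to\cN$ and $g\colon\cA\to\cB$ with $\cA,\cB\in\cK_\lambda$, I would first use the back direction to extend some $p\in I$ so that $f(\cA)\subseteq\range p$, and set $f^\ast:=p^{-1}\circ f\colon\cA\to\cM$. Since $\cM$ is $\lambda$-$\cK$-beautiful, (BP) yields $h^\ast\colon\cB\to\cM$ with $h^\ast\circ g=f^\ast$; then I would use the forth direction to extend $p$ to $p'\in I$ with $p'\supseteq p$ and $h^\ast(\cB)\subseteq\dom p'$, and put $h:=p'\circ h^\ast\colon\cB\to\cN$. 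One checks $h\circ g=p'\circ f^\ast=p'\circ p^{-1}\circ f=f$, the last equality holding because $p'$ agrees with $p$ on $f^\ast(\cA)\subseteq\dom p$. This gives (BP), so $\cN$ is $\lambda$-$\cK$-beautiful.

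\emph{Conclusion and main obstacle.} Applying $(1)\Rightarrow(2)$ to two $\lambda$-$\cK$-beautiful pairs gives $\cM\equiv_{\infty,\lambda}\cN$; to reach \emph{all} $\cK$-beautiful pairs I would record the trivial monotonicity that a $\lambda$-$\cK$-beautiful pair is $\mu$-$\cK$-beautiful for every $|T|^+\leqslant\mu\leqslant\lambda$ (immediate from $\cK_\mu\subseteq\cK_\lambda$ inside (BP)), reduce any two beautiful pairs to the common value $\mu=|T|^+$, and conclude $\equiv_{\infty,|T|^+}$, hence elementary equivalence. I expect the only genuinely delicate point to be the bookkeeping in $(2)\Rightarrow(1)$: transferring the amalgamation problem from $\cN$ into $\cM$ along $p$, solving it where beauty is available, and carrying the solution back along an extension $p'\supseteq p$, all while ensuring the two transfers are compatible so that $h\circ g=f$. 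The remaining points—nonemptiness and the set-wise (rather than element-wise) extension—are routine once one observes that generating a substructure by a set of size $<\lambda$ keeps it inside $\cK_\lambda$.
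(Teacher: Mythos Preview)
Your proof is correct and is precisely the standard Fra\"iss\'e-style argument for the equivalence of $\lambda$-richness and the $\lambda$-back-and-forth property; the paper itself gives no proof at all (the corollary is marked \qed), treating the result as folklore. Your bookkeeping in $(2)\Rightarrow(1)$, transferring the amalgamation problem into $\cM$ along $p$ and back along an extension $p'\supseteq p$, is exactly right and is the only place requiring any care.
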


\begin{proof}
The equivalence (1)$\Leftrightarrow$(2) and the implication (2)$\Rightarrow$(3) are clear. As for (3)$\Rightarrow$(2), it suffices to observe that any map in a back-and-forth system is partial elementary and that if $f$ is a partial elementary map between two structures from $\cK$, then the image of a $\BP$-substructure under $f$ is again a $\BP$-substructure.
\end{proof}

\begin{definition}\label{def:tbp}
When $\cK$-beautiful pairs exist, we will use $T_\BP(\cK)$ to denote the common $\cL_P$-theory of $\cK$-beautiful pairs. When $\cK=\cK_{\mathrm{def}}$, we will simply write $T_\BP$ instead of $T_\BP(\cK)$.
\end{definition}

Classical examples of $T_\BP$ will be gathered in Section \ref{sec:stable-omin}.

The following standard fact mirrors the situation of Corollary \ref{cor:elem-equiv}.   

\begin{fact}\label{rem:infinitary} Let $T'$ be any complete theory, $\lambda\geqslant |T'|^+$, and $\cM,\cN$ be two models of $T'$ such that $\cM$ is  $\lambda$-saturated. Then $\cN$ is $\lambda$-saturated if and only if $\cM\equiv_{\infty,\lambda} \cN$. \qed 
\end{fact}

\begin{lemma}\label{lem:beauty-transfer-cardinal} Suppose $\cK$-beautiful pairs exist. The following are equivalent: 
\begin{enumerate}[label=(\arabic*)]
    \item there is a cardinal $\lambda\geqslant |T|^+$ and $\cM\models T_\BP(\cK)$ which is $\lambda$-saturated and $\lambda$-$\cK$-beautiful; 
    \item  for every cardinal $\lambda\geqslant |T|^+$ and every model $\cM\models T_\BP(\cK)$, $\cM$ is $\lambda$-saturated if and only if $\cM$ is $\lambda$-$\cK$-beautiful. 
\end{enumerate}
\end{lemma}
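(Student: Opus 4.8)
The converse implication $(2)\Rightarrow(1)$ is immediate: since $T_\BP(\cK)$ is consistent it has a $\lambda$-saturated model for any $\lambda\geqslant|T|^+$, and $(2)$ makes such a model $\lambda$-$\cK$-beautiful. So I focus on $(1)\Rightarrow(2)$. Note first that consistency of $T_\BP(\cK)$ presupposes the existence of $\cK$-beautiful pairs (Definition \ref{def:tbp}), so by Theorem \ref{thm:charct-beaut-pairs} the class $\cK$ has the amalgamation property and $\lambda$-$\cK$-beautiful pairs exist for every $\lambda\geqslant|T|^+$. Fix a witness $\cM_0$ of $(1)$, say $\lambda_0$-saturated and $\lambda_0$-$\cK$-beautiful. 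The first thing I would record is that \emph{every} model of $T_\BP(\cK)$ lies in $\cK$: any $\lambda_0$-saturated model is $\equiv_{\infty,\lambda_0}\cM_0$ by Fact \ref{rem:infinitary}, so a back-and-forth system of size-$<\lambda_0$ partial isomorphisms embeds each of its finitely generated substructures into $\cM_0\in\cK$, forcing membership in $\cK$ by clause $(iv)$ of Definition \ref{def:natural-class-K}; an arbitrary $\mathfrak M\models T_\BP(\cK)$ then lies in $\cK$ because its finitely generated substructures are substructures of a $\lambda_0$-saturated elementary extension of $\mathfrak M$.

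The heart of the argument is to show that $T_\BP(\cK)$ eliminates quantifiers. Applying Corollary \ref{cor:elem-equiv} with $\cM=\cN=\cM_0$ shows that the set of \emph{all} size-$<\lambda_0$ partial isomorphisms of $\cM_0$ is a back-and-forth system, so each such partial isomorphism is partial elementary. Now take finite tuples $\bar a\in\mathfrak M$ and $\bar b\in\mathfrak N$ in models of $T_\BP(\cK)$ with $\qftp_{\cL_\BP}(\bar a)=\qftp_{\cL_\BP}(\bar b)$. Passing to $\lambda_0$-saturated elementary extensions and using $\equiv_{\infty,\lambda_0}\cM_0$ (Fact \ref{rem:infinitary}), I would carry $\langle\bar a\rangle$ and $\langle\bar b\rangle$ --- both of size $<\lambda_0$ --- into $\cM_0$ by partial elementary maps; since their images are isomorphic size-$<\lambda_0$ substructures of $\cM_0$, the homogeneity just obtained relates them by a partial elementary map of $\cM_0$, and composing the three correspondences yields $\tp(\bar a)=\tp(\bar b)$. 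By the standard criterion this is quantifier elimination.

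With quantifier elimination in hand I would prove, for each $\lambda\geqslant|T|^+$ and each $\cN\models T_\BP(\cK)$, that $\cN$ is $\lambda$-saturated iff it is $\lambda$-$\cK$-beautiful. For the ``beautiful implies saturated'' direction, let $\cN$ be $\lambda$-$\cK$-beautiful and let $p$ be a complete type over some $C\subseteq\cN$ with $|C|<\lambda$; realize $p$ by a finite tuple $d$ in a sufficiently saturated monster $\mathfrak U$ with $\cN\preccurlyeq\mathfrak U$. As $\mathfrak U\in\cK$, the substructures $\cC\coloneqq\langle C\rangle$ and $\mathcal D\coloneqq\langle C\cup\{d\}\rangle$ lie in $\cK_\lambda$, so condition $(\mathrm{BP})$ of Definition \ref{def:bpair}, applied to the inclusions $\cC\hookrightarrow\cN$ and $\cC\hookrightarrow\mathcal D$, gives an embedding $h\colon\mathcal D\to\cN$ fixing $\cC$; then $h(d)\in\cN$ has the same quantifier-free, hence (by QE) the same complete, type over $\cC$ as $d$, and so realizes $p$. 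For the converse, suppose $\cN$ is $\lambda$-saturated and fix any $\lambda$-$\cK$-beautiful pair $\cM$, which is $\lambda$-saturated by the direction just proved. By QE the size-$<\lambda$ partial isomorphisms between $\cM$ and $\cN$ are exactly the size-$<\lambda$ partial elementary maps, and since $\cM\equiv\cN$ are both $\lambda$-saturated these form a back-and-forth system; Corollary \ref{cor:elem-equiv} then yields that $\cN$ is $\lambda$-$\cK$-beautiful.

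The main obstacle is the quantifier-elimination step: it is exactly what converts the amalgamation-theoretic content of beauty --- which through $(\mathrm{BP})$ only controls quantifier-free types by embeddings --- into statements about complete types and saturation. I would stress that this does not make the argument circular with the later quantifier-elimination result, since here QE is deduced solely from Corollary \ref{cor:elem-equiv}, Fact \ref{rem:infinitary} and hypothesis $(1)$.
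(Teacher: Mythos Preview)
Your proof is correct, and it takes a genuinely different route from the paper's own argument.

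The paper argues $(1)\Rightarrow(2)$ directly, without passing through quantifier elimination. For $\lambda'\leq\lambda_0$ it simply combines Corollary~\ref{cor:elem-equiv} and Fact~\ref{rem:infinitary} (using that $\cM_0$ is simultaneously $\lambda'$-saturated and $\lambda'$-$\cK$-beautiful). For $\lambda'>\lambda_0$ and $\cM$ a $\lambda'$-saturated model, the paper first observes that $\cM$ is $\lambda_0$-$\cK$-beautiful by the previous case, and then, given a one-point extension $\cB=\langle\cA\cup\{b\}\rangle$ over some $\cA\in\cK_{\lambda'}$ with $P(\cA)=P(\cB)$, it writes down an explicit partial $\cL_\BP$-type $\Sigma(x)$ (encoding $\tp_{\cL^\eq}(\cB/\cA)$ together with the condition that each $c_\varphi$ takes the prescribed value) and checks its finite satisfiability in $\cM$ by restricting to suitable substructures of size $<\lambda_0$ and invoking $\lambda_0$-beauty. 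Saturation then realizes $\Sigma$. The converse direction (beautiful implies saturated) is handled by taking a $\lambda'$-saturated elementary extension and invoking Corollary~\ref{cor:elem-equiv} and Fact~\ref{rem:infinitary} once more.

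Your approach instead isolates quantifier elimination as the single intermediate step and then uses it uniformly in both directions. This is conceptually cleaner: it shows that QE for $T_\BP(\cK)$ already follows from the mere existence of a single $\lambda_0$-saturated $\lambda_0$-$\cK$-beautiful pair, so that Theorem~\ref{thm:qe}(1) is in fact \emph{equivalent} to beauty transfer rather than just a consequence of it. The paper's approach, by contrast, keeps the compactness argument self-contained and defers QE to a separate corollary; its type $\Sigma$ is more hands-on but avoids the detour through back-and-forth homogeneity of $\cM_0$. Your explicit verification that every model of $T_\BP(\cK)$ lies in $\cK$ is also a nice observation that the paper leaves implicit.
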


\begin{proof} The implication (2)$\Rightarrow$(1) is trivial. For (1)$\Rightarrow$(2), let $\cM_0$ be a $\lambda$-saturated $\lambda$-$\cK$-beautiful pair and $\lambda'\geqslant|T|^+$ be a cardinal number. If $\lambda'\leqslant \lambda$, (2) follows directly from Fact~\ref{rem:infinitary} and Corollary~\ref{cor:elem-equiv}. So suppose $\lambda<\lambda'$. Assume $\cM$ is a $\lambda'$-saturated model of $T_\BP(\cK)$. Then it is also $\lambda$-saturated, and therefore, by Corollary \ref{cor:elem-equiv} and Fact~\ref{rem:infinitary}, $\lambda$-$\cK$-beautiful. By Lemma \ref{lem:b-pair-finite}, it suffices to show that for $\cA\subseteq_\BP \cM$ with $\cA\in \cK_{<\lambda'}$ and a $\BP$-embedding $\cA\to \cB$ with $\cB=\langle \cA\cup b\rangle$ such that $P(\cA)=P(\cB)$, there is a $\BP$-embedding $\cB \to \cM$ which is the identity on $\cA$. Let $(b_i)_{i\in I}$ be an enumeration of the set of finite tuples of $\cB$. For each $i\in I$, let $x_i$ be a tuple of variables associated to $b_i$ and set $x=(x_i)_{i\in I}$. Consider the partial $\cL_P$-type
\begin{equation}\label{eq:type}\tag{$\dagger$}
\Sigma(x)\coloneqq\tp_{\cL}(B/A) \cup \{(\forall y)(P(y) \to (\varphi(x_i,y) \leftrightarrow d_{p_i}\varphi(y)) : \varphi(x_i,y) \text{ in $\cL$}\}.    
\end{equation}
Here, $p_i$ denotes $\tp_{\cL}(b_i/P(\cB))$, and $d_{p_i}\varphi(y)$ the $\varphi$-definition of $p_i$, which is an $\cL(P(\cA))$-formula by assumption.

By $\lambda'$-saturation, if $\Sigma$ is consistent, then it is realized in $\cM$. Moreover, any realisation of $\Sigma$ in $\cM$ gives us the desired embedding. Indeed, note that for any singleton $b_i\in \cB$, if $\theta$ denotes the formula $x_i=y$, then 
\[
P(b_i) \ \Leftrightarrow \ \models (\exists y)d_{p_i}\theta(y),  
\]
which shows that $P$ is preserved by any such embedding. In addition, if $e_i\in \cM$ is the realisation of the variables $x_i$, the right-hand side of \eqref{eq:type} ensures that the resulting embedding is a $\BP$-embedding.   

Thus it suffices to show $\Sigma(x)$ is consistent. For every $i\in I$ and every finite subset $A_0$ of $\cA$, there is $\cA_1$ such that $A_0\subseteq \cA_1\subseteq_\BP \cA$, $\cA_1\in \cK_{<\lambda}$, $P(\cB_i)=P(\cA_1)$ where $\cB_i=\langle A_1\cup b_i\rangle$ and $\cA_1\subseteq_\BP \cB_i$. Since $\cM$ is $\lambda$-$\cK$-beautiful,  $\cB_i$ $\BP$-embeds into $\cM$ over $\cA_1$, which shows that $\Sigma(x)$ is finitely satisfiable in $\cM$. 

For the converse, assume $\cM$ is a $\lambda'$-$\cK$-beautiful pair. Let $\cN$ be a $\lambda'$-saturated elementary extension of $\cM$. By the the above implication, $\cN$ is $\lambda'$-$\cK$-beautiful. Then $\cM$ is $\lambda'$-saturated by Corollary \ref{cor:elem-equiv} and Fact~\ref{rem:infinitary}.    
\end{proof}

\begin{definition}\label{def:beauty_transfer} Suppose $\cK$-beautiful pairs exist. We say that \emph{$\cK$ has beauty transfer} if one of the equivalent conditions in Lemma \ref{lem:beauty-transfer-cardinal} holds. 
\end{definition}

\begin{definition}\label{def:UDDT-F} Let $\cF$ be a natural class of global definable types. We say $T$ has $\mathrm{UDDT}$($\cF$) if the types from $\cF$ are uniformly definable.
\end{definition}

\begin{proposition}\label{prop:UDDT(F)}
    Assume $\cK$-beautiful pairs exist and \emph{$\cK$} has beauty transfer. Then $\cK$ is an $\cL_P$-elementary class and $T$ has $\mathrm{UDDT}$($\cF_{\cK}$).
\end{proposition}

\begin{proof}
The class $\cK$ is closed under bp-substructures. As any elementary substructure of an \sepa-pair is a bp-substructure, it follows that $\cK$ is closed under elementary substructures, and so every model of $T_{\BP}(\cK)$ is in $\cK$, by beauty transfer. To show $\cK$ is $\cL_p$-elementary, it remains to show it is closed under ultraproducts.

We will first prove $\mathrm{UDDT}$($\cF_{\cK}$). Suppose for contradiction that this does not hold. We may then find a formula $\phi(x,y)$ such that for each $\chi_i(y,z)$ there is $\cM_i\models T_{\BP}(\cK)$ and a tuple $a_i$ from $\cM_i^x$ such that the type 
of $a_i$ over the predicate has no $\phi$-definition of the form $\chi_i(y,c)$. It is then clear that in any non-principal ultraproduct $\cM=\prod_{U}\cM_i$ the $\phi$-type of $a=(a_i)_{i\in I}/ U$ is not definable over $P(\cM)$, contradicting $\cM\models T_{\BP}(\cK)$.

Let us now show that $\cK$ is closed under ultraproducts. Let $(\cA_i)_{i\in I}$ be a family of structures from $\cK$. Let $\cA_i\subseteq_{\BP}\cM_i\models T_{\BP}(\cK)$, for $i\in I$. It follows from $\mathrm{UDDT}$($\cF_{\cK}$) in a straight forward manner that $\cA:=\prod_U \cA_i$ is a $\BP$-substructure of $\cM:=\prod_U\cM_i\models T_{\BP}(\cK)$, so in particular $\cA$ is in $\cK$. 
\end{proof}

For the remaining part of this section, we assume that $\cK$-beautiful pairs exist and that $\cK$ has beauty transfer. In particular, by Proposition \ref{prop:UDDT(F)}, for each $\cL$-formula $\varphi(x,y)$ there is an $\cL^{\eq}$-formula $d(\varphi)(y,z_{\varphi})$ such that every definable type $p\in\cF_\cK$ admits a $\varphi$-definition of the form $d(\varphi)(y,c)$ for some (unique) imaginary element $c$.

\begin{definition}\label{def:Lbp}
We define the \emph{language of beautiful pairs} $\cL_{\BP}$ (depending on $T$) as the following extension of $\cL_P$. First, we add to $\cL_{P}$ all $\cL^\eq$-sorts restricted to the predicate $P$. In addition, we add function symbols $c_\varphi$ for each $\cL$-formula $\varphi(x,y)$. 
\end{definition}

\begin{convention}\label{conv:axiom_BP} From now on, unless otherwise stated, every $\cL_\BP$-structure will satisfy the following axiom scheme, stating that the function $c_\varphi$ selects $\varphi$-canonical parameters:
\begin{equation*}\label{axiom_BP}
(\forall x)\big[c_\varphi(x)\in P \wedge(\forall y\in P)(\varphi(x,y) \leftrightarrow d(\varphi)(y,c_\varphi(x)))\big] 
\end{equation*}
where $\varphi(x,y)$ is an $\cL$-formula.
\end{convention}
\begin{remark}\label{rem:Lbp}
    Note that any \sepa-pair $\cA$ extends to an $\cL_\BP$-structure uniquely, by respecting the convention and interpreting the new imaginary sorts from $\cL^{\eq}$ by $P(\cA)^{\eq}$. It is easy to see that this is an expansion by definitions of the $\cL_P$-structure $\cA$. Moreover, a $\BP$-embedding  $f:\cA\to \cB$ is the same as an $\cL_\BP$-embedding between the induced $\cL_\BP$-structures.

In addition, it is worth noting that to equip every $\cA\in\cK$ with an $\cL_{\BP}$-structure, it is sufficient that $T$ has $\mathrm{UDDT}$($\cF_\cK$). 
\end{remark}

\begin{theorem}\label{thm:qe} Suppose $\cK$-beautiful pairs exists and $\cK$ has beauty transfer. Then the following holds: 
\begin{enumerate}[label=(\arabic*)]
    \item $T_\BP(\cK)$ is complete and admits quantifier elimination in $\cL_\BP$. 
    \item The predicate $P$ is stably embedded in $T_\BP(\cK)$ and pure as an $\cL$-structure (i.e., there is no new induced structure beyond $\cL$).  
\end{enumerate}
\end{theorem}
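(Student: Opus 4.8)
The plan is to read completeness and quantifier elimination off the back-and-forth information already packaged in Corollary \ref{cor:elem-equiv}, and then to deduce stable embeddedness and purity of $P$ from a quantifier-free analysis of $\cL_\BP$-formulas.

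Completeness is immediate: beauty transfer presupposes that $T_\BP(\cK)$ is consistent, and by Corollary \ref{cor:elem-equiv} any two $\cK$-beautiful pairs are elementarily equivalent, so their common theory $T_\BP(\cK)$ is complete. For quantifier elimination I would invoke the standard criterion that a complete theory eliminates quantifiers as soon as every partial isomorphism between substructures of its models is elementary. Fix $\cM_0,\cN_0\models T_\BP(\cK)$ and a partial isomorphism $f_0\colon \cA\to\cB$ between $\cL_\BP$-substructures. Choosing $\lambda\geqslant|T|^+$ with $\lambda>|\cA|$, pass to $\lambda$-saturated elementary extensions $\cM_0\preccurlyeq\cM$ and $\cN_0\preccurlyeq\cN$. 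By beauty transfer (Definition \ref{def:beauty_transfer} and Lemma \ref{lem:beauty-transfer-cardinal}) both $\cM$ and $\cN$ are $\lambda$-$\cK$-beautiful, so Corollary \ref{cor:elem-equiv} applies: the set $I$ of partial isomorphisms between substructures of $\cM$ and $\cN$ of size smaller than $\lambda$ has the back-and-forth property. Since $f_0\in I$ and $I$ is a nonempty back-and-forth system, $f_0$ is $\equiv_{\infty,\lambda}$-elementary between $\cM$ and $\cN$, in particular elementary; restricting to $\cM_0,\cN_0$ shows $f_0$ is elementary. As $\lambda$ may be taken arbitrarily large, this establishes the criterion and hence (1).

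For (2) I would work in a $\lambda$-$\cK$-beautiful (equivalently, by beauty transfer, $\lambda$-saturated) model $\cM\models T_\BP(\cK)$ and prove the combined statement that the trace on $P(\cM)$ of every $\cL_\BP(\cM)$-definable subset of a product of $P$-sorts is $\cL^\eq(P(\cM))$-definable; this yields stable embeddedness and purity simultaneously. By the quantifier elimination from (1) it suffices to treat a quantifier-free $\cL_\BP$-formula $\theta(y,c)$ with $y$ in the $P$-sorts and $c\in\cM$, and to reduce each of its atomic constituents, for $y$ ranging over $P(\cM)$, to an $\cL^\eq(P(\cM))$-condition. The three inputs are: first, that $p\coloneqq\tp_\cL(c/P(\cM))$ is definable over $P(\cM)$ via the fixed scheme $d(\varphi)$, with canonical parameters $c_\varphi(c)\in P(\cM)^\eq$, so that every atomic $\cL$-relation among $\cL$-terms in $(c,y)$ becomes an $\cL(P(\cM))$-condition on $y$; second, that for a real term $\sigma$ the value $c_\varphi(\sigma(c,y))$ is, as a function of $y\in P(\cM)$, an $\cL^\eq(P(\cM))$-definable map into $P(\cM)^\eq$ (its $\varphi$-definition is obtained by feeding $\sigma$ into the scheme $d(\varphi)$), so that the constituents involving the symbols $c_\varphi$ also reduce to $\cL^\eq(P(\cM))$; and third, that a membership constituent $P(\tau(c,y))$ rewrites as $\exists w\,(P(w)\wedge d_p\varphi(y,w))$ and, since $P(\cM)\models T$ is a model of the quantifier-eliminable theory $T$ (Remark \ref{rem:small_saturated} and Convention \ref{conv:udef}), collapses to an $\cL(P(\cM))$-condition on $y$ after absorbing the witness $w$ into $P(\cM)$.

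I expect the quantifier-elimination step to be the crux: everything hinges on correctly invoking beauty transfer to guarantee that the saturated elementary extensions into which one embeds the given substructures are themselves $\cK$-beautiful, so that the back-and-forth system of Corollary \ref{cor:elem-equiv} becomes available. The verification of (2) is then largely bookkeeping, the only genuinely delicate point being the membership constituents $P(\tau(c,y))$, where one must combine the definability of $p$ over $P(\cM)$ with quantifier elimination in $T$ to pull the witness quantifier back into $P(\cM)$.
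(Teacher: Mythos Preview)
Your proof is correct and follows the same route as the paper: for (1), pass to sufficiently saturated models, use beauty transfer (Lemma~\ref{lem:beauty-transfer-cardinal}) to get $\lambda$-$\cK$-beauty, and invoke the back-and-forth system of Corollary~\ref{cor:elem-equiv}; for (2), read off stable embeddedness and purity from quantifier elimination. The paper's proof is a two-line sketch (``Part (2) follows from (1)''), and you have supplied the syntactic analysis it omits.

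One small remark on your treatment of the third kind of atomic constituent in (2): the appeal to quantifier elimination in $T$ is not really doing work. Once you rewrite $P(\tau(c,y))$ as $\exists w\,(P(w)\wedge d_p\theta(y,w))$ with $d_p\theta$ an $\cL^\eq$-formula over $P(\cM)^\eq$, the quantifier $\exists w\in P$ is already a quantifier internal to the $P$-restricted $\cL^\eq$-sorts, so the whole expression is by construction an $\cL^\eq(P(\cM))$-formula evaluated in $P(\cM)^\eq$; no further collapse is needed. (What you \emph{do} use implicitly is that the $P$-restricted sorts carry exactly the $\cL^\eq$-structure of $P(\cM)^\eq$, so there is no ambiguity in where $d_p\theta$ is evaluated.)
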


\begin{proof} Part (1) follows from the fact that $|T|^+$-saturated models are $|T|^+$-$\cK$-beautiful by Lemma \ref{lem:beauty-transfer-cardinal}, and the system of partial isomorphisms between their $\cL_\BP$-substructures of size smaller than $|T|^+$ is a back-and-forth system. Indeed, given an $\cL_{\BP}$-substructure $\cA=(A,P(\cA))$ of a $|T|^+$-saturated model of $T_{\BP}(\cK)$, we may canonically replace $\cA$ by $(\langle AB\rangle, B^{\eq})$, where $B\models T$ and $P(\cA)\subseteq B^{\eq}$ and then use the back-and-forth system for \sepa-pairs that are bp-embedded (Corollary \ref{cor:elem-equiv}). Part (2) follows from (1). 
\end{proof}
\begin{corollary}\label{cor:NIP-transfer} 
Suppose $\cK$-beautiful pairs exist, $\cK$ has beauty transfer and the extension property. Then if $T$ is NIP so is $T_\BP(\cK)$. 
\end{corollary}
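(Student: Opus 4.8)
The plan is to strip $T_\BP(\cK)$ down to a pure pair and then invoke (or reprove) the preservation of NIP under stably embedded pairs. By beauty transfer and Theorem~\ref{thm:qe}, $T_\BP(\cK)$ eliminates quantifiers in $\cL_\BP$ and the predicate $P$ is stably embedded and pure; since $\cK$ has the extension property, Theorem~\ref{thm:charct-beaut-pairs} ensures that beautiful pairs, and hence all models of the complete theory $T_\BP(\cK)$, are elementary pairs $P(\cM)\preccurlyeq M\models T$. The canonical-parameter functions $c_\varphi$ are $\cL_P$-definable in such a pair (exactly as in Remark~\ref{rem:exp-def}), so $T_\BP(\cK)$ is an expansion by definitions of its $\cL_P$-reduct $T^P$. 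As NIP is preserved under reducts and under expansions by definitions, it suffices to show that $T^P$ is NIP. This already pays off: in $\cL_P$ the only nonlogical symbols beyond $\cL$ are the predicate $P$, so the atomic data reduce to $\cL$-formulas and membership in $P$, and the models of $T^P$ are stably embedded elementary pairs whose induced structure on $P$ is that of $T^\eq$, again NIP.

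The core is therefore the following general claim, which I would isolate: \emph{if $T$ is NIP and $(M,P)$ is a stably embedded elementary pair whose induced structure on $P$ is pure, then $\Th_{\cL_P}(M,P)$ is NIP.} By Shelah's reduction it is enough to check formulas $\phi(x;y)$ with $x$ a single variable. Here I would first establish, from stable embeddedness, a normal form: modulo $T^P$ every $\cL_P$-formula is a Boolean combination of formulas $\exists\bar w\,(P(\bar w)\wedge\psi(x,y,\bar w))$ with $\psi\in\cL$, the content being that quantifiers ranging over $M$ can be absorbed and only quantifiers bounded to $P$ survive. NIP is then verified by induction on this normal form: $\cL$-formulas are NIP because $T$ is and NIP passes to reducts, Boolean combinations are harmless, and the one substantial step is closure under existentially quantifying a \emph{parameter} over the predicate, i.e. that $\exists\bar w\in P\,\psi(x;y,\bar w)$ remains NIP when $\psi\in\cL$ is NIP.

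This last step is the main obstacle, and it is genuinely where the two NIP hypotheses and stable embeddedness must cooperate: existentially quantifying a parameter over an arbitrary set can create the independence property, so there is no purely formal shortcut. For $\theta(x;y):=\exists\bar w\in P\,\psi(x;y,\bar w)$ and fixed $a$, the witness set $\{\bar w\in P:\psi(a,b,\bar w)\}$ is the trace on $P$ of the $\cL$-definable set $\psi(a,b,M)$, and stable embeddedness makes this trace $\cL^\eq(P)$-definable, say by $\chi(\bar w;d)$ with a canonical parameter $d=d(a,b)\in P^\eq$; thus $\theta(a;b)$ becomes an $\cL^\eq$-condition on $d(a,b)$. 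The difficulty is that $d(a,b)$ couples the ambient element $a$ with the parameter $b$, so one cannot immediately read off an IP pattern. The right tool is uniform definability of these traces, which in an NIP theory is provided by the existence of honest definitions; using it one makes the reduction $\theta(a;b)\leftrightarrow(\text{an $\cL^\eq$-condition on }d(a,b))$ uniform along an indiscernible sequence, and can then separate any putative infinite alternation of $\theta(a;b_i)$ into a part governed by $\tp_\cL(a\,b_i/P)$ on the $M$-side and a part governed by the behaviour of $d(a,b_i)$ inside the pure predicate $P$. The former cannot alternate infinitely along an $\cL$-indiscernible sequence because $T$ is NIP, the latter cannot because the induced structure on $P$ is a model of the NIP theory $T^\eq$; either way one exhibits an explicit IP pattern for an $\cL$- or $\cL^\eq$-formula, contradicting NIP of $T$. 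I expect essentially all the work to sit in this decoupling and honest-definition bookkeeping. (One could equally argue directly in $\cL_\BP$ via the quantifier elimination of Theorem~\ref{thm:qe}; there the same coupling reappears through the terms $c_\varphi(\sigma(a,b_i))\in P^\eq$, and is resolved in the same manner, stable embeddedness again making these canonical parameters definable from the $M$-side data.)
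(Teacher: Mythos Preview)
Your approach is essentially the paper's: reduce to the $\cL_P$-reduct via Remark~\ref{rem:exp-def}, use the quantifier elimination of Theorem~\ref{thm:qe} to see that every $\cL_P$-formula is equivalent to one whose quantifiers all range over $P$ (your ``normal form'' is exactly what the paper calls \emph{bounded}), and then argue NIP transfers. The only difference is that the paper invokes \cite[Corollary~2.5]{cherinikov-simon} as a black box for this last step, whereas you sketch its proof via honest definitions and indiscernible-sequence decoupling; your sketch has the right shape, though the decoupling is where the genuine work lies and your outline leaves it somewhat impressionistic.
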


\begin{proof} By Theorem \ref{thm:qe}, the $\cL_P$-theory $T_\BP(\cK)$ is bounded (i.e., every $\cL_P$-formula is equivalent to an $\cL_P$-formula where all quantifiers are over the predicate).  By the extension property, the big model is a model of $T$. By~\cite[Corollary 2.5]{cherinikov-simon}, $T_\BP(\cK)$ is NIP.   
\end{proof}

We suspect that by following carefully the proofs of \cite[Theorem 2.4]{cherinikov-simon} and \cite[Corollary 2.5]{cherinikov-simon}, one should be able to remove the assumption that $\cK$ has the extension property in Corollary \ref{cor:NIP-transfer}. But we leave it as a question.

\begin{question}\label{rem:NTP2-transfer}  Is Corollary~\ref{cor:NIP-transfer} still valid without assuming that $\cK$ has the extension property?
\end{question}

\subsection{Strict pro-definability of definable types and beauty transfer}
In this subsection, we relate beauty transfer to strict pro-definability of definable types. For a natural class of definable types $\cF$, we assume that $\cK_\cF$-beautiful pairs exist and $\cK_\cF$ has beauty transfer.
\begin{definition}\label{def:prodef}
Given a natural class $\cF$ of global definable types, for a finite tuple of variables $x$ and a sufficiently saturated model $M$ of $T$, consider the map $\tau_\cF$ sending a type $p\in \cF_x(M)$ to the infinite tuple of its canonical parameters. More precisely,  
\begin{equation*}
\tau_\cF\colon\cF_x(M)\to \prod_{\varphi} M^{z_\varphi} \hspace{1cm} p\mapsto (c(p,\varphi))_{\varphi} 
\end{equation*}
where $\varphi=\varphi(x,y)$ runs over all $\cL$-formulas  and where $z_\varphi$ corresponds to the imaginary sort variable in $d(\varphi)(y,z_\varphi)$ for $\varphi(x,y)$. We say $\cF$ is \emph{pro-definable} if the image of $\tau_\cF$ is $\ast$-definable (in the sense of Shelah), i.e., type-definable in a small number of variables. Assuming $\cF$ is pro-definable, we say $\cF$ is \emph{strict pro-definable} if the image of the projection of $\cF_x(M)$ onto any finite set of coordinates is a definable set. Since one may encode finitely many formulas in one, this is equivalent to  $\pi_\psi(\tau_\cF(\cF_x(M)))$ being definable for every formula $\psi$, where $\pi_\psi\colon \prod_{\varphi}M^{z_\varphi} \to M^{z_\psi}$ is the canonical projection. 
\end{definition}

Note that if $\cF$ is (strict) pro-definable, then so is $\cF_X$ for any $\cL(M)$-definable set $X$.

\begin{remark}\label{rem:prodef1}
By a result of M. Kamensky \cite{kamensky}, the definition of pro-definability (resp. strict pro-definability) given above agrees with the standard one. We remit the reader to \cite[Section 4]{cubi-ye} for details of the standard definition.
\end{remark}

\begin{fact}[{\cite[Proposition 4.1]{cubi-ye}}]
Assuming  $\mathrm{UDDT}(\cF)$, $\cF$ is pro-definable.   \qed
\end{fact}

The following theorem improves the above fact.

\begin{theorem}\label{thm:strictness} Let $\cF$ be a natural class of definable types. Suppose $\cK_\cF$-beautiful pairs exist and $\cK_\cF$ has beauty transfer. Then $\cF$ is strict pro-definable.   
\end{theorem}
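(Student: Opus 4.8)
The plan is to reduce the statement to the structural properties of $T_\BP(\cK_\cF)$ furnished by beauty transfer, namely quantifier elimination together with the purity and stable embeddedness of the predicate $P$ (Theorem \ref{thm:qe}). Fix a finite tuple of real variables $x$ and a cardinal $\lambda > |T| + |x|$. Using beauty transfer (Definition \ref{def:beauty_transfer}, Lemma \ref{lem:beauty-transfer-cardinal}) I would choose a $\lambda$-saturated model $\cM^\ast = (M^\ast, P(\cM^\ast))$ of $T_\BP(\cK_\cF)$, which is then $\lambda$-$\cK_\cF$-beautiful, and set $M := P(\cM^\ast)$; by Remark \ref{rem:small_saturated} this $M$ is a $\lambda$-saturated model of $T$, hence ``sufficiently saturated'' as required in Definition \ref{def:prodef}.

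The heart of the argument is the identification
\[
\tau_\cF(\cF_x(M)) = \{(c_\varphi(a))_\varphi : a \in (M^\ast)^x\},
\]
both sides being viewed inside $\prod_\varphi M^{z_\varphi}$. For $\supseteq$: given a real tuple $a \in (M^\ast)^x$, its type $\tp(a/M)$ has a unique global $M$-definable extension $p$ given by the scheme encoded in $\cM^\ast$; since $\cM^\ast \in \cK_\cF$ and $\cF$ is finitary (Definition \ref{def:natuclass-F}), $p \in \cF$, and by the defining axiom of $\cL_\BP$-structures (Convention \ref{conv:axiom_BP}) each $c_\varphi(a)$ is exactly the $\varphi$-canonical parameter $c(p,\varphi)$. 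Moreover $c_\varphi(a) \in \dcl^\eq(M)$ lies in $P(\cM^\ast)$, as the latter contains all $\cL^\eq$-sorts of $P$ (Definition \ref{def:Lbp}). For $\subseteq$: given $p \in \cF_x(M)$, I would realize it by a real tuple inside $M^\ast$ by the saturation-plus-richness argument of Lemma \ref{lem:beauty-transfer-cardinal}: the partial $\cL_\BP$-type $\{c_\varphi(x) = c(p,\varphi)\}_\varphi$ has size $\leqslant |T| < \lambda$ and is finitely satisfiable in $\cM^\ast$ --- any finite fragment only mentions canonical parameters lying in a small $M_0 \preccurlyeq M$, and the small stably embedded pair generated over $M_0$ by a realization of $p|M_0$ embeds into $\cM^\ast$ over $M_0$ by the richness condition (BP) --- so it is realized by $\lambda$-saturation.

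With this identification in hand, strict pro-definability is immediate from Theorem \ref{thm:qe}. For a finite set $S$ of formulas, the projection of the image onto the $S$-coordinates is exactly the set of $(w_\varphi)_{\varphi\in S}$ with $\cM^\ast \models \exists x \bigwedge_{\varphi\in S} c_\varphi(x) = w_\varphi$; this is an $\cL_\BP$-definable subset of $\prod_{\varphi\in S} P(\cM^\ast)^{z_\varphi}$, hence, by the purity and stable embeddedness of $P$, it is cut out on $P$ by an $\cL$-formula $\theta_S$, i.e. it is a definable subset of $M$. Taking $S = \{\psi\}$ yields the definability of $\pi_\psi(\tau_\cF(\cF_x(M)))$, which is strictness; and the full image equals $\{w : \pi_S(w) \in \theta_S(M)\text{ for all finite }S\}$ (the nontrivial inclusion again being finite satisfiability plus $\lambda$-saturation), exhibiting it as a $\ast$-definable set and so giving pro-definability.

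The step I expect to be most delicate is the $\subseteq$ half of the identification --- ensuring that every type in $\cF_x(M)$ is actually realized by a real tuple of the fixed beautiful pair $\cM^\ast$, which is where one must combine the richness (BP) condition with $\lambda$-saturation and track cardinalities exactly as in Lemma \ref{lem:beauty-transfer-cardinal}. The conceptual payoff, and the only place the full strength of beauty transfer is used, is the passage from $\cL_\BP$-definability of the projections to genuine $\cL$-definability via the purity and stable embeddedness of $P$ in Theorem \ref{thm:qe}; it is this collapse of the pair structure onto $P$ that upgrades the a priori merely type-definable projections to definable ones.
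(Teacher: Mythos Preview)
Your proof is correct and follows essentially the same approach as the paper: identify the projection $\pi_\psi(\tau_\cF(\cF_x(M)))$ with the $\cL_\BP$-definable set $c_\psi(M^\ast)$ inside $P(\cM^\ast)$, then invoke the purity and stable embeddedness of $P$ from Theorem~\ref{thm:qe} to conclude $\cL$-definability. The paper's proof simply asserts the identification $\pi_\psi(\tau_\cF(\cF_x(P(\cN))))=c_\psi(\cN)$ in one line, whereas you spell out both inclusions and also address the $\ast$-definability of the full image; your added detail is accurate, and your use of finite satisfiability plus $\lambda$-saturation for the $\subseteq$ direction is a valid (if slightly more roundabout) alternative to a single application of richness over a small $M_0$ containing all canonical parameters of $p$.
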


\begin{proof} 
Let $\cN$ be a $\lambda$-$\cK_\cF$-beautiful pair with $\lambda>|T|^+$ so that $P(\cN)$ is $|T|^+$-saturated. Let $\psi(x,y)$ be an $\cL$-formula. Consider the $\cL_{\BP}$-definable subset $c_\psi(\cN)$ of $P(\cN)$. Since $\cN$ is $\lambda$-$\cK_\cF$-beautiful, every type from $\cF_x(P(\cN)$ is realized in $\cN$. Thus
\[
\pi_\psi(\tau_\cF(\cF_x(P(\cN)))=c_\psi(\cN).  
\]
By Part (2) of Theorem \ref{thm:qe}, $c_\psi(\cN)$ is $\cL^\eq$-definable in $P(\cN)$. 
\end{proof}

We do not know if the converse is true in general (see later Question \ref{question:nfcp}). 

\begin{definition}\label{def:surj_transfer} Let $\cF$ be a natural class of definable types. For a definable function $f\colon X\to Y$, we let $f^\cF\colon \cF_X(\cU)\to \cF_Y(\cU)$ be the restriction of the pushforward $f_*$ to $\cF_X(\cU)$. We say $\cF$ has \emph{surjectivity transfer} if whenever $f$ is surjective, so is $f^\cF$. When $\cF=\D{}$, we simply say that $T$ has surjectivity transfer. 
\end{definition}

\begin{remark}
If $T$ has $\mathrm{UDDT}$ and $T^{\eq}$ has surjectivity transfer, then $T^\eq$ has $\mathrm{UDDT}$. 
\end{remark}

\begin{lemma}\label{lem:EP-to-surj} Let $\cF$ be a natural class of definable types. Then $\cK_\cF$ has the extension property if and only if $\cF$ has surjectivity transfer.  
\end{lemma}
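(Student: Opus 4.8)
The plan is to lift an arbitrary $q\in\cF_Y(\cU)$ along $f$ by using the extension property to realise the formula ``$f(x)=a$'' inside an \sepa-pair belonging to $\cK_\cF$, and then to read off the required lift as the global type attached to that realisation. Throughout I will exploit that $\cL_\BP$-embeddings preserve the uniform scheme of definition, and that $\cF$ is closed under push-forwards.

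First I would represent $q$ by a pair with a convenient base. Choose a small model $M\models T$ whose imaginary part contains the canonical base $\mathrm{Cb}(q)$ together with the parameters defining $f$, $X$ and $Y$, and pick $a\in\cU^y$ with $\tp(a/M)=q|M$. Setting $\cA=(\langle Ma\rangle,M)$, the type $\tp(a/M)$ is $M$-definable with exactly the $\varphi$-definitions of $q$, so the $c_\varphi$'s compute $c_\varphi(a)=c(q,\varphi)\in M^\eq$ and the global type attached to $\cA$ is precisely $q$. Using the finitary and push-forward clauses of Definition \ref{def:natuclass-F} (every finite subtuple of $\langle Ma\rangle$ is $M$-definable from $a$, hence its global type is a push-forward of $q$), this yields $\cA\in\cK_\cF$, with $a\in A^y$ realising $q$ over $P(\cA)=M$.

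Next I would invoke the extension property (Definition \ref{def:ext_prop}). Since $f$ is surjective and $a\in Y$, the $\cL(A)$-formula $\varphi(x):=\big(x\in X\wedge f(x)=a\big)$ is consistent, so EP provides $\cB\in\cK_\cF$, an $\cL_\BP$-embedding $\cA\hookrightarrow\cB$ (which I take to be inclusion) and a real $b\in B$ with $b\in X$ and $f(b)=a$. Define $p:=\tp(b/P(\cB))|\cU^\eq$, the global type attached to $b$ inside $\cB$. Applying the push-forward clause to the global type attached to $\cB$ (projection onto the $b$-coordinate being $P(\cB)$-definable) gives $p\in\cF$; as $b\in X$ and $p$ is definable over the small model $P(\cB)$, in fact $p\in\cF_X(\cU)$.

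It remains to check $f^\cF(p)=q$. Push-forward commutes with passing to global extensions, so $f_*p=\tp(f(b)/P(\cB))|\cU^\eq=\tp(a/P(\cB))|\cU^\eq$. The crucial point—which I expect to be the real content of the argument—is that this push-forward is $q$ exactly, not merely some global extension of $q|M$. This is guaranteed by preservation of the uniform scheme of definition under $\cL_\BP$-embeddings: since $\cA\hookrightarrow\cB$ is an $\cL_\BP$-embedding, $\tp(a/P(\cB))=\tp(a/M)|P(\cB)=q|P(\cB)$, whose global extension is again $q$. Hence $f_*p=q$, and as $q\in\cF_Y(\cU)$ was arbitrary, $f^\cF$ is surjective, i.e.\ $\cF$ has surjectivity transfer. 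The main obstacle is precisely this control of the canonical parameters across the EP-extension (ensuring the lift lands on $q$ and not just on an arbitrary extension of $q|M$); the remaining steps are bookkeeping with the closure properties of $\cF$ and the membership $\cA,\cB\in\cK_\cF$.
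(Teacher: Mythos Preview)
Your proof is correct and follows essentially the same approach as the paper's: represent the target type by an \sepa-pair in $\cK_\cF$, apply EP to the formula $f(x)=a$, and take the global type of the resulting realisation as the lift. You simply fill in details the paper leaves implicit—in particular the verification that the push-forward lands on the original type (via preservation of the $c_\varphi$'s under $\cL_\BP$-embeddings) and the reason $\cA\in\cK_\cF$—and you take $P(\cA)$ to be a small model rather than the $\cL^\eq$-structure generated by a small parameter set, which is an inessential variation.
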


\begin{proof} $(\Rightarrow)$ Let $f\colon X\to Y$ be a definable surjection and let $p(x)\in \cF_Y(C)$, where $C$ is a small elementary substructure of $\cU$ over which the data are definable. Let $a$ be a realisation of $p$ and $\cA$ be the $\cL_P$-structure given by $A=\langle Ca\rangle_{\cL}$ and $P(\cA)=C$. By construction, $\cA\in \cK_\cF$. Let $\varphi(x)$ be the formula $f(x)=a$. By the extension property, there is $\cB\in\cK_\cF$ such that $\cA\subseteq_{\BP} \cB$ together with $b\in \cB$ such that $\varphi(b)$ holds. Letting $q\in \cF_X(P(\cB))$ be the corresponding global extension of $\tp_{\cL}(b/P(\cB))$, we have that $f^\cF(q)=p$.

$(\Leftarrow)$ Conversely, let $\cA\in\cK_{\cF}$ and $\varphi(x)$ be a consistent $\cL(A)$-formula. Choose an $\cL$-formula $\psi(x,\overline{y})$ and $\overline{a}$ from $A$ such that $\varphi(x)=\psi(x,\overline{a})$. Set $\chi(\overline{y}):=\exists x\psi(x,\overline{y})$, let $X:=\psi(\cU)$, $Y:=\chi(\cU)$ and $f:X\rightarrow Y$ the projection map, which is surjective by construction. Let 
$p(x):=\tp_{\cL}(\overline{a}/P(\cA))|\cU$ which lies in $\cF_Y(\cU)$. By surjectivity transfer, there is $q\in \cF_X(\cU)$ such that $p=f^{\cF}(q)$. Let $C$ be a small model containing $P(\cA)$ such that $q$ is defined over $C$. We choose $b$ such that $(b,\overline{a})\models q|C$. Then $\cB:=(\langle ACb\rangle,C)$ is a bp-extension of $\cA$ in $\cK_{\cF}$ realizing $\varphi(x)$.
\end{proof}

For more discussion about the two notions above, see also~\cite[Lemma 4.2.6, Remark 4.2.8]{HL}.

Recall that $T$ has density of definable types if $\D{X}(\acl^\eq(\ulcorner X\urcorner))\neq\emptyset$ for every non-empty definable subset $X$ in the real sorts. Note that if $T$ has density of definable types then $T^{\eq}$ also has it (for an interpretable set $Y\subseteq M^n/E$, take the pushforward of a definable type in $\D{X}(\acl^\eq(\ulcorner X\urcorner))$ where $X$ is the union of all the $E$ classes contained in $Y$).  

\begin{lemma}\label{lem:density} Suppose $T$ has density of definable types. Then 
\begin{enumerate}[label=(\arabic*)]
    \item the class $\cK_\Def$ has the extension property; 
    \item if $\cM=(M,P(\cM))$ is a beautiful pair for $T$, then $\cM^*=(M^{\eq}, P(M^{\eq}))$ is a beautiful pair for $T^\eq$.
\end{enumerate} 
\end{lemma}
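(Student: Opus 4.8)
The plan is to run everything through the dictionary recorded in Examples~\ref{eg:class}(1) between members of $\cK_\Def$ and global definable types: a stably embedded pair $\cA$ corresponds to the global $P(\cA)$-definable type $p_\cA$ canonically extending $\tp_\cL(A/P(\cA))$, and conversely every global $C$-definable type determines such a pair with predicate $\dcl^\eq(C)$. Density of definable types is exactly the device that lets us manufacture, inside a prescribed non-empty definable set, a definable type — hence a pair. Both parts then amount to feeding the right definable set into this machine and checking that the output is a legitimate $\cL_\BP$-extension. Throughout we may assume $\cA=\pdcl(\cA)$, so that $C:=P(\cA)$ is $\dcl^\eq$-closed.

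For (1), fix a realization $a\models p_\cA$ in an elementary extension, so $A=\langle a\rangle$ and $\tp(a/\cU)=p_\cA$ is $C$-definable. Let $\varphi(x)$ be a consistent $\cL(A)$-formula with $x$ real, let $X$ be the non-empty set it defines, so $\ulcorner X\urcorner\in\dcl^\eq(a)$. Density yields $q\in\D{X}(\acl^\eq(\ulcorner X\urcorner))$; take $b\models q$, so $b\in X$ and $\varphi(b)$ holds. Since $q$ is based over $\acl^\eq(\ulcorner X\urcorner)\subseteq\acl^\eq(a)$ and $p_\cA$ is $C$-definable, the standard closure of definable types under fibered composition and under adjunction of an element algebraic over a realization shows $\tp(ab/\cU)$ is again definable, over $\acl^\eq(C)$. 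Let $\cB\in\cK_\Def$ be the corresponding pair, with real part $\langle ab\rangle\supseteq A$ and $C\subseteq P(\cB)\subseteq\acl^\eq(C)$. It remains to check that $\cA\hookrightarrow\cB$ is an $\cL_\BP$-embedding, i.e. that the predicate is reflected; as $P(\cB)\subseteq\acl^\eq(C)$ this reduces to $A\cap\acl^\eq(C)\subseteq C$. The latter holds because for $a'\in A$ the pushforward of $p_\cA$ along the corresponding coordinate is a $C$-definable, hence $\Aut(\cU/C)$-invariant, global type, and an invariant type concentrating on the finite set of $C$-conjugates of an algebraic $a'$ must be a single point of $\dcl^\eq(C)=C$. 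Thus $\cB$ and $b$ witness the extension property.

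For (2) we use the finitary criterion of Lemma~\ref{lem:b-pair-finite}. By part (1), $\cK_\Def$ has the extension property, so $\cM$ is elementary, $P(\cM)\preccurlyeq M\models T$ (Theorem~\ref{thm:charct-beaut-pairs}); one also notes that $T^\eq$ falls within our standing conventions, quantifier elimination being assumed and uniform definability of types for $T^\eq$ following from part (1) via Lemma~\ref{lem:EP-to-surj} and the remark after Definition~\ref{def:surj_transfer}. Condition (i) is immediate: $P(\cM)$ being $\lambda$-saturated, its imaginary expansion $P(M^\eq)=P(\cM)^\eq$ is a $\lambda$-saturated model of $T^\eq$. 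For (ii), a one-generator extension of a base $\cA^*\subseteq\cM^*$ in the class $\cK_\Def$ of $T^\eq$ is given by a single imaginary $b=[c]$; the fibre $\{c':[c']=b\}$ is a non-empty $T$-definable set in real sorts, so density provides a definable type on it over $\acl^\eq(b)$, and assembling this with the real part of $\cA^*$ produces a $T$-stably embedded pair $\cB^\dagger$ extending the real part $\cA^\dagger$ of $\cA^*$ by a real representative of $b$. Applying the $T$-beauty of $\cM$ embeds $\cB^\dagger$ into $\cM$ over $\cA^\dagger$; passing to $\eq$ turns this into the required embedding $\cB^*\to\cM^*$ over $\cA^*$, the predicate again being reflected by the argument of part (1).

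The main obstacle, common to both parts, is not producing a definable type but controlling its canonical base: density only places the new type over an algebraic closure, whereas to land back in $\cK_\Def$ and, crucially, to obtain an honest $\cL_\BP$-embedding one must guarantee that no real element of the smaller structure is absorbed into the enlarged predicate. The pushforward argument isolating $A\cap\acl^\eq(C)\subseteq C$ is precisely what makes the embedding legitimate, and the analogous bookkeeping for the descent through $\eq$ — matching the fibre-type's base with the predicate so that $\cM$'s beauty applies and the $\eq$-image still respects $P$ — is the delicate step in part (2).
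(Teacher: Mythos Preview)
Your argument for (1) is essentially correct, but you have made it harder than necessary by not first reducing to the case where $P(\cA)$ is a model of $T$ (a harmless base extension). Under that reduction $\acl^\eq(C)=C$, so the whole detour through enlarging the predicate to something inside $\acl^\eq(C)$ and then checking $A\cap\acl^\eq(C)\subseteq C$ evaporates. The paper simply takes $b\models p\mid\acl^\eq(\cA)$ and observes, by transitivity of definability, that $\tp(b,\acl^\eq(\cA)/P(\cA))$ is definable; then $\cB=\langle\cA b\rangle$ with $P(\cB)=P(\cA)$ already lies in $\cK_\Def$ and the inclusion is trivially an $\cL_\BP$-embedding.

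Part (2), however, has a genuine gap. In $T^\eq$ the ``real'' sorts of the pair $\cA^*$ are the $\cL^\eq$-sorts, so $A^*$ will in general contain $T$-imaginary elements. When you pass to ``the real part $\cA^\dagger$ of $\cA^*$'' you discard these, and the $T$-beauty embedding $\cB^\dagger\to\cM$ you produce is only over $\cA^\dagger$, not over $\cA^*$: nothing prevents it from moving the $T$-imaginary elements of $A^*$. Hence ``passing to $\eq$'' does \emph{not} yield an embedding over $\cA^*$. The paper handles exactly this point: before invoking $T$-beauty, it uses density (in $T^\eq$) to find a set $E$ of $T$-real elements with $E^\eq\supseteq A\cup P(\cA)$ and with $\tp(E/A)$ admitting a global $A$-definable extension, then realizes this extension over a small $\cM_0\preccurlyeq_{\cL_\BP}\cM$ containing $\cA$ to obtain a pair $\cA'$ which is generated by $T$-real elements and contains $\cA$. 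Only then does $T$-beauty apply to the $\cL$-reduct, and because $\cA'$ is real-generated the resulting embedding passes to $\eq$ while fixing all of $\cA'\supseteq\cA$. Your sketch is missing precisely this resolution step, and without it the argument does not go through.
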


\begin{proof}
For (1), let $\cA$ be an \sepa-pair and $\varphi(x)$ be a consistent $\cL(A)$-formula with $x$ a real variable. By density of definable types, there is $p\in\D{X}(\acl^\eq(\ulcorner X\urcorner))$ where $X$ is the definable set associated to $\varphi$ (as a definable set in the monster model of $T$). In particular, $p$ is $\acl^\eq(A)$-definable. Letting $b$ be a realisation of $p|\acl^\eq(A)$, we have that  
\[
\tp_{\cL^\eq}(b, \acl^\eq(A)/P(\cA))
\]
is definable. So in particular, $\tp_{\cL}(Ab/P(\cA))$ is definable. Finally, we conclude by setting $\cB=(\langle A b\rangle,P(\cA))$. 

\medskip

For (2), let $\cA$ be an $\cL_P^\eq$-substructure of $\cM^*$ and $\cA\rightarrow \cB$ be a $\BP$-embedding such that $P(\cA)=P(\cB)$. Without loss of generality, suppose $\acl^\eq(A)=A$. Take a small model $\cM_0\preccurlyeq_{\cL_P} \cM$ such that $\cA\subseteq\cM_0$. Since density of definable types transfers to $T^\eq$, there is a set of real elements $E$ such that $E^\eq$ contains both $A$ and $P(\cA)$ with  $\tp_{\cL^\eq}(E/A)$ admitting a global $A$-definable extension (it exists since $A$ is $\acl^\eq$-closed). Let $E'\models \tp_{\cL^\eq}(E/A)|M_0$ and $\cA'$ be $(\langle E' \cup M_0 \rangle_{\cL^\eq}, P(\cM_0))$. Note that $\cA'$ is an \sepa-pair for $T^\eq$ since $\cM_0$ is an \sepa-pair containing $A$ and $\tp(E'\cup M_0/P(\cM_0))$ is definable (by transitivity of definable types). We may $\BP$-embed $\cA'_{|\cL}$ in $\cM$ over $(\cM_0)_{|\cL}$ by beauty of $\cM$ and thus $\BP$-embed $\cA'$ in $\cM^*$ over $\cM_0$ since they are generated by real elements.   

We may assume by (1) (applied to $T^\eq$), that the real part of $B$ generates all of $\cB$. Take $A''$ realizing $\tp(A'/A)|B$ and set $\cB'=(\langle A''\cup B\rangle, P(\cB))$. By construction, $\cB'$ is an \sepa-pair extending $\cA'$. Since $\cA'$, $\cB'$ are generated by real elements, 
a solution of the problem for $\cA_{|\cL}'$, $\cB_{|\cL}'$ and $\cM$, induces the desired solution for $\cA$, $\cB$ and $\cM^*$. 
\end{proof}

\begin{remark}\label{rem:density} Given a natural class of definable types $\cF$, Lemma \ref{lem:density}(1) could also be shown for the class $\cK_\cF$ assuming $T=T^\eq$, $T$ has density of $\cF$-types, $\cF$ is closed under $\acl^\eq$ and closed in towers. 
\end{remark}

\subsection*{An example where surjectivity transfer fails: the leveled binary tree}\label{sec:tree}

Consider the binary tree $2^{<\omega}$ and let $\ell\colon 2^{<\omega}\to \omega$ be the function sending an element $t\in 2^{<\omega}$ to its distance to the root $r\in 2^{<\omega}$, where $\ell(r)=0$. We will study $(2^{<\omega},\omega)$ as a two-sorted structure in the following language.  
\begin{definition}\label{def:tree-language} We define the two sorted language of binary leveled meet trees $\cL_\mathrm{tree}$ as follows. The \emph{tree sort} $\mathbf{T}$ contains a binary relation $<$, a binary function $\wedge$ and a unary function $\mathrm{pred}_T$. The \emph{value sort} $\mathbf{V}$ contains a binary relation $<$ and a unary function $\mathrm{pred}_V$. We also add a function symbol $\ell\colon \mathbf{T} \to \mathbf{V}$. 
\end{definition}

We interpret $M=(2^{<\omega},\omega)$ as an $\cL_\mathrm{tree}$-structure in the standard way, where $\wedge$ corresponds to the meet of two elements, $\mathrm{pred}_T$ (resp. $\mathrm{pred}_V$) to the predecessor function, extended by $\mathrm{pred}_T(r)=r$  (resp. $\mathrm{pred}_V(0)=0$), and $\ell$ as the above distance function. Let $T_{\mathrm{tree}}$ be the $\cL_{\mathrm{tree}}$-theory of $M=(2^{<\omega},\omega)$. The following is folklore.  

\begin{fact}\label{fact:tree-qe} The theory $T_{\mathrm{tree}}$ has quantifier elimination.\qed
\end{fact}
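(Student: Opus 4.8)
The plan is to verify the standard embedding test for quantifier elimination: it suffices to show that whenever $N\models T$ is sufficiently saturated, $M\models T$ has size below the saturation cardinal, $f\colon A\to N$ is an $\cL_{\mathrm{tree}}$-embedding of a substructure $A\le M$, and $c\in M\setminus A$, then $f$ extends to an embedding of the substructure $\langle A\cup\{c\}\rangle$ into $N$. First I would describe substructures: since $A$ is closed under $\mathrm{pred}_{\mathbf{T}}$ and $\mathrm{pred}_{\mathbf{V}}$, its tree part $A_{\mathbf{T}}$ contains, with each node, the whole branch down to the root $r$, while its value part $A_{\mathbf{V}}$ is closed under $\mathrm{pred}_{\mathbf{V}}$ and contains $\ell(A_{\mathbf{T}})$. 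The argument then splits according to whether $c$ lies in $\mathbf{V}$ or in $\mathbf{T}$.

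The case $c\in\mathbf{V}$ is the easy one: adding a value element does not interact with the tree sort (as $\ell$ maps $\mathbf{T}$ to $\mathbf{V}$), so it reduces to realizing the quantifier-free type of $c$ over $A_{\mathbf{V}}$ in the value sort of $N$. Here the value sort is a discrete linear order with least element $0$ and a predecessor function, whose theory is easily seen to admit quantifier elimination: the quantifier-free type of $c$ is determined by the finite distances $\mathrm{pred}^{\,n}$ to named points and by the cut that $c$ fills. Using the saturation of $N$ I would realize the $f$-image of this type and extend $f$.

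The main case is $c\in\mathbf{T}\setminus A_{\mathbf{T}}$, and the key structural observation is that, because $A_{\mathbf{T}}$ is closed under $\mathrm{pred}_{\mathbf{T}}$, every meet $c\wedge a$ (being an ancestor of $a$) already lies in $A_{\mathbf{T}}$. Hence $b:=\max\{\,c\wedge a : a\in A_{\mathbf{T}}\,\}$ is a named element: it is the largest element of $A$ below $c$, and one checks that no element of $A$ lies in the interval $(b,c]$, so $c$ branches off $A_{\mathbf{T}}$ at $b$ into a child direction of $b$ carrying no element of $A$. Since binary branching forces at most one child of $b$ to be occupied by $A$, the image $f(b)$ likewise has a free child in $N$. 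The quantifier-free type of $c$ over $A$ is thus pinned down by $b$, by the fact that $c$ sits in a free child direction of $b$, and by the quantifier-free type of the value $\ell(c)>\ell(b)$ over $A_{\mathbf{V}}$. To realize it, I would use saturation to find $v'\in\mathbf{V}(N)$ realizing the $f$-image value type, then invoke the first-order consequences of $T$ that $\ell$ is surjective and that below any node every greater level is attained along every branch, in order to produce a node $c'>f(b)$ in the free child direction with $\ell(c')=v'$; routing through the free child guarantees $c'\wedge f(a)=f(b)$ for all $a\ge b$ in $A$, so that $f\cup\{c\mapsto c'\}$ is an embedding of $\langle A\cup\{c\}\rangle$ into $N$.

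The main obstacle is exactly the coupling of the two sorts through $\ell$ in this last case: one must ensure that the required node $c'$ exists at the prescribed (possibly nonstandard) level in a fresh branch of $N$. I expect this to be handled by checking that the relevant richness statements — surjectivity of $\ell$ and the existence of descendants at all higher levels inside every subtree — are expressible by first-order axioms true in $M$, hence valid in $N$, and by using the saturation of $N$ to accommodate infinite level gaps $\ell(c)-\ell(b)$.
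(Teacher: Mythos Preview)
The paper records this as folklore without proof, so there is no argument to compare against; let me just comment on your attempt.

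The embedding-test strategy is correct and the value-sort case is fine, but the tree case has a real gap stemming from nonstandard models. You claim that closure of $A_{\mathbf{T}}$ under $\mathrm{pred}_{\mathbf{T}}$ means it contains, with each node, the entire branch down to the root, and hence that every meet $c\wedge a$ (an ancestor of $a$) already lies in $A_{\mathbf{T}}$. This is only true in the standard model: in an arbitrary $M\models T$, a node at a nonstandard level has ancestors infinitely far below it, and iterating $\mathrm{pred}_{\mathbf{T}}$ externally finitely many times does not reach them. Thus $c\wedge a$ need not lie in $A$ at all. Even when all the meets do lie in $A$, the element $b=\max\{c\wedge a:a\in A_{\mathbf{T}}\}$ you invoke may fail to exist, for instance when $A_{\mathbf{T}}$ contains an increasing chain with no top all of whose members lie below $c$.

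The repair is routine once one sees the issue. In the Zorn argument you are free to choose which element to adjoin, so first reduce to $A_{\mathbf{V}}=M_{\mathbf{V}}$ and then to $A_{\mathbf{T}}$ downward closed in $M_{\mathbf{T}}$. For the latter, if some $d\in M_{\mathbf{T}}\setminus A$ lies below an $a\in A_{\mathbf{T}}$, a comparison of $d$ with $a\wedge a'\in A$ shows $d\wedge a'\in A\cup\{d\}$ for every $a'\in A_{\mathbf{T}}$; moreover $d$ is the unique ancestor of $a$ at level $\ell(d)\in A_{\mathbf{V}}$, so sending $d$ to the ancestor of $f(a)$ at level $f(\ell(d))$ extends $f$. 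Once $A_{\mathbf{T}}$ is downward closed, the ancestors of $c$ in $A_{\mathbf{T}}$ form an initial segment of the branch to $c$ (a cut, not in general a single branching point $b$), and the quantifier-free type of $c$ over $A$ is determined by this cut, by which child at its top (if it has one) is occupied by $A$, and by $\ell(c)\in A_{\mathbf{V}}$; saturation of $N$ then suffices to realize it.
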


\begin{corollary}\label{cor:no-surj-transfer}
The theory $T_{\mathrm{tree}}$ does not have surjectivity transfer. In particular, $\cK_\Def$ does not have the extension property.
\end{corollary}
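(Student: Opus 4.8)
The plan is to exhibit a definable surjection together with a definable type on its target that admits no definable preimage, and then read off the failure of the extension property from Lemma~\ref{lem:EP-to-surj}. Concretely, I would take the level map $\ell\colon\mathbf{T}\to\mathbf{V}$, which is a definable surjection (every value is the level of some node, by elementarity from $M=(2^{<\omega},\omega)$), and on the target the global type $p_\infty$ ``at infinity'' on $\mathbf{V}$, i.e.\ the type asserting $v<x$ for every $v$. Since $\mathbf{V}$ is a discrete linear order with least element, $p_\infty$ is $\emptyset$-definable (all of its $\varphi$-definitions are trivially $\top$ or $\bot$), so $p_\infty\in\D{\mathbf{V}}(\emptyset)$. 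The heart of the matter is the claim that there is \emph{no} global definable type $q\in\D{\mathbf{T}}(\cU)$ with $\ell_*q=p_\infty$. Granting this, the induced map $\ell^{\cF}$ on definable types (for $\cF=\D{}$) is not surjective, so $T$ fails surjectivity transfer in the sense of Definition~\ref{def:surj_transfer}, and then the contrapositive of Lemma~\ref{lem:EP-to-surj}, applied with $\cF=\D{}$ and $\cK_\cF=\cK_\Def$, yields that $\cK_\Def$ has no extension property. This gives both assertions of the corollary.

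To prove the claim I would exploit the homogeneity of the tree encoded by QE (Fact~\ref{fact:tree-qe}): because $\cL_{\mathrm{tree}}$ cannot distinguish the two children of a node, for any node $w$ whose children have level strictly above every element named so far, the \emph{subtree-flip at $w$}---swapping the two subtrees above its children and fixing all other nodes---is an automorphism of $\cU$ fixing the named parameters pointwise. Now suppose $q$ is definable over a small model $C$ with $\ell_*q=p_\infty$, let $a_0\models q|C$ and $a_1\models q|Ca_0$, and write $\theta(y):=d_q(y<x)$, a definition over $C$ (a fortiori over $Ca_0$), whose solution set in any realisation is the branch of ancestors below that realisation. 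Since $\ell(a_1)$ exceeds every level occurring in $Ca_0$, a short analysis of $a_0\wedge a_1$---using that $\ell(a_0\wedge a_1)\le\ell(a_0)$, so the meet is realised rather than at infinity, and a case distinction according to whether $a_0<a_1$---produces a node $w\in\dcl(Ca_0)$ with $w<a_1$, of level above every level occurring in $C$, and such that no element of $\dcl(Ca_0)$ lies strictly above $w$. The two children of $w$ are then conjugate over $Ca_0$ by the subtree-flip, so $\theta$ takes the same value on both of them; but the branch below $a_1$ passes through $w$ and hence through exactly one child of $w$, so $\theta$ must separate them. This is the desired contradiction.

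The main obstacle is the geometric input that makes the flip maps genuine $\cL$-automorphisms of $\cU$, together with the accompanying $\dcl$-bookkeeping. One must verify that the subtree-flip at a node $w$ preserves $<$, $\wedge$, $\mathrm{pred}_{\mathbf{T}}$ and $\ell$, which is exactly where QE and the absence of any left/right distinction in $\cL_{\mathrm{tree}}$ enter; and one must check that it fixes the chosen parameters, which is why the condition ``no element of $\dcl(Ca_0)$ strictly above $w$'' is needed. The second delicate point is producing the node $w$ beyond the reach of $C$: if the divergence node $a_0\wedge a_1$ lay in $\dcl(C)$, then $q$ would send every realisation through the same $\dcl(C)$-child and force $a_0\wedge a_1$ to be strictly larger, a contradiction, so the meet genuinely escapes $C$---and this is precisely where the hypothesis $\ell_*q=p_\infty$, which forces realisations to climb strictly above every level named in $C$, is indispensable. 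Once these two points are secured the contradiction is immediate, and, as noted, the statement about $\cK_\Def$ is then a formal consequence of Lemma~\ref{lem:EP-to-surj}.
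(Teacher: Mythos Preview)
Your overall strategy matches the paper's exactly: exhibit $p_\infty$ on $\mathbf{V}$ as a definable type with no definable $\ell$-preimage, then invoke Lemma~\ref{lem:EP-to-surj}. The paper dispatches the key claim in one line, observing (as a consequence of QE) that \emph{every} global definable type in the tree sort is realized, so in particular nothing maps to $p_\infty$. You instead aim only at types with $\ell_*q=p_\infty$ and work via subtree-flip automorphisms; this is exactly the mechanism one would use to justify the paper's stronger assertion, so you have the right key idea.

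However, your construction of the node $w$ has a gap. In the case $a_0\not<a_1$ the meet $a_0\wedge a_1$ lies strictly below $a_0\in\dcl(Ca_0)$, so taking $w=a_0\wedge a_1$ violates your own requirement that no element of $\dcl(Ca_0)$ lie strictly above $w$. You announce a case distinction but never specify $w$ in this case, and the natural candidates (e.g., the $a_1$-side child of $a_0\wedge a_1$) require further analysis you have not supplied. The detour through two successive realizations is in fact unnecessary. The clean route---which also proves the paper's stronger claim---is to work directly with the $C$-definable chain $B=\theta(\cU)$ of $q$-ancestors: if $B$ has a maximum $w$, then one of the two children of $w$ (both in $\cU$) is below any $a\models q$, contradicting maximality unless $a$ equals that child and is hence realized; if $B$ has no maximum, then $\ell(B)$ is unbounded in $\mathbf{V}(\cU)$ (bounded definable subsets of $\mathbf{V}$ have maxima, by elementarity from $\omega$), so pick $w\in B$ of level exceeding every tree-element of $C$, and the flip at $w$ fixes $C$ but swaps the two children of $w$, exactly one of which lies in $B$---a contradiction.
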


\begin{proof}
It follows from Fact \ref{fact:tree-qe} that the value sort is purely stably embedded. Moreover, note that every global definable one-type in the tree sort must be a realized type. In particular, the definable type at $+\infty$ in the value sort is not in the image under $\ell_*$ of any definable type, which shows that the $\cL_{\mathrm{tree}}$-theory of $M=(2^{<\omega},\omega)$ does not have surjectivity transfer. By Lemma \ref{lem:EP-to-surj}, this theory also constitutes an example of a theory for which $\cK_\Def$ does not have the extension property. 
\end{proof}

It is easy to see that beautiful pairs exist and $\cK_\Def$ has beauty transfer. Indeed, models of $(T_{\mathrm{tree}})_\BP$ are of the form $\cM=(M, P(\cM))$ where the $\cL_{\mathrm{tree}}$-reduct of
the structure $P(\cM) = (\mathbf{T}(P(\cM)), \mathbf{V}(P(\cM)))$ is a model of $T_{\mathrm{tree}}$, and for $M = (\mathbf{T}(M), \mathbf{V}(M))$ it holds that $\mathbf{T}(M)=\mathbf{T}(P(\cM))$ and $\mathbf{V}(M)$ is a proper elementary end-extension of $\mathbf{V}(P(\cM))$.

\section{Relation with previous work and variants}\label{sec:stable-omin} 

In this section we show how our general context of beautiful pairs is related to two previous constructions, first in the context of stable theories and second, in the context of o-minimal theories. In the second context we also consider variants for o-minimal expansions of divisible ordered abelian groups without poles.

\subsection{Beautiful pairs of stable theories} Suppose $T$ is stable. Note that $T$ has $\mathrm{UDDT}$. Recall that an elementary pair $(M,P(M))$ of models of $T$ is a \emph{belle paire} in the sense of Poizat \cite{poizat} if the following two conditions hold: 
\begin{enumerate}[(i)]
    \item $P(M)$ is $|T|^+$-saturated;
    \item for every finite tuple $a$ of $M$, every $\cL$-type over $P(M)\cup\{a\}$ is realized in $M$.   
\end{enumerate}

Recall that a theory $T$ has $\mathrm{nfcp}$ if no formula $\varphi(x,y)$ satisfies the following condition: for every integer $n$ there exists a subset $A$ of a model of $T$ and an inconsistent set $H\subseteq \{\varphi(x,a), \neg\varphi(x,a) : a\in A^y\}$ such that every subset of $H$ of size less than $n$ is consistent. Note, $T$ has $\mathrm{nfcp}$ if and only if $T$ is stable and $T^{\eq}$ eliminates $\exists^\infty$. The following is a reformulation of results of Poizat from \cite{poizat}: 

\begin{theorem}[Poizat]\label{thm:poizat} Let $T$ be stable. Then belles paires exist and any two belles paires are elementary equivalent. Moreover, the following are equivalent:
\begin{enumerate}[label=(\arabic*)]
    \item $T$ has $\mathrm{nfcp}$;
    \item there is a $|T|^+$-saturated belle paire;   
    \item $\D{X}$ is strict pro-definable for every definable set $X$.
\end{enumerate}
\end{theorem}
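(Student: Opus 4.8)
The plan is to identify belles paires with $|T|^+$-$\cK_\Def$-beautiful pairs and then read off all three assertions from the general machinery already developed. First I would use stability to see that $\cK_\Def$ is the right class: for any elementary extension $M\preccurlyeq N$ and finite $a\in N$ the type $\tp(a/M)$ is definable, so every elementary pair is stably embedded and $T_\SE$ is simply the theory of elementary pairs. Unwinding condition (BP) through Lemma \ref{lem:b-pair-finite}—one keeps $P$ fixed and extends the small part by a single element realizing a definable type over $P(M)\cup\{a\}$—a $|T|^+$-$\cK_\Def$-beautiful pair that is an elementary pair is precisely a belle paire in Poizat's sense. To obtain existence and elementary equivalence it then suffices, by Theorem \ref{thm:charct-beaut-pairs} and Corollary \ref{cor:elem-equiv}, to verify that $\cK_\Def$ has the amalgamation and extension properties. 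Amalgamation comes from stability: after amalgamating the predicates as models of $T$ over the common base, stationarity of types over $\acl^\eq$-closed subsets of $\cU^\eq$ produces a canonical common extension of the definable types (the stable incarnation of the tensor-product amalgamation already noted for JEP). The extension property follows from density of definable types, valid in any stable theory, via Lemma \ref{lem:density}(1): any consistent formula extends to an $\acl^\eq$-definable type, whose realization enlarges the pair.

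Next I would match the three conditions to framework notions. Condition (2) is literally beauty transfer for $\cK_\Def$: by Definition \ref{def:beauty_transfer} and Lemma \ref{lem:beauty-transfer-cardinal} with $\lambda=|T|^+$, the existence of a $|T|^+$-saturated belle paire is the same as the existence of a $\lambda$-saturated $\lambda$-$\cK_\Def$-beautiful pair. The implication (2)$\Rightarrow$(3) is then immediate from Theorem \ref{thm:strictness} applied to the natural class $\cF=\D{}(\cU)$ of all definable types (Example \ref{eg:class}(1)), whose associated class is $\cK_\Def$: beauty transfer of $T_\BP$ yields strict pro-definability of $\cF$, hence of $\D{X}$ for every definable $X$ (the subset concentrating on $X$ is a definable subset of a strict pro-definable set).

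For (3)$\Rightarrow$(1) I would unwind the definition of strict pro-definability. For a formula $\psi$ and $\cF=\D{}$, the image $\pi_\psi(\tau_\cF(\cF_x(M)))$ is exactly the set of $\psi$-canonical parameters $c(p,\varphi)$ as $p$ ranges over definable types; its definability for all $\psi$ says precisely that the sets of canonical parameters of definable $\varphi$-types are definable, rather than merely type-definable as stability alone guarantees. This is Shelah's classical characterisation of the non-finite-cover property, so strict pro-definability forces $\mathrm{nfcp}$. Finally, (1)$\Rightarrow$(2) is the classical construction of Poizat \cite{poizat}: $\mathrm{nfcp}$ is exactly what upgrades $|T|^+$-saturation to richness, so that a $|T|^+$-saturated model of $T_\BP$ is already beautiful, witnessing beauty transfer.

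I expect the main obstacle to be this last interaction with $\mathrm{nfcp}$, i.e.\ closing the loop through (1). The partial type $\Sigma$ of Lemma \ref{lem:beauty-transfer-cardinal} that must be realized to witness richness is a type over a parameter set of size $|T|^+$, so $|T|^+$-saturation does not obviously suffice; $\mathrm{nfcp}$ is precisely the hypothesis rendering the canonical-parameter conditions definable, equivalently finitely approximable, which is what allows $\Sigma$ to be realized in a $|T|^+$-saturated model. The delicate point is to check that the definability of the canonical-parameter sets coincides on the nose with the combinatorial $\mathrm{nfcp}$ condition and is exactly the ingredient missing from the realization of $\Sigma$; once this is in place, the remaining implications are formal consequences of the framework.
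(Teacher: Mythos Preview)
The paper does not prove this theorem: it is stated as ``a reformulation of results of Poizat from \cite{poizat}'' and used as a black box. The paper's own contribution in this direction comes \emph{after} the theorem, in Proposition~\ref{prop:poizat_equiv} (left as an exercise), which identifies $\lambda$-belles paires with $\lambda$-$\cK_\Def$-beautiful pairs, followed by the observation that, via Lemma~\ref{lem:beauty-transfer-cardinal}, beauty transfer for $T_\BP$ is equivalent to each of (1)--(3). So the paper's logic runs in the opposite direction from yours: it \emph{assumes} Poizat's theorem and then notes that the abstract framework recovers it.

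Your approach attempts to derive Poizat's theorem from the machinery of Section~\ref{sec:BP-pairs-main}, and the pieces you actually supply are correct. Amalgamation and extension for $\cK_\Def$ via stationarity and density give existence and elementary equivalence through Theorem~\ref{thm:charct-beaut-pairs} and Corollary~\ref{cor:elem-equiv}; condition~(2) is exactly beauty transfer, so Theorem~\ref{thm:strictness} yields (2)$\Rightarrow$(3); and your (3)$\Rightarrow$(1) is right, since definability of the set of $\varphi$-definitions for every $\varphi$ is a known characterization of nfcp. However, for (1)$\Rightarrow$(2) you invoke ``the classical construction of Poizat,'' which is precisely the content of the theorem under discussion, and you yourself flag this as the unresolved obstacle. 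So your argument does strictly more than the paper---which proves nothing here---but it is not a self-contained proof: it reduces the whole statement to its one substantive implication and then cites Poizat for that.
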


Given $\lambda\geqslant|T|^+$, call an elementary pair $(M,P(M))$ a $\lambda$-\emph{belle paire} if the following two conditions hold 
\begin{enumerate}
    \item[(i$\lambda$)] $P(M)$ is $\lambda$-saturated;
    \item[(ii$\lambda$)] for every subset $A\subseteq M$ of cardinality $<\lambda$, every $\cL$-type over $P(M)\cup A$ is realized in $M$.   
\end{enumerate}

The following proposition is left as an exercise. 

\begin{proposition}\label{prop:poizat_equiv} Let $T$ be stable and $\lambda\geqslant |T|^+$. An $\cL_P$-structure $\cM$ is a $\lambda$-beautiful pair if and only if it is a $\lambda$-belle paire. In particular, $T_\BP$ is the common theory of belles paires. \qed
\end{proposition}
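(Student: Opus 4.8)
The plan is to play the two conditions of Lemma~\ref{lem:b-pair-finite} against the two conditions defining a $\lambda$-belle paire, exploiting that for stable $T$ every type over every set is definable. I would first record two structural facts. First, every elementary pair $P(\cM)\preccurlyeq M$ of models of a stable $T$ is automatically stably embedded, since $\tp(a/P(\cM))$ is definable for every tuple $a$ in $M$ and its canonical parameters lie in $\dcl^\eq(P(\cM))$; thus the pair canonically expands to an $\cL_\BP$-structure in $\cK_\Def$, and conversely the axiom scheme of Convention~\ref{conv:axiom_BP} forces any given $\cL_\BP$-expansion to carry exactly these canonical parameters. Second, a stable theory has density of definable types (any complete type over $\acl^\eq(\ulcorner X\urcorner)$ concentrating on a nonempty definable $X$ is definable, hence yields a point of $\D{X}(\acl^\eq(\ulcorner X\urcorner))$), so by Lemma~\ref{lem:density}(1) the class $\cK_\Def$ has the extension property, and by Theorem~\ref{thm:charct-beaut-pairs} every $\cK_\Def$-beautiful pair is an elementary pair. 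I would use throughout the identity $\tp_\cL(f(A)/P(\cM))=\tp(f(A)/f(P(\cA)))|P(\cM)$ for any $\cL_\BP$-embedding $f\colon\cA\to\cM$, recorded just before Lemma~\ref{lem:base-change}.

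For the direction that a $\lambda$-belle paire $\cM$ is $\lambda$-beautiful, condition (i) of Lemma~\ref{lem:b-pair-finite} is exactly $\lambda$-saturation of $P(\cM)$, and $\cM\in\cK_\Def$ by the first fact. To check (ii), I would take $f\colon\cA\to\cM$ (the inclusion) and $g\colon\cA\to\cB$ with $\cA\in\cK_\lambda$, $P(\cB)=P(\cA)$ and $\cB=\langle A b\rangle$, and set $p:=\tp_\cL(Ab/P(\cA))$, which is definable. Because $f$ is an $\cL_\BP$-embedding, $A$ realizes over $P(\cM)$ the canonical extension of $\tp(A/P(\cA))$, so the restriction $q$ of $p|P(\cM)$ to the $b$-variables is a consistent type over $P(\cM)\cup A$ with $|A|<\lambda$. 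The belle paire condition (ii$\lambda$) yields a realization $b'\in M$, whence $\tp(Ab'/P(\cM))=p|P(\cM)$; by quantifier elimination the map fixing $\cA$ and sending $b\mapsto b'$ is then the required $\cL_\BP$-embedding $h\colon\cB\to\cM$.

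Conversely, suppose $\cM$ is $\lambda$-beautiful. Then $P(\cM)$ is $\lambda$-saturated, giving (i$\lambda$), and $\cM$ is an elementary pair by the second fact. For (ii$\lambda$), I would fix $A\subseteq M$ with $|A|<\lambda$ and $q\in S(P(\cM)A)$, take $b\models q$, and set $p:=\tp(Ab/P(\cM))$, which is definable. By stability $p$ is definable over some $C\subseteq P(\cM)$ with $|C|\leqslant|A|+|T|<\lambda$ (a small canonical base). Put $P(\cA):=\dcl^\eq(C)$, let $\cA$ be the \sepa-pair whose real part is the real elements of $\dcl^\eq(AC)$, and let $\cB:=\langle\cA\cup b\rangle$ with $P(\cB)=P(\cA)$; both lie in $\cK_\lambda$, and since $p$ is definable over $C\subseteq P(\cA)$ the inclusion $\cA\hookrightarrow\cM$ is an $\cL_\BP$-embedding. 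Applying (BP) through Lemma~\ref{lem:b-pair-finite}(ii) gives $h\colon\cB\to\cM$ with $h|_\cA=\id$, and $b':=h(b)\in M$ satisfies $\tp(Ab'/P(\cM))=\tp(Ab/P(\cA))|P(\cM)=p$, so $b'\models q$. I expect the one delicate point—the main obstacle—to be precisely this cardinality bookkeeping: the base pair $\cA$ must be small enough to lie in $\cK_\lambda$ yet rich enough in its $P$-part to pin down the definable type $p$, which is exactly what smallness of the canonical base in a stable theory provides.

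Finally, for the ``in particular'' clause, $T_\BP=T_\BP(\cK_\Def)$ is the common theory of $\cK_\Def$-beautiful pairs (Definition~\ref{def:tbp}), which are pairwise elementarily equivalent by Corollary~\ref{cor:elem-equiv}; hence their $\cL_P$-reducts share a common theory, namely the $\cL_P$-reduct of $T_\BP$. Each such reduct is, by the equivalence just proved, a $\lambda$-belle paire for some $\lambda\geqslant|T|^+$, and therefore a belle paire in Poizat's sense, since a $\lambda$-belle paire has $P(\cM)$ at least $|T|^+$-saturated and realizes every type over $P(\cM)\cup\{a\}$ for finite $a$ (finite tuples having size $<\lambda$). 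As all belles paires are elementarily equivalent by Theorem~\ref{thm:poizat}, the common theory of these reducts coincides with the common theory of belles paires, which is the asserted equality.
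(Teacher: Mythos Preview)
The paper explicitly leaves this proposition as an exercise and gives no proof. Your argument is correct and fills in the exercise appropriately: using Lemma~\ref{lem:b-pair-finite} to reduce to one-element extensions, realizing the needed type via (ii$\lambda$) in one direction, and in the converse direction exploiting that in a stable theory the canonical base of $\tp(Ab/P(\cM))$ has size at most $|A|+|T|<\lambda$ so that a small \sepa-pair $\cA$ can be carved out over which the inclusion into $\cM$ is an $\cL_\BP$-embedding. The final paragraph deducing the ``in particular'' clause from Corollary~\ref{cor:elem-equiv} and Theorem~\ref{thm:poizat} is also correct.

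Two cosmetic points worth tightening. First, the canonical base $C$ naturally lives in $P(\cM)^{\eq}$ rather than in $P(\cM)$; this is harmless since $P$ in $\cL_\BP$ carries the imaginary sorts (Definition~\ref{def:Lbp}), but your sentence ``some $C\subseteq P(\cM)$'' could be read as real-sorted. Second, when you invoke Theorem~\ref{thm:charct-beaut-pairs} to conclude that a $\lambda$-beautiful pair is an elementary pair, note that the ``moreover'' clause there is stated under the hypothesis that conditions (1)--(2) hold; strictly speaking you only know a single $\lambda$-beautiful pair exists. However, the relevant implication (EP $\Rightarrow$ elementary) in its proof uses only EP together with (BP) for the given pair via Tarski--Vaught, so your use is legitimate---you might just say so explicitly.
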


As a corollary, by Lemma \ref{lem:beauty-transfer-cardinal}, $\cK_\Def$ has beauty transfer if and only if any of the conditions (1)-(3) in Theorem \ref{thm:poizat} holds.

\subsubsection{Algebraically closed fields}\label{sec:ACF}

We finish this section with a brief description of beautiful pairs of algebraically closed fields. Although the results are classical, we present them here to provide an illustration of our framework. 

Let $\cL$ denote the language of rings $\cL_\Ring$ and let $\ACF$ denote a completion of the $\cL$-theory of algebraically closed fields. Let $\ACF^2$ be the $\cL_P$-theory of pairs of models of $\ACF$ and set $T_0=\ACF^2\cup \{(\forall x)(P(x))\}$ and $T_1=\ACF^2\cup \{(\exists x)(\neg P(x))\}$. 

\begin{proposition}\label{prop:ACF} The theories $T_0$ and $T_1$ axiomatize respectively the theories of beautiful pairs $\ACF(\cK_\triv)$ and $\ACF(\cK_\Def)$. In particular, $T_0$ and $T_1$ are the only completions of pairs of models of $\ACF$ and the corresponding natural classes have beauty transfer.  
\end{proposition}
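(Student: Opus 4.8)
The plan is to treat the two classes $\cK_\triv$ and $\cK_\Def$ in parallel, exploiting that $\ACF$ is strongly minimal, hence stable and, lacking the finite cover property, satisfies $\mathrm{nfcp}$. I would freely use Proposition \ref{prop:poizat_equiv}, identifying $\lambda$-beautiful pairs of the stable theory $\ACF$ with $\lambda$-belles paires, together with Poizat's Theorem \ref{thm:poizat} and the general results Theorem \ref{thm:qe} and Corollary \ref{cor:elem-equiv}.

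First, the trivial class. A $\lambda$-$\cK_\triv$-beautiful pair $\cM$ is by definition a trivial stably embedded pair, so all its real elements lie in $P(\cM)$, i.e. $M=P(\cM)$; by Remark \ref{rem:small_saturated}, $P(\cM)\models\ACF$ is $\lambda$-saturated. Thus every $\cK_\triv$-beautiful pair models $T_0$. Conversely, for $M$ a $\lambda$-saturated model of $\ACF$, the pair $(M,M)$ solves every $\cK_\triv$-amalgamation problem (such a problem amounts to amalgamating subfields inside $M$), so it is $\lambda$-$\cK_\triv$-beautiful; being $\lambda$-saturated as an $\cL_\BP$-structure, it witnesses beauty transfer for $T_\BP(\cK_\triv)$ via Lemma \ref{lem:beauty-transfer-cardinal}. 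As $T_0$ is definitionally equivalent to the complete theory $\ACF$ it is itself complete, and since $T_\BP(\cK_\triv)$ is complete by Theorem \ref{thm:qe} and contains $T_0$, the two coincide.

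For $\cK_\Def$, Theorem \ref{thm:poizat} provides, since $\ACF$ has $\mathrm{nfcp}$, a $|T|^+$-saturated belle paire, whence beauty transfer for $T_\BP=T_\BP(\cK_\Def)$ (Proposition \ref{prop:poizat_equiv} and Lemma \ref{lem:beauty-transfer-cardinal}); by Theorem \ref{thm:qe} this theory is complete with quantifier elimination. Every beautiful pair is a \emph{proper} elementary pair of models of $\ACF$ (a transcendental over $P(\cM)$ is realized in $M$), hence models $T_1$; so $T_1\subseteq T_\BP(\cK_\Def)$. To get equality it suffices that $T_1$ be complete, and for this I would show that any $\lambda$-saturated $(M,P(M))\models T_1$ is a $\lambda$-belle paire, hence $\lambda$-$\cK_\Def$-beautiful. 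The realization condition on types over $P(M)\cup A$ with $|A|<\lambda$ is immediate, since the $\cL$-reduct $M$ is $\lambda$-saturated. The saturation of $P(M)$ follows because any consistent $\cL(A)$-formula over $A\subseteq P(M)$, $|A|<\lambda$, is realized in the infinite algebraically closed field $P(M)$ (its solution set is finite and contained in $\acl(A)\subseteq P(M)$, or cofinite), so the partial type $\{P(x)\}\cup p(x)$ is finitely satisfiable in the pair and realized by its saturation. Corollary \ref{cor:elem-equiv} then makes any two such pairs $\equiv_{\infty,\lambda}$, so all models of $T_1$ are elementarily equivalent and $T_1$ is complete, giving $T_1=T_\BP(\cK_\Def)$.

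Finally, every model of $\ACF^2$ satisfies $(\forall x)P(x)$ or $(\exists x)\neg P(x)$, so each completion of $\ACF^2$ extends $T_0$ or $T_1$; as both are complete with beauty transfer, these are exactly the two completions. The conceptual crux is the completeness of $T_1$, and the only genuinely non-formal step is the saturation of the predicate in a saturated proper pair, which reduces to the dichotomy finite/cofinite for definable sets in a strongly minimal theory; every other step is bookkeeping inside the general framework.
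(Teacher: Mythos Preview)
Your overall strategy is sound and more self-contained than the paper's, which simply cites Robinson's classical result that $T_1$ is complete and then invokes Poizat's nfcp theorem for beauty transfer. Re-deriving completeness of $T_1$ by showing that every $\lambda$-saturated model of $T_1$ is a $\lambda$-belle paire is a legitimate and instructive route.

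There is, however, a genuine gap in your verification of condition (ii$\lambda$). You write that realization of types over $P(M)\cup A$ is ``immediate, since the $\cL$-reduct $M$ is $\lambda$-saturated''. This does not follow: the parameter set $P(M)\cup A$ may have cardinality $\geq\lambda$, so $\cL$-saturation of $M$ over small sets says nothing directly. What you actually need is that $\trdeg(M/P(M))\geq\lambda$, so that the generic type over $P(M)\cup A$ is realized (the algebraic types are handled since $M\models\ACF$). This does follow from $\lambda$-saturation of the \emph{pair} in $\cL_P$, not of $M$ in $\cL$: the $\cL_P$-type over a small set $\{c_\alpha:\alpha<\beta\}$ expressing ``$x$ is transcendental over $P\cup\{c_\alpha:\alpha<\beta\}$'' is finitely satisfiable in any proper pair (for any finite list of polynomial forms, the set of bad $x$ is a finite union of sets of the shape $f(c_{<\beta},P(M))$ with $f$ a rational function, and strong minimality plus properness of the pair ensures this is not all of $M$), hence realized. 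Iterating gives the required transcendence degree. Once you insert this argument, your proof goes through; but as written the key step is misattributed to the wrong saturation hypothesis.

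Your argument for (i$\lambda$) via the finite/cofinite dichotomy is correct, and your treatment of $\cK_\triv$ is fine. The paper's approach trades this work for a citation; yours has the advantage of making explicit why strong minimality is what drives the result.
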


\begin{proof} A classical result of A. Robinson \cite{Rob77} shows that $T_0$ and $T_1$ are complete (and hence the only completions of $\ACF^2$). Since $\ACF$ has nfcp, by Theorem \ref{thm:poizat}, both $T_0$ and $T_1$ axiomatize respectively the theories of beautiful pairs $\ACF(\cK_\triv)$ and $\ACF(\cK_\Def)$, and $\cK_\triv,\cK_\Def$ have beauty transfer. Note that in this precise example, quantifier elimination in $\cL_{\BP}$ was already obtained by F. Delon in \cite{Del12}, who provided an explicit interpretation of the canonical parameter functions $c_\varphi$.  
\end{proof}

\subsection{Beautiful pairs of o-minimal theories}\label{sec:omin}

Let $T$ be a complete (dense) o-minimal theory. By Marker-Steinhorn's theorem \cite{MS}, an elementary pair of models $M\preccurlyeq N$ of $T$ is stably embedded if and only if $M$ is Dedekind complete in $N$. In particular, the class of stably embedded elementary pairs of models of $T$ is an elementary class. Recall that we use $T_{\SE}$ to denote the corresponding theory. Moreover, by \cite[Theorem 6.3]{cubi-ye}, $T$ has uniform definability of types. Recall that the non-realized definable types in one variable over a model $M$ of $T$ correspond to the type at infinity (resp. minus infinity) $p_{\infty}$ (resp. $p_{-\infty}$) and to the types of elements infinitesimally close to an element of $a\in M$ from the right (resp. left) $p_{a^+}$ (resp. $p_{a^-}$). 

In \cite{Pillay_def}, A. Pillay provided an axiomatization of an $\cL_P$-theory $T^*$ of stably embedded pairs of models of $T$ which, if consistent, is complete \cite[Theorem 2.3]{Pillay_def}. It was later shown by B. Baisalov and B. Poizat that $T^*$ is always consistent \cite[Proposition, p.574]{BaPo98}. They also provided the following simpler axiomatization (see the proof of \cite[Proposition, p.574]{BaPo98}), where a pair $(M,P(M))$ is a model of $T^*$ if  the following conditions are satisfied:  
\begin{itemize}
    \item $(M,P(M))\models T_{\SE}$
    \item Given $\cL$-definable partial functions $f(x, y), g(x, y)$ (possibly constant $\pm\infty$) and a tuple $a\in M^x$, if for all $b_1,b_2\in P(M)^y$ 
    \[
M\models -\infty\leqslant f(a, b_1) < g(a, b_2)\leqslant +\infty
\]
whenever both sides are defined, then there is $e \in M$ such that for all $b_1,b_2\in P(M)^y$
\begin{equation}\label{eq:axiomA}\tag{$\ast$}
M\models f(a,b_1)<e<g(a,b_1),
\end{equation}
whenever both sides are defined. 
\end{itemize}

As the following result shows, $T^*$ axiomatizes the theory of beautiful pairs of $T$. 

\begin{theorem}\label{thm:omin-beauty} The theory $T_\BP$ exists and $\cK_\Def$ has beauty transfer. Moreover, it is axiomatized by $T^*$.    
\end{theorem}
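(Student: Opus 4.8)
The plan is to show that the $\cL_P$-reduct of $T_\BP$ coincides with the theory $T^*$ axiomatized above, and then invoke the machinery already developed for the abstract framework. Since Theorem \ref{thm:qe} gives completeness and beauty transfer once we know there is a $\lambda$-saturated $\lambda$-beautiful pair, the bulk of the work is \emph{existence} of beautiful pairs together with identifying them with models of $T^*$. By Theorem \ref{thm:charct-beaut-pairs}, beautiful pairs exist if and only if $\cK_\Def$ has the amalgamation property; and they are elementary pairs precisely when $\cK_\Def$ has the extension property. So the first step is to verify AP and EP for $\cK_\Def$ in the $o$-minimal setting. For EP, $o$-minimal theories have density of definable types (every nonempty definable set carries an $\acl^\eq$-definable type, e.g.\ the type of an endpoint or of an infinitesimal neighbor), so Lemma \ref{lem:density}(1) applies directly and yields EP; hence by Theorem \ref{thm:charct-beaut-pairs} all beautiful pairs will automatically be elementary pairs, i.e.\ genuine stably embedded elementary pairs $P(\cM)\preccurlyeq M\models T$.

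Next I would establish AP for $\cK_\Def$. Here the Marker--Steinhorn criterion is the key tool: an elementary pair $M\preccurlyeq N$ is stably embedded iff $M$ is Dedekind complete in $N$. Given an amalgamation problem with base $\cA$ and two extensions $\cB_1,\cB_2$ over a common $P$-saturated model, one amalgamates the real parts using the order structure of $o$-minimality (the $\cL$-types over $P(\cA)$ are determined by cuts, and one glues the two one-variable extensions respecting these cuts), checking throughout that the resulting pair remains Dedekind-complete over $P$ and hence stays in $\cK_\Def$. By Lemma \ref{lem:b-pair-finite} it suffices to handle the one-generator case $\cB=\langle\cA\cup\{b\}\rangle$ with $P(\cA)=P(\cB)$, which reduces the amalgamation to realizing a single $\cL$-cut over $P(\cM)$ while preserving stable embeddedness.

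With existence and elementarity in hand, the core identification step is to show that a $\lambda$-beautiful pair satisfies exactly the axiom scheme \eqref{eq:axiomA}, and conversely that any $\lambda$-saturated model of $T^*$ is $\lambda$-beautiful. For the forward direction, the displayed gap condition on $f(a,b_1)<g(a,b_2)$ describes a consistent cut over $P(M)$ determined by an element $a\in M$; realizing a witness $e$ in the middle is precisely an instance of property (BP) (or of Lemma \ref{lem:b-pair-finite}(ii)), since adding such an $e$ keeps us in $\cK_\Def$ by $o$-minimality and Marker--Steinhorn. For the converse, I would run the back-and-forth of Corollary \ref{cor:elem-equiv}: given a $\lambda$-saturated model of $T^*$, one shows it realizes every one-variable $\cL$-cut over $P(M)$ compatible with stable embeddedness, exactly because the only non-realized definable one-types are $p_{\pm\infty}$ and $p_{a^\pm}$, and the axiom \eqref{eq:axiomA} is tailored to produce realizations of precisely these cuts. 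This shows $\lambda$-saturated models of $T^*$ are $\lambda$-beautiful, giving a $\lambda$-saturated $\lambda$-beautiful pair and hence beauty transfer via Lemma \ref{lem:beauty-transfer-cardinal}.

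The main obstacle I anticipate is verifying AP rigorously, specifically checking that the amalgam of two stably embedded extensions remains stably embedded. Dedekind completeness of $P$ in the amalgam is not automatic: one must ensure that gluing $\cB_1$ and $\cB_2$ over $\cA$ does not create a cut in $P(\cM)$ that is filled from the $\cB_1$-side but approached from the $\cB_2$-side in an incompatible way. The resolution should come from the fact that $o$-minimal $\cL$-types over a model are governed by cuts and by Marker--Steinhorn these cuts are the only obstruction, so independence of the two extensions over the common $P$-saturated base (together with the one-generator reduction) forces compatibility; the uniform definability of types for $o$-minimal $T$, already noted via \cite[Theorem 6.3]{cubi-ye}, guarantees the scheme of definition is preserved under amalgamation and keeps everything inside $\cK_\Def$.
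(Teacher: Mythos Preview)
Your proposal contains the right ingredients but assembles them inefficiently, and in doing so creates an obstacle (a direct verification of AP) that the paper's proof entirely sidesteps.

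The paper's argument is much shorter: since $T^*$ is already known to be consistent (Baisalov--Poizat), by Lemma~\ref{lem:beauty-transfer-cardinal} it suffices to show that every $|T|^+$-saturated model $\cM$ of $T^*$ is a $|T|^+$-beautiful pair. This is checked via Lemma~\ref{lem:b-pair-finite}: condition~(i) is immediate, and for condition~(ii) one reduces to the case $P(\cA)\models T$, $b\notin\dcl^\eq(\cA\cup P(\cM))$, which is exactly case~(IIb) of the proof of \cite[Theorem~2.3]{Pillay_def} and is quoted verbatim. That is the entire proof. Existence of beautiful pairs, beauty transfer, and the equality $T_\BP=T^*$ all follow at once (the latter since $T^*$ is complete by Pillay and all beautiful pairs are elementarily equivalent by Corollary~\ref{cor:elem-equiv}). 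AP and EP are then \emph{corollaries} (this is precisely Corollary~\ref{prop:omin_bpairs}), not prerequisites.

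In your plan, the step you label ``the converse'' (showing $\lambda$-saturated models of $T^*$ are $\lambda$-beautiful) is in fact the whole proof; once you have it, the separate AP argument, the EP argument via density of definable types, and the forward direction (beautiful pairs model $T^*$) are all redundant. Your AP sketch is genuinely incomplete---you acknowledge as much---and the paper's route shows you never need to fill it. The lesson is that when an external axiomatization like $T^*$ is already known to be consistent and complete, the efficient path is to verify directly that its saturated models are rich, rather than to reprove Fra\"iss\'e-style amalgamation from scratch.
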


\begin{proof} Since $T^*$ is consistent by \cite[Proposition, p.574]{BaPo98}, Lemma \ref{lem:beauty-transfer-cardinal} ensures it suffices to show that every $|T|^+$-saturated model of $T^*$ is a beautiful pair. Let $\cM=(M,P(M))$ be such a model. We use Lemma \ref{lem:b-pair-finite}. Condition $(i)$ is clear. For condition (ii), let $f\colon \cA\to \cM$ and $g\colon \cA\to \cB$ be $\BP$-embeddings with $\cB\in \cK_{\Def,<|T|^+}$, $g(P(\cA))=P(\cB)$ and $B=\langle A\cup\{b\}\rangle$. Note that $P(\cA)$ is a model of $T$. Furthermore, we may assume that $b\notin \dcl_{\cL}(\cA\cup P(\cM))$. We are exactly in case (IIb) of the proof of \cite[Theorem 2.3]{Pillay_def}. Proceeding word for word, we can find a  $\BP$-embedding $g\colon \cB\to \cM$ over $\cA$.
\end{proof}

\begin{corollary}\label{prop:omin_bpairs} The class $\cK_\Def$ has the amalgamation property and the extension property. 
\end{corollary}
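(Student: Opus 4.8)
The plan is to deduce both properties directly from the characterisation in Theorem~\ref{thm:charct-beaut-pairs}, feeding in the content already established in Theorem~\ref{thm:omin-beauty}; no further work with the explicit axiomatisation $T^*$ should be needed.

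First I would establish the amalgamation property. By Theorem~\ref{thm:charct-beaut-pairs}, $\cK_\Def$ has AP if and only if $\lambda$-$\cK_\Def$-beautiful pairs exist for every $\lambda\geqslant|T|^+$. Theorem~\ref{thm:omin-beauty} tells us that $T_\BP$ exists (and is axiomatised by $T^*$) and has beauty transfer. Combining consistency of $T^*$ with beauty transfer (Definition~\ref{def:beauty_transfer} and Lemma~\ref{lem:beauty-transfer-cardinal}), every $\lambda$-saturated model of $T_\BP$ is a $\lambda$-$\cK_\Def$-beautiful pair; such models exist for all $\lambda\geqslant|T|^+$. Hence $\lambda$-$\cK_\Def$-beautiful pairs exist for every such $\lambda$, and AP follows.

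Next I would treat the extension property. Once AP is in hand, the ``moreover'' clause of Theorem~\ref{thm:charct-beaut-pairs} reduces EP to the assertion that every $\cK_\Def$-beautiful pair is an elementary pair of models of $T$. But by Theorem~\ref{thm:omin-beauty} the beautiful pairs are precisely the (sufficiently saturated) models of $T^*$, and the very first axiom of $T^*$ requires $(M,P(M))\models T_{\SE}$, i.e.\ $P(M)\preccurlyeq M\models T$. Thus every beautiful pair is an elementary pair, and EP holds.

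Since both parts are immediate consequences of the equivalences in Theorem~\ref{thm:charct-beaut-pairs}, there is no genuine obstacle once Theorem~\ref{thm:omin-beauty} is available. The only point I would verify carefully is that beauty transfer really yields $\lambda$-beautiful pairs for \emph{all} $\lambda$, not merely for the single cardinal witnessing transfer, which is exactly condition~(2) of Lemma~\ref{lem:beauty-transfer-cardinal}. An alternative route to EP worth recording is via density of definable types: an o-minimal $T$ admits a definable type over $\acl^\eq$ of the code of any non-empty definable set (e.g.\ a one-sided germ at an endpoint of a definable interval), so Lemma~\ref{lem:density}(1) applies directly; the argument through Theorem~\ref{thm:charct-beaut-pairs} is however cleaner and entirely self-contained.
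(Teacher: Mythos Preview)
Your proof is correct and follows exactly the approach the paper takes: the paper's own proof reads ``Immediate from Theorems~\ref{thm:charct-beaut-pairs} and~\ref{thm:omin-beauty},'' and you have simply unpacked the two directions of that implication. The additional remark on density of definable types as an alternative route to EP is accurate but not needed.
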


\begin{proof} Immediate from Theorems~\ref{thm:charct-beaut-pairs} and~\ref{thm:omin-beauty}.  
\end{proof}

\subsubsection{Bounded and convex pairs}\label{sec:bounded-convex}

\begin{definition}\label{def:bdd-convec} Let $M\preccurlyeq N$ be an elementary extension of models of $T$. 

\begin{itemize}[leftmargin=*]
    \item The pair $(N,M)$ is \emph{bounded} (we also say that \emph{$N$ is bounded by $M$}) if for every $b\in N$, there are $c_1, c_2\in M$ such that $c_1\leqslant b\leqslant c_2$. For a small subset $A$  of $\cU$, a type $p\in \D{x}(A)$ is \emph{bounded} if for any small model $M$ containing $A$ and every realisation $a\models p|M$, there is an elementary extension $M\preccurlyeq N$ with $a\in N^x$ and such that $(N,M)$ is a bounded pair. 
    \item The pair $(N,M)$ is \emph{convex} if $M$ is a convex subset of $N$, i.e., for every $b\in N\setminus M$, either $M<b$ or $b<M$. We say a type $p\in \D{x}(A)$ is \emph{cobounded} if for any small model $M$ containing $A$ and every realisation $a\models p|M$, there is an elementary extension $M\preccurlyeq N$ with $a\in N^x$ and such that $(N,M)$ is a convex pair. Note that realized types are cobounded.  
\end{itemize}
\end{definition}

\begin{definition} Let $M$ be a model of $T$. A \emph{pole} in $M$ is a definable continuous bijection between a bounded and an unbounded interval. 
\end{definition}

Observe that, by o-minimality, no model of $T$ has a pole if and only if any type of the form $p_{a^+}$ or $p_{a^-}$ is orthogonal to $p_{+\infty}$ and to $p_{-\infty}$.\footnote{Two global definable types $p(x)$ and $q(y)$ are said to be orthogonal if $p(x)\cup q(y)$ is a complete global type.} Indeed, note that by o-minimality given any two non-realized global definable types $p$ and $q$, $p$ is non-orthogonal to $q$ if and only if there is a local $\cU$-definable homeomorphism sending a realisation of $p$ to a realisation of $q$.

We let $\cK_\bdd$ be the class of bounded \sepa-pairs. Observe it corresponds to $\cK_\cF$ for $\cF$ the collection of definable bounded types. Similarly, we let $\cK_{\conv}$ be the class of convex \sepa-pairs. It corresponds to $\cK_\cF$ for $\cF$ the collection of cobounded definable types, i.e., of those definable types orthogonal to any type of the form $p_{a^+}$ and $p_{a^-}$. Note that such classes are different from $\cK_{\triv}$ precisely when no model of $T$ has a pole. Moreover, the classes of global bounded types and global cobounded types are natural classes of definable types, hence by Lemma \ref{lem:nat-to-nat}, $\cK_\bdd$ and $\cK_{\conv}$ are natural classes of \sepa-pairs. 

\medskip

\begin{assumption}
We assume from now that no model of $T$ has a pole.
\end{assumption}

Consider the following $\cL_\BP$-theories $T_\bdd^*$ and $T_{\conv}^*$. The theory $T_\bdd^*$ consists of axioms expressing for a pair $(M, P(M))$ that   
\begin{enumerate}
    \item[$(1)_{\bdd}$] $P(M)\preccurlyeq M$, $(M,P(M)) \models T_{\SE}$, and $M$ is bounded by $P(M)$, 
    \item[$(2)_{\bdd}$] given $\cL$-definable partial functions $f(x, y), g(x, y)$ and a tuple $a\in M^x$, if there are $c_1, c_2\in P(M)$ such that for all $b_1,b_2\in P(M)^y$  
    \[
M\models c_1<f(a, b_1) < g(a, b_2)<c_2
\]
whenever both sides are defined, then there is $e \in M$ such that for all $b_1,b_2\in P(M)^y$
\begin{equation*}
M\models f(a,b_1)<e<g(a,b_2),
\end{equation*}
whenever both sides are defined.
\end{enumerate}
Let us now assume that $T$ expands $\DOAG$. The theory~$T_{\conv}^*$ consists of axioms expressing for a pair $(M, P(M))$  
\begin{enumerate}
\item[$(1)_{\infty}$] $P(M)\preccurlyeq M$, $(M,P(M)) \models T_{\SE}$, and $(M,P(M))$ is convex,
\item[$(2)_{\infty}$] given $\cL$-definable partial functions $f(x, y), g(x, y)$ (possibly constant $\pm\infty$) and a tuple $a\in M^x$, if for all $b_1,b_2\in P(M)^y$  
    \[
M\models P(M) + f(a,b_2) <  g(a, b_1) 
\]
whenever both sides are defined, then there is $e \in M$ such that for all $b_1,b_2\in P(M)^y$
\begin{equation*}
M\models f(a,b_1)<e<g(a,b_2),
\end{equation*}
whenever both sides are defined. 
\end{enumerate}

\begin{lemma}\label{lem:consistency1} For any o-minimal theory $T$, the theory $T_\bdd^*$ is consistent. If in addition $T$ expands $\DOAG$, then $T_{\conv}^*$ is consistent.
\end{lemma} 

\begin{proof} We follow the same strategy of proof as in \cite{BaPo98}. Let $M_0$ be a model of $T$ and let $N$ be an $|\cL(M_0)|^+$-saturated elementary extension of $M_0$. 

We start with $T_\bdd^*$. Let $M$ be maximal such that $M_0\preccurlyeq M\preccurlyeq N$, $M_0\preccurlyeq M$ is stably embedded and $M$ is bounded by $M_0$. Note that such a maximal $M$ exists by Zorn's lemma. Observe that for any $a\in N\setminus M$, there is $b\in \dcl(Ma)$ such that $\tp(b/M_0)$ is either not bounded or not definable. Setting $(M,P(M))=(M,M_0)$, we claim that $(M,P(M))$ is a model of $T_\bdd^*$. The axiom scheme $(1)_{\bdd}$ is clear. 

For $(2)_{\bdd}$, let $f(x, y), g(x, y)$ be $\cL$-definable partial functions and fix $a\in M^x$. Let $A$ (resp. $B$) be the image of $f(a,y)$ (resp. the image of $g(a,y)$) under all $b\in P(M)^y$ (whenever defined). Suppose there are $c_1, c_2\in P(M)$ such that $c_1<A<B<c_2$ and for a contradiction that there is no $e\in M$ such that $A<e<B$. Since the cardinalities of $A$ and $B$ are bounded by the cardinality of $P(M)$, by saturation of $N$, we find  $d\in N$ be such that $A<d<B$. Set $M':=\dcl(Md)$. Note that $M'$ is bounded by $M$ and thus by $M_0$, since $A<d<B$ and $T$ has no pole. By maximality of $M$, the pair $(M',M_0)$ is thus not an \sepa-pair, so in particular $\tp(d/M)$ is not definable. Now we finish exactly as in the proof of \cite[Proposition, p.574]{BaPo98}, finding $e\in M'\setminus M$ with $\tp(e/M_0)$ not definable and leading this to a contradiction.

The proof for $T_{\conv}^*$ is similar. Let $M$ be maximal such that $M_0\preccurlyeq M\preccurlyeq N$, $M_0\preccurlyeq M$ is stably embedded and $(M, M_0)$ is convex. Again, such a maximal $M$ exists by Zorn's lemma. It follows that for any $a\in N\setminus M$, there is $b\in \dcl(Ma)$ such that $\tp(b/M_0)$ is bounded and non-realized. Setting $(M,P(M))=(M,M_0)$, we claim that $(M,P(M))$ is a model of $T_{\conv}^*$. The axiom scheme $(1)_{\infty}$ is clear. 

For $(2)_{\infty}$, let $f(x, y), g(x, y)$ be $\cL$-definable partial functions and fix $a\in M^x$. Let $A$ (resp. $B$) be the image of $f(a,y)$ (resp. the image of $g(a,y)$) under all $b\in P(M)^y$ (whenever defined). The cases where $B=\{+\infty\}$ or $A=\{-\infty\}$ are not excluded.  Suppose that $P(M)+A<B$ and for a contradiction that there is no $e\in M$ such that $A<e<B$. Since the cardinalities of $A$ and $B$ are bounded by the cardinality of $P(M)$, by saturation of $N$, we find $d\in N$ such that $A<d<B$. Set $M':=\dcl(Md)$. By maximality of $M$ and as $T$ has no poles, $\tp(d/M)$ cannot be at infinity. We claim that it cannot be of the form $p_{c^+}$ or $p_{c^-}$ (for some  $c\in M$) either. Indeed, $A$ (resp. $B$) has no largest finite element (resp. smallest finite element) since $P(M)+A<B$ and $\tp(d/M)$ is determined by filling the cut $A<B$. Thus, $\tp(d/M)$ is not definable and we may finish the proof as in the bounded case.
\end{proof}

\begin{theorem}\label{thm:bb_infty} For any o-minimal theory $T$, the theory $T_\bdd^*$ axiomatizes $T_\BP(\cK_\bdd)$ and $\cK_\bdd$ has beauty transfer. If $T$ extends $\DOAG$, the theory $T_{\conv}^*$ axiomatizes $T_\BP(\cK_{\conv})$ and $\cK_{\conv}$ has beauty transfer.  
\end{theorem}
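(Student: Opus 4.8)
The plan is to handle $T_\bdd^*$ and $T_\infty^*$ in parallel, following the strategy of Theorem~\ref{thm:omin-beauty} and of Baisalov--Poizat. Both theories are consistent by Lemma~\ref{lem:consistency1}, so the whole statement reduces to the single core claim that \emph{every $|T|^+$-saturated model of $T_\bdd^*$ (resp.\ $T_\infty^*$) is a $\cK_\bdd$-beautiful (resp.\ $\cK_\infty$-beautiful) pair}. Granting this, such saturated models exist and furnish $\cK_\bdd$-beautiful pairs, so $T(\cK_\bdd)$ is consistent; by Corollary~\ref{cor:elem-equiv} all beautiful pairs are elementarily equivalent, hence every $|T|^+$-saturated model of the first-order theory $T_\bdd^*$ has theory $T(\cK_\bdd)$. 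Since any model of $T_\bdd^*$ is elementarily equivalent to such a saturated elementary extension (again a model of $T_\bdd^*$), the theory $T_\bdd^*$ is complete and equals $T(\cK_\bdd)$. Moreover the exhibited model is simultaneously $|T|^+$-saturated and $|T|^+$-$\cK_\bdd$-beautiful, which is exactly condition~(1) of Lemma~\ref{lem:beauty-transfer-cardinal}, yielding beauty transfer. The identical reasoning applies to $T_\infty^*$ and $\cK_\infty$.

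To prove the core claim for $T_\bdd^*$, I would fix a $|T|^+$-saturated model $(M,P(M))$ and let $\cM$ be its $\cL_\BP$-expansion by definitions, then apply the finitary criterion of Lemma~\ref{lem:b-pair-finite}. Condition~(i) is immediate: axiom $(1)_\bdd$ gives $P(M)\preccurlyeq M\models T$, so $P(M)$ is a $|T|^+$-saturated model of $T$. For condition~(ii), given $f\colon\cA\to\cM$ and $g\colon\cA\to\cB$ with $\cB\in(\cK_\bdd)_{|T|^+}$, $g(P(\cA))=P(\cB)$ and $B=\langle A\cup\{b\}\rangle$, I would reduce exactly as in Theorem~\ref{thm:omin-beauty} to the case $P(\cA)\models T$, with $b$ a single element of the home sort and $b\notin\dcl^\eq(\cA\cup P(\cM))$. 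By o-minimality and quantifier elimination, $\tp_\cL(b/A\cup P(\cA))$ is determined by the cut carved out by $\cL$-definable functions $f(a,y),g(a,y)$ evaluated at $y\in P(\cA)$; since $\cB\in\cK_\bdd$ this cut is \emph{bounded}, i.e.\ squeezed between two elements of $P(\cA)$. Combining axiom $(2)_\bdd$ with the $|T|^+$-saturation of $M$ then produces $e\in M$ realizing this cut, and because a one-variable $\cL$-type is its cut, $\langle\cA\cup\{e\}\rangle$ is an \sepa-pair isomorphic to $\cB$ over $\cA$, lying in $\cK_\bdd$ as $e$ fills a bounded cut. This gives the desired $h\colon\cB\to\cM$.

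For $T_\infty^*$ the argument is the same, but membership of $\cB$ in $\cK_\infty$ forces $\tp_\cL(b/P(\cA))$ to be \emph{at infinity}, i.e.\ orthogonal to $p_{0^+}$, so that over $P(\cA)$ the gap $g(a,b_1)-f(a,b_2)$ exceeds all of $P(M)$; this is precisely the hypothesis of axiom $(2)_\infty$, which (with saturation) fills the cut by some $e\in M$. The extra point to check here is that $\langle\cA\cup\{e\}\rangle$ again lies in $\cK_\infty$, i.e.\ that filling an at-infinity cut creates no new infinitesimal (bounded, non-realized) element in $\dcl^\eq(Ae)$ over $P(\cA)$. This is exactly where the standing no-pole assumption intervenes, via the orthogonality of $p_{0^+}$ and $p_{+\infty}$, mirroring the contradiction derived at the end of the proof of Lemma~\ref{lem:consistency1}.

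The hard part will be condition~(ii): one must confirm that within the \emph{restricted} class $\cK_\bdd$ (resp.\ $\cK_\infty$) each one-element extension only ever requires filling a cut of the admissible kind---bounded (resp.\ at infinity)---so that the tailored axiom $(2)_\bdd$ (resp.\ $(2)_\infty$), rather than the full o-minimal axiom~\eqref{eq:axiomA}, already suffices, and that the element it produces generates an \sepa-pair back inside the same class. For $\cK_\infty$ this closure under the construction is the delicate step, and it relies essentially on the absence of poles.
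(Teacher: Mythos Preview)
Your proposal is correct and follows essentially the same approach as the paper, whose own proof is a one-line reference to Theorem~\ref{thm:omin-beauty} and Lemma~\ref{lem:consistency1}. One small remark: the ``closure'' step you flag for $\cK_\infty$ is automatic---once $e$ fills the same cut as $b$ over $\dcl(A\cup P(\cM))$, o-minimality gives $\tp(e,A/P(\cM))=\tp(b,A/P(\cA))\mid P(\cM)$, so $\langle\cA\cup\{e\}\rangle\cong\cB\in\cK_\infty$ without further work; the no-pole hypothesis is really consumed in Lemma~\ref{lem:consistency1} rather than here.
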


\begin{proof} Consistency follows from Lemma \ref{lem:consistency1}. The remaining is achieved as in the proof of Theorem \ref{thm:omin-beauty}. \end{proof}

\begin{remark}\label{rem:omin_dries} 
When $T$ extends $\RCF$, A.~H.~Lewenberg and L.~van den Dries proved in~\cite{vdd-lewen-1} that $T_\BP$ is axiomatized by $T_{\SE}\cup \{(\exists x)(\neg P(x))\}$ (they call such pairs, \emph{tame pairs}). In particular, as for $\ACF$, there are only two completions of $T_{\SE}$ and (up to passing to an expansion by definitions) each one corresponds to either $T_\BP(\cK_\triv)$ or $T_\BP$. This suggests the natural question of whether this holds more generally for expansions of $\DOAG$ (see Question \ref{question:amalg-omin}).
\end{remark}

\section{Beautiful pairs of ordered abelian groups}\label{sec:bp-oag} 

In this section we study beautiful pairs of complete theories of regular ordered abelian groups. In the cases of $\DOAG$ and Presburger arithmetic, we provide explicit axiomatizations and we characterize all completions of $T_{\SE}$.  

\subsection{Divisible ordered abelian groups}

We let $\cL$ be the language of ordered abelian groups $\cL_\og=\{+,-,<,0\}$ and $T$ be $\DOAG$. We will show that $T_{\SE}$ has exactly 4 completions: $T_\BP(\cK_\triv)$, $T_\BP(\cK_\bdd)$, $T_\BP(\cK_{\conv})$ and $T_\BP$ (see Section~\ref{sec:bounded-convex} for the definition of $\cK_\bdd$ and $\cK_{\conv}$). Consider the $\cL_P$-sentences: 

\begin{align*}
\label{infty}\tag{$E_{\infty}$} & (\exists x)(\forall y)(P(y) \to y<x);  \\
\label{0+}\tag{$E_{0^+}$}  & (\exists x)(\forall y)((P(y) \wedge 0<y) \to 0<x<y).    \\
\end{align*}

\begin{theorem}\label{thm:DOAG-complete} It holds that 
\begin{enumerate}[label=(\arabic*)]
    \item $T_\BP$ is axiomatized by $T_{\SE}\cup\{E_{0^+}, E_\infty\}$;
    \item $T_\BP(\cK_\bdd)$ is axiomatized by $T_{\SE}\cup\{E_{0^+}, \neg E_\infty\}$;
    \item $T_\BP(\cK_{\conv})$ is axiomatized by $T_{\SE}\cup\{E_{\infty}, \neg E_{0^+}\}$; 
    \item $T_\BP(\cK_\triv)$ is axiomatized by $T_{\SE}\cup\{\neg E_{\infty}, \neg E_{0^+}\}$. 
\end{enumerate} 
Moreover, the above are all the completions of $T_{\SE}$, and all the corresponding natural classes have beauty transfer.
\end{theorem}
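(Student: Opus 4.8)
The plan is to treat the four classes $\cK_\triv$, $\cK_\bdd$, $\cK_\infty$ and $\cK_\Def$ in parallel, match each with exactly one of the four Boolean combinations of $E_\infty$ and $E_{0^+}$, and then use that these combinations partition the models of $T_{\SE}$. First I would record that $\DOAG$ is o-minimal and has no poles (its definable functions are piecewise affine), so the hypotheses of Sections \ref{sec:omin} and \ref{sec:bounded-convex} hold. Hence by Theorems \ref{thm:omin-beauty} and \ref{thm:bb_infty} the theories $T_\BP$, $T_\BP(\cK_\bdd)$ and $T_\BP(\cK_\infty)$ exist, have beauty transfer, and are axiomatised by $T^*$, $T_\bdd^*$ and $T_\infty^*$. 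For $\cK_\triv$ amalgamation is inherited from amalgamation of $\DOAG$-substructures, so $T_\BP(\cK_\triv)$ exists by Theorem \ref{thm:charct-beaut-pairs}, and beauty transfer is immediate since its beautiful pairs are precisely the $\lambda$-saturated models of $\DOAG$ carrying the full predicate. By Theorem \ref{thm:qe} all four theories are complete, which already settles the beauty-transfer assertion of the statement.

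Next I would check that each theory proves its designated combination. A $\cK_\triv$-beautiful pair satisfies $(\forall x)P(x)$, hence $\neg E_\infty\wedge\neg E_{0^+}$. For the other three I would feed test functions into the defining schemes: taking $f(x,y)=y$ and $g=+\infty$ in $T^*$ (resp.\ in $T_\infty^*$) yields $E_\infty$, while taking $f=0$ and $g(x,y)=y$ restricted to $0<y<1$ yields $E_{0^+}$ in $T^*$ and $T_\bdd^*$; moreover axiom $(1)_\bdd$ directly gives $\neg E_\infty$ and axiom $(1)_\infty$ (convexity) gives $\neg E_{0^+}$. Thus $T_\BP\vdash E_\infty\wedge E_{0^+}$, $T_\BP(\cK_\bdd)\vdash E_{0^+}\wedge\neg E_\infty$, $T_\BP(\cK_\infty)\vdash E_\infty\wedge\neg E_{0^+}$ and $T_\BP(\cK_\triv)\vdash\neg E_\infty\wedge\neg E_{0^+}$. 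Since every model of $T_{\SE}$ decides both sentences, the four combinations partition $\Mod(T_{\SE})$ and we have exhibited four pairwise non-equivalent complete extensions. It therefore remains only to prove that each combination theory $T_{\SE}\cup\{\pm E_\infty,\pm E_{0^+}\}$ is itself complete: then each coincides with the corresponding $T_\BP(\cK)$, and since any completion of $T_{\SE}$ extends one of the four combinations, these are exactly the completions of $T_{\SE}$.

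For $\neg E_\infty\wedge\neg E_{0^+}$ this is easy. By Marker--Steinhorn \cite{mac-stein96} a stably embedded pair of $\DOAG$ is Dedekind complete, so every $b\in M\setminus P(M)$ realises over $P(M)$ one of the cuts $p_{\pm\infty}$ or $p_{c^\pm}$ with $c\in P(M)$; the former is excluded by $\neg E_\infty$ (together with its reflection under $x\mapsto-x$) and the latter by $\neg E_{0^+}$ (after translating by $c$ and possibly negating), whence $M=P(M)$ and the theory is complete. For the other three combinations I would instead show, via Lemma \ref{lem:b-pair-finite}, that every $|T|^+$-saturated model $(M,P(M))$ of the combination theory, which lies in the matching class $\cK$ (bounded pairs in $\cK_\bdd$, convex pairs in $\cK_\infty$, arbitrary \sse-pairs in $\cK_\Def$), is a beautiful pair for that class; completeness then follows from Corollary \ref{cor:elem-equiv}. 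Concretely, in the one-point amalgamation data with $P(\cA)=P(\cB)$ the new generator realises a definable cut over $\langle A\cup P(M)\rangle$, and one must fill the corresponding cut in $M$ by an element realising the prescribed type over $P(M)$; by $\lambda$-saturation this reduces to a pure consistency statement.

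The hard part will be exactly this cut-realisation step, which is where $E_\infty$ and $E_{0^+}$ do their work. Here I would use that in $\DOAG$ every definable function is piecewise affine, so the image-sets $A=f(a,P(M)^y)$ and $B=g(a,P(M)^y)$ occurring in $T^*$, $T_\bdd^*$, $T_\infty^*$ are finite unions of $\dcl(a)$-translates of $\mathbb{Q}$-linear images of definable subsets of $P(M)^y$. Analysing $\sup A$ and $\inf B$, the cut $(A,B)$ should decompose, up to translation by an element of $\langle a\cup P(M)\rangle$, into three cases: it is already realised inside $\langle a\cup P(M)\rangle$; it is an ``infinite'' cut, fillable exactly when $E_\infty$ holds; or it is an ``infinitesimal'' cut, fillable exactly when $E_{0^+}$ holds. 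Establishing this trichotomy and matching it against the boundedness and convexity constraints (so that $\cK_\bdd$ only ever calls on $E_{0^+}$ and $\cK_\infty$ only on $E_\infty$) is the technical heart; once in place it shows that each combination theory proves the corresponding scheme, which completes the identifications $(1)$--$(4)$ and the claim that these are all the completions of $T_{\SE}$.
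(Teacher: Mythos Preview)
Your proposal is correct, and it ends up at the same technical core as the paper, but the route is more indirect. You first invoke Theorems~\ref{thm:omin-beauty} and~\ref{thm:bb_infty} to know that $T_\BP$, $T_\BP(\cK_\bdd)$, $T_\BP(\cK_\infty)$ exist, have beauty transfer, and are axiomatised by the general schemes $T^*$, $T^*_\bdd$, $T^*_\infty$; then you argue that in $\DOAG$ these schemes collapse to the single sentences $E_\infty$, $E_{0^+}$. The paper skips this detour entirely: it directly takes an $\aleph_1$-saturated model $\cM$ of $T_{\SE}\cup\{E_{0^+},E_\infty\}$ and verifies condition~(ii) of Lemma~\ref{lem:b-pair-finite} by hand, obtaining existence, beauty transfer, and the axiomatisation in one stroke (and then says the other cases are ``easily derived''). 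Your first paragraph is thus redundant once you commit to the direct argument you sketch in your third paragraph.

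The more interesting difference is in the ``hard part''. You propose to analyse the cut $(A,B)$ coming from the image-sets $f(a,P(M)^y)$ and $g(a,P(M)^y)$ via piecewise-linearity, aiming at a trichotomy (realised / infinite / infinitesimal). The paper instead works on the \emph{source} side: given the new generator $b$, it reduces (up to interdefinability over $A$) to the two cases $b\models p_{0^+}\mid P(\cA)$ or $b\models p_{+\infty}\mid P(\cA)$, chooses $b'\in M$ realising the same cut over $P(\cM)$ together with $\tp_\cL(b/A)$, and then checks $\tp_\cL(b'A/P(\cM))=\tp_\cL(bA/P(\cA))\mid P(\cM)$ by direct inspection of atomic formulas $y+c\ \square\ x$ with $c\in P(\cM)$. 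This is shorter because the atomic formulas of $\DOAG$ are so simple that no general cut analysis is needed: one may normalise the $A$-element $a$ occurring in $y$ to lie in $\{0^-,0,0^+\}$ (in the $p_{0^+}$ case) or observe that adding $c\in P(\cM)$ is irrelevant (in the $p_{+\infty}$ case). Your trichotomy would also work and is arguably more conceptual, but it requires more bookkeeping and, as you note, the matching against the $\cK_\bdd$ and $\cK_\infty$ constraints still has to be done carefully.
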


\begin{proof} The first part of the `Moreover' statement follows from (1)-(4) in the statement of the theorem. Beauty transfer follows from Theorems \ref{thm:omin-beauty} and \ref{thm:bb_infty}. 

Let us show the result for $T_\BP$, the other cases being easily derived from the proof. Suppose $\cM$ is an $\aleph_1$-saturated model of $T_{\SE}\cup\{E_{0^+}, E_\infty\}$, we will show that it is $\aleph_1$-beautiful. By Lemma \ref{lem:b-pair-finite}, it suffices to show condition $(ii)'$, so let $f\colon \cA\to \cM$ and $g\colon \cA \to \cB$ be $\BP$-embeddings with $g(P(\cA))=P(\cB)$, $B=\langle A\cup\{b\}\rangle $ and $A$ countable. After possibly replacing $A$ with its divisible hull, we may suppose that $f$ and $g$ are the identity map and $P(\cA)\preccurlyeq_{\cL} A\preccurlyeq_{\cL} M$. We need to show that there is a $\BP$-embedding $h\colon \cB\to \cM$ such that $h_{|A}=\id$. There are two possibilities.  

\emph{Case 1:} Assume that up to interdefinability over $A$, there is some $b\models p_{0^+}|P(\cA)$ and $B$ is generated by $b$ over $A$. By saturation of $\cM$ and axiom $E_{0^+}$, there is $b'\in M$ realizing $p_{0^+}|P(\cM)$ with $\tp_\cL(b'/A)=\tp_\cL(b/A)$.

We now show that the map extending $f$ and sending $b$ to $b'$ is a $\BP$-embedding, that is,  
\begin{equation*}
    b'\cup A \models p\coloneqq \tp_{\cL}(b\cup A/P(\cA))|P(\cM).
\end{equation*}
Let $x$ denote a variable intended for $b$ and $y$ variables for elements in $A$. Let $\varphi(x,y)$ be a $P(\cM)$-formula in $p$. By quantifier elimination, we may assume that $\varphi(x,y)$ is of the form $y+c \ \square \ x$ for some  $c\in P(\cM)$ and $\square$ stands for $>$ or $<$ (note that $=$ cannot arise since $b\notin \cA$). We may assume that the element $a\in A$ corresponding to $y$ is in the convex hull of $P(\cM)$ (hence in the convex hull of $P(\cA)$), the other two cases being clear. By possibly changing $c$, we may further assume $a\models p_{0^+}|P(\cA)$, $a=0$ or $a\models p_{0^-}|P(\cA)$. Since $f$ is a $\BP$-embedding, we also have $a\models p_{0^+}|P(\cM)$, $a=0$ or $a\models p_{0^-}|P(\cM)$. In the case that $c=0$, $\varphi(b',a)$ holds since $b$ and $b'$ realize the same cut over $A$. If $c\neq 0$, the sign of $a+c-b$ only depends on the sign of $c$. Therefore, $\varphi(b',a)$ holds too.  

\emph{Case 2:} Assume that up to interdefinability over $A$, there is some $b\models p_{+\infty}|P(\cA)$ and $B$ is generated by $b$ over $A$ such that $b$ is not interdefinable with any $\Tilde{b} \models p_{0^+}|P(\cA)$ over $A$. By saturation of $\cM$, there is $b'\in M$ realizing $p_{+\infty}|P(\cM)$ and with $\tp_\cL(b'/A)=\tp_\cL(b/A)$. Note that the type described above is consistent, since $\cA$ is a $\BP$-substructure of $\cM$. As in the previous case, we want to show that 
\begin{equation*}
    p\coloneqq \tp_{\cL}(b\cup A/P(\cA))|P(\cM)=\tp_{\cL}(b'\cup A/P(\cM))
\end{equation*}
Let $\varphi(x,y)$ be a $P(\cM)$-formula in which $y$ is intended for elements in $A$. We may assume that $\varphi(x,y)$ is of the form $y+c\ \square \ x$ where $\square$ stands for $=,>$ or $<$ and $c\in P(\cM)$. Note that equality never happens and the truth of the formula does not depend on $c\in P(\cM)$. Thus we may assume the formula is of the form $y>x$ or $y<x$, which is completely determined by $\tp_\cL(b/A)=\tp_\cL(b'/A)$. 
\end{proof}

\subsection{Presburger arithmetic}

In this section we let $\cL$ be the language  of Presburger arithmetic $\cLpres\coloneqq\{+,-,<,0,1,(\equiv_n)_{n>1}\}$ and $T=\mathrm{PRES}$ be Presburger arithmetic. Recall that models of $T$ are also called $\Z$-groups.

\begin{fact}[\cite{f-generic_pres}]\label{thm:Zchar} Let $M$ be a $\Z$-group. Then, $p\in S_n(M)$ is definable if and only if for every realisation $a\models p$, $M$ is convex in $M(a)$.  \qed
\end{fact}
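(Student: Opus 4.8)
The plan is to reduce everything to the behaviour of cuts and to exploit discreteness of $\Z$-groups twice. Fix a realization $a=(a_1,\dots,a_n)\models p$ in some elementary extension $N\succcurlyeq M$. Since the isomorphism type of $M\subseteq M(a)$ over $M$ depends only on $\tp(a/M)$, the end-extension condition is a property of $p$ alone, so "for every realization" and "for some realization" coincide; recall that $M\subseteq M(a)$ being an end-extension means that every element of $M(a)\setminus M$ is either greater than all of $M$ or smaller than all of $M$ (i.e.\ $M$ is convex in $M(a)$). As $M(a)=\{k_1a_1+\dots+k_na_n+m : k_i\in\Z,\ m\in M\}$ and $M$ is a group, an element $\sum_i k_ia_i+m$ lies in $M$ (resp.\ above, below $M$) iff the pure combination $b_{\bar k}:=\sum_i k_ia_i$ does. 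Hence the end-extension condition is equivalent to: for every $\bar k\in\Z^n$, $b_{\bar k}\in M$ or $b_{\bar k}>M$ or $b_{\bar k}<M$.

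Next I would reduce definability of $p$ to the same combinations. By quantifier elimination in $\cLpres$, every formula $\varphi(x,y)$ is a Boolean combination of atomic formulas $\sum_i k_ix_i+\ell(y)+c\lessgtr 0$ and congruences $\sum_i k_ix_i+\ell(y)+c\equiv 0\pmod{m}$, where $\ell(y)$ is $\Z$-linear in the parameter variables. The congruence part is always uniformly definable once one records the (fixed) residue of each $b_{\bar k}$ modulo $m$, so it never obstructs definability. For an inequality, evaluating at $x=a$ compares $b_{\bar k}$ with the element $-(\ell(y)+c)$ of $M$; thus the corresponding $\varphi$-definition of $p$ is $M$-definable iff the cut $C_{\bar k}:=\{z\in M : z<b_{\bar k}\}$ is $M$-definable. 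Therefore $p$ is definable iff $C_{\bar k}$ is a definable cut for every $\bar k\in\Z^n$.

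It then remains to establish two discreteness facts. First, a cut $C$ of $M$ is $M$-definable iff $C=\emptyset$, $C=M$, or $C=\{z:z<c\}$ for some $c\in M$: if $C$ is definable, nonempty and bounded above, its complement is a nonempty definable set bounded below, which in a $\Z$-group has a least element $c$ (definable sets being finite unions of congruence classes intersected with intervals), whence $C=\{z<c\}$. Second, for $b\in N$ the cut $\{z\in M:z<b\}$ is principal iff $b\in M$: if it equals $\{z<c\}$ then $c-1<b\le c$, and since no element lies strictly between $c-1$ and $c$, this forces $b=c\in M$. Combining the two, $C_{\bar k}$ is definable iff $b_{\bar k}\in M$, $b_{\bar k}>M$, or $b_{\bar k}<M$, which matches the reformulation of the end-extension condition from the first paragraph and completes the argument.

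The main obstacle is to organize these discreteness arguments so that the two characterizations line up exactly: one must use discreteness once to show that a definable set bounded below has a minimum (ruling out any "infinitesimal" definable cut) and a second time to show that a principal cut can only be realized inside $M$ (ruling out new elements squeezed just below a point of $M$). Everything else—the reduction via quantifier elimination, the harmlessness of the congruence conditions, and the passage from single linear combinations to all of $M(a)$—is routine once these two facts are in place.
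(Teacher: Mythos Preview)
Your argument is correct and self-contained. Note, however, that the paper does not actually prove this statement: it is recorded as a \emph{Fact} with a citation to Conant--Vojdani and a \qed, so there is no ``paper's own proof'' to compare against. What you have written is essentially the standard argument behind the cited result: reduce via quantifier elimination to the cuts $C_{\bar k}=\{z\in M: z<\sum_i k_i a_i\}$, observe that congruence conditions are harmless, and then use discreteness twice---once to see that any definable proper cut of a $\Z$-group is principal (the complement, being definable and bounded below, has a least element), and once to see that a principal cut $\{z<c\}$ can only be realized by $c$ itself (since $c-1<b\leqslant c$ forces $b=c$). The only cosmetic point worth tightening is the converse direction of ``$p$ is definable iff every $C_{\bar k}$ is definable'': you implicitly use that once each $b_{\bar k}$ lies in $M$ or at $\pm\infty$, any atomic inequality $\sum_i k_i a_i+\ell(y)+c<0$ defines either an $M$-definable set, the empty set, or all of $M^{|y|}$; it would do no harm to say this explicitly.
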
  

It follows that the class of stably embedded pairs of models of $\mathrm{PRES}$ is $\cL_P$-elementary (see \cite{cubi-ye}). We will show that, in this case, the theory $T_{\SE}$ has two completions corresponding to $T_\BP$ and $T_\BP(\cK_\triv)$ respectively. Clearly the theory of trivial pairs is complete and $\cK_\triv$ has beauty transfer. In the non-trivial case, we have the following.

\begin{theorem}\label{thm:pres-complete} The theory $T_{\SE}\cup \{\exists x\neg P(x)\}$ is complete, axiomitizes $T_\BP$, and $\cK_\Def$ has beauty transfer. 
\end{theorem}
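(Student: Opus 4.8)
The plan is to mimic the proof of Theorem~\ref{thm:omin-beauty}: I will establish the single claim that every $\aleph_1$-saturated model of $T_{\SE}\cup\{\exists x\,\neg P(x)\}$ is a $\cK_\Def$-beautiful pair, and then read off all three assertions. Consistency of $T_{\SE}\cup\{\exists x\,\neg P(x)\}$ is immediate: for any $\Z$-group $M_0$ and any proper elementary end-extension $M_0\preccurlyeq N$, the pair $(N,M_0)$ is stably embedded by Fact~\ref{thm:Zchar} and satisfies $\exists x\,\neg P(x)$. Granting the claim, $\cK_\Def$-beautiful pairs exist, so $T_\BP$ is consistent, and the existence of an $\aleph_1$-saturated $\aleph_1$-$\cK_\Def$-beautiful pair yields beauty transfer through Lemma~\ref{lem:beauty-transfer-cardinal}. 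Moreover, since all $\cK_\Def$-beautiful pairs are elementarily equivalent by Corollary~\ref{cor:elem-equiv} while every $\aleph_1$-saturated model of $T_{\SE}\cup\{\exists x\,\neg P(x)\}$ is one, the latter theory has all its $\aleph_1$-saturated models elementarily equivalent and is therefore complete; being satisfied by beautiful pairs, it coincides with $T_\BP$.

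To prove the claim, fix an $\aleph_1$-saturated $\cM=(M,P(\cM))\models T_{\SE}\cup\{\exists x\,\neg P(x)\}$ and verify the two conditions of Lemma~\ref{lem:b-pair-finite}. Condition~(i), that $P(\cM)$ is $\aleph_1$-saturated, holds because a type over a countable $A_0\subseteq P(M)$ together with the predicate $P$ is realized in $M$ by $\aleph_1$-saturation, hence in $P(M)$ as $P(\cM)\preccurlyeq M$. For condition~(ii) I am given embeddings $f\colon\cA\to\cM$, $g\colon\cA\to\cB$ with $\cA\in\cK_{\Def,\aleph_1}$, $g(P(\cA))=P(\cB)$ and $B=\langle A\cup\{b\}\rangle$, and I must extend $f$ to $\cB$. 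As in Theorem~\ref{thm:DOAG-complete}, after enlarging $P(\cA)$ using $\aleph_1$-saturation of $P(\cM)$ and passing to the substructures generated, I may assume $f,g$ are inclusions and $P(\cA)\preccurlyeq A\preccurlyeq M$ are countable $\Z$-groups with $\tp_\cL(b/P(\cA))$ definable. The decisive point, and the simplification relative to $\DOAG$, is that $\mathrm{PRES}$ is discrete, so there is no infinitesimal cut: the global type $q=\tp_\cL(b/P(\cA))\mid\cU$ is, by Fact~\ref{thm:Zchar}, realized only in end-extensions, and this persists over every intermediate model, since $q\mid P(\cM)$ is definable over $P(\cA)$ and hence again realized only in an end-extension of $P(\cM)$. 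Thus $b$ lies at $\pm\infty$ over $P(\cM)$; replacing $b$ by $-b$ if necessary, I arrange $+\infty$, so that the target type $p=\tp_\cL(b/A)\mid P(\cM)$ forces any realization above all of $P(\cM)$.

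It then suffices to realize $p$ inside $M$. Here $(M,P(M))\models T_{\SE}$ gives, again by Fact~\ref{thm:Zchar}, that $P(M)\subseteq M$ is an end-extension, so $M$ has no new elements inside the convex hull of $P(M)$; this is consistent with $p$ concentrating at $+\infty$. The set $D=\{x:\forall y\,(P(y)\to y<x)\}$ is $\cL_P$-definable and nonempty by $\exists x\,\neg P(x)$ together with the end-extension property (negating a witness if needed). To solve (ii) I realize in $M$ the countable partial $\cL_P$-type $\tp_\cL(x/A)\cup\{x\in D\}$ augmented by the congruences carried by $b$; by $\aleph_1$-saturation of $\cM$ this reduces to finite satisfiability, which holds because $M\models\mathrm{PRES}$ is an end-extension of $P(M)$: given finitely many elements of $A$ and finitely many congruences, the infinite $+\infty$-gap of $P(M)$ in $M$ supplies an element above $P(M)$ meeting them. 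Any realization $b'$ lies in $M\setminus P(M)$ and, being at $+\infty$ over $P(\cM)$, determines the scheme of definition, so $b\mapsto b'$ is the required $\cL_\BP$-embedding.

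I expect the genuine obstacle to be the passage from consistency of $p$ (which is automatic in $\cU$) to its realizability inside the fixed model $M$: $p$ mixes the non-finitary condition of lying above all of $P(M)$ with a countable $\cL$-type over $A$, and one must certify finite satisfiability in $\cM$ rather than merely in some elementary extension. This is exactly where $T_{\SE}$ does the work, forcing $M$ to be an end-extension of $P(M)$ so that $\{x\in D\}$ is an honest $\cL_P$-definable set with no competing bounded new elements, while $\exists x\,\neg P(x)$ makes $D$ nonempty and the global end-extension property of Fact~\ref{thm:Zchar} keeps $b$ at $+\infty$ over $P(\cM)$; these three inputs are what rule out the only dangerous configuration, namely $\tp_\cL(b/A)$ pinning $b$ below an element of $A$ that becomes bounded by $P(\cM)$.
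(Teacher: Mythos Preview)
Your approach is essentially the paper's: show every $\aleph_1$-saturated model of $T_{\SE}\cup\{\exists x\,\neg P(x)\}$ is $\aleph_1$-beautiful via Lemma~\ref{lem:b-pair-finite}, use Fact~\ref{thm:Zchar} to reduce to $b$ at $+\infty$ over $P(\cA)$, realize $\tp_\cL(b/A)\cup p_{+\infty}\!\mid\! P(\cM)$ in $M$ by saturation, and read off completeness and beauty transfer exactly as you do. The one point the paper makes explicit that you only assert is the final check that $b\mapsto b'$ is an $\cL_\BP$-embedding: one must verify $\tp_\cL(b'\cup A/P(\cM))=\tp_\cL(b\cup A/P(\cA))\!\mid\! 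P(\cM)$ (not just the type of $b'$ over $P(\cM)$ alone), and the paper does this by quantifier elimination, e.g.\ reducing $nx+a+c\equiv_m 0$ with $a\in A$, $c\in P(\cM)$ to $nx+a+l\equiv_m 0$ for $l\in\mathbb{N}$ with $l\equiv_m c$, whose truth is then determined by $\tp_\cL(b/A)=\tp_\cL(b'/A)$.
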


\begin{proof} We show that an $\aleph_1$-saturated model $\cM=(M,P(\cM))$ of $T_{\SE}\cup \{\exists x\neg P(x)\}$ is a beautiful pair. The proof follows the same strategy as the proof of Theorem \ref{thm:DOAG-complete}, and again, the result will follow using Corollary \ref{cor:elem-equiv}. 

Let $\cA$ be an $\cL_\BP$-substructure of $\cM$, and let $g\colon \cA \to \cB$ be an $\cL_\BP$-embedding with $P(\cA)=P(\cB)$, $B=\langle A\cup\{b\}\rangle$. By taking definable closure, we may assume that $P(\cA)\preccurlyeq A$. Up to interdefinability, we may assume $b$ is positive and $b$ must realize some cobounded type over $P(\cA)$ i.e., type at $+\infty$. Moreover, note that $b$ is not $\cL$-interdefinable over $A$ with any $b'$ with $b'$ bounded by elements in $P(\cA)$. 

Using saturation of $\cM$, we find an element $b'\in M$ such that
\[
b'\models p_{+\infty}|P(\cM) \cup \tp_\cL(b/A).
\]
It is now straightforward to check the following:
\[
p\coloneqq \tp_\cL(b\cup A/P(\cA))|P(\cM)=\tp_\cL(b'\cup A/P(\cM))
\]
This will imply that the map $b\mapsto b'$ induces an $\cL_\BP(\cA)$-embedding of $\cB$ into $\cM$. To illustrate one case, suppose $\varphi(x,y)$ is an $\cL(P(\cM))$-formula where $x$ is intended for $b/b'$ and $y$ is intended for elements in $A$ and $\varphi(x,y)$ is of the form $nx+y+c\equiv_m 0$ for $n, m\in \N$ and $c\in P(\cM)$. Let $l\in \mathbb{N}$ be such that $c\equiv_m l$. We see that $0\equiv_m nx+y+c \leftrightarrow 0\equiv_m nx+y+l$, and the truth of the right hand side is determined by $\tp_\cL(b/A)=\tp_\cL(b'/A)$. 
\end{proof}

\subsection{Dense regular ordered abelian groups}\label{sec:roag}
 
Recall that a (non-trivial) ordered abelian group is \emph{discrete} if it has a minimum positive element, and otherwise it is called \emph{dense}. We continue to work with $\cL=\cLpres$. Any ordered abelian group $M$ is viewed as an $\cL$-structure by interpreting $+,0,-,<, \equiv_n$ in the usual way, $1$ as the minimum positive element if $M$ is discrete and as $0$ when $M$ is dense. 

Recall that an ordered abelian group $M$ is called \emph{regular} if one of the following equivalent conditions holds:
 
 \begin{fact}[{\cite{robzak, zakon, conrad, weispfenning}}]\label{fact:regoag}
For an ordered abelian group $M$, the following are equivalent.
\begin{enumerate}[label=(\arabic*)]
\item The theory of $M$ has quantifier elimination in $\cL$.
   \item The only definable convex subgroups of $M$ are $\{0\}$ and $M$.
   \item The theory of $M$ has an archimedean model, i.e., one embeddable in $(\mathbb R,+,<)$ as an ordered abelian group.
   \item For every $n>1$, if the interval $[a,b]$ contains at least $n$ elements, then it contains an element divisible by $n$.
   \item Any quotient of $M$ by a nontrivial convex subgroup is divisible.\qed
   \end{enumerate}
 \end{fact}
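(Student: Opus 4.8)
The plan is to prove the five conditions equivalent by separating them into an \emph{algebraic block} (2), (4), (5), which only involves convex subgroups and divisibility, and a \emph{model-theoretic block} (1), (3), and then to bridge the two. The easiest bridge is $(3)\Rightarrow(4)$: condition (4) is a first-order scheme (one sentence for each $n>1$), hence invariant under elementary equivalence, so it suffices to check that an archimedean group, realised as a subgroup of $(\R,+,<)$ by H\"older's theorem, is regular. In the dense case $nM$ is again dense, so any interval of positive length meets it, and an interval containing at least $n\ge 2$ elements has positive length; in the discrete case the group is isomorphic to $\Z$ and the statement reduces to the fact that among $n$ consecutive integers one is a multiple of $n$.

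Within the algebraic block I would establish $(2)\Leftrightarrow(4)\Leftrightarrow(5)$ by standard manipulations with convex subgroups. The cleanest step is $(4)\Rightarrow(5)$: given a nontrivial convex subgroup $C$, choose $0<c\in C$ and an arbitrary $x\in M$; the elements $x,x+c,\dots,x+(n-1)c$ witness that $[x,x+nc]$ has at least $n$ elements, so by (4) it contains some $ny$, and then $0\le ny-x\le nc$ forces $ny-x\in C$ by convexity, whence $x+C=n(y+C)$ is divisible by $n$ in $M/C$. The converse $(5)\Rightarrow(4)$ and the equivalence with (2) run through the classical correspondence between the failure of $n$-divisibility in a quotient and the existence of a parameter-definable proper convex subgroup (the Gurevich--Schmitt style convex subgroups built from the convex hulls of $nM$): a nontrivial proper \emph{definable} convex subgroup is precisely the ``definable archimedean obstruction'' that regularity rules out, and I would treat these as routine bookkeeping. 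The implication $(1)\Rightarrow(2)$ is then immediate: under quantifier elimination every definable convex subgroup is quantifier-free definable, and a convex subgroup defined by a Boolean combination of order and congruence conditions must be $\{0\}$ or $M$.

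The main work is $(4)\Rightarrow(1)$, the quantifier elimination, and this is where I expect the real obstacle. I would use the standard embedding test: for $M,N\models T$ with $N$ being $|M|^+$-saturated, it suffices to extend any $\cLpres$-embedding of a finitely generated substructure $A\le M$ into $N$ so as to absorb a prescribed $b\in M$. The quantifier-free type of $b$ over $A$ is governed by two independent pieces of data: the cut that $b$ fills over (the divisible hull of) $A$, and, for each modulus $m$, the coset of $mM$ in which $b$ lies relative to $A$. By saturation and compactness the realisation problem reduces to finding, for a single modulus $m$, an element of $N$ in a prescribed cut and a prescribed residue class modulo $mN$ simultaneously. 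This is exactly the point at which regularity is indispensable: condition (4) says that every sufficiently long interval meets every congruence class modulo $m$, so the order constraint and the congruence constraint are compatible and can be met at once. The dense/discrete dichotomy and the behaviour at $\pm\infty$ require only easy separate checks.

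Finally, to fold in (3) I would prove $(4)\Rightarrow(3)$ using the quantifier elimination just obtained: by (1) the complete $\cLpres$-theory of a regular group is determined by the discrete/dense alternative together with the Szmielew-type invariants $[M:pM]\in\{1,p\}$ for each prime $p$. Any admissible such datum is realised by an explicit archimedean group --- an appropriate localisation of $\Z$ inside $\Q$, or $\Z$ itself in the discrete case, or $\Q$ in the divisible case --- and matching invariants together with quantifier elimination yield elementary equivalence, producing the required archimedean model and closing the cycle. The one genuinely delicate ingredient remains the quantifier elimination of $(4)\Rightarrow(1)$; everything else is either a first-order transfer or a convex-subgroup computation.
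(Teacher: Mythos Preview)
The paper does not give a proof of this statement; it is recorded as a fact with references to the original literature (Robinson--Zakon, Zakon, Conrad, Weispfenning), so there is no argument in the paper against which to compare. Your outline follows the classical route and is essentially sound, but there is one genuine error.

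In your final paragraph you assert that for a regular group the Szmielew-type invariants satisfy $[M:pM]\in\{1,p\}$, and you propose to realise any such datum by a localisation of $\Z$ inside $\Q$. This is false: for a dense regular group $M$, the quotient $M/pM$ can be an $\F_p$-vector space of any dimension, finite or infinite (this is precisely the content of Fact~\ref{fact:complregoag}). For example, $M=\Z+\Z\sqrt{2}\subseteq\R$ is archimedean, hence regular by your own $(3)\Rightarrow(4)$, yet $[M:pM]=p^2$; more generally, if $(\alpha_i)_{i\in I}\subseteq\R$ is $\Q$-linearly independent then $\bigoplus_{i\in I}\Z\alpha_i\subseteq\R$ is archimedean with $[M:pM]=p^{|I|}$. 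Localisations of $\Z$ only produce $[M:pM]\in\{1,p\}$, so your construction does not cover all complete theories of dense regular groups and the implication $(4)\Rightarrow(3)$ is left with a gap. The repair is routine once the error is noticed: build the archimedean model as a subgroup of $\R$ using a $\Q$-linearly independent family of reals of the appropriate cardinality, adjusting $p$-divisibility prime by prime via suitable direct sums of localisations embedded through independent irrationals (compare the construction in Proposition~\ref{prop:ROAG}). With this correction your scheme for $(4)\Rightarrow(3)$ goes through, and the remainder of the outline is standard.
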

The following fact characterizes all complete $\cL$-theories of regular ordered abelian groups.

\begin{fact}[{\cite{robzak, zakon}}]\label{fact:complregoag}
Every discrete regular ordered abelian group is a model of $\mathrm{PRES}$, i.e., it is a $\mathbb{Z}$-group. If $M, N$ are dense regular, then $M\equiv N$ if and only if for every prime $q$ the quotients $M/q M$ and $N/q N$ are either both infinite or have the same finite size. \qed
 \end{fact}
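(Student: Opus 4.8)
The statement splits into a discrete and a dense case, which I would treat separately: the discrete case by directly verifying the Presburger axioms, and the dense case by a back-and-forth argument guided by the quantifier elimination of Fact~\ref{fact:regoag}(1).

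For the discrete case, fix the least positive element $1$ of $M$, so that discreteness makes the elements $a, a+1, \dots, a+(n-1)$ exactly the members of $[a,a+n-1]$ for every $a\in M$ and $n\geq 1$. I would first show that among any $n$ consecutive elements exactly one is divisible by $n$: at least one by Fact~\ref{fact:regoag}(4) applied to an interval with exactly $n$ elements, and at most one since two distinct multiples of $n$ differ by at least $n$. Applied to $[a-(n-1),a]$, this yields a multiple $nc$ with $a-nc\in\{0,1,\dots,n-1\}$, so every element is congruent modulo $n$ to one of $0,\dots,n-1$; these residues are pairwise distinct because $0<j-i<n$ forbids $n\mid(j-i)$. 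Hence $M/nM\cong\Z/n\Z$ for all $n$, so $M$ satisfies the defining scheme of $\mathrm{PRES}$, and since $\mathrm{PRES}$ is complete, $M$ is a $\Z$-group.

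For the dense case I would first check the forward direction, that the invariants are preserved by elementary equivalence. For a prime $q$ the quotient $M/qM$ is an $\F_q$-vector space, and for each $j\geq 1$ the condition $\dim_{\F_q}(M/qM)\geq j$ is expressed by a single $\cLpres$-sentence $\sigma_{q,j}$ asserting the existence of $x_1,\dots,x_j$ such that no nontrivial $\F_q$-combination lies in $qM$ (using $\equiv_q 0$ to express membership in $qM$). Thus $M\equiv N$ forces agreement on all $\sigma_{q,j}$, making $M/qM$ and $N/qN$ both infinite or of the same finite size. For the converse I would pass to highly saturated elementary extensions $M^*,N^*$ (whose invariants still match) and build a back-and-forth system of partial isomorphisms between finitely generated substructures, preserving order and congruences; by the back-and-forth theorem this system yields $M^*\equiv N^*$, hence $M\equiv N$, and Fact~\ref{fact:regoag}(1) confirms that this quantifier-free data governs the full elementary type. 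It then suffices to extend a finite partial isomorphism $f\colon A\to B$ by one new element $a\in M^*$: its quantifier-free type over $A$ records a cut and a system of congruence conditions, the cut being fillable in the saturated $N^*$ by density.

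The main obstacle is precisely this extension step, namely reconciling the order with the congruence data. Here regularity is essential in the form that every proper interval of a dense regular group meets every coset of $nM$ (apply Fact~\ref{fact:regoag}(4) to a suitable translate and shift back), so that the cut imposes no obstruction to realizing the congruences simultaneously. The remaining difficulty is to guarantee that congruence cosets not already represented over $A$ are available in $N^*$; this availability is exactly what the matching of the invariants $\dim_{\F_q}(M/qM)$ provides. One reduces prime powers to primes using torsion-freeness (multiplication by $q^i$ gives $q^iM/q^{i+1}M\cong M/qM$, so $|M/q^eM|=|M/qM|^e$) and composite moduli to prime powers by the Chinese Remainder Theorem. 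This coordination of the cut with infinitely many congruence conditions is the delicate part of the argument, more a matter of careful bookkeeping than of any single hard idea.
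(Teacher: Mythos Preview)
The paper does not prove this statement; it is recorded as a \emph{Fact} with a citation to the classical sources \cite{robzak,zakon} and no argument is given. Your sketch is a correct outline of the standard proof: the discrete case by directly verifying the Presburger congruence scheme using regularity in the form of Fact~\ref{fact:regoag}(4), and the dense case by a back-and-forth guided by the quantifier elimination of Fact~\ref{fact:regoag}(1), with the invariants $|M/qM|$ governing the availability of fresh congruence classes. There is nothing in the paper to compare your argument against.
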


The following lemma characterizes definable types in regular ordered abelian groups. We leave the proof to the reader, pointing out that the proof follows a similar argument given by G. Conant and S. Vojdani~\cite{f-generic_pres} in the proof of Fact \ref{thm:Zchar}. 

Given an ordered abelian group $\Gamma$, we let $\Div(\Gamma)$ be the divisible hull of $\Gamma$ in an ambient divisible ordered abelian group.  

\begin{lemma}\label{lem:1types-roag}
Let $T$ be a complete theory of regular ordered abelian groups, and let $\Gamma\preccurlyeq \Gamma'\models T$ and $a=(a_1,\ldots, a_n)\in \Gamma'$. Then $\tp(a/\Gamma)$ is definable if and only if $\tp(b/\Gamma)$ is definable for every element  $b$ in $\langle \Gamma a\rangle$. Moreover, for every element $e\in \Gamma'$, $\tp(e/\Gamma)$ is definable if and only if $\tp_{\DOAG}(e/\Div(\Gamma))$ is definable. \qed
\end{lemma}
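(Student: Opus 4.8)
The plan is to prove the two assertions separately, reducing everything to the behaviour of one-dimensional cuts and exploiting quantifier elimination in $\cLpres$ together with the passage to the divisible hull.

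For the first assertion, the left-to-right direction is immediate: every $b\in\langle\Gamma a\rangle$ is the image of $a$ under a $\Z$-linear (hence $\emptyset$-definable) map, and pushforwards of definable types along definable functions are definable, so $\tp(b/\Gamma)$ is definable whenever $\tp(a/\Gamma)$ is. For the converse I would use quantifier elimination: modulo $T$, an arbitrary $\cLpres$-formula $\varphi(x,y)$ is a Boolean combination of atomic formulas each of the form $t(x)+s(y)\ \square\ 0$ or $t(x)+s(y)\equiv_m 0$, where $t(x)=\sum_i n_i x_i$ and $s(y)$ are $\Z$-linear forms. Plugging a parameter tuple $c\in\Gamma^y$, the truth of such an atomic formula at $a$ depends only on the position, resp. congruence class, of the single element $b_t:=t(a)\in\langle\Gamma a\rangle$ relative to $\Gamma$, that is, only on $\tp(b_t/\Gamma)$ and on $s(c)\in\Gamma$. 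Hence a $\varphi$-definition for $\tp(a/\Gamma)$ is obtained as the corresponding Boolean combination of the finitely many $\psi$-definitions of the types $\tp(b_t/\Gamma)$, which exist by hypothesis.

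For the second assertion, I would first observe that the congruence part of any $1$-type over $\Gamma$ is automatically definable: the set of $c\in\Gamma$ with $e\equiv_n c$ is either empty or a single coset of $n\Gamma$, hence $\Gamma$-definable, where one uses that $\Gamma\preccurlyeq\Gamma'$ forces $\Gamma\cap n\Gamma'=n\Gamma$. Thus $\tp(e/\Gamma)$ is definable if and only if its order part is, i.e. if and only if the cut of $ne$ over $\Gamma$ is definable for every $n\geq 1$. The technical core is the following consequence of quantifier elimination, which I would prove first: a proper nonempty $\Gamma$-definable initial segment of $\Gamma$ is principal, its supremum being an element of $\Gamma$. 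Indeed such a set is automatically an initial segment as a set, and near a hypothetical non-principal boundary all order-atomic conditions (whose parameters avoid the boundary) are locally constant, so the defining formula would there reduce to periodic congruence conditions, which cannot cut out an initial segment. Via the order isomorphism $x\mapsto nx$ from $\tfrac1n\Gamma$ onto $\Gamma$, this says: the cut of $ne$ over $\Gamma$ is definable iff the cut of $e$ over $\tfrac1n\Gamma$ is principal; and applied to any $\theta\in\Div(\Gamma)$ it shows that $\theta$ has a principal cut over each $\tfrac1n\Gamma$.

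With these reductions in hand, the statement becomes: writing $\Div(\Gamma)=\bigcup_n \tfrac1n\Gamma$, the cut of $e$ is principal over every $\tfrac1n\Gamma$ if and only if it is principal over $\Div(\Gamma)$, where \emph{principal} includes realized and $\pm\infty$, so that the right-hand condition is exactly definability of $\tp_{\DOAG}(e/\Div(\Gamma))$ (as $\DOAG$ is $o$-minimal and $\Div(\Gamma)\models\DOAG$). The direction from right to left follows by restricting a principal cut over $\Div(\Gamma)$ to $\tfrac1n\Gamma$ and using the corollary of the core fact. The other direction, which I expect to be the main obstacle, amounts to showing that the boundary points $d_n\in\tfrac1n\Gamma$ stabilize. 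In the dense case I would argue by contradiction: a nonzero difference $d_m-d_n$ for $n\mid m$ is a positive element of $\tfrac1m\Gamma$ smaller than every positive element of $\tfrac1n\Gamma$; scaling by $m$ and using that $(m/n)\Gamma$ is coinitial in the positive cone of $\Gamma$ for dense regular groups, this produces a positive element of $\Gamma$ below all of its positive cone, which is absurd. Hence all $d_n$ equal a single $d\in\Div(\Gamma)$, the difference $e-d$ is infinitesimal over $\Div(\Gamma)$, and the cut is principal. In the discrete ($\Z$-group) case the argument is simpler, since discreteness rules out adjacency of $ne\notin\Gamma$ to any $\gamma\in\Gamma$, forcing the principal cuts to sit at $\pm\infty$ and hence the cut over $\Div(\Gamma)$ to be $\pm\infty$ as well. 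The key difficulty throughout is the quantifier-elimination analysis isolating principal cuts (the core fact) and controlling, in the dense case, how closely elements of the divisible hull can approach $e$ across the tower $\bigl(\tfrac1n\Gamma\bigr)_n$.
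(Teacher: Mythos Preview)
Your argument for the first assertion is correct and standard. The second assertion, however, contains a genuine slip in your ``core fact'': it is \emph{not} true that a proper nonempty $\Gamma$-definable initial segment of a dense regular $\Gamma$ has its supremum in $\Gamma$. Take $\Gamma=\Z+\sqrt{2}\,\Z$ and $I=\{x\in\Gamma:2x<1\}$; then $I$ is definable with supremum $1/2\notin\Gamma$. What your own justification (``near a non-principal boundary the order atoms are locally constant, so only periodic congruences remain'') actually proves is the correct version: the supremum lies in $\Div(\Gamma)\cup\{\pm\infty\}$, since the order atoms $nx<c$ change truth value exactly at points $c/n\in\Div(\Gamma)$. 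Your subsequent dense-case argument, with boundary points $d_n\in\tfrac{1}{n}\Gamma$ and the analysis of $d_m-d_n$, relies on the incorrect version and does not go through as written.

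The fix is simpler than your stabilization argument. For dense regular $\Gamma$, regularity (specifically Fact~\ref{fact:regoag}(4)) gives that $\Gamma$ is order-dense in $\Div(\Gamma)$: given $d_1<d_2$ in $\Div(\Gamma)$, write $d_i=c_i/n$; the interval $(c_1,c_2)$ in $\Gamma$ is infinite (density), hence contains some $nc$ with $c\in\Gamma$, and then $d_1<c<d_2$. Consequently the cut of $e$ over $\Gamma$ and the cut of $e$ over $\Div(\Gamma)$ have the \emph{same} supremum in the common Dedekind completion. So if $\tp(e/\Gamma)$ is definable, the initial segment $\{c\in\Gamma:c<e\}$ has supremum in $\Div(\Gamma)$ by the corrected core fact, and by density this supremum is also that of $\{d\in\Div(\Gamma):d<e\}$; hence the $\DOAG$-cut of $e$ over $\Div(\Gamma)$ is principal. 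No tower $(\tfrac{1}{n}\Gamma)_n$ is needed. Your treatment of the discrete case is fine (there the core fact does hold with supremum in $\Gamma$), and this is indeed the spirit of the Conant--Vojdani argument the paper points to.
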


\begin{proposition}\label{prop:ROAG}
Let $\Gamma$ be an archimedean ordered abelian group. Then there is an archi\-me\-dean elementary extension $\Gamma\preccurlyeq\Gamma'$ such that $\Gamma'$ is stably embedded in every elementary extension.    
\end{proposition}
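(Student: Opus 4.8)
The plan is to embed $\Gamma$ into $(\mathbb{R},+,<)$ (possible since $\Gamma$ is archimedean, by Fact \ref{fact:regoag}) and then to enlarge it so that its divisible hull becomes Dedekind complete. The guiding observation is a chain of reductions. To show that $\Gamma'$ is stably embedded in every elementary extension $\Gamma''$, it suffices, by the finitary clause of Lemma \ref{lem:1types-roag}, to prove that $\tp(e/\Gamma')$ is definable for every single element $e$ of every such $\Gamma''$; and by the ``moreover'' clause of the same lemma this holds as soon as $\tp_{\DOAG}(e/\Div(\Gamma'))$ is definable. Since $\DOAG$ is o-minimal, if $\Div(\Gamma')$ is Dedekind complete as a linear order then every cut realized over it is either $\pm\infty$ or of the form $r^{\pm}$ with $r\in\Div(\Gamma')$, so every such $\DOAG$-type is definable. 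Thus it is enough to produce an archimedean $\Gamma'\succcurlyeq\Gamma$ with $\Div(\Gamma')$ Dedekind complete; being dense in $\mathbb{R}$, this amounts to $\Div(\Gamma')=\mathbb{R}$.

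The construction depends on whether $\Gamma$ is dense or discrete. If $\Gamma$ is \emph{dense}, I fix a $\mathbb{Q}$-linear complement $U$ of the $\mathbb{Q}$-subspace $\Div(\Gamma)\subseteq\mathbb{R}$ and set $\Gamma'\coloneqq\Gamma\oplus U\subseteq\mathbb{R}$. Because $U$ is a $\mathbb{Q}$-vector space one has $n\Gamma'=n\Gamma\oplus U$, whence $\Gamma\cap n\Gamma'=n\Gamma$ and $\Gamma'/n\Gamma'\cong\Gamma/n\Gamma$ for all $n$; so $\Gamma$ is an $\cL$-substructure of $\Gamma'$ with the same congruence sizes, and Fact \ref{fact:complregoag} gives $\Gamma\equiv\Gamma'$, hence $\Gamma\preccurlyeq\Gamma'$ by quantifier elimination (Fact \ref{fact:regoag}). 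By construction $\Div(\Gamma')=\Div(\Gamma)+U=\mathbb{R}$. If instead $\Gamma$ is \emph{discrete}, then being archimedean it is cyclic, $\Gamma\cong\mathbb{Z}$, and I simply take $\Gamma'\coloneqq\Gamma$: here one cannot enlarge divisibly without destroying discreteness, but no enlargement is needed, since in a $\mathbb{Z}$-group every element of an elementary extension is either a standard multiple of the generator or infinite, so by Fact \ref{thm:Zchar} every type over $\Gamma$ is already definable.

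In the dense case I would then conclude exactly as sketched: given $\Gamma'\preccurlyeq\Gamma''$ and $e\in\Gamma''$, the cut of $e$ in $\Div(\Gamma')=\mathbb{R}$ is, by Dedekind completeness, determined by a point $r\in\mathbb{R}\cup\{\pm\infty\}$, so $\tp_{\DOAG}(e/\mathbb{R})$ is one of the definable types (realized, $r^{\pm}$, or $\pm\infty$); hence $\tp(e/\Gamma')$ is definable by Lemma \ref{lem:1types-roag}, and the finitary clause upgrades this to arbitrary finite tuples $a\in\Gamma''$ by applying it to each element of $\langle\Gamma' a\rangle$. This yields stable embeddedness of $\Gamma'$ in every elementary extension.

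The main obstacle is conceptual rather than computational. One is tempted to take for $\Gamma'$ a \emph{Dedekind completion of $\Gamma$ itself}, but a dense Dedekind-complete subgroup of $\mathbb{R}$ must equal $\mathbb{R}$ and therefore be divisible, which changes the theory whenever $\Gamma$ is not divisible. The resolution is to complete only the \emph{divisible hull}, adjoining a divisible complement $U$ so that the congruence data $\Gamma/n\Gamma$ — and hence, by Fact \ref{fact:complregoag}, the complete theory of $\Gamma$ — are left untouched while $\Div(\Gamma')$ grows to all of $\mathbb{R}$. The discrete case has to be split off precisely because this divisible enlargement is unavailable there, and it is instead dispatched directly via Fact \ref{thm:Zchar}.
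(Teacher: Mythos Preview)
Your proof is correct and follows essentially the same approach as the paper: embed $\Gamma$ in $\mathbb{R}$, adjoin a $\mathbb{Q}$-linear complement $U$ of $\Div(\Gamma)$ to obtain $\Gamma'=\Gamma\oplus U$ with $\Div(\Gamma')=\mathbb{R}$, and conclude via Lemma~\ref{lem:1types-roag}. You supply more detail than the paper does---the explicit computation $\Gamma'/n\Gamma'\cong\Gamma/n\Gamma$ and the appeal to Fact~\ref{thm:Zchar} in the discrete case---but the construction and the key idea are identical.
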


\begin{proof} We may suppose $\Gamma$ is dense, as for discrete $\Gamma$ we have $\Gamma\cong\Z$ and we can thus take $\Gamma'=\Gamma$. Without loss of generality, $\Gamma\leq (\R,+,<)$. Let $(a_i)_{i\in I}$ be a $\Q$-basis of $\Div(\Gamma)$. Let $(b_j)_{j\in J}$ be such that $(a_i, b_j)_{i\in I, j\in J}$ is a $\Q$-basis of $\R$. Let $\Gamma'$ be $\Gamma'=\Gamma \oplus (\bigoplus_{j\in J} \Q b_j$) equipped with the induced ordering by $\R$. The ordered abelian group $\Gamma'$ is archimedean and it is an elementary extension of $\Gamma$ by Facts \ref{fact:regoag} and \ref{fact:complregoag}. By construction, $\Div(\Gamma')=\R$, and therefore all types over $\Gamma'$ are definable by Lemma \ref{lem:1types-roag}. 
\end{proof}

\begin{proposition}\label{prop:reg-abel}
Let $T$ be the theory of a regular ordered abelian group which is neither discrete nor divisible. Then, there is a proper stably embedded pair $(\Gamma, P(\Gamma))$ of models of $T$ which is not convex. In addition, there is an elementary extension $(\Gamma', P(\Gamma'))$ of $(\Gamma, P(\Gamma))$ which is not stably embedded. In particular, the class of stably embedded pairs of models of $T$ is not elementary and $T$ does not have uniform definability of definable types.  
\end{proposition}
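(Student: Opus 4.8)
The plan is to locate precisely where stable embeddedness of a pair fails to be first order and then exploit that an element realizing a ``genuinely irrational'' cut over $\Div(P)$ has a non-definable type. Throughout, fix a prime $q$ with $P/qP\neq 0$ for $P\models T$ (such $q$ exists since $T$ is not divisible) and fix $c\in P$, $c>0$, with $c\notin qP$. The two relevant parameter-free $\cL_P$-formulas are $\beta(x):\equiv\exists u\,\exists w\,(P(u)\wedge P(w)\wedge u<x<w)$, saying that $x$ lies in the convex hull of $P$, and, for each $n\geq 1$, $\theta_n(x):\equiv\exists u\,(P(u)\wedge\forall v\,((P(v)\wedge v>0)\to -v<nx-u<v))$, saying that $nx$ is infinitesimally close (relative to $P$) to an element of $P$, i.e.\ that $x$ is infinitesimally close to a point of $\Div(P)$ of the shape $u/n$. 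By Lemma~\ref{lem:1types-roag} together with the description of definable $1$-types in $\DOAG$ over the divisible group $\Div(P)$ (realized, $\pm\infty$, or $d^{\pm}$ with $d\in\Div(P)$), a bounded element $b$ has a definable type over $P$ if and only if $\bigvee_{n\geq1}\theta_n(b)$ holds; and every non-definable type over $P$ is witnessed by a bounded element (unbounded elements have the definable type $\pm\infty$). Hence a pair $(\Gamma,P)\models T_{\SE}$ is stably embedded if and only if it satisfies $\forall x\,(\beta(x)\to\bigvee_{n\geq1}\theta_n(x))$. The decisive point is that this is an \emph{infinite} disjunction: the impossibility of bounding $n$ uniformly is exactly what will produce the non-elementary behaviour.

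For the first assertion I would build the pair by a standard-part construction. Fix $P\models T$ and a $|P|^+$-saturated $W\succcurlyeq P$, let $I\subseteq W$ be the convex subgroup of elements infinitesimally close to $0$ relative to $P$, and set $\Gamma:=\{b\in W:\exists d\in\Div(P)\ (b-d\in I)\}$, the elements of $W$ infinitesimally close to $\Div(P)$. Then $\Gamma$ is a subgroup containing $P$, and the standard-part map $\mathrm{st}\colon\Gamma\to\Div(P)$ is a surjective homomorphism with kernel $I$; surjectivity uses saturation and Fact~\ref{fact:regoag}(4) to realize, for each $d=u/m\in\Div(P)$, a multiple of $m$ in $W$ infinitesimally close to $u$. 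The crux is to verify $\Gamma\models T$: density is clear, regularity follows from Fact~\ref{fact:regoag}(4) by transporting multiples of $n$ produced in the regular group $W$ into $\Gamma$ (treating separately intervals with distinct standard parts and infinitesimal intervals), and the quotient computation $\Gamma/r\Gamma\cong I/rI\cong W/rW\cong P/rP$ for every prime $r$ (the first isomorphism from $\mathrm{st}(r\Gamma)=\Div(P)$ and $r\Gamma\cap I=rI$, the second because by saturation every $r$-coset of $W$ meets $I$) gives $\Gamma\equiv P$ via Fact~\ref{fact:complregoag}. By quantifier elimination $P\preccurlyeq\Gamma$. Since every element of $\Gamma$ is infinitesimally close to $\Div(P)$, Lemma~\ref{lem:1types-roag} shows all types over $P$ realized in $\Gamma$ are definable, so $(\Gamma,P)$ is stably embedded; and choosing $b_0\in\Gamma$ with $\mathrm{st}(b_0)=c/q\notin P$ (so $0<b_0<c$ while $b_0\notin P$) shows the pair is not convex.

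For the second assertion I would take an $\aleph_1$-saturated $\cL_P$-elementary extension $(\Gamma',P')\succcurlyeq_{\cL_P}(\Gamma,P)$ and realize the countable parameter-free type $\pi(x):=\{\beta(x)\}\cup\{\neg\theta_n(x):n\geq1\}$. Finite satisfiability is a divisibility computation: given a finite $F$, pick $k$ with $q^k>\max F$ and $b\in\Gamma$ with $\mathrm{st}(b)=c/q^k$; then $\beta(b)$ holds, while for $n\in F$ one has $nc/q^k\notin P$ (since $c\notin qP$ forces $nc\notin q^kP$ once $q^k>n$, as $n$ is invertible on $P/qP$ after dividing out $q^{v_q(n)}$), so $\theta_n(b)$ fails. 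By elementarity these finite fragments are satisfiable in $(\Gamma',P')$, and $\aleph_1$-saturation yields $b'\models\pi$. Then $b'$ is bounded by $P'$ yet $nb'$ is infinitesimally close to no element of $P'$ for any $n$; equivalently $b'$ realizes an irrational cut over $\Div(P')$, whence $\tp(b'/P')$ is not definable and $(\Gamma',P')$ is not stably embedded.

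Finally, $(\Gamma',P')$ is an $\cL_P$-elementary extension of the stably embedded pair $(\Gamma,P)$ which fails to be stably embedded; hence the class of stably embedded pairs of models of $T$ is not closed under elementary equivalence, so it is not elementary. Since, as recorded in Section~\ref{sec:pairal}, uniform definability of types would make this class $\cL_P$-elementary, $T$ cannot have uniform definability of types. The main obstacle is the middle step: verifying that the standard-part hull $\Gamma$ is genuinely a model of $T$—above all that it is regular and has the correct finite quotients—because this is exactly what makes the witnessing pair simultaneously proper, stably embedded and non-convex. Once $\Gamma\models T$ is secured, both the stable embeddedness and the Part~2 compactness argument are routine.
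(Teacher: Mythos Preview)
Your proof is correct and differs from the paper's in both halves. For the first assertion, the paper simply invokes Proposition~\ref{prop:ROAG}: taking $P(\Gamma)$ to be an archimedean model of $T$ stably embedded in every elementary extension and $\Gamma$ any $|P(\Gamma)|^+$-saturated extension, the pair is automatically stably embedded, and any infinitesimal witnesses non-convexity. Your standard-part-hull construction is more laborious but self-contained (it does not rely on Proposition~\ref{prop:ROAG}); your identification of the regularity and quotient computations as the crux is accurate, and those verifications do go through as you sketch. For the second assertion, the paper builds an explicit non-definable type in a non-principal ultrapower: having found $\varepsilon$ infinitesimal with $a+\varepsilon$ divisible, the element represented by $((a+\varepsilon)/q^i)_i$ realizes an irrational cut over the ultrapower of $P$. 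Your compactness argument realizing $\{\beta(x)\}\cup\{\neg\theta_n(x):n\geq1\}$ is cleaner and conceptually more direct; the divisibility check $nc/q^k\notin P$ for $n<q^k$ is essentially the same arithmetic underlying the paper's argument. One minor imprecision: when $P/rP$ is infinite you do not get a literal isomorphism $W/rW\cong P/rP$, only that both are infinite---but this is exactly what Fact~\ref{fact:complregoag} requires, so the argument is unaffected.
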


\begin{proof} The last part follows from the first part directly.
By Facts \ref{fact:regoag}, \ref{fact:complregoag} and Proposition~\ref{prop:ROAG}, there is a stably embedded pair $(\Gamma,P(\Gamma))$ of models of $T$ that is not convex and such that $\Gamma$ is $|P(\Gamma)|^+$-saturated (take as $P(\Gamma)$ an archimedean model of $T$ which is stably embedded in every elementary extension).  

In this proof, for every $a\in \Div(\Gamma)$, we write $p_{a^\pm}$ to denote the partial definable type in $T$ given by the corresponding definable quantifier-free $\DOAG$-type. Let $q$ be prime and $a\in P(\Gamma)$ so that $a$ is not $q$-divisible (which exists since $P(\Gamma)$ is not divisible). By saturation and Fact~\ref{fact:regoag}, there is $\varepsilon\in \Gamma$ such that $\varepsilon\models p_{0^+}|P(\Gamma)$ and $a+\varepsilon$ is divisible. Now, consider any non-principal ultrapower (with index set $\N$) $(\Gamma^*, P(\Gamma^*))$ of $(\Gamma, P(\Gamma))$. We claim that $(\Gamma^*, P(\Gamma^*))$ is no longer stably embedded. Indeed, consider the element $a'\in \Gamma^*$ represented by the sequence $((a+\varepsilon)/q^i)_{i\in \mathbb{N}}$ in $\Gamma$. Let us show that $p=\tp(a'/P(\Gamma^*))$ is not definable. By quantifier elimination in $\cL$, every definable set $X\subseteq \Gamma$ is the union of finitely many points and open intervals definable in the divisible hull of $\Gamma$ together with divisibility conditions. Since the cosets of $n\Gamma$ are dense, the convex hull of $X$ does not depend on the divisibility conditions. Hence, the only convex subsets definable in $P(\Gamma^*)$ are the ones with endpoints in the divisible hull of $P(\Gamma^*)$, working in an ambient model of $\DOAG$. If $p$ were definable, since $p$ is bounded, this would mean that there is $n\in \mathbb{N}$ and $b\in P(\Gamma^*)$ such that $p\vdash p_{(b/n)^+}|P(\Gamma^*)$ or $p\vdash p_{(b/n)^-}|P(\Gamma^*)$. Now if $b$ is represented by the sequence $(b_i)_{i\in \mathbb{N}}$, for almost every $i$ (with respect to the ultrafilter) $\tp((a+\varepsilon)/q^i)/P(\Gamma))\vdash p_{(b_i/n)^{\pm}}$. On the other hand, $\tp((a+\varepsilon)/q^i)/P(\Gamma))\vdash p_{(a/q^{i})^{\pm}}$, so $a/q^{i}=b_i/n$ for almost all $i$, contradicting the fact that $a$ is not $q$-divisible. 
\end{proof}

Let $T$ denote a complete theory of dense regular non-divisible ordered abelian groups. We denote by $\cK$ the class of \sepa-pairs $\cA=(A,P(\cA))$ such that $P(\cA)$ is convex in $A$ and for every prime $q$, the embedding $P(\cA)/qP(\cA)$ into $A/qA$ is a group isomorphism. It is not difficult to see that $\cK$ is a natural class. The proof of the following result is very similar to the one for Theorem~\ref{thm:pres-complete}. We leave the details to the reader.

\begin{theorem}\label{thm:ROAG-main} Let $T_{\conv}$ denote the theory of all proper convex elementary pairs $\cM$ of models of $T$. Then $T_{\conv}$ is complete and axiomatizes $T_\BP(\cK)$, and $\cK$ has beauty transfer. \qed
\end{theorem}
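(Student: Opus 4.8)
The plan is to follow the proof strategy of Theorem~\ref{thm:pres-complete} essentially verbatim, adapting it to the more general \sse-structure framework of Remark~\ref{rem:uniform-def}(1) that is forced on us by the failure of uniform definability of types in $T$ (Proposition~\ref{prop:reg-abel}). First I would check consistency and beauty transfer. Since $T_\infty$ describes proper convex elementary pairs and such pairs exist (any model of $T$ has a proper convex elementary extension inside a sufficiently saturated model, using that convex extensions are stably embedded and lie in $\cK$ by regularity), the class $\cK$ has the amalgamation property with respect to \sse-pairal embeddings, so $\widetilde{\cK}$-beautiful pairs exist and we may invoke the analogue of Lemma~\ref{lem:beauty-transfer-cardinal} to reduce everything to showing that an $\aleph_1$-saturated model $\cM=(M,P(\cM))$ of $T_\infty$ is a beautiful pair.

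The heart of the argument is the back-and-forth step via the analogue of Lemma~\ref{lem:b-pair-finite}. Given an \sse-substructure $\cA\subseteq\cM$ and an \sse-pairal embedding $g\colon\cA\to\cB$ with $P(\cA)=P(\cB)$ and $B=\langle A\cup\{b\}\rangle$, I would first pass to definable closures so that $P(\cA)\preccurlyeq A$ are models, and use convexity of the pairs to normalise $b$. Since the pair is convex, $b$ realizes a type at infinity over $P(\cA)$ (after replacing $b$ by an interdefinable element and possibly changing sign, $b\models p_{+\infty}|P(\cA)$); here the key structural input is Lemma~\ref{lem:1types-roag}, which reduces definability of $\tp(b/P(\cA))$ to definability in $\DOAG$ over $\Div(P(\cA))$. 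Using $\aleph_1$-saturation of $\cM$ I would produce $b'\in M$ realizing $p_{+\infty}|P(\cM)\cup\tp_{\cL}(b/A)$, and then verify
\[
p\coloneqq\tp_{\cL}(b\cup A/P(\cA))|P(\cM)=\tp_{\cL}(b'\cup A/P(\cM)),
\]
which by quantifier elimination in $\cLpres$ (Fact~\ref{fact:regoag}) reduces to checking atomic formulas of the form $nx+y+c<0$, $nx+y+c>0$ and the congruences $nx+y+c\equiv_m 0$. The linear order formulas are determined by $\tp_{\cL}(b/A)=\tp_{\cL}(b'/A)$ together with the fact that $b,b'$ both sit above $P(\cM)$; the congruence formulas are handled exactly as in Theorem~\ref{thm:pres-complete} by reducing $c\in P(\cM)$ modulo $m$ to an integer residue, so that their truth again depends only on $\tp_{\cL}(b/A)$.

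The main obstacle I anticipate is the congruence condition, and this is precisely where the extra hypothesis in the definition of $\cK$ (that $P(\cA)/qP(\cA)\to A/qA$ is an isomorphism for every prime $q$) earns its keep: without it, the residue of $b'$ modulo a prime power need not be pinned down by the residue of $b$, and the map $b\mapsto b'$ would fail to be an $\cL$-elementary map on the congruence predicates. I would therefore check carefully that this isomorphism hypothesis, preserved along the chain of \sse-substructures built in the Fra\"iss\'e construction, guarantees that no new congruence information about $b$ is forced over $P(\cM)$ beyond what is already recorded over $A$. A secondary bookkeeping point is that we are working with \sse-pairal embeddings rather than genuine $\cL_\BP$-embeddings, so at each stage one must confirm that $\tp_{\cL}(A/P(\cB))=\tp_{\cL}(A/P(\cA))|P(\cB)$ is preserved; this is automatic from the construction of $b'$ as a realization of the restricted type. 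Once the map $b\mapsto b'$ is shown to induce an \sse-pairal embedding $\cB\to\cM$ over $\cA$, beauty of $\cM$ follows, and completeness of $T_\infty$ together with its axiomatisation of $T_\BP(\cK)$ and beauty transfer follow from the analogue of Corollary~\ref{cor:elem-equiv}.
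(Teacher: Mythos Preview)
Your proposal is correct and follows essentially the same approach as the paper, which explicitly states the proof is ``very similar to the one for Theorem~\ref{thm:pres-complete}'' and leaves the details to the reader. One small imprecision: your phrase ``reducing $c\in P(\cM)$ modulo $m$ to an integer residue'' is the literal Presburger step and does not apply here, since $P(\cM)/mP(\cM)$ need not be $\Z/m\Z$; the correct move (which you identify in the next paragraph) is to use the hypothesis that $P(\cA)/qP(\cA)\to A/qA$ is an isomorphism, so $nb+a$ is congruent modulo $m$ to some $d\in P(\cA)\subseteq A$, and then $nb'+a\equiv_m d$ follows from $\tp_\cL(b'/A)=\tp_\cL(b/A)$, making the congruence over any $c\in P(\cM)$ depend only on $d$.
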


Note that when $\cM$ is a convex elementary pair of models of $T$, if follows from regularity that all cosets of $nM$ are represented in $P(\cM)$ for every $n>0$. 

Observe also that while $T$ does not have uniform definability of definable types by Proposition \ref{prop:reg-abel}, by Proposition \ref{prop:UDDT(F)}, the class $\cF$ of global cobounded definable types (i.e., $\cF_\cK$ as above) has uniform definability.

\section{Beautiful pairs of pure short exact sequences}\label{sec:SES}

We study in this section beautiful pairs of short exact sequences. Our motivation comes from valued fields, where we will apply the results of this section to the short exact sequence $1\rightarrow \RF^{\times}\rightarrow \RV^\times\rightarrow\VG\rightarrow 0$. Readers only interested in $\ACVF$, $\RCVF$, or $\PCF$ can skip this section and directly go to Section \ref{sec:domination}.    

The following elementary general fact---which is well known---will be used throughout. Its proof is is left to the reader.   

\begin{fact}\label{fact:def-intersection} The following holds for a complete theory $T$, $M\preccurlyeq\cU\models T$ and substructures $M\subseteq L,N\subseteq\cU$. Assume that $\tp(L/N)$ is $M$-definable (or it does not fork over $N$). Let $D$ be an $M$-interpretable set. Then $D(L)\cap D(N)=D(M)$. \qed
\end{fact}

Let us now recall the setting of \cite{AsChGeZi20}.  

\subsection{The setting}
In this section, we consider the complete theory of an $\{\bA\}$-$\{\bC\}$-enrichment\footnote{See \cite[Appendix A]{Rid17} for a formal definition of enrichment.} of a short exact sequence $M$ of abelian 
groups 
\[
0\longrightarrow \bA(M) \overset{\iota}{\longrightarrow} \bB(M) \overset{\nu}{\longrightarrow} \bC(M) \longrightarrow 0
\]

with $\bA(M)$ a pure subgroup of $\bB(M)$. Note that $\bA$ and $\bC$ are stably embedded, orthogonal and pure sorts in $M$. Indeed, as the sequence splits in any $\aleph_1$-saturated model (this follows from the fact that any $\aleph_1$-saturated abelian group is pure injective, see \cite[Theorems 10.7.1 and 10.7.3]{hodges}), $\Th(M)$ is a reduct of $\Th(\bA(M)\times \bC(M))$, and these same properties hold in the product structure. 

We need to consider various languages in what follows. Let $\cL_a=\{0,+,-\}$ be the language of abelian groups on sort $\bA$, and let $\cL_b$ and $\cL_c$ be similarly defined. Let $\cL_a^*$ be a relational enrichment of $\cL_a$ on the sort $\bA$, $\cL_c^*$ be a relational enrichment of $\cL_c$ on the sort $\bC$, and $T_a\coloneqq \Th_{\cL_a}(\bA(M))$, $T_a^*\coloneqq \Th_{\cL_a^*}(\bA(M))$, and let $T_c$ and $T_c^*$ be similarly defined (note that arbitrary enrichments of $\cL_a^*$ and $\cL_c^*$ may always be replaced by their Morleysation). 

Let $\cL_{abc}$ be the three sorted language $\cL_a\cup \cL_b\cup \cL_c\cup \{\iota,\nu\}$, and $\cL_{abc}^*=\cL_{abc}\cup \cL_a^*\cup \cL_c^*$. Let $T_{abc}=\Th_{\cL_{abc}}(M)$ and $T^*_{abc}=\Th_{\cL^*_{abc}}(M)$. In \cite{AsChGeZi20} the authors work in $\cL_{abcq}$, an expansion by definition (with new sorts) of $\cL_{abc}$. Namely, it is defined by adding new sorts for $\bA/n\bA$ for all $n\in\mathbb{N}$ (identifying $\bA/0\bA$ with $\bA$), as well as, for every natural number $n$, unary functions $\pi_n\colon \bA\rightarrow \bA/n\bA$ (denoting the canonical projection) and  $\rho_n\colon \bB\rightarrow \bA/n\bA$ which, on $\nu^{-1}(n\bC)$, is the composition of the group homomorphisms
\[
\nu^{-1}(n\bC)=n\bB+\iota(\bA)\rightarrow (n\bB+\iota(\bA))/n\bB\cong\iota(\bA)/ (n\bB\cap\iota(\bA))\cong \bA/n\bA,
\] 
and zero outside $\nu^{-1}(n\bC)$. We additionally equip in $\cL_{abcq}$ each sort $\bA/n\bA$ with the language of abelian groups.\footnote{Note that in \cite{AsChGeZi20} the abelian group on the sorts $\bA/n\bA$ is not part of the language $\cL_{abcq}$.} 

Let $\cL^*_{abcq}=\cL^*_{abc}\cup \cL_{abcq}$, and set 
$T_{abcq}\coloneqq \Th_{\cL_{abcq}}(M)$ and $T^*_{abcq}\coloneqq \Th_{\cL^*_{abcq}}(M)$. 

Finally, let $\cL_{aq}$ and $\cL^*_{aq}$ be the expansions of $\cL_a$ and $\cL^*_a$ obtained by adding for all $n$ the sorts $\bA/n\bA$ with the language of abelian groups and the maps $\pi_n$, and set $T_{aq}\coloneqq\Th_{\cL_{aq}}(\bA(M))$ and $T^*_{aq}\coloneqq\Th_{\cL^*_{aq}}(\bA(M))$.

In what follows we denote by $\bbA$ the family of sorts 
$\{\bA/n\bA\}_{n\in\mathbb{N}}$. The proof of \cite[Corollary~4.3]{AsChGeZi20} gives the following variant where we allow variables from all sorts from $\bbA$ and where we use that in our setting we only consider $\{\bA\}$-$\{\bC\}$-enrichments. 

\begin{fact}\label{F:QE-SES}
Every $\cL_{abcq}^*$-formula $\phi(x_a,x_b,x_c)$, with $x_a, x_b, x_c$ variables from sorts $\bbA$, $\bB$ and $\bC$, respectively, is equivalent in  $T^*_{abcq}$ to a boolean combination of formulas of the following forms: 
\begin{enumerate}[(i)]
    \item $\psi_a(x_a,\rho_{n_1}(t_1(x_b)),\ldots,\rho_{n_l}(t_l(x_b)))$ where $\psi_a(x_a,y_1,\ldots,y_l)$ is an $\cL^*_{aq}$-formula, each  $t_i(x_b)$ is an $\cL_b$-term and $n_1,\ldots,n_l\in\mathbb{N}$,
   \item  $\psi_c(\nu(t_1(x_b)),\ldots,\nu(t_l(x_b)),x_c)$ where $\psi_c(y_1,\ldots,y_l,x_c)$ is an $\cL^*_{c}$-formula and where each $t_i(x_b)$ is an $\cL_b$-term.\qed
\end{enumerate}
\end{fact}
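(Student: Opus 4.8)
The plan is to re-run the proof of \cite[Corollary~4.3]{AsChGeZi20}, keeping track of the two modifications relevant here: we permit free variables from every quotient sort in $\bbA$, and we exploit that the enrichment adds no new sorts and only relations on $\bA$ and $\bC$. Throughout I would use that $\bA$ and $\bC$ are stably embedded, orthogonal and pure, that $\bA$ is pure in $\bB$, and that the sequence splits in $\aleph_1$-saturated models. After Morleyzing (as the paper already notes is harmless), I may assume $T_a^*$ in $\cL^*_{aq}$ and $T_c^*$ in $\cL^*_c$ eliminate quantifiers, so that the formulas $\psi_a$ and $\psi_c$ appearing in the normal forms (i) and (ii) may be taken arbitrary.

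The argument is by induction on formula complexity. For the base case, every atomic $\cL^*_{abcq}$-formula decomposes into a boolean combination of forms (i) and (ii) directly from the definitions of $\nu$ and the residue maps $\rho_n$: for instance an equality $t(x_b)=s(x_b)$ between $\bB$-terms is equivalent to $\nu(t-s)=0$ together with $\rho_0(t-s)=0$, the first of form (ii) and the second of form (i). The inductive step reduces to eliminating a single existential quantifier, i.e.\ to showing that $\exists w\,\theta$ is again a boolean combination of forms (i) and (ii), where $\theta$ is such a combination in which the single variable $w$, ranging over one of the sorts $\bbA$, $\bB$ or $\bC$, is now allowed among the variables. I then split into cases on the sort of $w$. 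If $w$ lies in some $\bA/n\bA$ or in $\bC$, the elimination is immediate: such $w$ occurs only inside subformulas of type (i) respectively (ii), and by stable embeddedness and quantifier elimination for $T_a^*$ respectively $T_c^*$ the quantifier is absorbed into a new $\psi_a$ respectively $\psi_c$. It is precisely here that the added group structure on the sorts $\bA/n\bA$ together with the projections $\pi_n$ matter, since they make $\cL^*_{aq}$ rich enough to keep the outcome in form (i) when $w$ ranges over a quotient sort.

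The heart of the matter, and the step I expect to be the main obstacle, is the case $w\in\bB$. Here the terms $t_i(x_b)$ may involve $w$, so the constraints on $w$ are felt simultaneously on the $\bA$-side, through the residues $\rho_n(w)$, and on the $\bC$-side, through $\nu(w)$. The key point, already present in \cite{AsChGeZi20}, is that the data attached to $w\in\bB$ is exactly the pair consisting of $\nu(w)\in\bC$ and the family $(\rho_n(w))_n$ in $\bbA$, and that by orthogonality together with the splitting these two families of constraints are \emph{independent}. Thus $\exists w\,\theta$ can be rewritten as an existential over $\bC$, contributing a form-(ii) formula, together with existentials over the sorts $\bA/n\bA$, contributing form-(i) formulas, \emph{provided} one can express the compatibility condition that a prescribed value $\nu(w)=c$ and prescribed residues $\rho_n(w)=\bar a_n$ are jointly realised by some $w\in\bB$.

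This realisability is the delicate point: it is governed precisely by the purity of $\bA$ in $\bB$ and by the definition of $\rho_n$ as the $n$-divisibility obstruction on $\nu^{-1}(n\bC)$, and I would need to check that it is itself expressible as a boolean combination of formulas of types (i) and (ii). The splitting in $\aleph_1$-saturated models is what guarantees that any compatible choice is actually attained. Finally, the restriction to $\{\bA\}$-$\{\bC\}$-enrichments enters here as well: it ensures that no enrichment relation couples $\bB$ to the base sorts other than through $\nu$ and the $\rho_n$, so that no connecting terms beyond those displayed in (i) and (ii) can arise during the elimination.
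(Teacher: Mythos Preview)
The paper does not provide its own proof of this statement; it is recorded as a Fact with a citation to \cite[Corollary~4.3]{AsChGeZi20}, together with the remark that the proof there yields the stated variant once one allows variables from all sorts in $\bbA$ and uses that only $\{\bA\}$-$\{\bC\}$-enrichments are considered. There is therefore no proof in the paper against which to compare your proposal; your sketch is in effect an attempt to reconstruct the argument behind the citation.

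As such a reconstruction, your outline is broadly reasonable and correctly identifies the relevant structural ingredients (orthogonality and pure stable embeddedness of $\bA$ and $\bC$, purity of $\bA$ in $\bB$, and the splitting in $\aleph_1$-saturated models). The one place where the sketch is loose is the elimination of a $\bB$-quantifier. Your assertion that ``the data attached to $w\in\bB$ is exactly the pair $\nu(w)$ and $(\rho_n(w))_n$'' is not quite the operative reduction: the formulas of form (i) involve $\rho_n(t(w,x_b))$ for $\cL_b$-terms $t$ mixing $w$ with the parameters $x_b$, and since $\rho_n$ is only defined on $\nu^{-1}(n\bC)$ and is not a global homomorphism, these do not decompose simply as functions of $\rho_n(w)$. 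What the splitting actually buys is the ability, in a saturated model, to write $w=\iota(a)+\sigma(c)$ for a section $\sigma\colon\bC\to\bB$, turning $\exists w\in\bB$ into $\exists a\in\bA\,\exists c\in\bC$; one then verifies term by term that each $\rho_n(mw+s(x_b))$ and each $\nu(mw+s(x_b))$ rewrites in terms of $a$, $c$ and the $\rho_n$- and $\nu$-images of the parameter terms, landing back in forms (i) and (ii). Your sketch gestures at this mechanism but does not carry it out, and the ``compatibility condition'' you flag as delicate is in fact handled automatically by this decomposition rather than requiring a separate expressibility argument.
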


\subsection{Beautiful pairs}\label{subsec:bp-exact}

Throughout this subsection we let $\cL$ be $\cL_{abcq}^*$. Given $M$ and $N$ two superstructures of $K$, we sometimes write $K\langle M,N\rangle$ instead of $\langle M, N\rangle$ to emphasize that $K$ is a common substructure of $M$ and $N$.

\begin{lemma}\label{L:12-11for-SES}
Let $M_0\preccurlyeq M\preccurlyeq \cU$ be such that $\cU$ is a sufficiently saturated and homogeneous model of $T_{abcq}^*$, and let $M_0\subseteq N\subseteq \cU$ be an $\cL$-substructure of $\cU$. Assume the following holds:
\begin{enumerate}[label=(\arabic*)]
    \item $\bA/n\bA(N)\cap \bA/n\bA(M)=\bA/n\bA(M_0)$ for any $n\in\mathbb{N}$, and 
    \item $\tp_{\cL_c}(\bC(N)/\bC(M))$ does not fork over  $\bC(M_0)$.
\end{enumerate}
Let $\sigma$ and $\tau$ be two automorphisms of $\cU$ over $M_0\bbA(M)\bC(M)$. Then, there is an automorphism $\eta$ of $\cU$ mapping 
$M_0\langle M,N\rangle$ to $M_0\langle \sigma(M),\tau(N)\rangle$ such that  $\eta_{|M}=\sigma_{|M}$ and $\eta_{|N}=\tau_{|N}$. Furthermore $\bbA(\langle M, N\rangle)=\langle 
\bbA(M),\bbA(N)\rangle$ and $\bC(\langle M, N\rangle)=\langle 
\bC(M), \bC(N)\rangle$. 
\end{lemma}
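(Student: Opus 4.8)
\emph{Overall strategy.} The plan is to build a single $\cL$-isomorphism $\eta_0\colon M_0\langle M,N\rangle \to M_0\langle \sigma(M),\tau(N)\rangle$ that agrees with $\sigma$ on $M$ and with $\tau$ on $N$, to verify that $\eta_0$ is partial elementary using the quantifier elimination of Fact~\ref{F:QE-SES}, and then to extend it to the desired $\eta\in\Aut(\cU)$ by the saturation and homogeneity of $\cU$. Since $M_0\subseteq M\cap N$ and $\sigma,\tau$ fix $M_0$ pointwise, one has $M_0\langle M,N\rangle=\langle M,N\rangle$ and $M_0\langle \sigma M,\tau N\rangle=\langle \sigma M,\tau N\rangle$, so I shall work with $\langle M,N\rangle$. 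The definition of $\eta_0$ is forced: the only function mapping into $\bB$ is $\iota$, whose values already lie in $\bB(M)$ and $\bB(N)$, so $\bB(\langle M,N\rangle)=\bB(M)+\bB(N)$, and I set $\eta_0(b_M+b_N)=\sigma(b_M)+\tau(b_N)$; the maps on the remaining sorts are then determined through $\iota,\nu,\pi_n,\rho_n$.

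\emph{Structural (furthermore) part.} I would establish the two displayed equalities first, as they describe the relevant sorts and are needed for both well-definedness and elementarity. For $\bC$ this is immediate: the only function into $\bC$ is the homomorphism $\nu$, so $\bC(\langle M,N\rangle)=\bC(M)+\nu(\bB(M)+\bB(N))+\bC(N)=\bC(M)+\bC(N)$. For $\bbA$ the delicate point — and the main obstacle — is the behaviour of $\rho_n$, which is a homomorphism only on $\nu^{-1}(n\bC)$ and vanishes outside, so one cannot a priori split $\rho_n(b_M+b_N)$. I would resolve this as follows. If $b_M+b_N\in\nu^{-1}(n\bC)$, write $\nu(b_M)+\nu(b_N)=n(c_M+c_N)$ with $c_M\in\bC(M)$, $c_N\in\bC(N)$; then $\nu(b_M)-nc_M=-\nu(b_N)+nc_N$ lies in $\bC(M)\cap\bC(N)=\bC(M_0)$ by hypothesis~(2), call it $c_0$. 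By exactness of the sequence in the model $M_0$ there is $d\in\bB(M_0)$ with $\nu(d)=c_0$, whence $b_M-d\in\bB(M)\cap\nu^{-1}(n\bC)$ and $b_N+d\in\bB(N)\cap\nu^{-1}(n\bC)$ have sum $b_M+b_N$. The homomorphism property of $\rho_n$ on $\nu^{-1}(n\bC)$ then gives $\rho_n(b_M+b_N)=\rho_n(b_M-d)+\rho_n(b_N+d)\in\bbA(M)+\bbA(N)$. With the trivial case $b_M+b_N\notin\nu^{-1}(n\bC)$ and the contribution of $\pi_n$, this yields $\bbA(\langle M,N\rangle)=\langle \bbA(M),\bbA(N)\rangle$. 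A variant of the same computation, now using hypothesis~(1) at $n=0$ (that is, $\bA(M)\cap\bA(N)=\bA(M_0)$) together with~(2), shows $\bB(M)\cap\bB(N)=\bB(M_0)$, which I will need below.

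\emph{Well-definedness and the structure maps.} Given $\bB(M)\cap\bB(N)=\bB(M_0)$, the map $\eta_0$ is well defined on $\bB$: if $b_M+b_N=b_M^\sharp+b_N^\sharp$ then $b_M-b_M^\sharp\in\bB(M_0)$ is fixed by both $\sigma$ and $\tau$, so $\sigma(b_M)+\tau(b_N)=\sigma(b_M^\sharp)+\tau(b_N^\sharp)$. A routine check shows $\eta_0$ commutes with $\iota$ and $\pi_n$ (using that $\sigma,\tau$ fix $\bA(M)\subseteq\bbA(M)$ and are automorphisms), with $\nu$ (so that $\eta_0$ restricts to $\tau$ on $\bC(M)+\bC(N)$, since $\sigma$ fixes $\bC(M)$ and hence $\eta_0(c_M+c_N)=c_M+\tau(c_N)=\tau(c_M+c_N)$), and finally with $\rho_n$: applying $\eta_0$ to the splitting $b_M+b_N=(b_M-d)+(b_N+d)$ from the previous step and using $\sigma(d)=d=\tau(d)$ shows $\eta_0(\rho_n(b_M+b_N))=\rho_n(\eta_0(b_M+b_N))$, and one checks likewise that $\eta_0$ acts as $\tau$ on $\bbA(M)+\bbA(N)$.

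\emph{Elementarity and extension.} By Fact~\ref{F:QE-SES} it suffices that $\eta_0$ preserve formulas of the two listed forms. For type~(ii) the $\bC$-variables and the $\nu$-values lie in $\bC(M)+\bC(N)$, on which $\eta_0=\tau$; as $\tau\in\Aut(\cU)$, every $\cL_c^*$-formula is preserved. For type~(i) the $\bbA$-variables and, by the previous step, the $\rho_n$-values lie in $\bbA(M)+\bbA(N)$, on which again $\eta_0=\tau$, so the $\cL_{aq}^*$-formula $\psi_a$ is preserved. Hence $\eta_0$ is partial elementary. Taking $\cU$ sufficiently saturated and homogeneous relative to $|M|+|N|$, one extends $\eta_0$ to $\eta\in\Aut(\cU)$; by construction $\eta_{|M}=\sigma_{|M}$ and $\eta_{|N}=\tau_{|N}$, and $\eta$ maps $\langle M,N\rangle$ onto $\langle \sigma M,\tau N\rangle$, as required. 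The crux of the whole argument is the splitting of the $\rho_n$-values across $M$ and $N$, which is exactly where hypotheses~(1) and~(2) and the exactness in $M_0$ come into play.
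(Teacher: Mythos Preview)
Your overall architecture matches the paper's proof closely: reduce to defining $\eta_0$ on $\bB(M)+\bB(N)$ by $b_M+b_N\mapsto\sigma(b_M)+\tau(b_N)$, check well-definedness via $\bB(M)\cap\bB(N)=\bB(M_0)$, show compatibility with $\nu$ and $\rho_n$, and invoke Fact~\ref{F:QE-SES}. The paper also simplifies at the outset by assuming $\tau=\id$ and $N$ is $\bB$-generated, but this is cosmetic.

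There is, however, a genuine gap in your treatment of $\rho_n$. You write ``$\nu(b_M)+\nu(b_N)=n(c_M+c_N)$ with $c_M\in\bC(M)$, $c_N\in\bC(N)$'' and then deduce that $\nu(b_M)-nc_M\in\bC(M)\cap\bC(N)=\bC(M_0)$. But the existence of such a decomposition is exactly what is at issue: you would need $n\bC(\cU)\cap(\bC(M)+\bC(N))\subseteq n\bC(M)+n\bC(N)$, i.e., that $\bC(M)+\bC(N)$ is $n$-pure in $\bC(\cU)$, and nothing in the hypotheses guarantees this (recall $\bC(N)$ is merely a substructure). The paper avoids this by a different route: since $T_c$ is stable (indeed one-based, being the theory of an abelian group) and $\bC(M_0)$ is a model with $\bC(M)\cap\bC(N)=\bC(M_0)$, the type $\tp_{\cL_c}(\nu(b_M)/\bC(N))$ is finitely satisfiable in $\bC(M_0)$. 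Applying this to the formula ``$\nu(b_M)+y\in n\bC$'', which is witnessed by $y=\nu(b_N)\in\bC(N)$, yields $c_0\in\bC(M_0)$ with $\nu(b_M)+c_0\in n\bC(\cU)$; then take $d\in\bB(M_0)$ with $\nu(d)=c_0$ by exactness in $M_0$. From this point your argument proceeds verbatim: $b_M-d$ and $b_N+d$ both land in $\nu^{-1}(n\bC)$ and $\rho_n$ splits. So the fix is local---replace your unjustified decomposition by the finite-satisfiability step---and the remainder of your proof stands.
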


\begin{proof}
Without loss of generality, we may assume that $\tau$ is the identity. Suppose first that $N=\langle\bB(N)\rangle$.

\begin{claim}\label{cla:intersection} $\bB(N)\cap \bB(M)=\bB(M_0)$, equivalently $\bB(N)+\bB(M)$ is naturally isomorphic to $\bB(N)\oplus_{\bB(M_0)}\bB(M)$. 
\end{claim}

 Consider $\beta\in \bB(N)\cap \bB(M)$. As $\nu(\beta)\in \bC(N)\cap \bC(M)=\bC(M_0)$ (by assumption (2)), there is $\beta_0\in \bB(M_0)$ such that 
$\nu(\beta)=\nu(\beta_0)$, i.e., $\beta-\beta_0\in\bA(N)\cap \bA(M)=\bA(M_0)$, 
so finally $\beta\in \bB(M_0)$. This completes the proof of the claim.  

\medskip

\begin{claim}\label{cla:generation} For any $n\in\mathbb{N}$ we have $\bA/n\bA(\langle M,N\rangle)=\bA/n\bA(M)+\bA/n\bA(N)$, where the  latter is isomorphic to $\bA/n\bA(M)\oplus_{\bA/n\bA(M_0)}\bA/n\bA(N)$.  
\end{claim}

It is easy to see that 
$\bA/n\bA(\langle M, N\rangle)$ is generated, as an abelian group, by the subset $\bA/n\bA(M)\cup \bA/n\bA(N)\cup\rho_n(\bB(M)+\bB(N))$, and so it suffices to show that for any $b\in \bB(M)$ and $d\in \bB(N)$ we have $\rho_n(b+d)\in \bA/n\bA(M))+\bA/n\bA(N)$. We may assume that $\nu(b+d)\in n\bC(\cU)$. By assumption (2) and stability in $\cL_c$, $\tp_{\cL_{c}}(\nu(b)/\bC(N))$ is finitely satisfiable in $\bC(M_0)$. Then, there is $d_0\in \bB(M_0)$ such that $\nu(b+d_0)=\nu(b)+\nu(d_0)\in n\bC(\cU)$. Thus $\rho_n(b+d)=\rho_n((b+d_0)+(d-d_0))=\rho_n(b+d_0)+\rho_n(d-d_0)\in \bA/n\bA(M)+\bA/n\bA(N)$. This completes the proof of the claim.  
\medskip

Note that $M$ is also $\bB$-generated, since it is a model. It follows that $\langle M,N\rangle$ is also $\bB$-generated. Moreover, since $\bB(\langle M,N\rangle)= \bB(M)+ \bB(N)$, the map $\eta\colon\bB(M)+\bB(N)\to \cU$ given by $\eta(b_1+b_2)=\sigma(b_1)+b_2$ is well-defined by Claim \ref{cla:intersection} and the fact that $\tau$ is the identity on $M_0$. This map extends to a homomorphism on $\langle \bB(M), \bB(N)\rangle=\langle M,N\rangle$. The proof of Claim \ref{cla:generation} shows moreover that $\eta$ respects $\rho_n$ for every $n\geqslant 0$. By Fact \ref{F:QE-SES}, this shows that $\eta$ is an elementary map and hence, by homogeneity, it can be lifted to the desired automorphism.

\medskip

Let us now give the argument for a general $N$. Assumptions (1) and (2) for $N$ imply the same assumptions for $\widetilde{N}=\langle \bB(N)\rangle$. Therefore there is $\rho\in \Aut(\cU)$ mapping $M_0\langle M,\widetilde{N}\rangle$ to $M_0\langle \sigma(M),\widetilde{N}\rangle$ such that  $\rho_{|M}=\sigma_{|M}$, $\rho_{|\widetilde{N}}=\id_{|\widetilde{N}}$,  $\bbA(\langle M, \widetilde{N}\rangle)=\langle 
\bbA(M),\bbA(\widetilde{N})\rangle$ and $\bC(\langle M, \widetilde{N}\rangle)=\langle 
\bC(M), \bC(\widetilde{{N}})\rangle$. Every element in $N\setminus \widetilde{N}$ is either in $\bbA$ or $\bC$. Therefore $\rho_{|M_0\langle M,\widetilde{N}\rangle}$ together with the identity on $N$ induces an $\cL$-isomorphism mapping $M_0\langle M,N\rangle$ to $M_0\langle \sigma(M),N\rangle$. Hence, by homogeneity, there is $\eta\in\Aut(\cU)$ extending such a map. The last statement of the proposition follows from the assumptions, together with the corresponding statement for $\widetilde{N}$.  
\end{proof}

The following is a domination result for short exact sequences. Part (1) slightly generalizes \cite[Proposition 2.12]{touchard}.

\begin{lemma}\label{C:SES-beauty-extension}
Let $M_0\preccurlyeq \cU\models T^*_{abcq}$ and let $M_0\subseteq N\subseteq \cU$, where $N$ is an $\cL^*_{abcq}$-substructure of $\cU$. Then we have:
\begin{enumerate}[label=(\arabic*)]
    \item $\tp_{\cL^*_{abcq}}(N/M_0)$ is a definable type  if and only if the types  $\tp_{\cL^*_{aq}}(\mathcal{\bbA}(N)/\bbA(M_0))$ and $\tp_{\cL^*_c}(\bC(N)/\bC(M_0))$ are both definable;
    \item assuming that $\tp_{\cL_{abcq}^*}(N/M_0)$ is definable and $M_0\preccurlyeq M$, we have 
\[
    \left(\begin{array}{l} \tp_{\cL_{abcq}}(N/M_0)\cup\tp_{\cL_{aq}^*}(\bbA(N)/ \bbA(M_0))\mid \bbA(M) \\
\cup\tp_{\cL_{c}^*}(\bC(N)/\bC(M_0))\mid \bC(M)  \end{array}\right)
\vdash \tp_{\cL_{abcq}^*}(N/M_0)\mid M.
\]
\end{enumerate}
\end{lemma}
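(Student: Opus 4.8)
The plan is to reduce every question to the two stably embedded sorts $\bbA$ and $\bC$ by means of the quantifier-elimination result of Fact~\ref{F:QE-SES}, and then to decide a formula of shape (i) using the enriched type of $\bbA(N)$ and a formula of shape (ii) using that of $\bC(N)$. All the genuinely group-theoretic content---the values of $\nu$ and of the connecting maps $\rho_n$---will be carried either by the $\cL_{abcq}$-reduct type or by the correction argument already isolated in Claim~\ref{cla:generation}.

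For Part (1), the forward direction is immediate from the setting: $\bbA$ and $\bC$ are stably embedded and pure, so restricting a scheme of definition for $\tp_{\cL^*_{abcq}}(N/M_0)$ to parameters from these sorts produces $\cL^*_{aq}$- and $\cL^*_c$-definitions for $\tp_{\cL^*_{aq}}(\bbA(N)/\bbA(M_0))$ and $\tp_{\cL^*_c}(\bC(N)/\bC(M_0))$. For the converse I first observe that the $\cL_{abcq}$-reduct is a reduct of a product of pure abelian groups, hence stable, so $\tp_{\cL_{abcq}}(N/M_0)$ is automatically definable. It then suffices to build, for each $\cL^*_{abcq}$-formula $\varphi(x,y)$, a $\varphi$-definition over $M_0$; by Fact~\ref{F:QE-SES}, after partitioning $x$ and $y$ into their $\bbA$-, $\bB$- and $\bC$-parts, I may assume $\varphi$ is of shape (i) or (ii). In shape (ii) the only $\bB$-dependence is through $\nu(t_j(x_b,y_b))$, and since $\nu$ is a homomorphism this splits as $\nu$ of the $x_b$-part plus $\nu$ of the $y_b$-part; substituting $x_b\mapsto\bB(N)$ turns $\varphi$ into an $\cL^*_c$-condition on $\bC(N)$, decided by $\tp_{\cL^*_c}(\bC(N)/\bC(M_0))$. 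In shape (i) the analogous splitting of $\rho_{n}(t_j(x_b,y_b))$ is handled exactly as in Claim~\ref{cla:generation}: finite satisfiability in the pure group $\bC$ of $\tp_{\cL_c}(\nu(b)/\bC(N))$ in $\bC(M_0)$ yields a correcting $d_0\in\bB(M_0)$ along which $\rho_n$ does split into an $\bbA(N)$-part and a parameter-part, and the resulting $\cL^*_{aq}$-condition on $\bbA(N)$ is decided by $\tp_{\cL^*_{aq}}(\bbA(N)/\bbA(M_0))$. Combining these with the definable reduct type gives the $\varphi$-definition.

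For Part (2) I run the same reduction, but now over $M$. Let $N'$ realise the three partial types on the left-hand side; I must show $\tp_{\cL^*_{abcq}}(N'/M)=\tp_{\cL^*_{abcq}}(N/M)$. Fixing an $\cL^*_{abcq}(M)$-formula and reducing it via Fact~\ref{F:QE-SES}, I treat the parameters from $M$ as extra variables in the sorts $\bbA(M)$, $\bB(M)$, $\bC(M)$. The reduct type $\tp_{\cL_{abcq}}(N/M_0)$ fixes all the group-theoretic bookkeeping---the $\nu$-values, the coset memberships $\nu(\cdot)\in n\bC$, and the $\rho_n$-decompositions of the relevant $\bB$-terms, again through the $d_0$-correction of Claim~\ref{cla:generation}. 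After this bookkeeping, each shape-(ii) formula becomes an $\cL^*_c$-condition on $\bC(N)$ with parameters in $\bC(M)$, decided by $\tp_{\cL^*_c}(\bC(N)/\bC(M_0))\mid\bC(M)$, and each shape-(i) formula becomes an $\cL^*_{aq}$-condition on $\bbA(N)$ with parameters in $\bbA(M)$, decided by $\tp_{\cL^*_{aq}}(\bbA(N)/\bbA(M_0))\mid\bbA(M)$. Since $N'$ satisfies all three partial types, it returns the same answers as $N$ on each such atom, and hence on the original formula.

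The main obstacle is the non-homomorphic behaviour of the connecting maps $\rho_n$: defined as a homomorphism only on $\nu^{-1}(n\bC)$ and zero outside, $\rho_n$ of a mixed $\bB$-term $t(x_b,y_b)$ does not split naively into an $N$-part and a parameter-part. Overcoming this---by importing the finite-satisfiability correction of Claim~\ref{cla:generation} and verifying that the coset-membership conditions it relies on are themselves controlled by the $\bC$-data---is the technical heart of both parts; once Fact~\ref{F:QE-SES} has confined the problem to the two stably embedded sorts, everything else is formal.
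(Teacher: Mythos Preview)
Your approach is correct, and for Part (1) it is essentially the paper's proof with more detail spelled out. One minor inaccuracy: in Part (1) you invoke the finite-satisfiability argument from Claim~\ref{cla:generation}, but this is unnecessary there. Since $M_0\preceq\cU$, every coset of $n\bC(\cU)$ already meets $\bC(M_0)$, so for any $s_j\in\bB(N)$ a correcting $d_0\in\bB(M_0)$ with $\nu(s_j+d_0)\in n\bC$ exists directly; no stability or intersection hypothesis is needed. The finite-satisfiability trick only becomes relevant in Part (2), where the parameter lives in $\bB(M)\supsetneq\bB(M_0)$.

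For Part (2) you take a genuinely different route from the paper. The paper invokes Lemma~\ref{L:12-11for-SES} as a black box: any $N'$ realizing the left-hand side satisfies the intersection conditions (by Fact~\ref{fact:def-field}(2) applied to the $\bbA$- and $\bC$-extensions), so the lemma provides an automorphism of $\cU$ over $M$ taking $N'$ to any other realization, whence the type over $M$ is determined. You instead do a formula-by-formula syntactic verification, reducing via Fact~\ref{F:QE-SES} and then splitting each $\bB$-term into an $N'$-part and an $M$-part using the $d_0$-correction. This works, but note that the $d_0$-correction you import \emph{is} the proof of Claim~\ref{cla:generation}, and to justify it you need the intersection condition $\bC(N')\cap\bC(M)=\bC(M_0)$, which you should state follows from $\bC(N')$ realizing the canonical extension. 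So in effect you are reproving the relevant content of Lemma~\ref{L:12-11for-SES} inline. The paper's route is shorter and makes the structural reason (automorphism conjugacy) transparent; yours is more self-contained and shows concretely how each atomic formula is decided, which has some expository value.
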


\begin{proof}
For (1), from left-to-right, the result follows from the fact that $\bbA(M_0)$ is stably embedded in $M_0$ with induced structure given by $\cL^*_{aq}$ and the fact that $\bC(M_0)$ is stably embedded in $M_0$ with induced structure given by $\cL^*_{c}$. For the converse, first note that $T_{abcq}$ is stable\footnote{The theory of abelian groups is stable, and $T_{abcq}$ is intrepretable in the product $\bA\times \bC$ with the projections named. Moreover, a product of stable structures is stable.}, so $\tp_{\cL_{abcq}}(N/M_0)$ is a definable type. Then, by Fact~\ref{F:QE-SES}, $\tp_{\cL^*_{abcq}}(N/M_0)$ is implied by $\tp_{\cL_{abcq}}(N/M_0)\cup \tp_{\cL^*_{aq}}(\bbA(N)/\bbA(M_0))\cup \tp_{\cL^*_c}(\bC(N)/\bC(M_0))$, which shows the result.  

\medskip

For part (2), for any two realisations $N_1,N_2$ of the left-hand side of the condition in (2), let $\tau$ be an automorphism of $\cU$ over $M_0\bbA(M)\bC(M)$ sending $N_1$ to $N_2$, and let $\sigma$ be the identity of $\cU$. Then Lemma \ref{L:12-11for-SES} gives $\eta$, an automorphism of $\cU$ fixing $M$ sending $N_1$ to $N_2$. 
\end{proof}

Suppose that $\cK_{\mathscr{A}}$ is a natural class of \sepa-pairs of $T^*_{aq}$, satisfying the extension property, such that $\cK_{\mathscr{A}}$-beautiful pairs exist and $\cK_{\mathscr{A}}$ has beauty transfer. Similarly, suppose that $\cK_C$ is a natural class of \sepa-pairs of $T^*_{c}$, satisfying extension, such that $\cK_C$-beautiful pairs exist and $\cK_C$ has beauty transfer. Let $\cK$ be the subclass of $\cK_\Def$ induced by $\cK_{\mathscr{A}}$ and $\cK_C$, i.e., the corresponding pairs lie in $\cK_{\mathscr{A}}$ and $\cK_C$. It is not difficult to see $\cK$ is a natural class (for closure under base extension, one uses the furthermore part of Lemma \ref{L:12-11for-SES}). 

\begin{theorem}\label{thm:exact}
Let $\cK_\mathscr{A}$, $\cK_C$ and $\cK$ be as defined above. Then, $\cK$ satisfies the extension property and $\cK$-beautiful pairs exist. Moreover, $\cK$ has beauty transfer and $T_\BP(\cK)$ is axiomatized by the following conditions on an $\cL_P$-structure $\cM=(M,P(\cM))$ 
\begin{itemize}
    \item $P(\cM)\preccurlyeq M\models T_{abcq}^*$
    \item $(\bbA(M),\bbA(P(\cM)))\models T_\BP(\cK_{\mathscr{A}})$ and $(\bC(M),\bC(P(\cM)))\models T_\BP(\cK_C)$.
\end{itemize}
\end{theorem}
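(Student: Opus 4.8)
The plan is to run the standard template used for these axiomatization results (as in the proofs of Theorems~\ref{thm:omin-beauty} and~\ref{thm:DOAG-complete}). Write $T'$ for the displayed axiom scheme on an $\cL_\BP$-structure $\cM=(M,P(\cM))$. I would show two things: that $T'$ is consistent, and that for every $\lambda\geqslant|T|^+$ every $\lambda$-saturated model of $T'$ is a $\lambda$-$\cK$-beautiful pair. Granting these, $\cK$-beautiful pairs exist, beauty transfer follows from Lemma~\ref{lem:beauty-transfer-cardinal}, completeness and quantifier elimination from Theorem~\ref{thm:qe}, and since every model of $T'$ satisfies $P(\cM)\preccurlyeq M$ all beautiful pairs are elementary, whence $\cK$ has the extension property by Theorem~\ref{thm:charct-beaut-pairs}. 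Thus both the extension property and the axiomatization are consequences of the single assertion ``saturated models of $T'$ are beautiful''.

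For consistency I would start from a $\lambda$-saturated $\cK_{\mathscr{A}}$-beautiful pair $\mathscr{A}=(\mathscr{A}_1,\mathscr{A}_0)$ and a $\lambda$-saturated $\cK_C$-beautiful pair $(\bC_1,\bC_0)$; both are \emph{elementary} pairs because $\cK_{\mathscr{A}}$ and $\cK_C$ have the extension property and their beautiful pairs exist (Theorem~\ref{thm:charct-beaut-pairs}). Writing $\bA_0\preccurlyeq\bA_1$ for the underlying groups, I would use that the sequence splits in saturated models and that $\Th(M)$ is a reduct of $\Th(\bA(M)\times\bC(M))$ to form the split short exact sequences $\bB_i=\bA_i\oplus\bC_i$ and let $\cM=(M,P(\cM))$ be the resulting pair. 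Its $\bbA$- and $\bC$-reducts are the chosen component pairs, and by Fact~\ref{F:QE-SES} one gets $P(\cM)\preccurlyeq M$, so $\cM\models T'$; by Lemma~\ref{C:SES-beauty-extension}(1) the type $\tp_\cL(M/P(\cM))$ is definable, so $\cM\in\cK$.

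The core is to verify the two conditions of Lemma~\ref{lem:b-pair-finite} for a $\lambda$-saturated $\cM\models T'$. Condition~(i), that $P(\cM)$ is $\lambda$-saturated, follows by transferring $\lambda$-saturation to the $\bbA$- and $\bC$-pair reducts, invoking beauty transfer of the two component theories to conclude that $\bbA(P(\cM))$ and $\bC(P(\cM))$ are $\lambda$-saturated, and using once more that $\Th(P(\cM))$ is a reduct of the product. For condition~(ii), after the usual reduction I may assume $P(\cA)=P(\cB)=:P_0$ is a model with $P_0\preccurlyeq P(\cM)$ (using closure of $\cK$ under base extension) and $\cB=\langle A\cup\{b\}\rangle$ for a single $b$ in sort $\bB$; then $\bbA(\cB)=\langle\bbA(\cA)\cup\{\rho_n(b)\}_n\rangle$ and $\bC(\cB)=\langle\bC(\cA)\cup\{\nu(b)\}\rangle$. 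Applying $\lambda$-$\cK_{\mathscr{A}}$-beauty of $(\bbA(M),\bbA(P(\cM)))$ and $\lambda$-$\cK_C$-beauty of $(\bC(M),\bC(P(\cM)))$ through Lemma~\ref{lem:b-pair-finite} yields $\cL_\BP$-embeddings of these one-point component extensions over $\bbA(\cA)$ and $\bC(\cA)$, producing targets $\alpha_n$ and $\gamma$ in $\bbA(M)$ and $\bC(M)$ whose types over the respective component models are the correct definable extensions.

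The main obstacle, and the point where the short-exact-sequence structure enters, is to \emph{lift} this component data to a genuine element $b'\in\bB(M)$ with $\rho_n(b')=\alpha_n$ for all $n$ and $\nu(b')=\gamma$, realizing the correct $\cL^*_{abcq}$-type over $\langle A\cup P(\cM)\rangle$. Here I would show that the partial type over the small set $A\cup\{\alpha_n\}_n\cup\{\gamma\}$ imposing these conditions, together with the (definable, stable) $\cL_{abcq}$-reduct type of $b$ over $A$, is consistent and hence realized by $\lambda$-saturation of $M$: its consistency is exactly what Fact~\ref{F:QE-SES} and the amalgamation Lemma~\ref{L:12-11for-SES} provide, while Lemma~\ref{C:SES-beauty-extension}(2) guarantees that matching the $\bbA$- and $\bC$-components over $\bbA(P(\cM))$ and $\bC(P(\cM))$, plus the stable reduct over $P_0$, already pins down $\tp_{\cL^*_{abcq}}(b'/P(\cM))$ as the canonical definable extension, so that the canonical parameter functions are preserved. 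Coherence of the residues across different $n$ is automatic since the $\bbA$-embedding respects the transition maps between the $\bA/n\bA$. By Fact~\ref{F:QE-SES} the resulting map $b\mapsto b'$ then preserves every $\cL$-formula as well as the functions $c_\varphi$, so it is the desired $\cL_\BP$-embedding $\cB\to\cM$ over $\cA$, completing condition~(ii) and hence the proof.
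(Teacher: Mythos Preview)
Your approach is essentially the same as the paper's: consistency via the product construction, then showing a saturated model of $T'$ is $\cK$-beautiful by embedding the $\bbA$- and $\bC$-components via beauty transfer on those sorts, lifting to $\bB$ by $\VF$-free quantifier elimination and saturation, and invoking the domination statement Lemma~\ref{C:SES-beauty-extension}(2) to verify that the resulting map is an $\cL_\BP$-embedding. The only cosmetic difference is that the paper embeds all of $\cB$ at once rather than reducing to a single generator via Lemma~\ref{lem:b-pair-finite}; in particular your description $\bbA(\cB)=\langle\bbA(\cA)\cup\{\rho_n(b)\}_n\rangle$ is not quite right (terms like $\rho_n(b+a)$ with $\nu(b)\notin n\bC$ but $\nu(b+a)\in n\bC$ need not lie there), but this is harmless since all you use is that $\bbA(\cB)$ is a small member of $\cK_{\mathscr{A}}$.
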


\begin{proof}
Let $T$ be the list of axioms given in the statement. Note that $T$ is consistent, simply by taking the product of a model of $T_\BP(\cK_{\mathscr{A}})$ and a model of $T_\BP(\cK_C)$. So let $\cM$ be an $|\cL|^+$-saturated model of $T$. By Lemma \ref{lem:beauty-transfer-cardinal}, it suffices to show $\cM$ is an  $|\cL|^+$-$\cK$-beautiful pair. So let $\cA\to \cM$ and $\cA\to \cB$ be $\BP$-embeddings with $|\cB|\leq|\cL|$. 

\medskip

\emph{Step 0.} Without loss of generality, we may suppose $\cA\subseteq \cM$, and by Lemma \ref{lem:b-pair-finite} and Lemma-definition~\ref{lem:base-change}, we may further assume $P(\cA)=P(\cB)$. 

\medskip

\emph{Step 1.} Since $\cB\in \cK$, we have $\bbA(\cB)=(\bbA(B), \bbA(P(\cB)))\in \cK_\mathscr{A}$ and similarly $\bC(\cB)=(\bC(B),\bC(P(\cB)))\in \cK_C$. By beauty transfer of $\cK_{\mathscr{A}}$ and beauty transfer of $\cK_C$, $\bbA(\cM)=(\bbA(M),\bbA(P(\cM)))$ is $
|\cL|^+$-$\cK_\mathscr{A}$-beautiful and $\bC(\cM)=(\bC(M),\bC(P(\cM)))$ is $|\cL|^+$-$\cK_C$-beautiful. In particular, there is an $(\cL_{aq}^*)_{\BP}$-embedding $g\colon \bbA(\cB)\to \bbA(\cM)$ over $\bbA(\cA)$, and similarly, there is an $(\cL_c^*)_{\BP}$-embedding $h\colon \bC(\cB)\to \bC(\cM)$ over $\bC(\cA)$.

\medskip

\emph{Step 2.} By quantifier elimination (Fact \ref{F:QE-SES}) in $\cL$ and $|\cL|^+$-saturation of $\cM$, there is an $\cL$-embedding $f\colon B\to M$ over $A$ inducing the maps $h$ and $g$ from Step 1. Note that since $P(\cB)$ is a model, the restriction of $f$ to $P(\cB)=P(\cA)$ induces a 
$\BP$-embedding. Moreover, we have that
\begin{align*}
& f(B)\models \tp(B/P(\cA)) \\ 
& g(\bbA(B))\models  \tp(\bbA(B)/\bbA(P(\cA)))\mid\bbA(P(\cM)) \\ 
& h(\bC(B))\models  \tp(\bC(B)/\bC(P(\cA)))\mid \bC(P(\cM)).  
\end{align*}
Part (2) of Lemma \ref{C:SES-beauty-extension} yields that $f(B)\models \tp(B/P(\cA))\mid P(\cM)$, which shows that $f$ is a $\BP$-embedding (the predicate $P$ is preserved by Lemma \ref{L:12-11for-SES}, more precisely using Claim \ref{cla:intersection}).  
\end{proof}

\begin{remark}[Application to ordered abelian groups]
By Theorem~\ref{thm:exact} we obtain characterisations of beautiful pairs of products of regular ordered abelian groups with the lexicographic order. In particular, we obtain beauty transfer for classes in $\Th(\Z^n)$ and $\Th( \Z^n\times \Q)$ corresponding to combinations of the possibilities in Theorem~\ref{thm:DOAG-complete} and Theorem~\ref{thm:pres-complete}. The results of the current section apply, as one may use induction on $n$ and the fact that in both $\Z^n$ and $\Z^n\times \Q$, all convex subgroups are definable.
\end{remark}

\section{Domination in valued fields}\label{sec:domination}

After setting up the notation and terminology on valued fields that will be used for the rest of the paper, we will prove some key domination results building on and generalizing~\cite[Proposition 12.11]{HHM} and~\cite[Theorem 2.5]{has-ealy-mar} (see subsection~\ref{sec:dominationII}). Some of these results have been recently obtained independently by M. Vicaria in \cite{vicaria}. Slight variants also appear in \cite{hils-mennuni} by the second author and R. Mennuni. For the reader's convenience we decided to keep all of our proofs.

\subsection{Languages and theories of valued fields}\label{sec:languages}

For a valued field $(K,v)$, we let $\VG(K)$ denote its value group $v(K^\times)$, $\mathcal{O}(K)$ its valuation ring, $\cM(K)$ its maximal ideal, $\RF(K)$ its residue field and $\res\colon \mathcal{O}(K)\to \RF(K)$ the residue map. By default, unless otherwise stated, by a valued field extension $K\subseteq L$ we implicitly mean $(K, v)\subseteq (L,v)$ and always use $v$ for the underlying valuation. When working with multi-sorted structures, we will also use the notation $\VF(K)$ to denote the field $K$. 

\medskip

We will work in the following languages and theories of valued fields. 
\begin{itemize}[leftmargin=*]
    \item The 3-sorted language of valued fields $\cLgk$ has sorts $\VF, \RF$, and $\VGi$, where $\VF$ and $\RF$ are equipped with $\cLr$, $\VGi$ is equipped with $\cLog$ together with a new constant symbol for $\infty$, and we have additional function symbols for the valuation $v\colon \VF\rightarrow\VGi$ and the following residue map $\Res\colon \VF^2\rightarrow \RF$ interpreted in a valued field $(K,v)$ by     
\[
\Res(x,y)\coloneqq
\begin{cases}
\res(\frac{x}{y}) & \text{ if } \infty\neq v(y)\leqslant v(x);\\
0 & \text{otherwise.}
\end{cases}
\]
We let $\VG$ be $\VGi\setminus \{\infty\}$. For the remaining of the paper, unless otherwise stated, $\ACVF$ will denote the complete $\cLgk$-theory of an algebraically closed non-trivially valued field.     
\item The 3-sorted language of ordered valued fields $\cLovf$ corresponds to $\cLgk$ together with additional binary predicates for the order in both $\VF$ and $\RF$. As usual, $\RCVF$ is the $\cLovf$-theory of a real closed non-trivially valued field with convex valuation ring.

\item The 3-sorted language $\cLpcf(e,f)$ of $p$-adically closed fields with $p$-ramification index $e$ and residue degree $f$ corresponds to $\cLgk$ where on $\VF$ we replace $\cLr$ by Macintyre's language $\cLmac$ (i.e., we add predicates $P_n$ for the $n^\mathrm{th}$-powers) together with $d=ef$ new constant symbols, and on $\VGi$ we put Presburger's language $\cLpres$ together with a new constant symbol for $\infty$. If $F$ is a finite extension of $\Q_p$ with $p$-ramification index $e$ and residue degree $f$, the new constants are interpreted by elements in $\mathcal{O}$ whose residues in $\mathcal{O}/p\mathcal{O}$ form an $\mathbb{F}_p$-basis. We let $\PCF$ denote the complete $\cLpcf(e,f)$-theory of a finite extension of $\Q_p$ of $p$-ramification index $e$ and residue degree $f$.  

\item We will now consider $(K,v)$ in the leading term language $\mathcal{L}_{\RVV}$ of Flenner \cite{fle-2011}, consisting of a sort $\VF$ for the valued field (endowed with the ring language), a sort $\RV$ for the union of $\{0\}$ and the quotient group $K^{\times}/1+\cM(K)$ and the quotient map $\rv\colon \VF\to\RV$ (extended by $0\mapsto 0$). We endow $\RV$ with the group language and a ternary relation $\oplus$ interpreted as $\oplus(a_1,a_2,a_3)$ for $a_i\in \RV$ if and only if there exist $x_i\in \VF$ such that $\rv(x_i)=a_i$ for $i=1,2,3$ and $x_1+x_2=x_3$. We let $\RV^\times$ denote $\RV\setminus\{0\}$. By an $\RV$-enrichment of $\cL_\RVV$, we mean a language extending $\cL_\RVV$ which may add new sorts but only adds functions, relations and constant symbols to $\RV$ and the new sorts. See \cite[Appendix A]{Rid17} for a formal definition. Abusing notation, given an $\RV$-enrichment $\cL$ of $\cL_\RVV$ and an $\cL$-structure $M$, we continue to write $\RV(M)$ for the union of the $M$-points of $\RV$ together with the $M$-points of all new sorts from $\cL$. 

\item We will denote by $\mathcal{L}_{\RVV,\Gamma}$ the language $\mathcal{L}_{\RVV}$ augmented by a sort $\VGi$ (endowed with $\cLog$ together with a constant symbol for $\infty$) for the value group and a function symbol for the quotient map $v_{\rv}\colon\RV\rightarrow\VGi$ so that $v=v_\rv\circ\rv$. 

\item Recall that an \emph{angular component} map (also called \emph{ac} map) is a multiplicative group homomorphism $\ac\colon \VF^{\times}\rightarrow \RF^{\times}$ extended to $\VF$ by setting $0\mapsto 0$ and such that $\ac(a)=\res(a)$ for any $a$ with $v(a)=0$. The language $\mathcal{L}_{\Pas}$, introduced in~\cite{pas-89}, is the 3-sorted language with sorts 
$\VF$ (endowed with $\mathcal{L}_{\Ring}$), $\VGi$ (endowed with $\cLog$ together with a constant symbol for $\infty$) and $\RF$ (endowed with $\mathcal{L}_{\Ring}$) and the maps $v\colon\VF\rightarrow\VGi$ and $\ac\colon\VF\rightarrow\RF$ between the sorts. Any $\aleph_1$-saturated valued field may be endowed with an $\ac$ map (this follows as for short exact sequences), and the $\mathcal{L}_{\Pas}$-structure associated to a valued field $(K,v)$ is quantifier-free bi-interpretable with the $\mathcal{L}_{\RVV}$-structure associated to $(K,v)$ enriched by a splitting of the short exact sequence 
\[
1\rightarrow \RF^{\times}\rightarrow \RV^\times\rightarrow\VG\rightarrow 0,
\]
i.e., by a group homomorphism $s\colon \RV^\times\rightarrow \RF^{\times}$ such that $s\circ\iota$ is the identity on $\RF^{\times}$. By a $\VG$-$\RF$-enrichment of $\cL_\Pas$ we mean a language extending $\cL_\Pas$ which may add functions, relations and constant symbols to $\VG_\infty$ and $\RF$, separately. Note that, contrarily to $\RV$-enrichments, here we do not allow new sorts.  
\item The language $\cL^\mathcal{G}$ extends the 3-sorted language $\cLgk$ language with the following ``geometric sorts'' introduced in~\cite{HHM1}.
\begin{enumerate}[label=(\arabic*)]
  \item For each natural number $n$, we add a sort $\mathbf{S}_{n}$ which consists of the codes of the $\mathcal{O}$-sublattices of $\VF^{n}$ (i.e. the free $\mathcal{O}$-submodules of $\VF^{n}$ on $n$ generators).
  \item For any $s \in \mathbf{S}_{n}$, we define $\mathrm{red}(s)=s/\mathcal{M}s$ (the reduction of $s$ modulo $\mathcal{M}$). And we let $\mathbf{Lin}_{n}=  \bigcup \{ \mathrm{red}(s) \ | \ s \in \mathbf{S}_{n}\}$. 
  \end{enumerate}
We write ${\mathcal{G}}$ to denote the collection of sorts $\displaystyle{\VF\cup  \bigcup_{n \in \mathbb{N}}\mathbf{S}_{n} \cup \bigcup_{n \in \mathbb{N}} \mathbf{Lin}_{n}}$.
\end{itemize}

\begin{definition}\label{def:benign}
We will call a valued field $(K,v)$ \emph{benign} if its theory $T$ in $\mathcal{L}_{\RVV}$ admits elimination of $\VF$-quantifiers and if any model $M\models T$ admits a unique maximal immediate extension $M'$ (up to isomorphism) such that $M\preccurlyeq M'$.
\end{definition}

\begin{fact}\label{F:ex-benign}
The following valued fields are benign:
\begin{enumerate}[label=(\arabic*)]
    \item Henselian valued fields of equicharacteristic 0,
    \item algebraically maximal Kaplansky valued fields of positive characteristic,
\item algebraically closed valued fields 
\end{enumerate}
\end{fact}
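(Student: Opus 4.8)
The plan is to unwind the definition of benignity into its two requirements and verify each of them for the three classes, drawing on the quantifier-elimination literature for valued fields and on classical valuation theory.

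For the first requirement, elimination of $\VF$-quantifiers in $\mathcal{L}_{\RVV}$, I would invoke the known relative quantifier elimination results: in the equicharacteristic $0$ henselian case this is Flenner's theorem; for $\ACVF$ it is the translation into $\mathcal{L}_{\RVV}$ of Robinson's classical quantifier elimination; and for algebraically maximal Kaplansky fields of positive characteristic it is the corresponding positive-characteristic relative quantifier elimination. In each case the statement applies to the ambient (possibly incomplete) $\mathcal{L}_{\RVV}$-theory $T_0$ of the class, and it has two consequences I shall use: (i) the complete theory of a model is determined by the induced theory on the $\RV$-sort (an Ax--Kochen--Ershov statement), and (ii) any $\mathcal{L}_{\RVV}$-embedding between models of $T_0$ that restricts to an isomorphism on $\RV$ is elementary.

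For the second requirement, I would first recall that every valued field admits a maximal immediate extension $K^{\max}$ (Krull, via Zorn's lemma, taking a maximal immediate subextension of a spherical completion). Uniqueness up to isomorphism over $K$ follows from Kaplansky's theorem once one checks that Kaplansky's hypothesis A holds: in equicharacteristic $0$ there is no residue characteristic to worry about; for $\ACVF$ the residue field is algebraically closed (hence perfect with no proper finite extensions) and the value group divisible, so $p$-divisible; and for the positive-characteristic class this is built into the definition of Kaplansky field. It then remains to see that $K^{\max}$ lies in the class and that $K\preccurlyeq K^{\max}$. Since a maximal valued field is henselian and algebraically maximal, and since passing to $K^{\max}$ changes neither the value group nor the residue field, $K^{\max}$ is again henselian of equicharacteristic $0$ (resp.\ algebraically maximal Kaplansky); in the $\ACVF$ case one uses the standard fact that a maximal valued field with divisible value group and algebraically closed residue field is itself algebraically closed. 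Finally, an immediate extension induces an isomorphism on $\RV$ (the value group and residue field, hence the whole leading-term structure, are unchanged), so consequence (i) gives $K^{\max}\models T$ and consequence (ii) gives $K\preccurlyeq K^{\max}$.

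The step I expect to be the main obstacle is the positive-characteristic case: both the relative quantifier elimination for algebraically maximal Kaplansky fields and the verification that this class is closed under maximal immediate extensions are genuinely more delicate than their tame (equicharacteristic $0$ or algebraically closed) counterparts, since defect can occur and the usual Hensel-based embedding arguments must be replaced by the finer theory of Kaplansky extensions. By contrast, the equicharacteristic $0$ and $\ACVF$ cases amount to bookkeeping on top of well-documented results.
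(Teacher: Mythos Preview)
Your proposal is correct and follows essentially the same route as the paper. The paper's own proof is extremely brief: it simply cites Flenner for case (1), Halevi--Hasson for case (2), and Hils--Kamensky--Rideau for case (3) for the $\VF$-quantifier elimination, and declares the maximal immediate extension condition ``well known'' without further comment. Your write-up supplies the details the paper omits---in particular, you make explicit the Kaplansky hypothesis~A verification for uniqueness and you derive elementarity of $K\preccurlyeq K^{\max}$ directly from $\VF$-quantifier elimination via the observation that an immediate extension is an $\RV$-isomorphism. This last point is a nice self-contained argument that the paper does not spell out. Your caution about case (2) is well placed: the reference the paper uses (Halevi--Hasson) is indeed the non-trivial input there.
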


\begin{proof}
For elimination of $\VF$-quantifiers in case (1) see \cite{fle-2011}. Cases (2) and (3) are shown in \cite{HaHa19}. The uniqueness of the maximal immediate extension is well known in these cases (see \cite{kap42}). The extension is elementary as, in these cases, the maximal immediate extension is also a model of $T$.    
\end{proof}

\begin{remark}
In \cite{touchard}, Touchard defines benign valued fields as the valued fields belonging to the list given in Fact~\ref{F:ex-benign}, although he works axiomatically, and the results he obtains work with the more general definition we are adopting. For this reason, we decided to stick to the same terminology. 
\end{remark}

\medskip

By syntactic considerations (see \cite[Proposition~A.9]{Rid17}), elimination of $\VF$-quantifiers in benign valued fields yields the following.

\begin{fact}\label{F:Benign-facts}
Let $(K,v)$ be a benign valued field. Then the following holds.
\begin{enumerate}[label=(\arabic*)]
    \item Any $\RV$-enrichment of $(K,v)$ admits elimination of $\VF$-quantifiers, and the enrichment of $\RV$ is purely stably embedded.
    \item Any $\VG$-$\RF$-enrichment of the $\mathcal{L}_{\RVV,\Gamma}$-structure associated to $(K,v)$ admits elimination of $\VF$-quantifiers, and $\RF$ and $\VGi$ are purely stably embedded and orthogonal.  
    \item Any  $\VG$-$\RF$-enrichment of the $\cL_\Pas$-structure associated to $(K,v)$ admits elimination of  $\VF$-quantifiers, and $\RF$ and $\VGi$ are purely stably embedded and orthogonal. \qed
\end{enumerate} 
\end{fact}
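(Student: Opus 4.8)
The plan is to deduce all three items from the single defining feature of benign valued fields—elimination of $\VF$-quantifiers for the $\mathcal{L}_{\RVV}$-theory $T$ of $(K,v)$—by invoking the general transfer principle for enrichments recorded in \cite[Proposition~A.9]{Rid17}. That principle says, roughly, that if a multi-sorted theory eliminates quantifiers ranging over a distinguished sort (here $\VF$), and one enriches only the remaining sorts (adding new sorts, functions, relations and constants, but nothing touching $\VF$ beyond the maps into the auxiliary sorts already present), then the enriched theory still eliminates $\VF$-quantifiers, the auxiliary sorts remain purely stably embedded, and any orthogonality already present between auxiliary sorts is preserved. The whole Fact then becomes a matter of checking that each of the three enrichments in question only modifies the auxiliary (i.e.\ $\RV$-, $\RF$- or $\VGi$-) structure, never $\VF$.

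For item (1) this is immediate. By definition an $\RV$-enrichment $\cL$ of $\mathcal{L}_{\RVV}$ adds structure only to $\RV$ and to new sorts attached to $\RV$, and never to $\VF$. Since $T$ eliminates $\VF$-quantifiers by benignity, \cite[Proposition~A.9]{Rid17} applies verbatim and yields both the elimination of $\VF$-quantifiers for the enriched theory and the pure stable embeddedness of the enriched $\RV$-sort.

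For items (2) and (3) I would bring them back to the shape of (1). In $\mathcal{L}_{\RVV,\Gamma}$ the new data—the sort $\VGi$ and the map $v_\rv\colon\RV\to\VGi$—concern only $\RV$ and a new auxiliary sort, so a $\VG$-$\RF$-enrichment of the $\mathcal{L}_{\RVV,\Gamma}$-structure is just a particular $\RV$-enrichment of $(K,v)$; item (1) then gives elimination of $\VF$-quantifiers and pure stable embeddedness of $\RF$ and $\VGi$. For (3) one first uses the quantifier-free bi-interpretation of the $\cL_\Pas$-structure with the $\mathcal{L}_{\RVV}$-structure enriched by a splitting $s\colon\RV^\times\to\RF^\times$ of the short exact sequence; a $\VG$-$\RF$-enrichment of $\cL_\Pas$ then transports to an $\RV$-enrichment of $\mathcal{L}_{\RVV}$, and item (1) applies once more. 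It remains to establish orthogonality of $\RF$ and $\VGi$. After $\VF$-quantifier elimination every definable subset of $\RF^m\times\VGi^n$ is already definable using only quantifiers over the auxiliary sorts, so it suffices to check orthogonality inside the (separately enriched) short exact sequence $1\to\RF^\times\to\RV^\times\to\VG\to0$, where orthogonality of the residue-field and value-group sorts is the known fact recalled at the opening of Section~\ref{sec:SES}. For (3) this is transparent: the splitting $s$ exhibits $\RV^\times\cong\RF^\times\times\VG$ as a literal product, and separate enrichments of two factors of a product cannot create cross-relations.

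The one point not handed to us directly by the enrichment principle, and hence the main obstacle, is exactly this orthogonality in the non-split case (2): one must rule out that the \emph{separate} $\RF$- and $\VGi$-enrichments, interacting through the possibly nontrivial extension class of the sequence, produce a definable relation coupling the two sorts. I would resolve this either by passing to an $\aleph_1$-saturated model—where an angular component map exists, the sequence splits, and the product argument of (3) applies—and then transferring orthogonality back down via stable embeddedness, or by reading off orthogonality directly from the syntactic normal form for such sequences (cf.\ Fact~\ref{F:QE-SES}).
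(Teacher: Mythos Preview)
Your proposal is correct and takes essentially the same approach as the paper: the paper's entire justification is the single sentence ``By syntactic considerations (see \cite[Proposition~A.9]{Rid17}), elimination of $\VF$-quantifiers in benign valued fields yields the following,'' so you are invoking exactly the intended reference. You in fact give considerably more detail than the paper does---in particular, your discussion of how to obtain orthogonality in the non-split case (2) (via passing to an $\aleph_1$-saturated model where a splitting exists, or via the syntactic normal form of Fact~\ref{F:QE-SES}) fills in a point the paper leaves implicit.
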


\begin{remark}
    Note that in the previous fact, when we say \emph{purely stably embedded} we mean stably embedded with the natural $\emptyset$-induced structure. When there is no enrichment, in the case of the residue field, this means the ring language, in the case of the value group this means the language of ordered abelian groups (with a constant for $v(p)$ in mixed characteristic $(0,p)$) and in the case of $\RV$ the structure given by the short exact $\RV$-sequence (with a constant for $\rv(p)$ in mixed characteristic $(0,p)$) with the ring language in the residue field and the language of ordered abelian groups in the value group. In the presence of an enrichment, this enrichment will give the $\emptyset$-induced structure in each case.
\end{remark}

The following fact goes back to results of \cite{Rob77}, \cite{cherlin_dickmann} and \cite{pre-ro-84}. 

\begin{fact}\label{fact:QEVF} The following theories have quantifier elimination:
\begin{enumerate}[label=(\arabic*)]
    \item the theory of algebraically closed valued fields $\ACVF$ in $\cLgk$;
    \item the theory of real closed valued fields $\RCVF$ in $\cLovf$;
    \item the theory of a finite extension of $\Q_p$ with $p$-ramification index $e$ and residue degree $f$ $\PCF$ in $\cLpcf(e,f)$.  
    \end{enumerate}
In addition, in each theory, $\VGi$ and $\RF$ are orthogonal and purely stably embedded. \qed
\end{fact}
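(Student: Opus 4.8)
The plan is to treat the three quantifier-elimination statements as instances of the classical theorems of Robinson \cite{Rob77} (for $\ACVF$), Cherlin--Dickmann \cite{cherlin_dickmann} (for $\RCVF$) and Prestel--Roquette \cite{pre-ro-84} together with Macintyre's analysis of the $P_n$-predicates (for $\PCF$), and then to read off orthogonality and pure stable embeddedness directly from the shape of the eliminating language. Rather than re-deriving the original one-sorted statements, I would prove the multi-sorted versions through the standard embedding test: a complete theory $T$ in one of our languages eliminates quantifiers provided that, for all sufficiently saturated $M,N\models T$, every common substructure $A$, every $\cL$-embedding $f\colon A\to N$ and every single element $a$ of $M$, the map $f$ extends to an embedding of $\langle Aa\rangle$ into $N$ (one then iterates and takes unions). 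Since $\VGi$ carries exactly $\cLog$ (resp. $\cLpres$) and $\RF$ carries $\cLr$ (resp. an ordered field language), the subcases where $a\in\VGi$ or $a\in\RF$ reduce to quantifier elimination in $\DOAG$, in Presburger arithmetic, and in $\ACF$ (resp. $\RCF$, resp. the residue field of a $p$-adically closed field). Thus all of the content concentrates on the case $a\in\VF(M)$.

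For the $\VF$-step one amalgamates valued-field extensions according to the usual trichotomy: passing from $A$ to $\langle Aa\rangle$ either enlarges the residue field, or enlarges the value group, or is immediate (neither invariant changes). In the first two cases the extension is controlled by $v$ and by $\Res$ (resp. the angular component $\ac$): these terms record the value and the residue of every element of $\langle Aa\rangle$, so the quantifier-free $\cL$-type $\qftp(a/A)$ is pinned down by the corresponding value-group and residue-field data, and saturation of $N$ produces a realization. I expect the genuine obstacle to be the immediate-extension case, which is exactly where the structure-specific theory enters: for $\ACVF$ one uses that an algebraically closed valued field is henselian and admits no proper immediate algebraic extension; for $\RCVF$ one additionally invokes real closedness and convexity of the valuation ring (which forces henselianity, divisibility of $\VGi$ and real closedness of $\RF$); and for $\PCF$ one uses $p$-adic closedness (henselianity together with the prescribed $\Z$-group value group, the prescribed residue field, and the $P_n$-predicates) to guarantee that the relevant defectless extensions embed. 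This is the classical heart of each of the cited theorems, and in a self-contained treatment it would be the step demanding the most care.

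Finally, granting quantifier elimination, orthogonality and purity of $\RF$ and $\VGi$ follow by inspecting the eliminating language. In $\cLgk$ (and likewise in $\cLovf$ and $\cLpcf(e,f)$) there is no function or relation symbol, avoiding $\VF$ in both its arguments and its value, that links $\RF$ to $\VGi$: the only symbols touching these sorts are $v\colon\VF\to\VGi$ and $\Res\colon\VF^2\to\RF$, besides the sort-internal ring and ordered-group symbols. Hence every quantifier-free formula in variables ranging only over $\RF$ and $\VGi$ is a boolean combination of $\cLr$-formulas purely in the $\RF$-variables and $\cLog$- (resp. $\cLpres$-) formulas purely in the $\VGi$-variables. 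This immediately yields that any definable subset of $\RF^m\times\VGi^n$ is a finite boolean combination of rectangles, i.e. orthogonality, and that the induced structure on each of $\RF$ and $\VGi$ is exactly its home-sort structure, i.e. purity; pure stable embeddedness is then clear.
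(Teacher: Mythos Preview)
The paper does not prove this statement: it is labelled a Fact and simply attributed to Robinson \cite{Rob77}, Cherlin--Dickmann \cite{cherlin_dickmann}, and Prestel--Roquette \cite{pre-ro-84}, with no argument given. Your outline is a correct sketch of the standard strategy behind those references---the embedding test, the residue/value-group/immediate trichotomy for one-step valued-field extensions, and the syntactic reading of orthogonality and purity from the absence of any term linking $\RF$ and $\VGi$ directly---so in effect you have supplied what the paper chose to quote.

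One small slip: none of $\cLgk$, $\cLovf$, $\cLpcf(e,f)$ contains an angular component map; the residue information is carried by $\Res$ in all three cases, so your parenthetical ``(resp.\ the angular component $\ac$)'' should be dropped.
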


\subsection{Some lemmas about valuation independence}\label{sec:val-indipendence}

For $K\subseteq K',L$ subfields of a common extension, we write $L\ind_K^{\ld} K'$ if $L$ is linearly disjoint from $K'$ over $K$.

\begin{fact}\label{fact:def-field} Let $T$ be complete theory, $M\preccurlyeq\cU\models T$ and substructures $M\subseteq L,N\subseteq\cU$. Assume that $\tp(L/N)$ is $M$-definable. Let $k$ be an $M$-interpretable field, and assume that $k(L)$ and $k(N)$ are fields. Then $k(L)\ind_{k(M)}^{\ld} k(N)$. \qed
\end{fact}

\begin{proof}The above fact is presumably well-known, but we include a short proof for the sake of completeness. Assume to the contrary, then there are $c_1,\ldots,c_n \in k(L)$ that are linearly independent over $k(M)$ but not over $k(N)$. Then $\{(a_1,...,a_n)\in \cU^n: a_1c_1+\cdots+ a_nc_n=0\}$ is a non-trivial $M$-definable subspace of $\cU^n$. Consequently, it has a non-zero $M$-point, which is a contradiction. 
\end{proof}

Given a valued field extension $K\subseteq L$ and a subset $A\coloneqq \{a_1,\ldots,a_n\}\subseteq L$, we say that $A$ is \emph{$K$-valuation independent} if $v(\sum_{i=1}^n c_ia_i) = \min_{i}(v(c_ia_i))$ for every $K$-linear combination $\sum_{i=1}^n c_ia_i$. The extension $L/K$ is called \emph{$vs$-defectless}\label{VS-def}\footnote{This is the same as ``separated'' in W. Baur's and F. Delon's terminology.} if every finitely generated $K$-vector subspace $V$ of $L$ admits a \emph{$K$-valuation basis}, that is, a $K$-valuation independent set which spans $V$ over $K$. The following is a valuative analogue notion of linear disjointness. 

\begin{definition}[Valuative disjointness]\label{def:valuative-disjoint}
Let $K\subseteq K', L$ be valued subfields of a common valued field. Suppose the extension $L/K$ is $vs$-defectless. We say \emph{$L$ is valuatively disjoint from $K'$ over $K$}, denoted by $L\ind_K^{\vd} K'$, if every $K$-valuation basis of a finite dimensional $K$-vector subspace in $L$ is also $K'$-valuation independent. 
\end{definition}

Observe that $L\ind_K^{\vd} K'$ implies in particular that $L\ind_K^{\ld} K'$. 

\begin{definition}
Let $K\subseteq L$ be a valued field extension and  $B$ be a subset of $L$. We say $B$ is a \emph{normalized $K$-valuation independent set} if it satisfies  
\begin{enumerate}
\item[(N1)] for every $b,b'\in B$, if $v(b)$ and $v(b')$ lie in the same coset modulo $\VG(K)$, then $v(b)=v(b')$;
\item[(N2)] for every $b\in B$, the set $\{\res(b'/b)\ | \ b'\in B \textrm{ and } v(b')=v(b)\}$ is $\RF(K)$-linearly independent;
\item[(N3)] if $b\in B$ and $v(b)\in \VG(K)$, then $v(b)=0$;
\item[(N4)] if $b\in B$ and $\res(b)\in \RF(K)$, then $\res(b)=1$.
 \end{enumerate}
\end{definition}

Note that, as the name indicates, a normalized $K$-valuation independent set is $K$-valuation independent. 

\begin{lemma}[{\cite[Lemma 2.24]{bla-cubi-kuh}}]\label{lem:normalized} Let $K\subseteq L$ be an extension of valued fields, and let $B = \{b_1,\ldots, b_n\}$ be a $K$-valuation independent set. Then there is a set $\{c_i \in K^\times : 1\leqslant i\leqslant n\}$  such that $B'\coloneqq \{c_ib_i : 1\leqslant i\leqslant n\}$ is a normalized K-valuation independent set. \qed
\end{lemma}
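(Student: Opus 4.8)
The plan is to produce the scalars $c_i$ by arranging conditions (N1), (N3) and (N4) through successive multiplication by elements of $K^\times$, while observing that (N2) comes for free. The starting remark is that replacing each $b_i$ by $c_ib_i$ with $c_i\in K^\times$ does not affect $K$-valuation independence: the $K$-spans of $B$ and $B'$ coincide and $v(c_ib_i)=v(c_i)+v(b_i)$, so $B'$ stays $K$-valuation independent for any choice of the $c_i$. Hence the only task is to choose the $c_i$ so that (N1)--(N4) hold.

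First I would handle (N1) and (N3) simultaneously. Group the indices according to the coset $v(b_i)+\Gamma(K)$ in $v(L^\times)$, and within each coset fix a target value: take $0$ for the coset $\Gamma(K)$ itself, and an arbitrary fixed representative $\gamma$ of the value of one chosen element for every other coset. For $i$ in a given coset, the difference between $v(b_i)$ and the target lies in $\Gamma(K)=v(K^\times)$, so I can pick $c_i\in K^\times$ making $v(c_ib_i)$ equal to the target. This forces all elements in a common coset to share a value (N1) and forces value $0$ whenever the value lies in $\Gamma(K)$ (N3). Next, for (N4), I would adjust by units: after the previous step an element with $\res(b_i)\in k(K)^\times$ necessarily has $v(b_i)=0$, and multiplying by a unit $c_i\in\OO_K^\times$ with $\res(c_i)=\res(b_i)^{-1}$ (possible since $\res\colon\OO_K^\times\to k(K)^\times$ is onto) yields $\res(c_ib_i)=1$ without changing any value, so (N1) and (N3) are untouched.

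Finally, (N2) needs no further choice, since it is an automatic consequence of $K$-valuation independence and is preserved under all the scalings above. This residue-independence fact is the only substantive point, and is where I expect the real work to lie; the rest is bookkeeping with cosets and units. Concretely, fix a value $\gamma$, set $G_\gamma=\{\,j : v(b_j)=\gamma\,\}$, and fix $b=b_{i_0}$ with $i_0\in G_\gamma$; I claim the residues $\{\res(b_j/b) : j\in G_\gamma\}$ are $k(K)$-linearly independent. A nontrivial relation $\sum_j\lambda_j\res(b_j/b)=0$ with $\lambda_j\in k(K)$ lifts to $d_j\in\OO_K$ with $\res(d_j)=\lambda_j$; then $x=\sum_j d_jb_j=b\cdot\sum_j d_j(b_j/b)$ satisfies $\res\bigl(\sum_j d_j(b_j/b)\bigr)=0$, forcing $v(x)>\gamma$, whereas $K$-valuation independence gives $v(x)=\min_{j:\,d_j\neq 0}v(d_jb_j)=\gamma$, a contradiction. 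Assembling the two families of scalars (value normalizations first, then the unit adjustments, which commute with and preserve the value normalizations) produces the required $c_i$, and $B'=\{c_ib_i\}$ is a normalized $K$-valuation independent set.
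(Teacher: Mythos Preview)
Your argument is correct. The paper does not supply its own proof of this lemma---it is stated with a citation to the external reference and a \qed---so your direct verification (scale to a common value within each $\Gamma(K)$-coset for (N1) and (N3), adjust by units of $\mathcal{O}(K)$ for (N4), and derive (N2) from $K$-valuation independence via the residue computation you give) is exactly the kind of argument the citation stands in for.
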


The following fact follows directly from the definition of valuation independence. 

\begin{fact}\label{fact:rv-sum} Let $K\subseteq L$ be valued fields. Suppose $\{c_1,\ldots, c_n\}\subseteq L$ is $K$-valuation independent. Then for $a_i\in K$
\begin{flalign*}
&& \rv\left(\sum_{i=1}^n c_ia_i\right)= \bigoplus_{i=1}^n \rv(c_i)\rv(a_i).
&& \qed
\end{flalign*}
\end{fact}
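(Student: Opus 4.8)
The plan is to reduce the statement to the cancellation-free two-term case and then bootstrap by induction on $n$. First I would use that $\rv$ is multiplicative on $K^\times$, so $\rv(c_i)\rv(a_i)=\rv(c_ia_i)$; writing $d_i\coloneqq c_ia_i$, the claim becomes $\rv\!\left(\sum_{i=1}^n d_i\right)=\bigoplus_{i=1}^n\rv(d_i)$. Any index with $a_i=0$ contributes $\rv(0)=0$, the neutral element for $\oplus$, and so may be discarded; thus we may assume every $d_i\neq 0$. The crucial preliminary observation is that $K$-valuation independence controls all partial sums and not merely the full one: for each $k\leqslant n$ the subset $\{c_1,\dots,c_k\}$ is again $K$-valuation independent (a linear combination of it is a special case of a linear combination of the whole set), whence $v\!\left(\sum_{i=1}^k d_i\right)=\min_{i\leqslant k}v(d_i)$. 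In particular no cancellation ever occurs along the chain of partial sums $s_k\coloneqq\sum_{i=1}^k d_i$.

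The engine of the argument is the two-term lemma: whenever $x,y\in\VF$ satisfy $v(x+y)=\min(v(x),v(y))$, the value $\rv(x+y)$ depends only on $\rv(x)$ and $\rv(y)$, so that $\oplus$ is single-valued on the pair $(\rv(x),\rv(y))$ and $\rv(x+y)=\rv(x)\oplus\rv(y)$. I would verify this directly from the definition of $\oplus$ in the leading-term language: any representatives of $\rv(x),\rv(y)$ have the form $x(1+\epsilon)$, $y(1+\delta)$ with $v(\epsilon),v(\delta)>0$, and they alter the sum by $x\epsilon+y\delta$, whose valuation strictly exceeds $\min(v(x),v(y))=v(x+y)$; hence every representative sum has the same leading term $\rv(x+y)$, which is therefore the unique value of $\rv(x)\oplus\rv(y)$.

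With the lemma in hand the induction is immediate. Since $v(s_{k+1})=\min(v(s_k),v(d_{k+1}))$, the passage $s_{k+1}=s_k+d_{k+1}$ is cancellation-free, so the lemma gives $\rv(s_{k+1})=\rv(s_k)\oplus\rv(d_{k+1})$; combined with the inductive hypothesis $\rv(s_k)=\bigoplus_{i\leqslant k}\rv(d_i)$ this yields the claim for $s_{k+1}$, and for $k+1=n$ the Fact. The only point that genuinely demands care—and which I regard as the main obstacle—is pinning down the semantics of the ternary relation $\oplus$ precisely enough that the iterated expression $\bigoplus_i\rv(d_i)$ is meaningful at all, i.e.\ single-valued along the chain of partial sums; this single-valuedness is exactly what the no-cancellation hypothesis supplied by valuation independence guarantees. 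Everything else is a routine unwinding of the definitions of $\rv$ and of $K$-valuation independence.
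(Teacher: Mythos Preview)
Your proof is correct and is precisely the detailed unfolding of what the paper leaves implicit: the paper states the Fact with a \qed and the remark that it ``follows directly from the definition of valuation independence,'' giving no further argument. Your reduction to the cancellation-free two-term case via partial sums, together with the observation that valuation independence forces $v(s_{k+1})=\min(v(s_k),v(d_{k+1}))$ and hence single-valuedness of $\oplus$ along the chain, is exactly the computation the paper suppresses.
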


\begin{lemma}\label{lem:vd-equiv}Let $K\subseteq K', L$ be valued subfields of a common valued field. Suppose that $L/K$ is $vs$-defectless, $\VG(L)\cap \VG(K')=\VG(K)$ and $\RF(L)\ind_{\RF(K)}^{\ld} \RF(K')$. Then, $L\ind_{K}^{\vd} K'$. Moreover, $\VG(LK')= \VG (L)+ \VG(K')$, $\RF(LK')$ is the field compositum of $\RF(L)$ and $\RF(K')$ and $LK'/K'$ is $vs$-defectless. In particular, $\RV(LK')$ is generated by $\RV(L)$ and $\RV(K')$ (i.e., $\RV(LK')$ is the smallest subset containing both $\RV(L)$ and $\RV(K')$ which is closed under multiplication, multiplicative inverse, and under $\oplus$ seen as a partial function).  
\end{lemma}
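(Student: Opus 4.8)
The plan is to deduce all four conclusions from a single structural fact: \emph{every $K$-valuation basis of a finite-dimensional $K$-subspace of $L$ remains $K'$-valuation independent}. Once this is known, every element of the compositum can be put in a normal form relative to such a basis, and the statements about $\VG$, $\RF$, $\RV$ and $vs$-defectlessness read off from it.

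First I would establish $L\ind_K^{\vd}K'$. Let $B=\{b_1,\dots,b_n\}$ be a $K$-valuation basis of a finite-dimensional $K$-subspace of $L$. By Lemma~\ref{lem:normalized} there are $c_i\in K^\times$ making $\{c_ib_i\}$ normalized, and since $c_i\in K\subseteq K'$, rescaling preserves both $K$- and $K'$-valuation independence; so I may assume $B$ is normalized. Take $d_1,\dots,d_n\in K'$, set $\gamma=\min_i v(d_ib_i)$ and $S=\{i:v(d_ib_i)=\gamma\}$. The key point is that for $i,i'\in S$ one has $v(b_i)-v(b_{i'})=v(d_{i'})-v(d_i)\in\VG(K')$, while also $v(b_i)-v(b_{i'})\in\VG(L)$; by the hypothesis $\VG(L)\cap\VG(K')=\VG(K)$ this difference lies in $\VG(K)$, and then condition (N1) forces $v(b_i)=v(b_{i'})$ for all $i\in S$. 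Fixing $j\in S$, all ratios $d_i/d_j$ and $b_i/b_j$ $(i\in S)$ thus have valuation $0$, and the residue of $(\sum_{i\in S}d_ib_i)/(d_jb_j)$ equals $\sum_{i\in S}\res(d_i/d_j)\,\res(b_i/b_j)$. The residues $\res(b_i/b_j)$ $(i\in S)$ are distinct (otherwise $v(b_i-b_{i'})>v(b_j)$ contradicts $K$-valuation independence) and, by (N2), $\RF(K)$-linearly independent; the hypothesis $\RF(L)\ind_{\RF(K)}^{\ld}\RF(K')$ upgrades this to $\RF(K')$-linear independence, so the residue sum is nonzero (its $j$-th coefficient is $1$). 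Hence $v(\sum_{i\in S}d_ib_i)=\gamma$, whereas the remaining terms have valuation $>\gamma$, giving $v(\sum_i d_ib_i)=\gamma$. This is $K'$-valuation independence of $B$, i.e.\ $L\ind_K^{\vd}K'$.

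The remaining assertions I would extract from the normal form of elements of the ring $L\cdot K'$. Since $L\ind_K^{\vd}K'$ gives $L\ind_K^{\ld}K'$, the $L$-components of any element of $L\cdot K'$ span a finite-dimensional $K$-subspace $V_0\subseteq L$, which has a $K$-valuation basis $\{b_j\}$ by $vs$-defectlessness of $L/K$; collecting terms rewrites the element as $\sum_j d_jb_j$ with $d_j\in K'$. As $\{b_j\}$ is $K'$-valuation independent, its valuation is $\min_j(v(d_j)+v(b_j))\in\VG(K')+\VG(L)$, and passing to fraction fields yields $\VG(LK')=\VG(L)+\VG(K')$; the parallel residue computation (again the surviving indices share a common $b$-valuation) shows $\RF(LK')$ is the compositum $\RF(L)\cdot\RF(K')$. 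For $\RV$, applying Fact~\ref{fact:rv-sum} over $K'$ to the $K'$-valuation independent set $\{b_j\}$ gives $\rv(\sum_j d_jb_j)=\bigoplus_j\rv(d_j)\rv(b_j)$ with $\rv(d_j)\in\RV(K')$ and $\rv(b_j)\in\RV(L)$; since every element of $(LK')^\times$ is a ratio of two elements of $L\cdot K'$, this shows $\RV(LK')$ is generated by $\RV(L)$ and $\RV(K')$.

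Finally, for $vs$-defectlessness of $LK'/K'$ I would take a finite-dimensional $K'$-subspace $W\subseteq LK'$, clear denominators by a single factor $c\in(L\cdot K')^\times$ (scaling by one element preserves existence of a valuation basis) to assume $W\subseteq L\cdot K'$, and embed $W$ into $W'=\sum_j K'b_j$ for a sufficiently large $K$-valuation basis $\{b_j\}$ of $L$, so that $W'$ has the $K'$-valuation basis $\{b_j\}$. It then remains to invoke the standard fact that a subspace of a finite-dimensional valued vector space admitting a valuation basis again admits one. I expect the valuative-disjointness step to be the main obstacle, namely controlling cancellation among the minimal-valuation terms: it is exactly the interplay of $\VG(L)\cap\VG(K')=\VG(K)$ with (N1) and of the residue-field disjointness with (N2) that rules it out. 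The subspace fact can be handled cleanly by passing to associated graded spaces, where the short exact sequence $0\to\mathrm{gr}(W)\to\mathrm{gr}(V)\to\mathrm{gr}(V/W)\to 0$ forces $\dim_{\mathrm{gr}}\mathrm{gr}(W)=\dim_{K'}W$, equivalent to the existence of a valuation basis.
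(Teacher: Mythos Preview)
Your proof is correct and follows essentially the same approach as the paper: normalize a $K$-valuation basis, use (N1) together with $\VG(L)\cap\VG(K')=\VG(K)$ to force equal valuations on the minimal-valuation indices, then use (N2) together with residue-field linear disjointness to rule out cancellation (the paper invokes \cite[Lemma~3.2.2]{prestel2005} for this last step, while you compute the residue directly). For the ``moreover'' part the paper simply refers to the proof of \cite[Proposition~12.11]{HHM} and the short exact sequence for $\RV$, whereas you spell out the arguments and in particular prove $vs$-defectlessness of $LK'/K'$ via the subspace-of-free-is-free fact for valued vector spaces, justified through associated graded spaces; this is a self-contained alternative to the cited reference.
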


\begin{proof} Let $B=\{b_1,\ldots, b_n\}$ be a $K$-valuation basis of a $K$-vector subspace of $L$. By Lemma~\ref{lem:normalized}, we may suppose $B$ is a normalized $K$-independent set. Suppose for a contradiction, $B$ is not $K'$-valuation independent. Hence, there are $a_1,\ldots, a_n\in K'$ such that 
\[
v(\sum_{i=1}^n b_ia_i) > \gamma\coloneqq \min\{v(b_ia_i) : 1\leqslant i\leqslant n\}. 
\]
Let $J\coloneqq \{i : v(b_ia_i)=\gamma\}$. In particular, $J$ is non-empty and $v(\sum_{i\in J}b_ia_i)>\gamma$. For $i,j\in J$ $v(b_i/b_j)=v(a_j/a_i)$ and since $\VG(L)\cap\VG(K')=\VG(K)$, all $v(b_i)$ lie in the same coset modulo $\VG(K)$. Therefore, by (N1), $v(b_i)=v(b_j)$ for all $i,j\in J$. Fixing some $i_0\in J$, by (N2), the elements in $E\coloneqq\{\res(b_i/b_{i_0}) : i\in J\}$ are $\RF(K)$-linearly independent. Since $\RF(L)$ is linearly disjoint from $\RF(K')$ over $\RF(K)$, they are also $\RF(K')$-linearly independent. But by \cite[Lemma~3.2.2]{prestel2005}, this implies that $\{b_i: i\in J\}$ is $K'$-valuation independent, a contradiction. 

The moreover part follows for $\VG$ and $\RF$ as in the proof of \cite[Proposition 12.11]{HHM}. This implies the result for $\RV$ since $\RV$ sits in the exact sequence $1\to \RF^\times\to \RV^\times\to \VG\to 0$ (note this also follows from Fact \ref{fact:rv-sum}).  
\end{proof}

For the definition of valued vector spaces and the following fact see \cite[Definition 3.2.23 and Corollary 3.2.26]{transseries}.

\begin{fact}\label{lem:max-sep} Let $(K,v)$ be a maximally complete valued field. Then, every finite dimensional valued $K$-vector space $(V,w)$ admits a $K$-valuation basis. \qed
\end{fact}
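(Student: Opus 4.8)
The plan is to prove the statement by induction on $n=\dim_K V$. The case $n=1$ is immediate, since any nonzero vector $b$ forms a $K$-valuation basis: $w(ab)=v(a)+w(b)$ for all $a\in K$ by the axioms of a valued vector space. For the inductive step I would fix an $(n-1)$-dimensional subspace $W\subseteq V$, which by the induction hypothesis carries a $K$-valuation basis $b_1,\ldots,b_{n-1}$, and choose any $v\in V\setminus W$. The goal is to correct $v$ by a vector of $W$ so that the result completes the basis, and the relevant quantity is the \emph{distance} $d(v)\coloneqq \sup_{u\in W} w(v-u)$.

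First I would record the elementary observation that if this supremum is \emph{attained}, say $w(v-u_0)=d(v)$ for some $u_0\in W$, then $v'\coloneqq v-u_0$ together with $b_1,\ldots,b_{n-1}$ is a $K$-valuation basis of $V$. Indeed, the ultrametric inequality already gives $w(av'+u)\geqslant \min(v(a)+w(v'),w(u))$ for all $a\in K$ and $u\in W$, and equality can only fail when $v(a)+w(v')=w(u)$ with $a\neq 0$; but in that case $w(av'+u)>v(a)+w(v')$ would yield $w(v-(u_0-a^{-1}u))>w(v-u_0)=d(v)$ with $u_0-a^{-1}u\in W$, contradicting the definition of $d(v)$. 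Since $\{b_i\}$ is already a valuation basis of $W$, writing a general combination as $av'+u$ with $u=\sum_i a_ib_i$ and $w(u)=\min_i(v(a_i)+w(b_i))$ shows that this equality propagates, so $\{b_1,\ldots,b_{n-1},v'\}$ is a valuation basis.

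It therefore remains to show the supremum $d(v)$ is attained, and this is the only place where maximal completeness enters; I expect it to be the main obstacle. The key point is that $W$, equipped with the valuation basis $b_1,\ldots,b_{n-1}$, is isometric to the direct sum $\bigoplus_i Kb_i$ with $w(\sum_i a_ib_i)=\min_i(v(a_i)+w(b_i))$, i.e. to a finite product of shifted copies of $K$. As $K$ is maximally complete it is spherically complete, and a finite product of spherically complete ultrametric spaces is again spherically complete, so $W$ is spherically complete. Now suppose $d(v)$ were not attained. Then I would build a transfinite net $(u_\rho)\subseteq W$ with $w(v-u_\rho)$ strictly increasing and cofinal in $d(v)$; the identity $w(u_\sigma-u_\rho)=w\bigl((v-u_\rho)-(v-u_\sigma)\bigr)=w(v-u_\rho)$ for $\rho<\sigma$ shows that $(u_\rho)$ is pseudo-Cauchy with $v$ as a pseudo-limit. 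By spherical completeness of $W$ there is a pseudo-limit $u^*\in W$, and comparing the two pseudo-limits forces $w(v-u^*)>w(v-u_\rho)$ for all $\rho$, whence $w(v-u^*)\geqslant d(v)$; as $u^*\in W$ also gives $w(v-u^*)\leqslant d(v)$, we conclude $w(v-u^*)=d(v)$ is attained at $u^*$, a contradiction. This closes the induction. The technical heart is thus the passage from maximal completeness of $K$ to spherical completeness of the finite-dimensional space $W$, together with the pseudo-convergence argument; the algebraic bookkeeping with valuation bases is routine.
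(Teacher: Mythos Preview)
Your argument is correct. The paper does not prove this statement at all; it is recorded as a \emph{Fact} with a reference to \cite[Corollary~3.2.26]{transseries}, so there is no proof in the paper to compare against. What you have written is essentially the standard proof of this classical result: induction on the dimension, reducing to the existence of a best approximation of $v$ in the hyperplane $W$, and obtaining the latter from spherical completeness of $W$ (which follows since a closed ball in $W\cong\bigoplus_i Kb_i$ with the min-valuation is a product of closed balls in $K$, and $K$ is spherically complete as it is maximally complete). One cosmetic point: the final paragraph is phrased as a proof by contradiction, but the argument actually \emph{constructs} the best approximant $u^*$ directly, so you could drop the ``suppose not'' framing.
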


\begin{lemma}\label{lem:def-to-sep} Let $K$ be valued field having an elementary extension which is maximal. Let $K\subseteq L\subseteq M$ be valued field extensions with $K\preccurlyeq M$. If $\qftp_{\cLgk}(L/K)$ is definable, then the extension $L/K$ is $vs$-defectless.   
\end{lemma}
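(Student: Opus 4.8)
The plan is to transfer $vs$-defectlessness from a maximally complete elementary extension of $K$ down to $K$, using the definability of $\qftp_{\cLgk}(L/K)$ to supply the disjointness that makes the transfer faithful. First I would fix, by hypothesis, a maximally complete elementary extension $K\preccurlyeq\widehat{K}$. Since $\qftp_{\cLgk}(L/K)$ is definable, its defining scheme over $K$ applied to $\widehat{K}$ yields a consistent qf-type over $\widehat{K}$; realizing it gives a valued field $L'$ sitting in a common valued field with $\widehat{K}$ such that $\qftp_{\cLgk}(L'/\widehat{K})$ is the $K$-definable extension of $\qftp_{\cLgk}(L/K)$. As $\qftp_{\cLgk}(L'/K)=\qftp_{\cLgk}(L/K)$, we have $L'\cong_K L$ as valued fields, so it suffices to treat $L'$; renaming, I may assume $L$ and $\widehat{K}$ lie in a common valued field with $\qftp_{\cLgk}(L/\widehat{K})$ being $K$-definable.

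Next I would extract the disjointness consequences. By Fact~\ref{fact:def-field}(1) applied in $\VF$ we get $L\ind_K^{\ld}\widehat{K}$, and by Fact~\ref{fact:def-field}(2) we get $\langle L\rangle_{\cLgk}\cap\widehat{K}=K$, hence $\VG(L)\cap\VG(\widehat{K})=\VG(K)$ and $\RF(L)\cap\RF(\widehat{K})=\RF(K)$. Since $\RF$ is stably embedded and orthogonal to $\VG$, the induced qf-type of $\RF(L)$ over $\RF(\widehat{K})$ is $\RF(K)$-definable, so Fact~\ref{fact:def-field}(1) applied in the residue sort gives $\RF(L)\ind_{\RF(K)}^{\ld}\RF(\widehat{K})$. (Everything here is at the level of qf-types, which is all that valuation independence requires; one may invoke quantifier elimination to pass between $\qftp$ and $\tp$ if desired.)

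The engine is maximality. Fix a finite-dimensional $K$-subspace $V\subseteq L$ with $K$-basis $b_1,\dots,b_n$; by $L\ind_K^{\ld}\widehat{K}$ these stay $\widehat{K}$-linearly independent, so $\widehat V:=\widehat{K}b_1+\cdots+\widehat{K}b_n$ has dimension $n$, and as $\widehat{K}$ is maximally complete, Fact~\ref{lem:max-sep} gives $\widehat V$ a $\widehat{K}$-valuation basis. The bridge to $K$ is the claim that a finite tuple from $L$ is $K$-valuation independent if and only if it is $\widehat{K}$-valuation independent. The nontrivial direction follows exactly as in the proof of Lemma~\ref{lem:vd-equiv}: normalize the tuple over $K$ by Lemma~\ref{lem:normalized}; the condition $\VG(L)\cap\VG(\widehat{K})=\VG(K)$ makes two values share a $\VG(\widehat{K})$-coset precisely when they share a $\VG(K)$-coset, and $\RF(L)\ind_{\RF(K)}^{\ld}\RF(\widehat{K})$ keeps the normalized residues $\RF(\widehat{K})$-linearly independent, whence $\widehat{K}$-valuation independence by \cite[Lemma~3.2.2]{prestel2005}.

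Granting the claim, I would finish by showing every maximal $K$-valuation independent subset $\{c_1,\dots,c_m\}\subseteq V$ already spans, so that $m=n$ and such a subset is a $K$-valuation basis of $V$. By the claim $\{c_i\}$ is $\widehat{K}$-valuation independent; if $m<n$ then, picking $b\in V$ outside its $K$-span (so $\{c_1,\dots,c_m,b\}$ is $\widehat{K}$-linearly independent by $\ld$-disjointness), defectlessness of $\widehat V$ lets one reduce $b$ to $b-\sum\widehat\lambda_i c_i$ extending $\{c_i\}$ to a $\widehat{K}$-valuation basis. The hard part will be to perform this reduction with coefficients in $K$ rather than $\widehat{K}$: here the successive residue relations and value coincidences driving the reduction algorithm, which terminate over $\widehat{K}$ by defectlessness of $\widehat V$, descend from $\RF(\widehat{K})$ and $\VG(\widehat{K})$ to $\RF(K)$ and $\VG(K)$ by the two disjointness conditions, so the whole reduction can be carried out over $K$ and lands inside $V$. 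This produces a $K$-valuation independent subset of size $m+1$, contradicting maximality; hence $m=n$, and as $V$ was an arbitrary finitely generated $K$-subspace of $L$, the extension $L/K$ is $vs$-defectless. I expect this last descent of the reduction algorithm to $K$ to be the main obstacle, with the claim and the disjointness lemmas doing the real work.
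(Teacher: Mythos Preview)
Your approach diverges from the paper's, and the final step has a genuine gap. The paper's argument is a one-line model-theoretic transfer: fix a $K$-basis $b_1,\dots,b_n$ of $V$ and observe that the preorder $a\precsim a'\Leftrightarrow v(\sum a_ib_i)\leq v(\sum a'_ib_i)$ on $K^n$ is $K$-definable precisely because $\qftp_{\cLgk}(L/K)$ is definable. This makes $(V,v)$ isomorphic to a $K$-\emph{interpretable} valued $K$-vector space $(K^n,w)$. The property ``$(K^n,w)$ admits a valuation basis'' is then a first-order sentence in $K$; it holds in the maximally complete elementary extension $K'$ by Fact~\ref{lem:max-sep}, hence in $K$ by elementarity. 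No disjointness, no reduction algorithm, no realizing the type over $\widehat{K}$.

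Your route --- realize the type over $\widehat{K}$, extract disjointness, then descend a Gram--Schmidt reduction --- breaks down at the descent. The disjointness conditions $\VG(L)\cap\VG(\widehat K)=\VG(K)$ and $\RF(L)\ind^{\ld}_{\RF(K)}\RF(\widehat K)$ do \emph{not} by themselves force $L/K$ to be $vs$-defectless, even when $L\widehat K/\widehat K$ is. Indeed, take $L=K(b)$ with $b$ a pseudo-limit of a pseudo-Cauchy sequence in $K$ with no pseudo-limit in $K$: the extension $L/K$ is immediate, so both disjointness conditions hold trivially, yet $L/K$ is not $vs$-defectless. (Of course $\qftp(b/K)$ is not definable here, which is the whole point --- but your descent argument never invokes definability again after extracting disjointness, so it cannot distinguish this situation.) Concretely, the $\widehat K$-reduction and the $K$-reduction need not proceed in lockstep: after one step the two remainders differ by an element of $\widehat K\cdot c_i$, and there is no reason their values coincide, so termination over $\widehat K$ says nothing about termination over $K$.

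The clean fix is to use definability \emph{again} at the last step: ``there exist $\lambda_1,\dots,\lambda_m\in K$ such that $\{c_1,\dots,c_m,b-\sum\lambda_ic_i\}$ is $K$-valuation independent'' is, via the definable preorder $\precsim$, a first-order statement about $K$, and it holds in $\widehat K$. But once you see this, the detour through disjointness and Lemma~\ref{lem:vd-equiv} is unnecessary --- you have rediscovered the paper's argument.
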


\begin{proof} Let $V=\mathrm{Span}_K(b_1,\ldots, b_n)$ where $b_1,\ldots, b_n\in L$ are $K$-linearly independent. The pair $(V,v)$ is a valued $K$-vector space. 

Let $\precsim$ be the preorder on $K^n$ given by 
\[a\precsim a' \Leftrightarrow v\left(\sum_{i=1}^n a_ib_i\right)\leq v\left(\sum_{i=1}^n a'_ib_i\right)\]
and let $\sim$ be the associated equivalence relation on $K^n$. Since $\qftp_{\cLgk}(L/K)$ is definable, $\precsim$ and $\sim$ are $K$-definable, and thus the quotient $\Gamma_V\coloneqq K^n/\!\sim$ together with the total ordering $\leq$ on $\Gamma_V$ induced by $\precsim$ is $K$-interpretable.

Letting $w\colon K^n\to \Gamma_V$ be the function sending $a$ to $a/\!\sim$, it is clear that the valued $K$-vector space $(V,v)$ is isomorphic to the $K$-interpretable valued $K$-vector space $(K^n,w)$. Let $K'$ be a maximally complete elementary extension of $K$ and $((K')^{n},w')$ be the corresponding definable vector space. By Fact~\ref{lem:max-sep}, $((K')^{n},w')$ admits a $K'$-valuation basis. But this is a definable property, so $(K^n,w)$ has also a $K$-valuation basis, and hence so does $(V,v)$. 
\end{proof}

\subsection{Domination}\label{sec:dominationII}

Let $\cL$ be either $\cLgk$, $\cLovf$ or $\cLpcf(e,f)$ and $T$ be either a completion of $\ACVF$, $\RCVF$ or $\PCF$, each in their corresponding language. Let $K\subseteq K'$ and $K\subseteq L$ be $\cL$-structures all being $\cL$-substructures of some sufficiently large model $\cU$ of $T$. We let $\VF(K)[\VF(L),\VF(K')]$ denote the ring generated by $\VF(L)$ and $\VF(K')$ over $\VF(K)$ and $\VF(K)(\VF(L),\VF(K'))$ denote the field compositum of $\VF(L)$ and $\VF(K')$ over $\VF(K)$. We let $K\langle L,K'\rangle$ denote the $\cL$-structure generated by $L$ and $K'$ over $K$.

\begin{proposition}\label{prop:our12.11} Let $K\subseteq F$ and $K\subseteq L$ be $\cLgk$-structures, such that both $F$ and $L$ are $\cLgk$-substructures of some sufficiently saturated and homogeneous algebraically closed valued field $\cU$. Suppose that 
\begin{itemize}
    \item $K$ is $\VF$-generated and $\VF(K)$, $\VF(F)$, $\VF(L)$ are fields, 
    \item $\VF(L)/\VF(K)$ is $vs$-defectless, 
    \item $v(\VF(L))\cap v(\VF(F))=v(\VF(K))$ and 
    \item $\res(\OO(L))\ind_{\res(\OO(K))}^{\ld} \res(\OO(F))$.
\end{itemize}
Let $\sigma$ and $\tau$ be two automorphims of $\cU$ over $K\VG(L)\RF(L)$ with $\sigma(L)=L'$ and $\tau(F)=F'$. Then there is an automorphism $\rho$ of $\cU$ mapping $N=K\langle L, F\rangle$ to $N'=K\langle L',F'\rangle$ and such that $\rho_{|L}=\sigma_{|L}$ and $\rho_{|F}=\tau_{|F}$. Furthermore, $\VF(N)$ is the ring generated by $\VF(L)$ and $\VF(F)$, $\VG(N)= \VG(L)+\VG(F)$, $\RF(N)$ is contained in the field compositum $\RF(F)\RF(L)$, and we have $\VF(L)\ind_{\VF(K)}^{\vd} \VF(F)$.   
\end{proposition}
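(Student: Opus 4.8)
The plan is to treat this as the $\cLgk$-analogue of Lemma~\ref{L:12-11for-SES}: the structural conclusions and the valuative disjointness come directly from Lemma~\ref{lem:vd-equiv}, while the automorphism $\rho$ is obtained by gluing $\sigma|_L$ and $\tau|_F$ over $K$ into a quantifier-free $\cLgk$-isomorphism $N\to N'$, which then lifts to $\cU$ by quantifier elimination for $\ACVF$ (Fact~\ref{fact:QEVF}) together with the saturation and homogeneity of $\cU$.

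First I would record the structural facts. Applying Lemma~\ref{lem:vd-equiv} to the fields $\VF(K)\subseteq \VF(F),\VF(L)$ — whose hypotheses are exactly the four bullet points — yields at once $\VF(L)\ind_{\VF(K)}^{\vd}\VF(F)$, the equality $\VG(N)=\VG(L)+\VG(F)$, that $\RF(N)$ is generated by $\RF(L)$ and $\RF(F)$, and that $\VF(L)\VF(F)/\VF(F)$ is $vs$-defectless. Since in $\cLgk$ the maps $v$ and $\Res$ take values in $\VGi$ and $\RF$, the $\VF$-part of $N=K\langle L,F\rangle$ adds nothing beyond the ring generated by $\VF(L)$ and $\VF(F)$, so $\VF(N)$ is that ring, which by linear disjointness is $\VF(L)\otimes_{\VF(K)}\VF(F)$. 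I would then check that the four hypotheses transfer to $L'$ and $F'$: since $\sigma,\tau$ fix $\VG(L)$ and $\RF(L)$ pointwise and are automorphisms of $\cU$, one gets $\VG(L')=\VG(L)$, $\RF(L')=\RF(L)$, $\VG(L)\cap\VG(F')=\VG(K)$ and $\RF(L)\ind_{\RF(K)}^{\ld}\RF(F')$, so Lemma~\ref{lem:vd-equiv} applies equally to $N'=K\langle L',F'\rangle$ and in particular gives $\VF(L')\ind^{\vd}_{\VF(K)}\VF(F')$.

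Next I would define $\rho$ on the three sorts of $N$. On $\VF(N)\cong\VF(L)\otimes_{\VF(K)}\VF(F)$ set $\rho=\sigma|_{\VF(L)}\otimes\tau|_{\VF(F)}$; this is well defined and a ring isomorphism onto $\VF(N')\cong\VF(L')\otimes_{\VF(K)}\VF(F')$ because $\sigma,\tau$ agree on $\VF(K)$ and both target fields are linearly disjoint over $\VF(K)$. On $\VG(N)=\VG(L)\oplus_{\VG(K)}\VG(F)$ set $\rho(\gamma+\delta)=\gamma+\tau(\delta)$, and on $\RF(N)$ use $\id_{\RF(L)}$ together with $\tau|_{\RF(F)}$; these are well defined since $\VG(L)\cap\VG(F)=\VG(K)$, $\RF(L)\ind^{\ld}_{\RF(K)}\RF(F)$, and $\sigma,\tau$ fix $K$. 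The key observation, and the reason the hypothesis that $\sigma,\tau$ fix $\VG(L)$ and $\RF(L)$ is needed, is that the map on $\VG(N)$ is order preserving: for $\gamma\in\VG(L)$ and $\delta\in\VG(F)$ one has $\gamma\leqslant\delta$ iff $\tau(\gamma)\leqslant\tau(\delta)$ iff $\gamma\leqslant\tau(\delta)$, since $\tau$ is an order automorphism of the value group fixing $\VG(L)$ pointwise.

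The main work — and the expected obstacle — is to verify that $\rho$ is an $\cLgk$-isomorphism and not merely a ring isomorphism, i.e.\ that it respects $v$ and $\Res$. Here I would use valuative disjointness. Given $x\in\VF(N)$, write $x=\sum_i b_i a_i$ with $a_i\in\VF(F)$ and $\{b_i\}$ a $\VF(K)$-valuation basis of a finite-dimensional subspace of $\VF(L)$, normalized via Lemma~\ref{lem:normalized}; by $\VF(L)\ind^{\vd}_{\VF(K)}\VF(F)$ the set $\{b_i\}$ is $\VF(F)$-valuation independent, so
\[
v(x)=\min_i\bigl(v(b_i)+v(a_i)\bigr),\qquad \rv(x)=\bigoplus_i\rv(b_i)\,\rv(a_i)
\]
by Fact~\ref{fact:rv-sum}. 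Applying $\sigma$ to the $b_i$ yields a $\VF(K)$-valuation basis of a subspace of $\VF(L')$, which by $\VF(L')\ind^{\vd}_{\VF(K)}\VF(F')$ is $\VF(F')$-valuation independent, so the identical formulas compute $v(\rho(x))$ and $\rv(\rho(x))$ from $\sigma(b_i),\tau(a_i)$. Since $\sigma$ and $\tau$ are automorphisms of $\cU$ preserving $v,\rv,\Res$ on $L$ and $F$ respectively, and since the comparisons between the terms $v(b_i)+v(a_i)$ reduce, as above, to order comparisons preserved by $\rho$, it follows that $\rho$ preserves $v$ and $\Res$ on $N$, inducing $\sigma$ on $\VG(L),\RF(L)$ and $\tau$ on $\VG(F),\RF(F)$. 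Thus $\rho\colon N\to N'$ is a quantifier-free $\cLgk$-isomorphism; by Fact~\ref{fact:QEVF} it is partial elementary, and by the saturation and homogeneity of $\cU$ it extends to an automorphism of $\cU$ with $\rho|_L=\sigma|_L$ and $\rho|_F=\tau|_F$, which together with the structural facts from Lemma~\ref{lem:vd-equiv} completes the proof.
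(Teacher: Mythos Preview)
Your proof is correct and follows essentially the same approach as the paper: obtain the structural conclusions and valuative disjointness from Lemma~\ref{lem:vd-equiv}, glue $\sigma|_L$ and $\tau|_F$ into a quantifier-free $\cLgk$-isomorphism $N\to N'$ using that valuation-independent sets transfer, and extend by quantifier elimination and homogeneity. The paper organizes the same argument by first reducing to $\tau=\id$ and to $\VF$-generated $L,F$, then citing \cite[Proposition~12.11]{HHM} for the valued-field-isomorphism step that you spell out directly; one minor slip in your write-up is that the hypotheses only give $v(\VF(L))\cap v(\VF(F))=v(\VF(K))$ and $\res(\OO(L))\ind^{\ld}_{\res(\OO(K))}\res(\OO(F))$, not the corresponding statements for the full sorts $\VG(L),\RF(L)$, but well-definedness of $\rho$ on $\VG(N)$ and $\RF(N)$ still holds for the reason you use elsewhere, namely that $\tau$ fixes $\VG(L)$ and $\RF(L)$ pointwise.
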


\begin{proof} Without loss of generality we may assume $\tau$ is the identity. Furthermore, we may suppose that $F$ and $L$ are also $\VF$-generated. Indeed, let $\widetilde{F}\subseteq F$ and $\widetilde{L}\subseteq L$ be their respective $\VF$-generated parts, $\widetilde{L}'=\sigma(\widetilde{L})$, $\widetilde{F}'=\tau(\widetilde{F})$ and $\widetilde{\rho}$ be the automorphism sending $\widetilde{N}=K\langle \widetilde{L}, \widetilde{F}\rangle$ to $\widetilde{N}'=K\langle \widetilde{L}', \widetilde{F}'\rangle$. In addition, $\VF(\widetilde{N})$ is the field compositum of $\VF(\widetilde{L})$ and $\VF(\widetilde{F})$ over $\VF(K)$. What remains in $N\setminus \widetilde{N}$ only appears in the sorts $\VG$ and $\RF$. Then, the union of the maps $\widetilde{\rho}_{|\widetilde{N}}$ and $\sigma_{|N\setminus \widetilde{N}}$ defines an $\cLgk$-isomorphism between $N$ and $N'$. By homogeneity it extends to an autormorphism $\rho$ on $\cU$ which, by construction, satisfies $\rho_{|L}=\sigma_{|L}$ and $\rho_{|F}=\tau_{|F}$. The last statement of the proposition follows from the assumptions, together with the corresponding statement for $\widetilde{N}$. 

Assuming that all structures are $\VF$-generated, we may apply Lemma \ref{lem:vd-equiv}. The rest of the proof follows word for word the proof of \cite[Proposition 12.11]{HHM}. The independence $\VF(L)\ind_{\VF(K)}^{\vd} \VF(F)$ ensures that the map
$\rho$ defined on $\VF(K)[\VF(L),\VF(F)]$ by 
\[
\rho(\sum_{i=1}^n l_if_i) \coloneqq \sum_{i=1}^n \sigma(l_i)f_i
\] 
is well-defined. Note that since $N=\langle \VF(K)(\VF(L),\VF(F))\rangle$, it suffices to show that $\rho\colon \VF(N)\to \VF(N')$ is a valued field isomorphism. This is also proved in \cite[Proposition 12.11]{HHM}, and follows from the fact that $\VF(L)\ind_{\VF(K)}^{\vd} \VF(F)$. 
\end{proof}

The following result is new in the case of $\PCF$.  

\begin{proposition}\label{prop:our12.11new} Let $\cL$ be either $\cLovf$ or $\cLpcf(e,f)$ and $T$ be either $\RCVF$ or $\PCF$, each in their corresponding language.  

Let $K\subseteq F$ and $K\subseteq L$ be $\cL$-structures all being $\cL$-substructures of some sufficiently saturated and homogeneous model $\cU$ of $T$. Suppose that 
\begin{itemize}
    \item $K$ is $\VF$-generated and $\VF(K)$,$\VF(F)$, $\VF(L)$ are fields, 
    \item $\VF(L)/\VF(K)$ is $vs$-defectless, 
    \item $v(\VF(L))\cap v(\VF(F))=v(\VF(K))$ and 
    \item $\res(\OO(L))\ind_{\res(\OO(K))}^{\ld} \res(\OO(F))$.
\end{itemize}
Let $\sigma$ and $\tau$ be two automorphisms of $\cU$ over $K\VG(L)\RF(L)$ with $\sigma(L)=L'$ and $\tau(F)=F'$. Then there is an automorphism $\rho$ of $\cU$ mapping $N=K\langle L, F\rangle$ to $N'=K\langle L',F'\rangle$ and such that $\rho_{|L}=\sigma_{|L}$ and $\rho_{|F}=\tau_{|F}$. Furthermore, $\VF(N)$ is the ring generated by $\VF(L)$ and $\VF(F)$, $\VG(N)= \VG(L)+\VG(F)$, $\RF(N)$ is contained in the field compositum $\RF(F)\RF(L)$, and we have $\VF(L)\ind_{\VF(K)}^{\vd} \VF(F)$. 
\end{proposition}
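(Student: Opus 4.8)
The plan is to run the proof of Proposition~\ref{prop:our12.11} essentially unchanged up to the construction of a valued field isomorphism, and then to carry out the one extra step that the present setting requires: in $\cLovf$ (resp.\ $\cLpcf(e,f)$) the sort $\VF$ carries additional structure---the order (resp.\ the Macintyre predicates $P_n$ and the $d=ef$ constants)---so a valued field isomorphism is no longer automatically an $\cL$-isomorphism, and this upgrade is exactly what must be checked.

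First I would reduce, as in the first paragraph of the proof of Proposition~\ref{prop:our12.11}, to the case $\tau=\id$ and to $K$, $L$, $F$ all $\VF$-generated; the remaining $\VG$- and $\RF$-points are glued back at the end via $\sigma$ and $\tau$, and this gluing is language-independent. Since the hypotheses are purely valuation-theoretic, Lemma~\ref{lem:vd-equiv} applies verbatim and gives $\VF(L)\ind_{\VF(K)}^{\vd}\VF(F)$ together with $\VG(N)=\VG(L)+\VG(F)$ and $\RF(N)=\RF(L)\cdot\RF(F)$. Valuative disjointness then makes $\rho(\sum_i l_i f_i):=\sum_i\sigma(l_i)f_i$ a well-defined valued field isomorphism $\VF(N)\to\VF(N')$, exactly as in \cite[Proposition~12.11]{HHM}. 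Because $\sigma$ fixes $\VG(L)$ and $\RF(L)$ pointwise and $\tau=\id$ fixes $\VG(F),\RF(F)$, the map $\rho$ is the identity on $\VG(N)$ and on $\RF(N)$; and since $\rho$ commutes with $\rv$, Fact~\ref{fact:rv-sum} together with $\rv(\sigma(l)/l)=\res(\sigma(l))/\res(l)=1$ for $l\in\VF(L)$ shows that $\rho$ is in fact the identity on $\RV(N)$.

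In the $\RCVF$ case this already suffices: the valuation ring being convex, the sign of a nonzero element is constant on cosets of $1+\cM(\cU)$, so it factors as $\mathrm{sign}=\overline{s}\circ\rv$ for a homomorphism $\overline{s}\colon\RV^\times\to\{\pm1\}$. As $\rho$ fixes $\RV(N)$ it preserves signs, hence the order on $\VF$; being the identity on $\RF(N)$ and $\VG(N)$ it also respects the order on $\RF$ and $\VGi$. Thus $\rho$ is an $\cLovf$-isomorphism, and by quantifier elimination (Fact~\ref{fact:QEVF}) it is partial elementary, so homogeneity of $\cU$ extends it to the desired automorphism.

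The $\PCF$ case is where the real work lies, and is the main obstacle: here $P_n$ for $n$ divisible by $p$ is \emph{not} controlled by $\rv$ alone, so fixing $\RV(N)$ does not immediately yield $P_n$-preservation. The plan is to argue instead that $s\equiv\rho(s)$ modulo $(\VF(\cU)^\times)^n$ for every $s\in\VF(N)^\times$. I would use that $\VF(\cU)^\times/(\VF(\cU)^\times)^n$ is a finite group admitting a system of representatives definable over $K$ (Prestel--Roquette), so that $\sigma$ acts trivially on it and hence $l/\sigma(l)\in P_n(\cU)$ for every $l\in\VF(L)^\times$; combined with valuative disjointness and Hensel's lemma, this should propagate from the factors $\VF(L)$ and $\VF(F)$ to their compositum, giving $s/\rho(s)\in P_n(\cU)$. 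The $d=ef$ constants lie in $K$ and are fixed, so $\rho$ becomes a quantifier-free $\cLpcf(e,f)$-embedding, and quantifier elimination (Fact~\ref{fact:QEVF}) together with homogeneity finishes as before. I expect the propagation of $n$-th-power data across the valuatively disjoint compositum, in mixed characteristic, to be the technical heart of the argument.
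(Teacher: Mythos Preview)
Your reduction to the $\VF$-generated case and the construction of $\rho$ as a valued field isomorphism are fine; this is exactly how the paper proceeds. The problem is your claim that $\rho$ is the identity on $\RV(N)$. Your justification ``$\rv(\sigma(l)/l)=\res(\sigma(l))/\res(l)=1$'' only makes sense when $v(l)=0$; for general $l\in\VF(L)^\times$ one has $v(\sigma(l)/l)=0$ (since $\sigma$ fixes $\VG(L)$), but $\res(\sigma(l)/l)$ is an element of $\RF(\cU)$ which need not lie in $\RF(L)$ and need not equal $1$. For a concrete counterexample, take $t\in\VF(L)$ with $v(t)\notin\VG(K)$ and $\sigma(t)=2t$: then $\sigma$ fixes $K$, $\VG(L)$ and $\RF(L)$, yet $\res(\sigma(t)/t)=2\neq 1$, so $\rv(\sigma(t))\neq\rv(t)$. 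Thus $\rho$ does \emph{not} fix $\RV(N)$ in general, and your $\RCVF$ argument as written collapses.

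For $\RCVF$ the conclusion can still be salvaged (the paper simply defers to \cite[Theorem~2.5]{has-ealy-mar}): after normalizing a $K$-valuation basis $\{l_i\}$ as in Lemma~\ref{lem:normalized}, the terms of minimal valuation in $x=\sum_i l_if_i$ all satisfy $v(l_i)=v(l_{i_0})$ for a fixed $i_0$, and $\res(x/(l_{i_0}f_{i_0}))=\sum\res(l_i/l_{i_0})\res(f_i/f_{i_0})$ lies in the compositum of $\RF(L)$ and $\RF(F)$. Since $\res(\sigma(l_i)/\sigma(l_{i_0}))=\sigma(\res(l_i/l_{i_0}))=\res(l_i/l_{i_0})$ (this quotient \emph{does} have valuation $0$ and residue in $\RF(L)$), one gets $\res(\rho(x)/(\sigma(l_{i_0})f_{i_0}))=\res(x/(l_{i_0}f_{i_0}))$, and then sign-preservation follows from $\mathrm{sign}(\sigma(l_{i_0}))=\mathrm{sign}(l_{i_0})$.

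For $\PCF$ your starting observation that $K$ contains representatives of $\VF(\cU)^\times/P_n(\cU)$, whence $l/\sigma(l)\in P_n(\cU)$, is exactly right and handles the monomial case $x=lf$. What you are missing is the mechanism to reduce a sum $\sum_i l_if_i$ to a monomial. The paper's trick is specific to $\PCF$: since the residue field is \emph{finite}, a $\VF(F)$-valuation independent family $\{l_i\}$ must have the $v(l_i)$ lying in pairwise distinct $\VG(F)$-cosets (there is no room for $\RF(F)$-linearly independent residues). Hence the $v(l_if_i)$ are pairwise distinct, one term strictly dominates, and $x=l_1f_1(1+\varepsilon)$ with $v(\varepsilon)>\Z$. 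As $P_n(\cU)$ contains a ball around $1$ of radius in $\Z$, the factor $1+\varepsilon$ is always an $n$th power, and you are back to the monomial case. Your invocation of ``Hensel's lemma'' is in the right spirit, but the decisive point---finiteness of the residue field forcing a unique dominant term---is absent from your outline.
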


\begin{proof} As in the proof of Proposition \ref{prop:our12.11}, we may suppose that: $\tau$ is the identity, $F$ and $L$ are $\VF$-generated. Moreover, it holds that $\VF(L)\ind_{\VF(K)}^{\vd} \VF(F)$ and that the map $\rho$ on $\VF(K)(\VF(L),\VF(F))$ defined by $\rho(\sum_{i=1}^n l_if_i) = \sum_{i=1}^n \sigma(l_i)f_i$, is an $\cLgk$-isomorphism sending $N$ to $N'$. It remains to show that $\rho$ is also an $\cLovf$-isomorphism when $T$ is $\RCVF$ and an $\cLpcf(e,f)$-isomorphism when $T$ is $\PCF$. For $\RCVF$, this is contained in the proof of \cite[Theorem 2.5]{has-ealy-mar}. Let us show it for $\PCF$. 

It suffices to show that $\rho$ preserves the predicates $P_n$. Suppose $x=\sum_{i=1}^m l_if_i$ with $l_i\in \VF(L)$ and $f_i\in \VF(F)$. Without loss of generality, we may assume that $\{l_i\}_{1\leq i\leq n}$ are $\VF(K)$-valuation independent and so $\VF(F)$-valuation independent by Lemma~\ref{lem:vd-equiv}. Let us first show the result for $m=1$, so $x=l_1f_1$. Let $d=ef$ and $u_1,\ldots, u_d\in \OO(\cU)$ be the interpretations of the $d=ef$ constant symbols from $\cLpcf(e,f)$ (which, by definition, represent an $\mathbb{F}_p$-basis of $\OO(\cU)/p\OO(\cU)$). Since the completion of $\Q(u_1,\ldots,u_d)$ is a model of $T$ and $\Q(u_1,\ldots, u_d)\subseteq K$, $K$ contains representatives of
$\cU^\times/P_n(\cU)$ as $P_n(\cU)$ is an open subgroup. In particular, by possibly dividing by an element of $K$, we may suppose that $l_1$ is already an $n^\mathrm{th}$-power. Therefore, $l_1f_1$ is an $n^\mathrm{th}$-power if and only if $f_1$ is an $n^\mathrm{th}$-power, and therefore $\rho(l_1f_1)=\sigma(l_1)f_1$ is an $n^\mathrm{th}$-power if and only if $f_1$ is an $n^\mathrm{th}$-power. For $m>1$, since the residue field is finite and $\{l_i\}_{1\leq i\leq m}$ is $\VF(F)$-valuation independent, we have that $v(l_1),\ldots, v(l_n)$ all lie in different $\VG(F)$-cosets, and we may thus assume that $v(l_1f_1)<v(l_if_i)$ for all $i>1$. In particular, $x=l_1f_1(1+\epsilon)$ for $\epsilon$ such that $v(\epsilon)>\Z$. Note that $P_n(1+\epsilon)$ holds for all $n\geqslant 1$, since $P_n(\cU)$ contains a ball centered at 1 with radius in $\mathbb{Z}$. Then, $x$ is an $n^\mathrm{th}$-power if and only if $l_1f_1$ is an $n^\mathrm{th}$-power, and we are reduced to the case $n=1$.  \end{proof}

Part (1) of the following proposition generalizes results in \cite{CD} and \cite{cubi-ye}. 

\begin{proposition}\label{lem:domination} Let $\cL$ be $\cLgk$, $\cLovf$ or $\cLpcf(e,f)$ and $T$ be a completion of $\ACVF$, $\RCVF$ or $\PCF$, each in their corresponding language. Let $K\subseteq L\subseteq \cU$ be $\cL$-structures with $K\preccurlyeq \cU\models T$ and $\VF(L)$ a field (but $L$ not necessarily $\VF$-generated). Then 
\begin{enumerate}[label=(\arabic*)]
    \item $\tp(L/K)$ is definable if and only if $\VF(L)/\VF(K)$ is $vs$-defectless and $\tp(\RF(L)/\RF(K))$ and $\tp(\VG(L)/\VG(K))$ are both definable;
    \item if $tp(L/K)$ is definable and $K\subseteq K'\subseteq \cU$, then 
\begin{equation}\label{eq:domination}\tag{$\ast$}
\tp(L/K)\cup\tp(\RF(L)/\RF(K))\mid \RF(K')\cup\tp(\VG(L)/\VG(K))\mid \VG(K') \vdash \tp(L/K)\mid K'.
\end{equation}
\end{enumerate}
\end{proposition}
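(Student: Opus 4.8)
The plan is to reduce both statements to the automorphism-extension results Propositions~\ref{prop:our12.11} and~\ref{prop:our12.11new}, which supply the amalgamation engine, and to use quantifier elimination together with the orthogonality and pure stable embeddedness of $\RF$ and $\VG$ recorded in Fact~\ref{fact:QEVF} to pass between geometric data and type-theoretic data. Throughout one should note that $K\preccurlyeq\cU\models T$ is automatically $\VF$-generated, while the amalgamation propositions only require $\VF(L)$ to be a field and treat any extra generators of $L$ living in the sorts $\RF,\VG$ separately; hence the fact that $L$ need not be $\VF$-generated is not an obstacle.

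For the forward direction of (1): assume $\tp(L/K)$ is definable. Since $\RF$ and $\VG$ are purely stably embedded by Fact~\ref{fact:QEVF}, the restrictions $\tp(\RF(L)/K)$ and $\tp(\VG(L)/K)$ are definable and, being concentrated in stably embedded sorts, are already definable over $\RF(K)$ and $\VG(K)$ respectively; this yields definability of $\tp(\RF(L)/\RF(K))$ and $\tp(\VG(L)/\VG(K))$. For $vs$-defectlessness, definability of $\tp(L/K)$ forces definability of the reduct $\qftp_{\cLgk}(L/K)$, and since $K$ admits a maximally complete elementary extension in each of the three theories, Lemma~\ref{lem:def-to-sep} applies and gives that $\VF(L)/\VF(K)$ is $vs$-defectless.

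For the converse of (1) together with (2): assume $\VF(L)/\VF(K)$ is $vs$-defectless and that $q_{\RF}:=\tp(\RF(L)/\RF(K))$ and $q_{\VG}:=\tp(\VG(L)/\VG(K))$ are definable, with global definable extensions $\tilde q_{\RF},\tilde q_{\VG}$. Fix $K'$ with $K\subseteq K'\subseteq\cU$, enlarging $K'$ so that $\VF(K')$ is a field (this only strengthens the implication to be proved). Realising $\tilde q_{\RF}$ and $\tilde q_{\VG}$ over $K'$ and invoking Fact~\ref{fact:def-field}, definability of $q_{\VG}$ gives $\VG(L)\cap\VG(K')=\VG(K)$ and definability of $q_{\RF}$ gives $\RF(L)\ind_{\RF(K)}^{\ld}\RF(K')$; together with $vs$-defectlessness these are exactly the four hypotheses of Proposition~\ref{prop:our12.11} (resp.\ Proposition~\ref{prop:our12.11new}) with $F:=K'$. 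Hence any two realisations of
\[
\tp(L/K)\cup \tilde q_{\RF}\mid K' \cup \tilde q_{\VG}\mid K'
\]
are conjugate over $K'$, so this partial type is complete over $K'$, which is precisely the domination statement $(\ast)$ of (2). To obtain definability of $\tp(L/K)$ one assembles a global extension $r$ from $\tp(L/K)$, $\tilde q_{\RF}$ and $\tilde q_{\VG}$: it is $\Aut(\cU/K)$-invariant, since all its data live over $K$, and complete by the conjugacy just established; its $\varphi$-definition is computed by reducing $\varphi$ via quantifier elimination to a Boolean combination of $\RF$- and $\VG$-conditions on $\cLgk$-terms and evaluating $v,\res,\rv$ of those terms through valuative disjointness (Lemma~\ref{lem:vd-equiv} and Fact~\ref{fact:rv-sum}), substituting the $\varphi$-definitions of $\tilde q_{\RF}$ and $\tilde q_{\VG}$. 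As $r$ is then definable and $r\mid K=\tp(L/K)$, the latter is definable.

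The hard part will be this last step, namely checking that the $\varphi$-definition of $r$ genuinely assembles into a single $K$-formula. This requires making the quantifier-elimination decomposition uniform: expressing $v$ and $\res$ of an arbitrary term $t(L,b)$ in terms of the value-group and residue data attached to a fixed valuation basis, which is exactly what $vs$-defectlessness and valuative disjointness provide (Lemma~\ref{lem:vd-equiv}, Fact~\ref{fact:rv-sum}); and, in the $\PCF$ case, carrying along the analysis of the predicates $P_n$ from the proof of Proposition~\ref{prop:our12.11new}, where finiteness of the residue field forces the relevant values into distinct $\VG(K')$-cosets. The remaining points — verifying the amalgamation hypotheses from Fact~\ref{fact:def-field}, and the harmless replacements of $K'$ by a field and of $L$ by its $\VF$-generated part — are routine.
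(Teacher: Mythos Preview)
Your argument is correct and aligns closely with the paper's proof. For the forward direction of (1) and for (2) you proceed exactly as the paper does: Lemma~\ref{lem:def-to-sep} plus pure stable embeddedness of $\RF$ and $\VG$ for the former, and Fact~\ref{fact:def-field} feeding into Propositions~\ref{prop:our12.11}/\ref{prop:our12.11new} for the latter.

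The only point of divergence is the converse of (1). The paper simply cites \cite[Theorem~5.9]{cubi-ye} (noting that the model assumption there is inessential), whereas you sketch a direct argument: use the domination machinery to obtain a complete $\Aut(\cU/K)$-invariant global extension, then exhibit explicit $\varphi$-definitions by rewriting terms through a fixed $K$-valuation basis via Fact~\ref{fact:rv-sum} and plugging in the schemes for $\tilde q_{\RF}$ and $\tilde q_{\VG}$. This is precisely the computation the paper carries out later in the proof of Proposition~\ref{prop:def-upto-RV} (in the $\RV$-language setting), so your route is the paper's own method transplanted to the three-sorted language rather than outsourced to the reference. It has the advantage of being self-contained; the citation has the advantage of brevity. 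Your acknowledgment that the $\PCF$ case requires the extra $P_n$-analysis from Proposition~\ref{prop:our12.11new} is apt, and the reduction to $\VF(K')$ a field is a harmless strengthening as you note.
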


\begin{remark}
By pure stable embeddedness of both the residue field and the value group, the definability of the types $\tp(\RF(L)/\RF(K))$ and $\tp(\VG(L)/\VG(K))$ can be taken either in the languages of rings and ordered groups, respectively, or in the full ambient valued field language.  
\end{remark}

\begin{proof}
One direction of part (1) follows by Lemma \ref{lem:def-to-sep}, and stable embeddedness of the value group and residue field. The converse direction follows word for word as in the proof of \cite[Theorem 5.9]{cubi-ye}, where the assumption that $L$ is a model is not used. 

For part (2), let $f\colon L\to \cU$ be an $\cL$-embedding such that $f(L)$ realizes the left-hand side of \eqref{eq:domination}. Possibly passing to the fields of fractions, we may suppose $\RF(L)$ is a field. Let $h\colon \VG(L)\to \VG(\cU)$ and $g\colon \RF(L)\to \RF(\cU)$ be the corresponding induced embeddings. By Fact \ref{fact:def-field} and  and Fact~\ref{fact:def-intersection}, $\RF(g(L))\ind_{\RF(K)}^{\ld} \RF(K')$ and $\VG(h(L))\cap \VG(K')=\VG(K)$. By Part (1), the extension $\VF(L)/\VF(K)$ is vs-defectless. We are thus in the situation of Proposition \ref{prop:our12.11} or Proposition \ref{prop:our12.11new}, respectively, which yields the result. 
\end{proof}

We present further domination results---more precisely $\RV$-domination results---in benign valued fields.

\begin{proposition}\label{prop:def-upto-RV}Let $(K,v)$ be benign. Work in the language $\cLrvg$, possibly $\RV$-enriched, and let $K\subseteq L\subseteq K'$, with $K\preccurlyeq K'$ and $\VF(L)$ a field, where $L$ is a (not necessarily $\VF$-generated) substructure of $K'$. Then the following holds:
\begin{enumerate}
    \item $\tp(L/K)$ is definable if and only if $\tp(\RV(L)\cup\VG(L))/\RV(K))$ is definable (in the theory of $\RV(K)$) and $\VF(L)/\VF(K)$ is $vs$-defectless;
    \item assuming that $\tp(L/K)$ is definable and letting $K\subseteq N\subseteq\mathcal{U}$, with $K\preccurlyeq\mathcal{U}$ and $N$ a substructure, it holds that  
    \[
    \tp(L/K)\cup\tp(\RV(L)\cup\VG(L)/\RV(K))\mid \RV(N)\cup\VG(N)\vdash \tp(L/K)\mid N.
    \]
\end{enumerate}
\end{proposition}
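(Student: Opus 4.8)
The plan is to exploit elimination of $\VF$-quantifiers for benign valued fields (Fact~\ref{F:Benign-facts}(1)), which reduces $\tp(L/K)$ to the quantifier-free $\VF$-data of $L$ over $K$ together with the type of $\RV(L)\cup\VG(L)$ over $\RV(K)$; the quantifier-free $\VF$-data will in turn be pushed into $\RV$ via a valuation basis. I work in the (possibly $\RV$-enriched) language $\cLrvg$, using that, as $\VG$ is interpretable in $\RV$, Fact~\ref{F:Benign-facts}(1) gives that $\RV\cup\VG$ is purely stably embedded and that the theory eliminates $\VF$-quantifiers. For the forward direction of (1), assuming $\tp(L/K)$ definable, restricting to the stably embedded sorts $\RV\cup\VG$ immediately yields definability of $\tp((\RV(L)\cup\VG(L))/\RV(K))$ (note $\VG(K)\subseteq\dcl(\RV(K))$); and since $(K,v)$ benign means $K$ has a maximally complete elementary extension, the induced definability of $\qftp_{\cLgk}(L/K)$ lets me invoke Lemma~\ref{lem:def-to-sep} to get that $\VF(L)/\VF(K)$ is vs-defectless.

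The converse of (1) is the heart of the matter. Fixing an $\cLrvg$-formula and eliminating $\VF$-quantifiers, it becomes a Boolean combination of $\RV$-formulas applied to terms $\rv(t(x,y))$, with $t$ a ring term in the $\VF$-variables, and to the $\RV$/$\VG$-variables. I would fix the realization $\bar\ell$ from $L$, let $\bar k$ range over $K$, and expand $t(\bar\ell,\bar k)=\sum_\alpha \bar\ell^{\alpha}h_\alpha(\bar k)$ as a $K$-linear combination of the finitely many monomials $\bar\ell^{\alpha}$, each $h_\alpha(\bar k)\in K$ being a $K$-definable function of $\bar k$. These monomials span a finite-dimensional $K$-subspace of the field $\VF(L)$, which by vs-defectlessness has a $K$-valuation basis $b_1,\dots,b_n$; writing $\bar\ell^{\alpha}=\sum_i a_{\alpha i}b_i$ and regrouping gives $t(\bar\ell,\bar k)=\sum_i d_i(\bar k)\,b_i$ with each $d_i(\bar k)\in K$ a $K$-definable function of $\bar k$. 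Fact~\ref{fact:rv-sum} then yields $\rv(t(\bar\ell,\bar k))=\bigoplus_i \rv(b_i)\rv(d_i(\bar k))$, an $\RV$-expression in the fixed elements $\rv(b_i)\in\RV(L)$ and the $K$-definable elements $\rv(d_i(\bar k))\in\RV(K)$. Hence the truth of the formula is an $\RV$-condition on the $\rv(b_i)$ over $\RV(K)$, which by definability of $\tp((\RV(L)\cup\VG(L))/\RV(K))$ is $K$-definable in $\bar k$, giving a defining scheme for $\tp(L/K)$.

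For (2), I would take an $\cLrvg$-embedding $f\colon L\to\cU$ over $K$ realizing the left-hand side. Applying Fact~\ref{fact:def-field} to the definable $\RV$-type realized by $f$ gives $\VG(f(L))\cap\VG(N)=\VG(K)$ and $\RF(f(L))\ind_{\RF(K)}^{\ld}\RF(N)$; with vs-defectlessness of $\VF(f(L))/\VF(K)$ from part~(1), Lemma~\ref{lem:vd-equiv} then yields $\VF(f(L))\ind_{\VF(K)}^{\vd}\VF(N)$ and that $\RV$ of the compositum is generated by $\RV(f(L))$ and $\RV(N)$. Rerunning the valuation-basis computation with $N$ in place of $K$ shows that $f$ preserves the quantifier-free $\VF$-type over $N$; since $f$ also realizes $\tp((\RV(L)\cup\VG(L))/\RV(K))\mid(\RV(N)\cup\VG(N))$ by hypothesis, $\VF$-quantifier elimination gives $f(L)\models\tp(L/K)\mid N$.

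The hard part will be the converse of (1): converting the valuation-basis decomposition into a uniform defining scheme. Everything hinges on the identity $\rv(t(\bar\ell,\bar k))=\bigoplus_i \rv(b_i)\rv(d_i(\bar k))$, which separates a fixed $\RV(L)$-contribution from a $K$-definable one; vs-defectlessness is precisely what produces a valuation basis so that $\rv$ of the sum does not degenerate, and Fact~\ref{fact:rv-sum} is what renders the separation computable inside $\RV$. With this identity in hand, both the definability in (1) and the domination in (2) follow by feeding it into the stably embedded $\RV$-structure over $K$ and over $N$ respectively.
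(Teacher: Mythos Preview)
Your proof is correct and follows essentially the same approach as the paper's. In both arguments the converse of (1) rests on taking a $K$-valuation basis of the finite-dimensional $K$-subspace of $\VF(L)$ spanned by the relevant monomials/coefficients and then pushing the computation of $\rv$ of a general term through this basis via Fact~\ref{fact:rv-sum}, reducing everything to the definable $\RV$-type. For (2), the paper obtains the linear disjointness and value-group intersection conditions and then invokes Proposition~\ref{prop:our12.11} as a black box (working inside $\ACVF$) to conclude that $\qftp_{\cLgk}(L/K)\mid N$ is determined, while you instead unpack that step by applying Lemma~\ref{lem:vd-equiv} to get valuative disjointness and then rerun the valuation-basis identity over $N$; this is exactly the content of the relevant part of Proposition~\ref{prop:our12.11}, so the two routes coincide in substance.
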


\begin{proof} To prove (1), we follow the proof of \cite[Theorem 2.17]{touchard}, where this is shown in the special case where $L$ is an elementary extension of $K$. We give the argument since, in contrast to Proposition \ref{lem:domination}, it has not been fully written elsewhere.  

Suppose $\tp(L/K)$ is definable. By Lemma \ref{lem:def-to-sep}, the extension $\VF(L)/\VF(K)$ is $vs$-defectless. That the type $\tp(\RV(L)\cup\VG(L)/\RV(K))$ is definable follows from the assumption and the fact that $\RV(K)$ is purely stably embedded (Fact~\ref{F:Benign-facts}). 

For the converse, let $\varphi(x,y)$ be an $\cL$-formula, with $y$ a tuple from sort $\VF$ and $a\in L^{|x|}$, with $a_0$ the subtuple given by elements from sort $\VF$. 

By $\VF$-quantifier elimination (in the theory of $K$), we may suppose the formula $\varphi(a,y)$ is of the form
\[
\psi(\rv(P_1(y)),\ldots, \rv(P_m(y)), b) 
\]
where $P_i(y)$ are polynomials with coefficients in the field $K(a_0)$ and $b$ is a tuple of elements in $\RV(L)\cup\VG(L)$, and where $\psi$ is a formula in the (enrichment of) the language $\mathcal{L}_{\RVV,\Gamma}$. Let $c=(c_1,\ldots, c_n)$ be a valuation basis of the $K$-vector space spanned by the coefficients of the polynomials $P_1(y),\ldots,P_m(y)$. Then, each polynomial $P_i$ can be expressed as a sum of the form $\sum_{j=1}^n c_jQ_{ij}(y)$ where $Q_{ij}\in K[y]$. 
By Fact~\ref{fact:rv-sum}, this sum factors through $\rv$, namely 
\[
\rv(P_i(y))=\bigoplus_{j=1}^n \rv(c_j)\rv(Q_{ij}(y)) 
\]
for any $y$ from $K$,
and therefore the subset of $K$ defined by the formula $\varphi(a,y)$ may also be  defined by a formula of the form 
\[
\theta(\rv(Q_{ij}(y)),\rv(c_1), \ldots \rv(c_n), b).  
\]
Introduce new $\RV$-variables $z=(z_{ij})_{ij}$. Since $\tp(\RV(L)\cup\VG(L)/\RV(K))$ is definable, we may find an $\RV(K)$-formula $\theta'(z)$ such that  $\theta'(\RV(K))=\theta(\RV(K), \rv(c_1),\ldots, \rv(c_n), b)$. It follows that 
$\theta'((\rv(Q_{ij}(y)))_{ij})$ defines the same set in $K$ as $\varphi(a,y)$.

To prove (2), we proceed as in Part (2) of Proposition \ref{lem:domination}. Indeed we may suppose that $\RF(L)$ is a field. Moreover,  $\tp(\RV(L)\cup\VG(L)/\RV(K))\mid\RV(N)\cup\VG(N)$ implies $\RF(L)\ind^{\ld}_{\RF(K)}\RF(N)$ as well as $\VG(L)\cap\VG(N)=\VG(K)$ by Fact~\ref{fact:def-field} and Fact~\ref{fact:def-intersection}, respectively. This shows, by Proposition \ref{prop:our12.11}, that $\mathrm{qftp}_{\cLgk}(L/K)|N$ and hence also 
$\mathrm{qftp}_{\cLrvg}(L/K)|N$ is determined by $\tp(L/K)\cup \tp(\RV(L)\cup\VG(L)/\RV(K))\mid\RV(N)\cup\VG(N)$. This completes the result by $\VF$-quantifier elimination (Fact~\ref{F:Benign-facts}).
\end{proof}

The following is an immediate consequence of Proposition~\ref{prop:def-upto-RV}, taking into account part (3) of Fact \ref{F:Benign-facts} and that $\RV^\times$ is canonically isomorphic to $\RF^\times\times\VG$, if an angular component map is added to the language.

\begin{corollary}\label{Cor:Pas-domination}
Let $(K,v)$ be benign. Work in the language $\mathcal{L}_{\Pas}$, possibly $\VG$-$\RF$-enriched, and let $K\subseteq L\subseteq K'$, with $K\preccurlyeq K'$, where $L$ is a (not necessarily VF-generated) substructure of $K'$. Then the following holds:
\begin{enumerate}[label=(\arabic*)]
    \item $\tp(L/K)$ is definable if and only if both $\tp(\RF(L)/\RF(K)$ and $\tp(\VG(L)/\VG(K))$ are definable and $\VF(L)/\VF(K)$ is $vs$-defectless;
    \item assuming that $\tp(L/K)$ is definable and letting $K\subseteq N\subseteq\mathcal{U}$, with $K\preccurlyeq\mathcal{U}$ and $N$ a substructure, it holds that 
    \[
    \tp(L/K)\cup\tp(\RF(L)/\RF(K))\mid \RF(N)\cup\tp(\VG(L)/\VG(K))\mid \VG(N) \vdash \tp(L/K)\mid N. \qed
    \]
\end{enumerate}
\end{corollary}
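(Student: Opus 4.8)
The plan is to deduce the statement from Proposition~\ref{prop:def-upto-RV} by transporting everything along the quantifier-free bi-interpretation between the $\mathcal{L}_\Pas$-structure on $(K,v)$ and the $\cLrvg$-structure equipped with a splitting of the short exact sequence $1\to\RF^\times\to\RV^\times\to\VG\to 0$. Since $\mathcal{L}_\Pas$ already carries an angular component map $\ac$, the assignment $\rv(x)\mapsto(\ac(x),v(x))$ identifies $\RV^\times$ with $\RF^\times\times\VG$, hence $\RV$ with $\RF\times\VGi$ (sending $0$ to $(0,\infty)$). Under this identification a $\VG$-$\RF$-enrichment of $\mathcal{L}_\Pas$, which enriches $\VG$ and $\RF$ separately, corresponds precisely to an $\RV$-enrichment of $\cLrvg$ in which the new structure on $\RV=\RF\times\VGi$ is the product of the new structures on the two factors; in particular Proposition~\ref{prop:def-upto-RV} applies verbatim in this enriched setting (after passing to fraction fields if necessary, so that $\VF(L)$ is a field as required there).

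For part~(1), the first step is to record that, under the identification $\RV=\RF\times\VGi$, the reconstructed $\RV$-part of the $\mathcal{L}_\Pas$-substructure $L$ is exactly $\RF(L)\times\VGi(L)$, so that the data $\RV(L)\cup\VG(L)$ and $\RF(L)\cup\VG(L)$ coincide. The second, and main, step is to decompose types: by part~(3) of Fact~\ref{F:Benign-facts} the sorts $\RF$ and $\VGi$ are orthogonal and purely stably embedded, so the induced structure on $\RF\times\VGi$ is the product structure. Consequently a type over $\RV(K)=\RF(K)\times\VGi(K)$ is determined by its two projections, and is definable if and only if both $\tp(\RF(L)/\RF(K))$ and $\tp(\VG(L)/\VG(K))$ are definable. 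Feeding this equivalence into Proposition~\ref{prop:def-upto-RV}(1) yields part~(1) of the corollary.

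For part~(2) I would apply Proposition~\ref{prop:def-upto-RV}(2), which gives
\[
\tp(L/K)\cup\tp(\RV(L)\cup\VG(L)/\RV(K))\mid \RV(N)\cup\VG(N)\vdash \tp(L/K)\mid N,
\]
and then rewrite the middle term. Using orthogonality again, the restriction to $\RV(N)=\RF(N)\times\VGi(N)$ of the global extension of $\tp(\RV(L)\cup\VG(L)/\RV(K))$ is interdeducible with $\tp(\RF(L)/\RF(K))\mid\RF(N)\cup\tp(\VG(L)/\VG(K))\mid\VG(N)$, so substituting gives exactly the displayed entailment of Corollary~\ref{Cor:Pas-domination}(2).

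The one point that needs genuine care, rather than formal transport, is checking that orthogonality and pure stable embeddedness of $\RF$ and $\VGi$ really force definability of a type over $\RF(K)\times\VGi(K)$ to split into definability of the two factor-types (both for the base type in~(1) and for its restriction to $N$ in~(2)); this is precisely where the hypothesis that $(K,v)$ is benign enters, through Fact~\ref{F:Benign-facts}(3). Everything else is a routine translation along the bi-interpretation, so I expect no further obstacle.
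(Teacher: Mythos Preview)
Your proposal is correct and follows essentially the same approach as the paper: the paper's proof is a single sentence stating that the result is an immediate consequence of Proposition~\ref{prop:def-upto-RV}, using part~(3) of Fact~\ref{F:Benign-facts} together with the canonical identification $\RV\cong\RF\times\VGi$ provided by the angular component map. You have simply spelled out the details of this reduction.
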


\begin{remark}\label{rem:dom-with-quotients}
Let $\cL_\mathrm{D}$ be $\cL_\Pas$ without the angular component map together with new sorts for the quotients $\RF^\times/(\RF^\times)^n$ for every $n$ and the corresponding projection maps between them (see \cite[Section 5.4]{AsChGeZi20} for a precise definition). The proof of  \cite[Theorem 5.15]{AsChGeZi20} gives a relative quantifier elimination result for benign valued fields in this language. In this setting, one may also get a domination statement analogous to Corollary \ref{Cor:Pas-domination} for $\VG$-$\RF$-enrichments of $\cL_\mathrm{D}$ combining Proposition \ref{prop:def-upto-RV} and Lemma \ref{C:SES-beauty-extension}. We leave the details to the reader.     
\end{remark}

\section{Beautiful pairs of valued fields}\label{sec:BP-val-fields}

In this section we characterize all completions of stably embedded pairs of $\ACVF$, $\RCVF$, and $\PCF$ in terms of the completions of stably embedded pairs of the corresponding value group and residue field. An Ax-Kochen-Ershov type result will be given in Section \ref{sec:Ax-Kochen-RV} for beautiful pairs of benign henselian valued fields (see Theorem \ref{thm:red-to-k-gamma}).  The results in this section could also be inferred from this more general Ax-Kochen-Ershov type result given in Section \ref{sec:Ax-Kochen-RV}. However, at the cost of being redundant, we chose to include them separately so that those readers interested in these special cases can follow the arguments without going through the machinery of Sections \ref{sec:SES} and \ref{sec:Ax-Kochen-RV}.

\subsection{Beautiful pairs of $\ACVF$ in the three sorted language}\label{sec:3sorted}
Throughout Section~\ref{sec:3sorted}, we let $\cL$ denote the three sorted language of valued fields $\cLgk$. Recall that $\ACVF$ denotes a completion of the $\cLgk$-theory of algebraically closed non-trivially valued fields. Note that $\ACVF$ has $\mathrm{UDDT}$ by \cite[Theorem 6.3]{cubi-ye}. (This will actually follow from the results of the current section.) 
Recall that there are only two completions of the theory of stably embedded pairs of $\ACF$ and both correspond to theories of beautiful pairs with beauty transfer (see Proposition \ref{prop:ACF}). Similarly, by Theorem \ref{thm:DOAG-complete}, the 4 completions of the theory of stably embedded pairs of $\DOAG$ also correspond to theories of beautiful pairs with beauty transfer.

\begin{definition}\label{def:ACVF-pairs}
Let $T_k$ and $T_\Gamma$ be completions of the theory of stably embedded pairs of $\ACF$ and $\DOAG$, respectively. Let $\cK_k$ (resp. $\cK_\Gamma$) be the natural classes such that $T_k$ (resp. $T_\Gamma$) is the theory of $\cK_k$-beautiful pairs (resp. $\cK_\Gamma$-beautiful pairs). 
\begin{itemize}[leftmargin=*]
\item  We let $\ACVF(T_k,T_\Gamma)$ be the $\cL_P$-theory of stably embedded pairs of algebraically closed valued fields, for which the corresponding pair of residue fields is a model of $T_k$, and the corresponding pair of value groups is a model of $T_\Gamma$. 
\item We define $\cK$ to be the class of \sepa-pairs $\cA\in \cK_\Def$ such that $(\RF(A), \RF(P(\cA)))\in \cK_k$ and $(\VG(A), \VG(P(\cA)))\in \cK_\Gamma$. The class $\cK$ is said to be \emph{induced} by $\cK_k$ and $\cK_\Gamma$.
\end{itemize}
\end{definition}

\begin{remark}\label{rem:base} The class $\cK$ is closed under base extension. Indeed, assume that  $\cA\in\cK$ and that  $P(\cA)\preccurlyeq B\preccurlyeq \cU$. Setting $\cA'=\cA_B$, it holds that $(\RF(A'), \RF(P(\cA'))\in \cK_k$ and $(\VG(A'), \VG(P(\cA'))\in \cK_\Gamma$. This follows by Lemma \ref{lem:vd-equiv}, as $A/P(\cA)$ is $vs$-defectless by Lemma \ref{lem:def-to-sep}. This shows $\cA'\in\cK$.
\end{remark}

\begin{lemma}\label{lem:natu} Given $\cK_k$, $\cK_\Gamma$ and $\cK$ as above, $\cK$ is a natural class.
\end{lemma}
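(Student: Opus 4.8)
The plan is to verify directly the five conditions $(i)$–$(v)$ of Definition~\ref{def:natural-class-K}, using throughout that $\cK_k$ and $\cK_\Gamma$ are themselves natural classes, so that each is closed under isomorphism, under substructures, under base extension, contains all trivial pairs, and — crucially — has finite character and is closed under unions of chains. Condition $(v)$ is exactly Remark~\ref{rem:base}, so nothing new is needed there. The engine of everything else is a base-point–free reformulation of membership in $\cK$. For $\cA\in\cK_\Def$ the residue-field pair $\RF(\cA):=(\RF(A),\RF(P(\cA)))$ and value-group pair $\VG(\cA):=(\VG(A),\VG(P(\cA)))$ are genuine \sepa-pairs of $\ACF$ and $\DOAG$ (the relevant types are definable by stable embeddedness of $\RF$ and $\VG$), and I claim
\[
\cA\in\cK \iff \RF(\cA)\in\cK_k \ \text{ and }\ \VG(\cA)\in\cK_\Gamma,
\]
independently of the auxiliary model $B$ in Definition~\ref{def:ACVF-pairs}.

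To prove the claim, fix a model $B\models\ACVF$ with $P(\cA)\subseteq B$. Since $\cA_B$ realizes $\tp(A/P(\cA))\mid B$, this type is $P(\cA)$-definable, so by Fact~\ref{fact:def-field} one gets $\VG(A)\cap\VG(B)=\VG(P(\cA))$ and $\RF(A)\ind^{\ld}_{\RF(P(\cA))}\RF(B)$; together with $vs$-defectlessness of $A/P(\cA)$ (Lemma~\ref{lem:def-to-sep}) and Lemma~\ref{lem:vd-equiv}/Proposition~\ref{prop:our12.11}, this yields $\RF(\cA_B)=\RF(A)\cdot\RF(B)$ and $\VG(\cA_B)=\VG(A)+\VG(B)$. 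Hence $(\RF(\cA_B),\RF(B))$ is precisely the base extension of $\RF(\cA)$ to the model $\RF(B)$, and similarly for $\VG$. As $\cK_k$ is closed under base extension and under substructures, $(\RF(\cA_B),\RF(B))\in\cK_k$ iff $\RF(\cA)\in\cK_k$, and likewise for $\VG$; this proves the claim and simultaneously shows the definition of $\cK$ is independent of $B$. With the reformulation in hand, conditions $(i)$–$(iii)$ become formal. For $(i)$, an $\cL_\BP$-isomorphism induces isomorphisms of the associated $\RF$- and $\VG$-pairs, and $\cK_k,\cK_\Gamma$ are closed under isomorphism. For $(ii)$, the pairs $\RF(A_\triv)$ and $\VG(A_\triv)$ have real part contained in their predicate, hence are trivial pairs lying in $\cK_k$ and $\cK_\Gamma$. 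For $(iii)$, one checks $(\pdcl(\cA))_B=\cA_B$ for a model $B\supseteq\dcl^\eq(P(\cA))$, so $\cA$ and $\pdcl(\cA)$ have identical associated $\RF$- and $\VG$-pairs and membership is preserved.

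The substantive point is condition $(iv)$, which I would deduce from finite character of $\cK_k$ and $\cK_\Gamma$. By the reformulation, $\cA\in\cK$ iff $\RF(\cA)\in\cK_k$ and $\VG(\cA)\in\cK_\Gamma$, and by finite character of the latter this holds iff every finitely generated sub-pair of $\RF(\cA)$ lies in $\cK_k$ and every finitely generated sub-pair of $\VG(\cA)$ lies in $\cK_\Gamma$. The remaining task is to match these finitely generated residue/value sub-pairs with finitely generated $\cL_\BP$-substructures $\cA_0\subseteq\cA$: on one hand $\RF(\cA_0)$ and $\VG(\cA_0)$ are sub-pairs of $\RF(\cA)$ and $\VG(\cA)$; on the other, any finitely generated sub-pair of $\RF(\cA)$ (resp. $\VG(\cA)$) is generated by finitely many residue (resp. value) elements, each witnessed by finitely many elements of $A$ and $P(\cA)$, hence contained in $\RF(\cA_0)$ (resp. $\VG(\cA_0)$) for a suitable finitely generated $\cA_0\subseteq\cA$. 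Using closure of $\cK_k,\cK_\Gamma$ under substructures in one direction and finite character in the other, both implications of $(iv)$ follow.

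The main obstacle is the base-point–free reformulation — concretely, verifying that the residue field and value group of $\cA_B$ are exactly the compositum $\RF(A)\cdot\RF(B)$ and the sum $\VG(A)+\VG(B)$. This is where the valuation-theoretic input is indispensable: one must know that base extension realizes the definable type \emph{freely}, so that the disjointness hypotheses of Lemma~\ref{lem:vd-equiv} are met, via Fact~\ref{fact:def-field} and Lemma~\ref{lem:def-to-sep}. Once this is secured, the natural-class axioms for $\cK$ are a purely formal consequence of those for $\cK_k$ and $\cK_\Gamma$.
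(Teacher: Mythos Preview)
The paper's proof is deliberately terse: it simply asserts that properties $(i)$--$(iv)$ are straightforward and that $(v)$ is Remark~\ref{rem:base}. Your write-up supplies the details that the paper omits, and the overall strategy---reducing everything to the natural-class axioms for $\cK_k$ and $\cK_\Gamma$ via what happens on the $\RF$- and $\VG$-sorts after base extension---is the intended one.

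That said, the proof of your ``base-point-free reformulation'' has a genuine technical gap when $P(\cA)$ is not a model. The lemmas you invoke do not apply in this generality: Lemma~\ref{lem:def-to-sep} requires $K=\VF(P(\cA))$ to be a valued field with $K\preccurlyeq M$ (indeed, a model of the theory), and Proposition~\ref{prop:our12.11} needs a $\VF$-generated common base with the field parts honest fields. Moreover, your asserted identity $\VG(A)\cap\VG(B)=\VG(P(\cA))$ is generally false: it needs $\VG(P(\cA))\subseteq\VG(A)$, which fails for example when $A$ is generated by a single field element while $P(\cA)$ is a model. What Fact~\ref{fact:def-field} actually yields is $\langle A,P(\cA)\rangle\cap B=P(\cA)$, so one only gets $\VG(A)\cap\VG(B)\subseteq\VG(P(\cA))$.

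The fix is simple and is implicitly what the paper does: use condition $(v)$ (=\,Remark~\ref{rem:base}) \emph{first}, so that checking $\cA\in\cK$ is independent of the chosen model $B\supseteq P(\cA)$. Then for $(iv)$, fix one such $B$; write $\cA=\bigcup_i\cA_i$ as a directed union of finitely generated substructures, so $\cA_B=\bigcup_i(\cA_i)_B$; each $(\cA_i)_B$ has model predicate and lies in $\cK$ by Remark~\ref{rem:base}, hence $\RF((\cA_i)_B)\in\cK_k$ and $\VG((\cA_i)_B)\in\cK_\Gamma$; finite character of $\cK_k$ and $\cK_\Gamma$ then gives $\RF(\cA_B)\in\cK_k$ and $\VG(\cA_B)\in\cK_\Gamma$. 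The converse direction and $(i)$--$(iii)$ are similarly immediate from closure under substructures and the definition. Your reformulation becomes a tautology once one base-extends to a model, so it is not wrong, but its role as an independent lemma requires more care than you give it.
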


\begin{proof}
Properties $(i)$-$(iv)$ and $(vi)$ in Definition \ref{def:natural-class-K} are straightforward. Property $(v)$ (closure under base extension) is proved in Remark \ref{rem:base}. 
\end{proof}

\begin{remark}\label{rem:ACVF2-axioms}
By Part (1) of Proposition \ref{lem:domination}, the theory $\ACVF(T_k,T_\Gamma)$ is axiomatized by axioms stating for a pair $\cM=(M,P(\cM))$
\begin{itemize}
     \item $\VF(M)/\VF(P(\cM))$ is $vs$-defectless;
     \item $P(\cM)\preccurlyeq M\models\ACVF$;
    \item $(\RF(M), \RF(P(\cM)))\models T_k$ and $(\VG(M), \VG(P(\cM)))\models T_\Gamma$.
\end{itemize}
It follows easily that $\ACVF(T_k,T_\Gamma)$ is consistent.
\end{remark}

\begin{theorem}\label{thm:ACVF-Beauty} Let $\cK_k, \cK_\Gamma, \cK, T_k$ and $T_\Gamma$ be as in Definition \ref{def:ACVF-pairs}. Then, $\cK$-beautiful pairs exist. Moreover, $\cK$ has beauty transfer and $\ACVF_\BP(\cK)$ is axiomatized by $\ACVF(T_k,T_\Gamma)$.   
\end{theorem}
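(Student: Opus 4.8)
The plan is to follow the template of the proof of Theorem~\ref{thm:exact}, replacing the short exact sequence data $(\bbA,\bC)$ by the residue field and value group $(\RF,\VG)$, the amalgamation Lemma~\ref{L:12-11for-SES} by Proposition~\ref{prop:our12.11}, and the domination Lemma~\ref{C:SES-beauty-extension} by Proposition~\ref{lem:domination}. By Remark~\ref{rem:ACVF2-axioms} the theory $\ACVF(T_k,T_\Gamma)$ is consistent, so by Lemma~\ref{lem:beauty-transfer-cardinal} it is enough to fix an $|\cL|^+$-saturated model $\cM=(M,P(\cM))\models \ACVF(T_k,T_\Gamma)$ and show that it is $|\cL|^+$-$\cK$-beautiful; existence of $\cK$-beautiful pairs, beauty transfer, and the fact that $\ACVF(T_k,T_\Gamma)$ axiomatises $\ACVF_\BP(\cK)$ then all follow from Corollary~\ref{cor:elem-equiv} and Lemma~\ref{lem:beauty-transfer-cardinal}.

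So let $\cA\to\cM$ and $\cA\to\cB$ be $\cL_\BP$-embeddings with $|\cB|\leqslant|\cL|$. As in Step~0 of the proof of Theorem~\ref{thm:exact}, using Lemma-definition~\ref{lem:base-change} I would reduce to the case $\cA\subseteq\cM$, $P(\cA)=P(\cB)$ and $P(\cA)\models\ACVF$. Since $\cB\in\cK$ and $P(\cB)\models\ACVF$, Remark~\ref{rem:base} gives $(\RF(B),\RF(P(\cB)))\in\cK_k$ and $(\VG(B),\VG(P(\cB)))\in\cK_\Gamma$. By beauty transfer of $T_k$ and $T_\Gamma$, the pairs $(\RF(M),\RF(P(\cM)))$ and $(\VG(M),\VG(P(\cM)))$ are $|\cL|^+$-$\cK_k$- resp. $|\cL|^+$-$\cK_\Gamma$-beautiful, so there are $\cL_\BP$-embeddings $g_k$ and $g_\Gamma$ over $\RF(\cA)$ resp. $\VG(\cA)$ with $g_k(\RF(B))\models\tp(\RF(B)/\RF(P(\cA)))\mid\RF(P(\cM))$ and $g_\Gamma(\VG(B))\models\tp(\VG(B)/\VG(P(\cA)))\mid\VG(P(\cM))$. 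By Fact~\ref{fact:def-field} these definable extensions force $\RF(g_k(B))\ind_{\RF(P(\cA))}^{\ld}\RF(P(\cM))$ and $\VG(g_\Gamma(B))\cap\VG(P(\cM))=\VG(P(\cA))$, which are two of the three hypotheses of Proposition~\ref{prop:our12.11}; the third, $vs$-defectlessness of $\VF(B)/\VF(P(\cA))$, follows from definability of $\tp(B/P(\cB))$ together with Lemma~\ref{lem:def-to-sep}, using that $P(\cA)\models\ACVF$ has a maximally complete elementary extension.

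With these hypotheses in hand I would invoke Proposition~\ref{prop:our12.11} (with $K=P(\cA)$, $F=P(\cM)$, $L=B$) to see that the $\cLgk$-amalgam of $B$ and $P(\cM)$ over $P(\cA)$ inducing $g_k,g_\Gamma$ is consistent; realising it inside $\cM$ by quantifier elimination for $\ACVF$ (Fact~\ref{fact:QEVF}) and $|\cL|^+$-saturation produces an $\cL$-embedding $f\colon B\to M$ over $A$ inducing $g_k$ and $g_\Gamma$. In particular $f(B)\models\tp(B/P(\cA))$, while $g_k,g_\Gamma$ supply the residue and value clauses $\tp(\RF(B)/\RF(P(\cA)))\mid\RF(P(\cM))$ and $\tp(\VG(B)/\VG(P(\cA)))\mid\VG(P(\cM))$; Proposition~\ref{lem:domination}(2), applied with $K'=P(\cM)$, then yields $f(B)\models\tp(B/P(\cA))\mid P(\cM)$, so the canonical parameters are preserved and $f$ is an $\cL_\BP$-embedding fixing $\cA$. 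This witnesses condition~(BP), whence $\cM$ is $|\cL|^+$-$\cK$-beautiful, and the theorem follows as indicated in the first paragraph.

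I expect the main obstacle to be precisely the interface between the two pieces: arranging the residue field and value group copies $g_k(\RF(B))$ and $g_\Gamma(\VG(B))$ so that they are simultaneously compatible with the fixed field $A$ and sufficiently independent over $P(\cA)$ for the valued-field amalgamation to glue them without introducing defect, i.e.\ so that the resulting map genuinely extends to $\VF(L)\ind_{\VF(K)}^{\vd}\VF(F)$. This is where orthogonality and pure stable embeddedness of $\RF$ and $\VG$, the valuative disjointness package of Lemma~\ref{lem:vd-equiv}, and the domination results of Section~\ref{sec:domination} do the real work; once the hypotheses of Proposition~\ref{prop:our12.11} have been verified, the remaining bookkeeping is routine and parallels Theorem~\ref{thm:exact} verbatim.
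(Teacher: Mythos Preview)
Your proposal is correct and follows essentially the same approach as the paper's proof: reduce via Lemma~\ref{lem:beauty-transfer-cardinal} to showing that a sufficiently saturated model of $\ACVF(T_k,T_\Gamma)$ is $\cK$-beautiful, normalize the embedding problem so that $P(\cA)=P(\cB)\models\ACVF$, use beauty transfer of $T_k$ and $T_\Gamma$ together with Fact~\ref{fact:def-field} to embed the residue field and value group parts, then lift to an $\cL$-embedding via quantifier elimination and saturation, and conclude it is an $\cL_\BP$-embedding by the domination statement Proposition~\ref{lem:domination}(2). The only cosmetic differences are that the paper works with $\aleph_1$ rather than $|\cL|^+$ (the language being countable), explicitly passes to the case where $\VF(A)$ and $\VF(B)$ are fields in Step~0, and does not separately cite Proposition~\ref{prop:our12.11} for the existence of the $\cL$-embedding $f$ (this being absorbed into the appeal to quantifier elimination and the proof of Proposition~\ref{lem:domination}(2)).
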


\begin{proof}
Let $\cM$ be an $\aleph_1$-saturated model of $\ACVF(T_k,T_\Gamma)$. By Lemma \ref{lem:beauty-transfer-cardinal}, it suffices to show that $\cM$ is an  $\aleph_1$-$\cK$-beautiful pair. So let $\cA\to \cM$ and $\cA\to \cB$ be $\BP$-embeddings with $\cB$ countable. 

\

\emph{Step 0.} Without loss of generality, we may suppose $\cA\subseteq \cM$, and by Lemma~\ref{lem:b-pair-finite} and Lemma-definition~\ref{lem:base-change} we may also assume $P(\cA)=P(\cB)$. Moreover, we may suppose that the valued field sorts of $A$ and $B$ are fields. 

\medskip

\emph{Step 1.} Since $\cB\in \cK$, we have that $\RF(\cB)=(\RF(B), \RF(P(\cB))\in \cK_k$ and similarly $\VG(\cB)=(\VG(B),\VG(P(\cB))\in \cK_\Gamma$. By beauty transfer of $\cK_k$ and beauty transfer of $\cK_\Gamma$, $\RF(\cM)=(\RF(M),\RF(P(\cM)))$ is $\aleph_1$-$\cK_k$-beautiful and $\VG(\cM)=(\VG(M),\VG(P(\cM)))$ is $\aleph_1$-$\cK_\Gamma$-beautiful. In particular, there is an $(\cL_\Ring)_{\BP}$-embedding $g\colon \RF(\cB)\to \RF(\cM)$ over $\RF(\cA)$, and similarly, there is an $(\cL_\og)_{\BP}$-embedding  $h\colon \VG(\cB)\to \VG(\cM)$ over $\VG(\cA)$. Fact \ref{fact:def-field} and Fact~\ref{fact:def-intersection} yield that $\RF(g(B))\ind_{\RF(P(\cA))}^{\ld} \RF(P(\cM))$ and $\VG(h(B))\cap \VG(P(\cM))=\VG(P(\cA))$.

\medskip

\emph{Step 2.} By quantifier elimination in $\cL$ and $\aleph_1$-saturation of $\cM$, there is an $\cL$-embedding $f\colon B\to M$ over $A$ inducing the maps $h$ and $g$ from Step 1. 
Note that since $P(\cA)=P(\cB)$, the restriction of $f$ to $P(\cB)$ is trivially a $\BP$-embedding.
Moreover, we have that
\begin{align*}
& f(B)\models \tp(B/P(\cA)) \\ 
& g(\RF(B))\models  \tp(\RF(B)/\RF(P(\cA)))\mid\RF(P(\cM)) \\ 
& h(\VG(B))\models  \tp(\VG(B)/\VG(P(\cA)))\mid\VG(P(\cM)).  
\end{align*}
Part (2) of Proposition \ref{lem:domination} yields that $f(B)\models \tp(B/P(\cA))\mid P(\cM)$, which shows that $f$ induces a $\BP$-embedding. (Note that it is clear that $f$ respects the predicate $P$ and is thus an $\cL_P$-embedding.) \end{proof}

Combining Proposition~\ref{prop:ACF} with Theorem~\ref{thm:DOAG-complete} and Theorem~\ref{thm:ACVF-Beauty} we obtain the following result.

\begin{corollary}\label{cor:all-completions-acvf} There are exactly 8 possible completions of $\ACVF_{\SE}$ (see Section~\ref{sec:strictpro} for a description), each corresponding to a theory of beautiful pairs of the form $\ACVF(T_k,T_\Gamma)$, and the corresponding natural classes have beauty transfer. \qed
\end{corollary}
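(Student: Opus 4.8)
The plan is to count completions by showing that the complete theory of a stably embedded pair of models of $\ACVF$ is entirely determined by the induced completions on the residue field pair and on the value group pair, and that all admissible combinations occur. Since, by Proposition~\ref{prop:ACF}, the theory of stably embedded pairs of $\ACF$ has exactly $2$ completions (namely $\ACF(\cK_\triv)$ and $\ACF(\cK_\Def)$, according to whether $\RF(P(\cM))=\RF(M)$ or the extension is proper), and, by Theorem~\ref{thm:DOAG-complete}, the theory of stably embedded pairs of $\DOAG$ has exactly $4$ completions (corresponding to $\cK_\triv,\cK_\bdd,\cK_\infty,\cK_\Def$), one expects exactly $2\cdot 4=8$ completions of $\ACVF_{\SE}$.

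First I would establish the lower bound. For each pair $(T_k,T_\Gamma)$ of such completions, Theorem~\ref{thm:ACVF-Beauty} produces the theory $\ACVF(T_k,T_\Gamma)$, shows $\cK$-beautiful pairs exist, and shows that $\ACVF_\BP(\cK)=\ACVF(T_k,T_\Gamma)$ has beauty transfer; by Theorem~\ref{thm:qe} it is therefore complete, and it is consistent by Remark~\ref{rem:ACVF2-axioms}. Thus each of the $8$ combinations yields a genuine completion. These are pairwise distinct: the sorts $\RF$ and $\VG$ are purely stably embedded in $\ACVF$ (Fact~\ref{fact:QEVF}), so the $\cL_\BP$-theory of a valued field pair recovers, by restriction to these sorts, the theory of the induced residue field pair and of the induced value group pair; hence distinct choices of $(T_k,T_\Gamma)$ give pairs that are not elementarily equivalent.

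Next I would prove the upper bound, i.e.\ that every completion of $\ACVF_{\SE}$ is of this form. Let $\cM=(M,P(\cM))$ be any model of $\ACVF_{\SE}$. Stable embeddedness means $\tp(M/P(\cM))$ is definable, so by Lemma~\ref{lem:def-to-sep} the extension $\VF(M)/\VF(P(\cM))$ is $vs$-defectless; moreover, since $\RF$ and $\VG$ are purely stably embedded and $P(\cM)\preccurlyeq M$, the induced pairs $(\RF(M),\RF(P(\cM)))$ and $(\VG(M),\VG(P(\cM)))$ are again stably embedded elementary pairs, of $\ACF$ and $\DOAG$ respectively. Each of the latter satisfies exactly one of its completions, say $T_k$ and $T_\Gamma$. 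By the axiomatisation in Remark~\ref{rem:ACVF2-axioms} (obtained from Part~(1) of Proposition~\ref{lem:domination}), $\cM$ then satisfies $\ACVF(T_k,T_\Gamma)$; and since this theory is complete by the previous paragraph, $\Th(\cM)=\ACVF(T_k,T_\Gamma)$. Hence every completion is among the $8$ listed, and together with the lower bound this gives exactly $8$.

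The main obstacle is the upper bound, and within it the verification that an arbitrary stably embedded pair already satisfies the $vs$-defectlessness axiom and that stable embeddedness descends to the residue field and value group pairs; both points rest on the domination package of Section~\ref{sec:domination}, chiefly Lemma~\ref{lem:def-to-sep} and Proposition~\ref{lem:domination}(1). Once these are in hand, the combinatorial count is immediate.
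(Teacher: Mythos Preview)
Your proof is correct and follows essentially the same approach as the paper, which simply cites Proposition~\ref{prop:ACF}, Theorem~\ref{thm:DOAG-complete}, and Theorem~\ref{thm:ACVF-Beauty} without further comment. You have merely spelled out in detail how these three results combine to give both the lower bound (each of the $2\times 4$ choices yields a distinct completion with beauty transfer) and the upper bound (every model of $\ACVF_{\SE}$ satisfies one of the theories $\ACVF(T_k,T_\Gamma)$, using Remark~\ref{rem:ACVF2-axioms} and Proposition~\ref{lem:domination}(1)), which the paper leaves implicit.
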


\subsection{Beautiful pairs of $\RCVF$ and $\PCF$ in the three sorted language}\label{sec:RCVF-PCF-beauty}

Throughout this subsection we let $\cL$ denote either the 3-sorted language of ordered valued fields $\cL_{\mathrm{ovf}}$ or the 3-sorted language $\cLpcf(e,f)$. Our results for $\RCVF$ and $\PCF$ use the same pattern as those in the case of $\ACVF$ presented in the previous section. 

We first consider the case of $\RCVF$. Recall that there are only two completions of the theory of stably embedded pairs of $\RCF$ and both correspond to theories of beautiful pairs (see Remark \ref{rem:omin_dries}). Let $T_k$ and $T_\Gamma$ be completions of the theory of stably embedded pairs of $\RCF$ and $\DOAG$, respectively. Let $\cK_k$ (resp. $\cK_\Gamma$) be the natural classes such that $T_k$ (resp. $T_\Gamma$) is the theory of $\cK_k$-beautiful pairs (resp. $\cK_\Gamma$-beautiful pairs). Note that $\cK_k$ and $\cK_\Gamma$ have beauty transfer (by Theorem~\ref{thm:omin-beauty} and Theorem~\ref{thm:DOAG-complete}, respectively). 

As in the case of $\ACVF$ (see Definition \ref{def:ACVF-pairs}), we let $\cK$ be the class of \sepa-pairs  induced by $\cK_k$ and $\cK_\Gamma$, i.e., the \sepa-pairs $\cA\in \cK_\Def$ such that $(\RF(A), \RF(P(\cA))\in \cK_k$ and $(\VG(A), \VG(P(\cA))\in \cK_\Gamma$.

The argument given in Lemma \ref{lem:natu} shows that $\cK$ is a natural class. We let $\RCVF(T_k,T_\Gamma)$ be the $\cL_P$-theory of stably embedded pairs of real closed valued fields, for which the corresponding pair of residue fields is a model of $T_k$, and the corresponding pair of value groups is a model of $T_\Gamma$. Moreover, by Part (1) of Proposition \ref{lem:domination}, the theory $\RCVF(T_k,T_\Gamma)$ is axiomatizable and it is easy to see it is consistent.

\begin{theorem}\label{thm:RCVF-Beauty} Let $\cK_k, \cK_\Gamma, \cK, T_k$ and $T_\Gamma$ be as above. Then, $\cK$-beautiful pairs exist. Moreover, $\cK$ has beauty transfer and $\RCVF_\BP(\cK)$ is axiomatized by $\RCVF(T_k,T_\Gamma)$.   
\end{theorem}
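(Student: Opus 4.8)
The plan is to follow verbatim the architecture of the proof of Theorem~\ref{thm:ACVF-Beauty}, substituting the $\RCVF$-domination input (Proposition~\ref{prop:our12.11new}) for the $\ACVF$-one (Proposition~\ref{prop:our12.11}). Since $\RCVF(T_k,T_\Gamma)$ has already been shown to be axiomatizable and consistent (via Part~(1) of Proposition~\ref{lem:domination} and the existence of a product model), by Lemma~\ref{lem:beauty-transfer-cardinal} it suffices to fix an $\aleph_1$-saturated model $\cM=(M,P(\cM))\models\RCVF(T_k,T_\Gamma)$ and prove it is an $\aleph_1$-$\cK$-beautiful pair. I would verify this through Lemma~\ref{lem:b-pair-finite}, so I take $\cL_\BP$-embeddings $\cA\to\cM$ and $\cA\to\cB$ with $\cB$ countable.

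First I would carry out the reductions of Step~0 of the $\ACVF$ proof: using Lemma-definition~\ref{lem:base-change} I may assume $\cA\subseteq\cM$ and $P(\cA)=P(\cB)$, and after a base extension I may further assume $P(\cA)\models\RCVF$ and that the $\VF$-sorts of $A$ and $B$ are fields. The core of the argument is then the two-step lifting. In Step~1, because $\cB\in\cK$, the induced pairs satisfy $\RF(\cB)\in\cK_k$ and $\VG(\cB)\in\cK_\Gamma$; invoking beauty transfer of $T_k$ and $T_\Gamma$ (available by Theorem~\ref{thm:omin-beauty} and Theorem~\ref{thm:DOAG-complete} respectively), the pairs $\RF(\cM)$ and $\VG(\cM)$ are $\aleph_1$-$\cK_k$- and $\aleph_1$-$\cK_\Gamma$-beautiful, yielding embeddings $g\colon\RF(\cB)\to\RF(\cM)$ and $h\colon\VG(\cB)\to\VG(\cM)$ over the respective restrictions of $\cA$. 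By Fact~\ref{fact:def-field} these give $\RF(g(B))\ind^{\ld}_{\RF(P(\cA))}\RF(P(\cM))$ and $\VG(h(B))\cap\VG(P(\cM))=\VG(P(\cA))$, which are exactly the hypotheses needed downstream.

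In Step~2, quantifier elimination for $\RCVF$ in $\cL_{\mathrm{ovf}}$ (Fact~\ref{fact:QEVF}(2)) together with $\aleph_1$-saturation of $\cM$ produces an $\cL$-embedding $f\colon B\to M$ over $A$ that induces $g$ and $h$ and such that $f(B)\models\tp(B/P(\cA))$ with the $\RF$- and $\VG$-parts realizing the respective global extensions over $P(\cM)$. Part~(2) of Proposition~\ref{lem:domination}, which now covers $\RCVF$ precisely because Proposition~\ref{prop:our12.11new} handles the ordered setting, then upgrades this to $f(B)\models\tp(B/P(\cA))\mid P(\cM)$, showing $f$ is the desired $\cL_\BP$-embedding (that $P$ is respected is clear since $P(\cA)=P(\cB)$).

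The genuinely new content, and hence the only step I expect to require care rather than transcription, is the domination input: one must know that the amalgamating valued-field isomorphism constructed from valuative disjointness also preserves the order predicates on $\VF$ and $\RF$. This is exactly what Proposition~\ref{prop:our12.11new} supplies, its order-preservation being drawn from the argument of \cite[Theorem~2.5]{has-ealy-mar}; once this is in hand, every remaining step is formally identical to the $\ACVF$ case, so I would state the proof as \emph{``Analogous to the proof of Theorem~\ref{thm:ACVF-Beauty}, replacing Proposition~\ref{prop:our12.11} by Proposition~\ref{prop:our12.11new},''} and only spell out the order-preservation point if a referee demands it.
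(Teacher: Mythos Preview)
Your proposal is correct and matches the paper's own proof, which is the one-liner ``The proof follows exactly the same strategy as that of Theorem~\ref{thm:ACVF-Beauty}, using the $\RCVF$ part of Proposition~\ref{lem:domination}.'' The only cosmetic difference is that the paper points to Proposition~\ref{lem:domination} (which already covers $\RCVF$) rather than phrasing it as a swap of Proposition~\ref{prop:our12.11} for Proposition~\ref{prop:our12.11new}; since the latter is precisely what makes the $\RCVF$ case of Proposition~\ref{lem:domination} go through, the content is identical.
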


\begin{proof} The proof follows exactly the same strategy as that of Theorem \ref{thm:ACVF-Beauty}, using the $\RCVF$ part of Proposition \ref{lem:domination}. 
\end{proof}

\begin{corollary}\label{rem:all-completions-rcvf} There are exactly 8 possible completions of $\RCVF_{\SE}$, each corresponding to a theory of beautiful pairs of the form $\RCVF(T_k,T_\Gamma)$, and the corresponding natural classes satisfy beauty transfer.\qed 
\end{corollary}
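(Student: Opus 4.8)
The plan is to prove the corollary as a soft counting argument, assembling the already-established classifications of completions of stably embedded pairs of the residue field and of the value group, and feeding them into Theorem~\ref{thm:RCVF-Beauty}. The residue field of a model of $\RCVF$ is real closed and its value group is a divisible ordered abelian group; since by Fact~\ref{fact:QEVF} the sorts $\RF$ and $\VG$ are orthogonal and purely stably embedded in $\RCVF$, the completion of $\RCVF_{\SE}$ realised by a given stably embedded pair should be controlled entirely by the induced pairs on $\RF$ and $\VG$, and the whole task reduces to counting the possibilities there.

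First I would count the inputs. By Remark~\ref{rem:omin_dries}, the theory of stably embedded pairs of $\RCF$ has exactly two completions, namely $T_\BP(\cK_\triv)$ and $T_\BP$, both of which are theories of beautiful pairs with beauty transfer. By Theorem~\ref{thm:DOAG-complete}, the theory of stably embedded pairs of $\DOAG$ has exactly four completions, $T_\BP(\cK_\triv)$, $T_\BP(\cK_\bdd)$, $T_\BP(\cK_\infty)$ and $T_\BP$, again all beautiful-pair theories with beauty transfer. This yields $2\times 4 = 8$ pairs $(T_k,T_\Gamma)$.

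Next I would invoke Theorem~\ref{thm:RCVF-Beauty}: for each such pair, the class $\cK$ induced by the corresponding $\cK_k$ and $\cK_\Gamma$ is natural, $\cK$-beautiful pairs exist, and $\RCVF_\BP(\cK)$ has beauty transfer and is axiomatised by $\RCVF(T_k,T_\Gamma)$. By Theorem~\ref{thm:qe}, beauty transfer forces $\RCVF(T_k,T_\Gamma)$ to be complete; and since by Remark~\ref{rem:exp-def} it is an expansion by definitions of its $\cL_P$-reduct, that reduct is a complete extension of $\RCVF_{\SE}$. Thus each of the eight pairs $(T_k,T_\Gamma)$ produces a completion of $\RCVF_{\SE}$ which is, up to passing to an expansion by definitions, a beautiful-pair theory with beauty transfer of the required form.

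Finally I would argue that these eight completions are \emph{all} of them and are pairwise distinct. For exhaustiveness, given any $(M,P(M))\models\RCVF_{\SE}$, pure stable embeddedness (Fact~\ref{fact:QEVF}) shows that $(\RF(M),\RF(P(M)))$ and $(\VG(M),\VG(P(M)))$ are themselves stably embedded pairs of $\RCF$ and $\DOAG$, hence satisfy a unique $T_k$ and $T_\Gamma$ from the lists above; one then checks $(M,P(M))\models\RCVF(T_k,T_\Gamma)$, so its complete theory is the corresponding completion. For distinctness, the complete theories $T_k$ and $T_\Gamma$ are read off from the pair theory by relativising $\cL_P$-formulas to the $\RF$- and $\VG$-sorts, so completions arising from distinct pairs $(T_k,T_\Gamma)\neq(T_k',T_\Gamma')$ must differ. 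I expect the only genuinely load-bearing step to be the observation underlying both exhaustiveness and distinctness, although it is soft given the machinery in place: Proposition~\ref{lem:domination}(1) guarantees that a stably embedded pair of $\RCVF$ is governed by nothing beyond vs-defectlessness of the valued-field extension together with the induced pairs on residue field and value group, so no completion can fall outside the eight and none of the eight can collapse.
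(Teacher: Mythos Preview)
Your proposal is correct and follows exactly the route the paper intends: the corollary is marked \qed\ immediately after Theorem~\ref{thm:RCVF-Beauty}, meaning it is to be read as the direct analogue of Corollary~\ref{cor:all-completions-acvf}, obtained by combining Remark~\ref{rem:omin_dries} (two completions for $\RCF_{\SE}$), Theorem~\ref{thm:DOAG-complete} (four completions for $\DOAG_{\SE}$), and Theorem~\ref{thm:RCVF-Beauty}. Your treatment of exhaustiveness and distinctness via pure stable embeddedness of $\RF$ and $\VG$ is the expected one and simply spells out what the paper leaves implicit.
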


In the case of $p$-adically closed fields the situation is much simpler. Combining Proposition~\ref{lem:domination} and Theorem~\ref{thm:pres-complete}, and following the same strategy of the proof of Theorem~\ref{thm:ACVF-Beauty}, we have the following. Recall that  $\mathrm{PRES}$ denotes the theory of Presburger arithmethic.

\begin{theorem}\label{thm:pcf-Beauty}
The theory $\PCF_\BP$ is consistent and $\cK_\Def$ has beauty transfer. It is axiomatized by the theory of $vs$-defectless elementary pairs of models of $\PCF$ for which the corresponding pair of value groups is a model of $(\mathrm{PRES})_\BP$. The theories $\PCF_\BP$ and $\PCF_\BP(\cK_\triv)$ are the only 2 completions of $\PCF_{\SE}$.\qed
\end{theorem}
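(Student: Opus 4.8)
The plan is to transcribe, essentially verbatim, the three-step argument in the proof of Theorem~\ref{thm:ACVF-Beauty}, taking advantage of the special feature of $\PCF$ that the residue field is \emph{finite}: this forces the ``residue side'' of the induced class to be the trivial pair, so all the genuine content is concentrated on the value group, where Theorem~\ref{thm:pres-complete} applies. Concretely, I would set $\cL=\cLpcf(e,f)$ and let $\cK$ be the natural class induced, as in Definition~\ref{def:ACVF-pairs}, by the trivial class on the residue field and by the class $\cK_\Gamma$ whose beautiful pairs have theory $(\mathrm{PRES})_\BP$; by Theorem~\ref{thm:pres-complete} this $\cK_\Gamma$ has beauty transfer, and the argument of Lemma~\ref{lem:natu} shows $\cK$ is natural. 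Write $T$ for the candidate axiomatization ($vs$-defectless elementary pairs of models of $\PCF$ whose value group pair is a model of $(\mathrm{PRES})_\BP$). Consistency of $T$ follows as in Remark~\ref{rem:ACVF2-axioms} and in the spirit of Lemma~\ref{lem:consistency1}: starting from a finite extension of $\Q_p$ one takes, inside a saturated elementary extension, a maximal stably embedded $vs$-defectless subextension whose value group is a proper end-extension, keeping the finite residue field fixed. By Lemma~\ref{lem:beauty-transfer-cardinal} it then suffices to prove that an $\aleph_1$-saturated $\cM\models T$ is an $\aleph_1$-$\cK$-beautiful pair, which I verify via the finitary criterion of Lemma~\ref{lem:b-pair-finite}.

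For the beauty argument, given $\cL_\BP$-embeddings $\cA\to\cM$ and $\cA\to\cB$ with $\cB$ countable, I would run the same three steps. In Step~0, reduce to $\cA\subseteq\cM$, $P(\cA)=P(\cB)\models\PCF$, and all $\VF$-sorts fields. In Step~1, since $\VG(\cB)\in\cK_\Gamma$ and $\VG(\cM)$ is $\aleph_1$-$\cK_\Gamma$-beautiful by beauty transfer of $(\mathrm{PRES})_\BP$, I obtain an $(\cLpres)_\BP$-embedding $h\colon\VG(\cB)\to\VG(\cM)$ over $\VG(\cA)$, and Fact~\ref{fact:def-field} yields $\VG(h(B))\cap\VG(P(\cM))=\VG(P(\cA))$. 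Crucially, the residue-field hypothesis of Proposition~\ref{lem:domination} is automatic here: all residue fields coincide with the same finite field, so the required linear disjointness $\res(\OO(B))\ind_{\res(\OO(P(\cA)))}^{\ld}\res(\OO(P(\cM)))$ holds trivially and no residue embedding needs to be constructed. In Step~2, by quantifier elimination in $\cLpcf(e,f)$ (Fact~\ref{fact:QEVF}) together with $\aleph_1$-saturation I produce an $\cL$-embedding $f\colon B\to M$ over $A$ inducing $h$, and Part~(2) of Proposition~\ref{lem:domination} upgrades this to $f(B)\models\tp(B/P(\cA))\mid P(\cM)$, so that $f$ is an $\cL_\BP$-embedding. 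This simultaneously shows $\cK$-beautiful pairs exist and that $T$ axiomatizes $\PCF_\BP(\cK)$ with beauty transfer.

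Finally, for the classification of completions I would invoke Part~(1) of Proposition~\ref{lem:domination}: a stably embedded pair of models of $\PCF$ is determined up to elementary equivalence by $vs$-defectlessness together with the theories of its residue field pair and of its value group pair. The residue field pair is always the trivial pair over a fixed finite field, while the value group pair ranges over exactly the two completions of $(\mathrm{PRES})_{\SE}$ given by Theorem~\ref{thm:pres-complete}. Hence $\PCF_{\SE}$ has precisely the two completions $\PCF_\BP$ and $\PCF_\BP(\cK_\triv)$, the latter corresponding to the trivial value group pair.

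I expect the only genuine subtlety to be the interaction of the amalgamation with Macintyre's power predicates $P_n$; however, this has already been absorbed into the domination machinery, specifically into Proposition~\ref{prop:our12.11new} on which Proposition~\ref{lem:domination} rests, so no additional work is needed at this stage. The remaining steps are then routine transcriptions of the $\ACVF$ argument, with the residue-field contribution suppressed throughout.
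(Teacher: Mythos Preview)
Your proposal is correct and follows essentially the same approach as the paper: the paper's proof is literally the one-line indication ``combining Proposition~\ref{lem:domination} and Theorem~\ref{thm:pres-complete}, and following the same strategy of the proof of Theorem~\ref{thm:ACVF-Beauty}'', and you have faithfully unpacked exactly that, including the key observation that finiteness of the residue field trivializes the $\RF$-side so that all content lives in the value group pair.
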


By Corollary~\ref{cor:NIP-transfer}, we have the following.

\begin{corollary}\label{cor:ACVF-NIP-transfer} 
Any completion of the theory of stably embedded pairs of $\ACVF$, $\RCVF$ and $\PCF$ is NIP.\qed 
\end{corollary}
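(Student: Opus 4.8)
The plan is to apply Corollary~\ref{cor:NIP-transfer} to each of the finitely many completions, reducing NIP of the pairs to NIP of the base theories, which is classical. First I would invoke the classification of completions already established: by Corollary~\ref{cor:all-completions-acvf} (resp.\ Corollary~\ref{rem:all-completions-rcvf}) there are exactly eight completions of $\ACVF_{\SE}$ (resp.\ $\RCVF_{\SE}$), each of the form $\ACVF(T_k,T_\Gamma)$ (resp.\ $\RCVF(T_k,T_\Gamma)$) and each satisfying beauty transfer; and by Theorem~\ref{thm:pcf-Beauty} there are exactly two completions of $\PCF_{\SE}$, namely $\PCF_\BP$ and $\PCF_\BP(\cK_\triv)$, both satisfying beauty transfer. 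Up to the expansion by definitions from $\cL_P$ to $\cL_\BP$ (Remark~\ref{rem:exp-def}), every completion in question is thus of the form $T_\BP(\cK)$ for a natural class $\cK$ with beauty transfer; since NIP is insensitive to passing between a theory and an expansion by definitions, it suffices to show each such $T_\BP(\cK)$ is NIP.

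Next I would record that the base theories $\ACVF$, $\RCVF$ and $\PCF$ are all NIP, which is classical. With beauty transfer in hand, Theorem~\ref{thm:qe} yields that in each $T_\BP(\cK)$ the predicate $P$ is stably embedded and pure and that the $\cL_P$-reduct is bounded; in particular the structure induced on $P$ is just a model of the NIP theory $T$. This is exactly the configuration treated by Corollary~\ref{cor:NIP-transfer}, so applying it once to each of the finitely many classes $\cK$ delivers that every completion is NIP.

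The only delicate point, and the step I expect to be the main obstacle, is the extension property hypothesis in Corollary~\ref{cor:NIP-transfer}: several of the relevant classes — notably the trivial classes $\cK_\triv$ on the residue field or value group side, and the bounded/convex classes $\cK_\bdd,\cK_\infty$ for $\DOAG$ — need not satisfy (EP), so their beautiful pairs need not be elementary pairs and the hypothesis of Corollary~\ref{cor:NIP-transfer} is not literally met. This is precisely what Remark~\ref{rem:NTP2-transfer} resolves: running the Chernikov--Simon argument of \cite{cherinikov-simon} (Theorem~2.4 and Corollary~2.5) using only the boundedness and purity of $P$ granted by Theorem~\ref{thm:qe} removes the (EP) assumption. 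Invoking this, Corollary~\ref{cor:NIP-transfer} applies uniformly to all the completions listed above, which completes the proof.
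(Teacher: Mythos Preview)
Your approach is the same as the paper's: invoke Corollary~\ref{cor:NIP-transfer} after noting that every completion is a $T_\BP(\cK)$ with beauty transfer (by Corollary~\ref{cor:all-completions-acvf}, Corollary~\ref{rem:all-completions-rcvf}, Theorem~\ref{thm:pcf-Beauty}) and that the base theories are NIP. The paper's proof is simply the one-line citation of Corollary~\ref{cor:NIP-transfer}.

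Your detour through Remark~\ref{rem:NTP2-transfer}, however, is unnecessary, because the extension property \emph{does} hold for every natural class $\cK$ that arises here. By Theorem~\ref{thm:charct-beaut-pairs}, $\cK$ has EP if and only if all $\cK$-beautiful pairs are elementary pairs; and the axiomatizations in Theorems~\ref{thm:ACVF-Beauty}, \ref{thm:RCVF-Beauty} and~\ref{thm:pcf-Beauty} show that the beautiful pairs are precisely the models of the various $\ACVF(T_k,T_\Gamma)$, $\RCVF(T_k,T_\Gamma)$ and the two $\PCF$ completions, all of which are by definition stably embedded \emph{elementary} pairs. The same holds on the residue field and value group sides: the axiomatizations in Proposition~\ref{prop:ACF}, Theorem~\ref{thm:DOAG-complete}, Theorem~\ref{thm:bb_infty}, Remark~\ref{rem:omin_dries} and Theorem~\ref{thm:pres-complete} all extend $T_{\SE}$ (or, in the trivial case, give $M=P(\cM)$), so those beautiful pairs are elementary as well. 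Hence Corollary~\ref{cor:NIP-transfer} applies directly in every case, and you need not appeal to the Chernikov--Simon argument sketched in Remark~\ref{rem:NTP2-transfer}.
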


\subsection{Beautiful pairs of $\ACVF$ in the geometric language}\label{sec:geomsorted}
In this subsection, we consider the theory $\ACVF$ in the geometric sorts $\cL^{\mathcal{G}}$. By the main result of \cite{HHM1}, $\ACVF$ eliminates imaginaries in $\cL^{\mathcal{G}}$.

\begin{theorem}\label{thm:Beauty-geom-ACVF} Let $T_\Gamma$ be a completion of the theory of stably embedded pairs of $\DOAG$ and let $T_k=\ACF_\BP$. Let $\cK_\Gamma$ and $\cK_k$ be their corresponding natural classes. Let $\cK$ be the class of \sepa-pairs (as $\cL^{\mathcal{G}}_P$-structures) $\cN\in \cK_\Def$ induced by $\cK_\Gamma$ and $\cK_k$ (as in Definition \ref{def:ACVF-pairs}). Then, $\cK$-beautiful pairs exist and are elementary pairs. Moreover,  $\cK$ has beauty transfer and $\ACVF_\BP(\cK)$ is axiomatized by $\ACVF(T_k,T_\Gamma)$. \end{theorem}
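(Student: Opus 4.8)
The plan is to reduce to the three-sorted case, Theorem \ref{thm:ACVF-Beauty}, by passing to imaginaries and invoking elimination of imaginaries in $\cL^{\mathcal{G}}$. Because $T_k=\ACF_\BP$, the class $\cK_k$ is the full class $\cK_\Def$ for $\ACF$. Let $\cK_0$ denote the three-sorted class (in $\cLgk$) induced by $\cK_k$ and $\cK_\Gamma$ as in Definition \ref{def:ACVF-pairs}, and let $\cF=\cF_{\cK_0}$ be the associated natural class of definable types via Lemma \ref{lem:nat-to-nat}. By Part (1) of Proposition \ref{lem:domination}, $\cF$ consists exactly of the definable $\ACVF$-types $p=\tp(a/C)$ whose value-group part $\tp(\VG(a)/\VG(C))$ lies in the natural class $\cF_\Gamma$ associated with $\cK_\Gamma$, with no constraint on the residue-field part ($vs$-defectlessness being automatic for definable types by Lemma \ref{lem:def-to-sep}). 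First I would record that $\cF$ is a natural class of definable types: finitarity, invariance and push-forward closure all reduce to the same properties of $\cF_\Gamma$, since the value-group projection is an $\emptyset$-definable map commuting with all three operations.

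Next I would verify the three hypotheses of Remark \ref{rem:density} for $\cF$, namely density of $\cF$-types, closure under $\acl^\eq$, and closure in towers. The last two follow from the corresponding statements for $\cF_\Gamma$ — immediate from the explicit completions in Theorem \ref{thm:DOAG-complete} — together with invariance and transitivity of definable types. For density, given a nonempty $\acl^\eq(\ulcorner X\urcorner)$-definable set $X$ in the valued-field sorts, I would combine density of definable types in $\ACVF$ with density of $\cF_\Gamma$-types in $\DOAG$ to produce a definable type concentrating on $X$ whose value-group part is of the prescribed kind, adjusting the push-forward along the $\VG$-projection so that it lands in one of the four classes $\cK_\triv,\cK_\bdd,\cK_\infty,\cK_\Def$ described in Theorem \ref{thm:DOAG-complete}.

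Granting these hypotheses, the $\cK_\cF$-version of Lemma \ref{lem:density}(2) supplied by Remark \ref{rem:density} applies. Starting from a three-sorted $\cK_0$-beautiful pair $\cM=(M,P(\cM))$, which exists by Theorem \ref{thm:ACVF-Beauty}, its imaginary expansion $\cM^{*}=(M^\eq,P(M^\eq))$ is then beautiful for $\ACVF^\eq$ relative to $\cF$. By the main result of \cite{HHM1}, $\ACVF$ eliminates imaginaries in $\cL^{\mathcal{G}}$, so $M^\eq=\dcl^\eq(\mathcal{G}(M))$ and under this identification the induced class on $\cL^{\mathcal{G}}_\BP$-structures is exactly $\cK$; thus $\cK$-beautiful pairs in $\cL^{\mathcal{G}}$ exist, and since $P(\cM)\preccurlyeq M$ holds already three-sortedly they are elementary pairs. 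The axioms $\ACVF(T_k,T_\Gamma)$ of Remark \ref{rem:ACVF2-axioms} only refer to $vs$-defectlessness of $\VF(M)/\VF(P(\cM))$ and to the residue-field and value-group pairs, hence are unchanged when passing between $\cLgk$ and $\cL^{\mathcal{G}}$; beauty transfer then follows from Lemma \ref{lem:beauty-transfer-cardinal} via Corollary \ref{cor:elem-equiv}, since an $\cL^{\mathcal{G}}_\BP$-model of $\ACVF(T_k,T_\Gamma)$ is $\lambda$-saturated precisely when its three-sorted reduct is, and Theorem \ref{thm:qe} then delivers completeness, quantifier elimination and the stated axiomatization.

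The main obstacle is the density of $\cF$-types: although density of definable types in $\ACVF$ and of $\cF_\Gamma$-types in $\DOAG$ are separately available, controlling the value-group part of a definable type concentrating on an \emph{arbitrary} definable set $X$ (not merely a variety), and forcing it to be trivial, bounded, at infinity or unconstrained as required, while remaining definable over $\acl^\eq(\ulcorner X\urcorner)$, is the delicate point. A more self-contained alternative would re-run the proof of Theorem \ref{thm:ACVF-Beauty} verbatim in $\cL^{\mathcal{G}}$, replacing Proposition \ref{lem:domination} by its geometric-sorts analogue extracted from \cite{HHM}; the burden then shifts to establishing that domination statement directly in $\cL^{\mathcal{G}}$.
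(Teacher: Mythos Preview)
Your route differs genuinely from the paper's. The paper does not invoke Remark~\ref{rem:density}; instead it works directly in $\cL^{\mathcal{G}}_\BP$ and reduces to Theorem~\ref{thm:ACVF-Beauty} by \emph{resolving the geometric sorts}. Given $\cA\subseteq\cB$ with $P(\cA)=P(\cB)$ a model, for each lattice $s\in\mathbf{S}(B)$ the paper takes the generically stable $s$-definable type $p_s$ of a generic $\mathcal{O}$-basis of $s$, realizes these types independently over $B$ to get $\cB_1\in\cK$ (here $\VG(B_1)=\VG(B)$, so no $T_\Gamma$-constraint is disturbed), and---crucially using $\RF(M)\supsetneq\RF(P(\cM))$, which is exactly where $T_k=\ACF_\BP$ enters---realizes the $p_s$ for $s\in\mathbf{S}(A)$ inside $M$ over $AP(\cM)$ to embed an analogous $\cA_1$. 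Iterating $\omega$ times yields $\cA'\subseteq\cB'$ in which every lattice has a $\VF$-basis, hence both are generated by their $\cLgk$-parts, and Theorem~\ref{thm:ACVF-Beauty} finishes. Your closing ``alternative'' (re-run the three-sorted argument with a geometric-sorts domination statement) is therefore not what the paper does either: the paper reduces to $\cLgk$-generated substructures first, then applies the three-sorted domination unchanged.

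Your abstract approach via Remark~\ref{rem:density} is plausible but has a real gap at exactly the point you flag. The phrase ``adjusting the push-forward along the $\VG$-projection so that it lands in one of the four classes'' is not a procedure: you cannot start from an arbitrary definable type on $X$ and modify it so that its $\VG$-image becomes realized (or bounded, or at infinity). What actually makes density of $\cF$-types work here is that, since $T_k=\ACF_\BP$ removes every constraint on the residue-field part, each $\cF$ under consideration \emph{contains the stably dominated types}, and those are dense in $\ACVF$---a non-trivial fact you would have to import from \cite{HL} or \cite{HHM}. Once made precise, your argument therefore consumes essentially the same input (stably dominated resolutions of imaginaries) that the paper uses explicitly via the types $p_s$. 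The paper's concrete construction is more self-contained and makes the dependence on $T_k=\ACF_\BP$ visible at a single step; your route is more modular but leaves both Remark~\ref{rem:density} and the density statement as genuine sub-proofs still to be written out.
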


\begin{proof}
We will reduce the statement to Theorem~\ref{thm:ACVF-Beauty}. Let $\cN$ be an $\aleph_1$-saturated model of $\ACVF(\ACF_\BP,T_\Gamma)$ in the language $\cL^{\mathcal{G}}_{P}$. By Lemma \ref{lem:beauty-transfer-cardinal}, it suffices to show $\cN$ is an $\aleph_1$-$\cK$-beautiful pair. So let $\cA\to \cN$ and $\cA\to \cB$ be $\BP$-embeddings (in $\cK$, i.e., in the language $\cL^{\mathcal{G}}_{P}$) with $\cB\in\cK$ countable. 

As in the proof of Theorem~\ref{thm:ACVF-Beauty}, by Lemma-definition~\ref{lem:base-change} and Lemma \ref{lem:b-pair-finite} we may suppose $\cA\subseteq \cN$, $\cA\subseteq \cB$, $P(\cA)=P(\cB)$, and the valued field sorts of $A$ and $B$ are fields. 

Recall that $\mathbf{S}_n$ denotes the sort of $n$-dimensional lattices. We denote by $\mathbf{S}$ the union of all $\mathbf{S}_n$ for all $n>0$. For any $s\in \mathbf{S}_n(B)$, we let $p_s$ be the pushforward (under any $Q\in \mathop{GL}_n(\VF(\mathcal{U}))$ with $Q(\mathcal{O}^n)=s$) of $\eta^{\otimes n}$, where $\eta$ denotes the generic type of $\mathcal{O}$. Observe that $p_s$ is a generically stable global $s$-definable type. For any lattice $s\in \mathbf{S}(B)$, we choose $b_s\models p_s\mid B$, such that the tuple $(b_s)_{s\in\mathbf{S}(B)}$ is (forking) independent over $B$. Then 
$\cB_1\coloneqq (\langle B\cup\{b_s\mid s \in \mathbf{S}(B)\}\rangle,P(\cA)) \in \cK$, as $\VG(B_1)=\VG(B)$. Next, setting $\cA_1\coloneqq(\langle A\cup\{b_s\mid s \in \mathbf{S}(A)\}\rangle,P(\cA))$, we get $\cA\subseteq\cA_1,\cB\subseteq\cB_1$.

By $\aleph_1$-saturation of $\cN$ and since $\RF(N)\supsetneq P(\RF(N))$, we find an $A P(\cN)$-independent family $(c_s)_{s\in\mathbf{S}(A)}$ in $N$ (in the sense of forking) with $c_s\models p_s\mid AP(\cN)$ for all $s\in\mathbf{S}(A)$. It follows that $b_s\mapsto c_s$ defines a $\BP$-embedding of $\cA_1$ into $\cN$ over $\cA$. 

We now iterate the preceding procedure $\omega$ times and obtain increasing unions $(\cA_n)_{n\geq1}$ and $(\cB_n)_{n\geq 1}$. Set $\cA'\coloneqq\bigcup_{n\geq1}\cA_n$ and $\cB'\coloneqq\bigcup_{n\geq1}\cB_n$.

By construction, every lattice $s\in \mathbf{S}_n(A')$ has a basis in $\VF(A')$, similarly for $B'$. Thus, $s/\cM s$ is $\VF(A')$-definably isomorphic to $\RF^n$ for all $s\in\mathbf{S}_n(A')$, and it follows that  $\dcl^{\mathcal{G}}(A')=\dcl^{\mathcal{G}}(\VF(A')\cup \RF(A'))$. Similarly, $\dcl^{\mathcal{G}}(B')=\dcl^{\mathcal{G}}(\VF(B')\cup \RF(B'))$. 

This shows in particular that $\cA'$ and $\cB'$ are generated by their restriction to the sorts in $\cLgk$. The $(\cLgk)_{\BP}$-embedding given by Theorem~\ref{thm:ACVF-Beauty} between (the $(\cLgk)_{\BP}$-reducts of) $\cB'$ and $\cN'$ over $\cA'$ induces thus a $\BP$-embedding (in $\cK$).     
\end{proof}

\begin{remark}\label{rem:RCVF-imagin} It seems plausible that an analogue of Theorem \ref{thm:Beauty-geom-ACVF} holds for $\RCVF$ in the geometric language using prime resolutions as introduced in \cite[Section 4]{has-ealy-mar}. Alternatively (as suggested by one of the anonymous referees), one can also adapt the proof, using a definable type of bases of any lattice which is orthogonal to $\VG$ and choosing
one of the two generics of the 1-dimensional $\RCF$ vector space at each stage.  
\end{remark}

\subsection{Strict pro-definability of spaces of definable types}\label{sec:strictpro} 

In this section, we apply our results to show that various spaces of definable types in $\ACVF$, $\RCVF$ and $\PCF$ are strict pro-definable. Using the dual description of natural classes of \sepa-pairs in terms of natural classes of definable types given in Section~\ref{sec:def-type}, each theory of beautiful pairs $T_\BP(\cK)$ (where $T$ is one of the above theories) induces a corresponding natural class of definable types $\cF$ for which $\cK=\cK_\cF$. By Theorem \ref{thm:strictness}, the class $\cF$ is strict pro-definable. We now describe the classes of types they correspond to. It is worth pointing out that it was already shown in \cite{cubi-ye} that $\D{}$ is pro-definable for $\ACVF$, $\RCVF$ and $\PCF$.

\begin{remark}\label{rmk:geom-interpret}
In the following list of spaces of types in $\ACVF$ (resp. later in $\RCVF$), when we say that a certain class of types $X$ is the model-theoretic analogue of a certain geometric space $Y$, we mean that they are related via the restriction functor in~\cite[Chapter~14]{HL}. In particular, when $K$ is a maximally complete model with value group $\mathbb{R}$ (and residue field $\mathbb{R}$ in the case of $\RCVF$), we have that the restriction induces a canonical bijection between $X(K)$ and $Y(K)$. 
\end{remark}

\subsubsection{Spaces of definable types in $\ACVF$:}\label{sec:list-spaces-ACVF}
For $T_k$ and $T_\Gamma$ as in Definition \ref{def:ACVF-pairs} let $\cF$ be the corresponding natural class of definable types such that $\ACVF(T_k,T_\Gamma)$ axiomatizes $\ACVF_\BP(\cK_\cF)$. We now describe the 8 corresponding possible classes $\cF$. Suppose first that $T_k$ corresponds to the theory of proper pairs of algebraically closed fields $\ACF_\BP$ and let $V$ be an algebraic variety over a small model $K$ of $\ACVF$. Recall that $\cF_V$ denotes the space of types in $\cF$ that concentrate on $V$.

\begin{enumerate}[leftmargin=*, label=(\arabic*)]
\item Note that in $\ACVF$, a definable type $p$ is stably dominated type  if and only if $p$ is orthogonal to $\Gamma$~\cite[Proposition 2.9.1]{HL}. Thus, when $T_\Gamma$ is the theory of trivial pairs of $\DOAG$, $\cF$ corresponds to the class of stably dominated types. The set $\cF_V(K)$ of $K$-definable types in $\cF$ concentrating on $V$ is denoted $\widehat{V}(K)$, and can be viewed as a model-theoretic analogue of the Berkovich analytification $V^{\mathrm{an}}$ of $V$. It was already shown to be strict pro-definable in \cite{HL}. 
	\item When $T_\Gamma$ corresponds to bounded pairs of $\DOAG$, $\cF$ corresponds to the class of bounded definable types in $\ACVF$, meaning the types $p\in \D{}(\cU)$ that have a realisation in a model whose value group is bounded by $\Gamma(\cU)$. The set $\cF_V(K)$ will be denoted by $\widetilde{V}(K)$ and studied in a subsequent work. It can be seen as the model theoretic analogue of the adic space, in the sense of Huber~\cite{Adic}, associated to $V$. Indeed, when $K$ is a maximally complete with $\Gamma(K)=\mathbb{R}$, following the same machinery~\cite[Chapter 14.1]{HL}, one can show that $\widetilde{V}(K)$ is in natural bijection with the adic space associated to $V$.
	\item When $T_\Gamma$ is the theory of proper pairs of $\DOAG$ such that $P(\cM)$ is convex in $M$, $\cF$ corresponds to those definable types that are orthogonal to $p_{0^+}$ (in $\ACVF$). Indeed, since $\Gamma\models \DOAG$, $P(\cM)$ being convex in $M$ is the same as no realisation of $p_{0^+}$ is added. We are unaware of a geometric interpretation of such type spaces.
	\item When $T_\Gamma$ is the theory that corresponds to all definable types in DOAG, $\cF$ corresponds to the class of all definable types. The set $\cF_V(K)$ can be seen as a model theoretic analogue of the Zariski-Riemann space associated to $V$ following the same approach as identifying $\widehat{V}$ as the Berkovich analytification $V^{an}$ in~\cite[Chapter 14.1]{HL}. (See also (2) above.)
\end{enumerate}
Suppose now that $T_k$ corresponds to the theory of trivial elementary pairs of $\ACF$. 
\begin{enumerate}[resume, leftmargin=*, label=(\arabic*)]
	\item When $T_\Gamma$ is the theory of trivial pairs of DOAG, $\cF$ corresponds to the class of realized types.
	\item When $T_\Gamma$ corresponds to the class of bounded types, the class $\cF$ corresponds to those definable types which are orthogonal to the residue field sort and $p_{+\infty}$. We are unaware of a geometric interpretation of such type spaces.
	\item When $T_\Gamma$ is the theory of proper pairs of DOAG such that $P(\cM)$ is convex in $M$, the class $\cF$ corresponds to those definable which are orthogonal to the residue field sort and to $p_{0^+}$. Such types correspond to the residually algebraic types in~\cite{mu-stab-ACF} or standard types in~\cite{mu-stab-ACVF}. The set $\cF_V(K)$ can be used to give a notion of a branch of $V$ at infinity along a certain direction. Indeed, in the case when $V$ is an affine curve, such types correspond to infinitesimal neighborhoods of points at infinity for some projective embedding of $V$. For more details, see~\cite[Section 3]{mu-stab-ACF}. 
	\item When $T_\Gamma$ is the theory that corresponds to the class of all definable types in DOAG, then $\cF$ corresponds to the class of definable types orthogonal to the residue field sort. Again, we are unaware of a geometric interpretation of such type spaces.
\end{enumerate}

\begin{theorem}\label{cor:acvf-strictpro}
	Let $X$ be an interpretable set in $\ACVF$. Then, for $\cF$ as in classes (1)-(5) above, $\cF_X$ is strict pro-definable. When $X$ is a definable set in the 3-sorted language of $\ACVF$, for $\cF$ as in (6)-(8) above, $\cF_X$ is strict pro-definable.
\end{theorem}

\begin{proof} When $X$ is definable, the result follows for (1)-(8) by Theorems \ref{thm:ACVF-Beauty} and \ref{thm:strictness}. When $X$ is only assumed to be interpretable, the result follows for (1)-(5) by Theorems \ref{thm:Beauty-geom-ACVF} and \ref{thm:strictness}. 
\end{proof}

\begin{remark}
Note that if $f\colon Z\to W$ is a surjective pro-definable map between pro-definable sets with $Z$ strict pro-definable, then $W$ is also strict pro-definable. In particular, since $\ACVF$ has surjectivity transfer in $\cL^{\mathcal{G}}$ by \cite[Lemma 4.2.6]{HL}, one could alternatively prove strict pro-definability of $\D{X}(\cU)$ when $X$ is interpretable using strict pro-definability in case $X$ is a definable set in $\ACVF$ in the valued field sort. 

In the remaining cases, using the same method, one would have to show surjectivity transfer for the corresponding natural classes of types. Since these conditions have not been checked, invoking Theorem~\ref{thm:Beauty-geom-ACVF} (instead of just invoking Theorem~\ref{thm:ACVF-Beauty}) is necessary when $X$ is interpretable.
\end{remark}

\subsubsection{Spaces of definable types in $\RCVF$:}\label{sec:RCVF-description}
For $T_k$ and $T_\Gamma$ as in Definition \ref{def:ACVF-pairs} let $\cF$ be the corresponding natural class of definable types such that $\RCVF(T_k,T_\Gamma)$ axiomatizes $\RCVF_\BP(\cK_\cF)$. We now describe the 8 corresponding possible classes $\cF$. Suppose first that $T_k$ corresponds to the theory of proper pairs of real closed fields $\RCF_\BP$ and let $V$ be a semi-algebraic set in $\VF$.

\begin{enumerate}[leftmargin=*, label=(\arabic*)]
	\item When $T_\Gamma$ is the theory of trivial pairs of $\DOAG$, $\cF$ corresponds to the class of definable types which are orthogonal to $\Gamma$. The set $\cF_V(K)$, also denoted by $\widehat{V}(K)$, can be viewed as the model theoretic analogue of the Berkovich analytification of semi-algebraic sets as defined in \cite{jell_etal}.
	\item When $T_\Gamma$ corresponds to bounded pairs of $\DOAG$, $\cF$ corresponds to the class of bounded types in $\RCVF$ (see point (2) in Section \ref{sec:list-spaces-ACVF}). The set $\cF_V(K)$ could be thought of as (a model theoretic analogue of) the adic space associated to $V$. 	
	\item When $T_\Gamma$ is the theory of proper pairs of $\DOAG$ such that $P(M)$ is convex in $M$, as for $\ACVF$, we are unaware of  a geometric interpretation of such spaces. 
	\item When $T_\Gamma$ is the theory that corresponds to all definable types in $\DOAG$, $\RCVF(T_k,T_\Gamma)$ corresponds to the class of all definable types. The set $\cF_V(K)$ is a model theoretic analogue of the Zariski-Riemann space above a given ordering on the real spectrum of $V$.
    \item[(5-8)] These correspond to $T_k$ being the theory of trivial pairs of $\RCF$ and $T_\Gamma$ each of the 4 possibilities as in the case of $\ACVF$ (Section \ref{sec:list-spaces-ACVF}).
\end{enumerate}

\begin{theorem}\label{cor:rcvf-strictpro}
	Let $X$ be an $\cL_{\mathrm{ovf}}$-definable set in $\RCVF$. Then, for $\cF$ as in (1)-(8) above, $\cF_X$ is strict pro-definable.  \qed
\end{theorem}

\begin{remark}\label{rmk:rcvf-geom} 
A careful look at the proof of~\cite[Lemma 4.2.6 (1)]{HL} shows that $\RCVF$ has density of definable types and moreover, density of definable bounded types (the former result is also shown in \cite{hils-rideau-2024}). By Theorem \ref{thm:RCVF-Beauty} and Lemma \ref{lem:density} (and Remark \ref{rem:density}), this shows that whenever $X$ is interpretable, both $\D{X}$ and the set of bounded definable types concentrating on $X$ are strict pro-definable. 
\end{remark}

Similar methods give us the following:

\begin{theorem}\label{thm:pcf-strictpro}
Let $X$ be definable set in $\PCF$ in $\cLpcf$, then $S^\Def_X$ is strict pro-definable.
\end{theorem}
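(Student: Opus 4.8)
The plan is to obtain this as a direct instance of the strictness criterion of Theorem~\ref{thm:strictness}, in complete analogy with the proofs of Corollaries~\ref{cor:acvf-strictpro} and~\ref{cor:rcvf-strictpro}. I would take $\cF = \D{}$, the class of \emph{all} global definable types; this is a natural class by Example~\ref{eg:class}(1), and by Lemma~\ref{lem:nat-to-nat} its associated natural class of \sepa-pairs is $\cK_\cF = \cK_\Def$, so that $T_\BP(\cK_\cF) = \PCF_\BP$. Because the statement concerns \emph{definable} (not merely interpretable) sets $X$ in the three-sorted language $\cLpcf$, there is no need to pass to imaginaries or geometric sorts, and one may work directly with the three-sorted characterization of beauty; in particular the analogue of Theorem~\ref{thm:Beauty-geom-ACVF} is not required here.

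The key input is Theorem~\ref{thm:pcf-Beauty}, which asserts that $\PCF_\BP$ is consistent and has beauty transfer. This verifies both hypotheses of Theorem~\ref{thm:strictness} for $\cF = \D{}$, namely that $T_\BP(\cK_\cF)$ exists and has beauty transfer. (The global assumption of uniform definability of types for $\PCF$ is available, since $\D{}(\cU)$ was already shown to be pro-definable for $\PCF$ in~\cite{cubi-ye}.) Theorem~\ref{thm:strictness} then yields immediately that $\cF = \D{}$ is strict pro-definable: for every finite tuple $x$ of real variables, the projection of $\D{x}(\cU)$ onto any finite set of canonical-parameter coordinates is definable in the sense of Definition~\ref{def:prodef}.

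It then remains to restrict from $\D{x}$ to $\D{X}$ for a fixed $\cLpcf$-definable set $X$, say defined by $\theta(x)$. A definable type $p$ concentrates on $X$ exactly when $\theta \in p$, which is a definable condition on the canonical-parameter coordinate $c(p,\theta)$ of $p$ in the sense of Definition~\ref{def:prodef}. Hence $\D{X}$ is cut out from $\D{x}$ by a definable constraint on finitely many coordinates, and each finite projection of $\D{X}$ is a definable projection of a definable subset of the corresponding finite projection of $\D{x}$, which is again definable. Therefore $S^\Def_X = \D{X}$ is strict pro-definable. The whole argument is thus an assembly of prior results: the substantive content is entirely contained in Theorem~\ref{thm:pcf-Beauty} (beauty transfer for $\PCF$, resting on Theorem~\ref{thm:pres-complete} and Proposition~\ref{lem:domination}), and the only step requiring a moment's care---that concentrating on a definable set is a definable condition on canonical parameters---follows directly from the definitions, so I anticipate no genuine obstacle.
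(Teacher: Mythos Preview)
Your proposal is correct and follows exactly the approach the paper intends: the statement is presented in the paper with the phrase ``Similar methods give us the following,'' meaning one applies Theorem~\ref{thm:strictness} with $\cF=\D{}$, using Theorem~\ref{thm:pcf-Beauty} to supply existence and beauty transfer of $\PCF_\BP$, just as in Corollaries~\ref{cor:acvf-strictpro} and~\ref{cor:rcvf-strictpro}. Your additional paragraph making explicit that concentrating on a definable $X$ is a definable condition on canonical parameters is a welcome clarification of a step the paper leaves implicit, but otherwise the arguments coincide.
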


In this case, $\D{V}$ is a model-theoretic analogue of the $p$-adic spectrum of a variety $V$ in the sense of~\cite{ERob-padic} (see also \cite{belair}).

\section{An Ax-Kochen-Ershov principle for beauty}\label{sec:Ax-Kochen-RV}

\subsection{Pairs of henselian valued fields in $\RV$-enrichments}\label{sec:enrichments}

Let $\cL$ be an $\RV$-enrichment of $\cL_{\RVV}$ in which we allow a set of auxiliary sorts from $\RV^\eq$ which always contains $\VG$. Recall that $\RV$ denotes the $\RV$-sort together with the auxiliary sorts. Let $\cL^-$ be the restriction of $\cL$ to the sorts in $\RV$. For an $\cL$-structure $A$, let $A|\cL^-$ be its reduct to $\RV$. 

Let $T$ be the $\cL$-theory of a benign valued field $(K,v)$. Let $\cK_{\RVV}$ be a natural class of \sepa-pairs in the $\cL^-$-theory of $K|\cL^-$ (in sorts $\RV$). Assume that $\cK_{\RVV}$-beautiful pairs exist and that $\cK_{\RVV}$ has beauty transfer. We will assume in addition that $\cK_{\RVV}$ has the extension property (recall that this implies that $\cK_{\RVV}$-beautiful pairs are elementary pairs by Theorem~\ref{thm:charct-beaut-pairs}).

Let $\cK$ be the class of $\cL_P$-structures $\cM\in \cK_\Def$ induced by $\cK_{\RVV}$ analogously as in Definition \ref{def:ACVF-pairs}, i.e., $\cK$ is the class of \sepa-pairs $\cA\in \cK_\Def$ such that $(\RV(A'), \RV(P(\cA')))\in \cK_\RV$. 

\begin{lemma}\label{lem:natu-RV}
$\cK$ is a natural class.
\end{lemma}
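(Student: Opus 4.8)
The plan is to verify the five conditions of Definition \ref{def:natural-class-K}, following closely the argument of Lemma \ref{lem:natu} and Remark \ref{rem:base} from the $\ACVF$ case, with the sorts $\RV$ playing the role there played by $\RF$ and $\VG$ together (so that Proposition \ref{prop:def-upto-RV} takes the place of Proposition \ref{lem:domination}). Conditions $(i)$--$(iv)$ are formal. Invariance under isomorphism $(i)$ is immediate from the definition. For $(ii)$, the $\RV$-pair induced by a trivial pair $A_\triv$ is again trivial, hence lies in $\cK_{\RVV}$ by condition $(ii)$ applied to the natural class $\cK_{\RVV}$. For $(iii)$, passing to $\pdcl$ changes the induced $\RV$-pair only by replacing it with its $\dcl$-closure, so membership is preserved because $\cK_{\RVV}$ is closed under $\pdcl$. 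For $(iv)$, one observes that a finitely generated \sse-pair has finitely generated induced $\RV$-pair, and conversely every finitely generated $\RV$-substructure of the induced pair is contained in the induced $\RV$-pair of some finitely generated substructure of $\cA$; the finitary character of $\cK_{\RVV}$ then transfers the equivalence to $\cK$.

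The substantial point is $(v)$, closure under base extension, and here I would argue exactly as in Remark \ref{rem:base}. Given $\cA\in\cK$, the definition of $\cK$ together with Lemma-definition \ref{lem:base-change} lets me first reduce to the case $P(\cA)\models T$; given $P(\cA)\subseteq B\subseteq\cU^\eq$ I may then enlarge $B$ and assume $B\models T$ as well, since $\cK_{\RVV}$ is closed under base extension and any two models over $P(\cA)$ embed in a common one (this simultaneously shows the defining condition is independent of the auxiliary model, so that $\cK$ is well posed). Writing $\cA'=\cA_B$, I must check $(\RV(A'),\RV(P(\cA')))\in\cK_{\RVV}$. Since $\cA$ is an \sse-pair, $\tp(A/P(\cA))$ is definable, whence $\VF(A)/\VF(P(\cA))$ is $vs$-defectless by Lemma \ref{lem:def-to-sep}; and definability of the type, via Fact \ref{fact:def-field}, yields $\VG(A)\cap\VG(B)=\VG(P(\cA))$ and $\RF(A)\ind_{\RF(P(\cA))}^{\ld}\RF(B)$. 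These are precisely the hypotheses of Lemma \ref{lem:vd-equiv}, whose conclusion gives that $\RV(A')$ is generated by $\RV(A)$ and $\RV(B)$ (and that $\VF(A')/\VF(B)$ is again $vs$-defectless). Consequently the induced $\RV$-pair of $\cA'$ is exactly the base extension, within $\cK_{\RVV}$, of the induced $\RV$-pair of $\cA$ to $\RV(B)$; as $\cK_{\RVV}$ is a natural class and hence closed under base extension, this pair lies in $\cK_{\RVV}$, so $\cA'\in\cK$.

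The main obstacle I anticipate is not a single estimate but the bookkeeping needed to identify the induced $\RV$-pair of the base extension $\cA_B$ with the $\cK_{\RVV}$-base extension of the induced $\RV$-pair of $\cA$. This identification is what converts the abstract hypothesis ``$\cK_{\RVV}$ is closed under base extension'' into closure for $\cK$, and it rests squarely on the generation statement $\RV(A')=\langle\RV(A),\RV(B)\rangle$ supplied by Lemma \ref{lem:vd-equiv}, together with the linear and valuation-theoretic disjointness extracted from definability of $\tp(A/P(\cA))$ through Fact \ref{fact:def-field}. Once these are in place the verification is routine, and conditions $(i)$--$(iv)$ require no ideas beyond the corresponding properties of $\cK_{\RVV}$.
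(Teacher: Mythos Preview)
Your proposal is correct and follows essentially the same route as the paper: conditions $(i)$--$(iv)$ are dispatched as formal (the paper says even less), and for $(v)$ both arguments reduce to the identification $\RV(\cA_B)=\RV(\cA)_{\RV(B)}$ and then invoke closure of $\cK_{\RVV}$ under base extension. One small point the paper makes explicit and you leave implicit: Lemma~\ref{lem:vd-equiv} is stated only for the \emph{unenriched} $\RV$-sort, whereas here $\RV$ denotes the enriched family of sorts including the auxiliary ones; the paper observes that, precisely because we are in an $\RV$-enrichment (no new function symbols from $\VF$ into the auxiliary sorts except through $\rv$), the generation statement still yields $\RV(\cM')=\RV(\cM)_{\RV(B)}$ for the enriched $\RV$, so the identification goes through.
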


\begin{proof}
We argue as in the proof of Lemma \ref{lem:natu}. Properties (i)-(iv)  and (vi) are clear. As for Property (v) (closure under base extension), let us first assume that $\RV$ is unenriched. In this case, the result follows exactly as in Remark \ref{rem:base}. If $\RV$ is enriched, let $\cM\in\cK$ and $P(\cM)\preccurlyeq B$. For $\cM'\coloneqq\cM_B$, since we work with an $\RV$-enrichment, by Lemma~\ref{lem:vd-equiv}, it still holds that $\RV(\cM')=\RV(\cM)_{\RV(B)}$. Since $\cK_\RVV$ is natural, $\cM'\in \cK$.
\end{proof}

\begin{theorem}\label{thm:RV-pairs} 
Let $\cK_{\RVV}$ be a natural class with the extension property. Assume $\cK_{\RVV}$-beautiful pairs exist and that $\cK_{\RVV}$ has beauty transfer. Let $\cK$ be the class of $\cL_P$-structures $\cM\in \cK_\Def$ induced by $\cK_{\RVV}$. Then, $\cK$ has the extension property and $\cK$-beautiful pairs exist. Moreover, $\cK$ has beauty transfer and $T_\BP(\cK)$ is axiomatized by the following conditions on $\cM=(M,P(\cM))$

\begin{itemize}
    \item $\VF(M)/\VF(P(\cM))$ is $vs$-defectless;
    \item $(M,P(\cM))$ is an elementary pair of models of $T$;
    \item $(M|\cL^-, P(\cM)|\cL^-)\models T_\BP(\cK_{\RVV})$.
\end{itemize}
\end{theorem}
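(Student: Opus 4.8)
The plan is to follow the template of Theorem~\ref{thm:ACVF-Beauty} (and Theorem~\ref{thm:exact}), replacing the separate residue-field and value-group data by the single bundle $\RV$ and using the domination result Proposition~\ref{prop:def-upto-RV} in place of Proposition~\ref{lem:domination}. Write $T$ for the list of three conditions in the statement. First I would record that $\cK$ is a natural class (Lemma~\ref{lem:natu-RV}) and that $T$ is first-order: the elementary-pair clause is $T_{\SE}$, the clause on $\RV$ is $T_\BP(\cK_{\RVV})$ read off on the $\RV$-sorts, and $vs$-defectlessness of $\VF(M)/\VF(P(\cM))$ is axiomatizable exactly as in Remark~\ref{rem:ACVF2-axioms}, via Proposition~\ref{prop:def-upto-RV}(1). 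Granting consistency of $T$ (addressed below), the heart of the proof is to show that every $|\cL|^+$-saturated $\cM\models T$ is an $|\cL|^+$-$\cK$-beautiful pair; by Lemma~\ref{lem:beauty-transfer-cardinal} this simultaneously yields that $\cK$-beautiful pairs exist and that $T_\BP(\cK)$ has beauty transfer, and since all beautiful pairs are elementarily equivalent (Corollary~\ref{cor:elem-equiv}) it identifies $T$ with $T_\BP(\cK)$.

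For the main step, fix an $|\cL|^+$-saturated $\cM\models T$ and $\cL_\BP$-embeddings $\cA\to\cM$, $\cA\to\cB$ with $\cB\in\cK$ of size $\le|\cL|$. As in Step~0 of Theorem~\ref{thm:ACVF-Beauty}, I would reduce (using Lemma~\ref{lem:base-change}) to the case $\cA\subseteq\cM$, $P(\cA)=P(\cB)$ a model of $T$, with $\VF(A),\VF(B)$ fields. Since $\cB\in\cK$, the induced pair $\RV(\cB)=(\RV(B),\RV(P(\cB)))$ lies in $\cK_{\RVV}$; because $\cM\models T$ and $\cM$ is $|\cL|^+$-saturated, $\RV(\cM)$ is an $|\cL|^+$-saturated model of $T_\BP(\cK_{\RVV})$, hence $|\cL|^+$-$\cK_{\RVV}$-beautiful by beauty transfer. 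This produces an $(\cL^-)_\BP$-embedding $g\colon\RV(\cB)\to\RV(\cM)$ over $\RV(\cA)$; in particular $g(\RV(B))$ realizes $\tp_{\cL^-}(\RV(B)/\RV(P(\cA)))\mid\RV(P(\cM))$. The next step is to lift $g$ to an $\cL$-embedding $f\colon B\to M$ over $A$ inducing $g$: using $\VF$-quantifier elimination (Fact~\ref{F:Benign-facts}(1)) and $|\cL|^+$-saturation of $\cM$, it suffices to match the $\rv$-values of a valuation basis of $\VF(B)$ over $\VF(A)$ (available from $vs$-defectlessness), as by Fact~\ref{fact:rv-sum} these determine $\rv$ on all $A$-linear combinations and hence, via $\VF$-QE, the whole $\cL$-type of $B$ over $A$. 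Finally I would verify $f$ is an $\cL_\BP$-embedding: $f(B)$ realizes $\tp(B/P(\cA))$ and $f(\RV(B))=g(\RV(B))$ realizes $\tp(\RV(B)/\RV(P(\cA)))\mid\RV(P(\cM))$, so Proposition~\ref{prop:def-upto-RV}(2) forces $f(B)\models\tp(B/P(\cA))\mid P(\cM)$, i.e.\ $f$ respects $P$ and the scheme of definition.

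It remains to handle consistency, the elementary-pair axiom, and the extension property. For consistency I would take a maximally complete $P_0\models T$, choose a $\cK_{\RVV}$-beautiful extension of $\RV(P_0)$, and realize it as $\RV(M)$ for a $vs$-defectless valued-field extension $M/P_0$ (a standard AKE-style construction over a maximally complete base), exactly analogous to taking a product of beautiful pairs in the proof of Theorem~\ref{thm:exact}. The elementary-pair clause is where the hypothesis that $\cK_{\RVV}$ has the extension property enters: by Theorem~\ref{thm:charct-beaut-pairs} it makes $\cK_{\RVV}$-beautiful pairs elementary, so $\RV(P(\cM))\preccurlyeq\RV(M)$, and the Ax--Kochen--Ershov principle for benign valued fields (Fact~\ref{F:Benign-facts}, via $\VF$-QE and purity/orthogonality of $\RV$) upgrades this to $P(\cM)\preccurlyeq M$. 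Consequently $\cK$-beautiful pairs are elementary pairs, and Theorem~\ref{thm:charct-beaut-pairs} then gives the extension property for $\cK$. The main obstacle is the lift in the middle step: one must check that prescribing $g$ on $\RV$ and matching it on a valuation basis is genuinely consistent as an $\cL$-type over $A$ and realizable in the saturated $\cM$; this is precisely what $vs$-defectlessness together with Fact~\ref{fact:rv-sum} and Proposition~\ref{prop:def-upto-RV} are designed to guarantee, and it is the only place where the valued-field geometry (rather than the soft amalgamation and transfer machinery) does real work.
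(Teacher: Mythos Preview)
Your proposal is correct and follows essentially the same route as the paper, which simply says the proof proceeds exactly as in Theorem~\ref{thm:ACVF-Beauty} with Proposition~\ref{prop:def-upto-RV} in place of Proposition~\ref{lem:domination}. You have expanded the details considerably (consistency of the axioms, the lifting of $g$ to $f$, and the derivation of the extension property via Theorem~\ref{thm:charct-beaut-pairs}), all of which the paper leaves implicit; the only unnecessary elaboration is your valuation-basis argument for the lift $f$, since in Step~2 of Theorem~\ref{thm:ACVF-Beauty} this is obtained directly by $\VF$-quantifier elimination and saturation, without singling out a basis.
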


\begin{proof}
The proof follows exactly the same strategy as that of Theorem \ref{thm:ACVF-Beauty}, using Proposition \ref{prop:def-upto-RV} instead of Proposition \ref{lem:domination}. 
\end{proof}

\subsection{Pairs of henselian valued fields with angular component}

Let $T$ be the theory of a benign valued field $(K,v)$, where we work in a $\VG$-$\RF$-enrichment of $\cL_\Pas$. Let $\cK_k$ and $\cK_\Gamma$ be natural classes of \sepa-pairs in the theory of the residue field $k$ and of the value group $\Gamma$ of $(K,v)$, respectively. As before, for ease of presentation, we will assume both classes $\cK_{k}$ and $\cK_\Gamma$ have the extension property and the amalgamation property, so that  $\cK_k$-beautiful pairs and $\cK_\Gamma$-beautiful pairs exist. Assume moreover that $\cK_k$ and $\cK_\Gamma$ have beauty transfer. Let $\cK$ be the class of \sepa-pairs $\cM\in \cK_\Def$ induced by $\cK_k$ and $\cK_\Gamma$. Recall that adding an angular component map is bi-interpretable to adding a splitting of the short exact sequence $1\rightarrow \RF^{\times}\rightarrow \RV^\times\rightarrow\VG\rightarrow 0$, and thus we obtain a uniquely determined associated $\RV$-enrichment, in which moreover  $\VGi$ and $\RF$ are orthogonal (as this holds in any  $\VG$-$\RF$-enrichment of the $\cL_\Pas$-theory of $(K,v)$) and in which we have  a definable  product decomposition $\RV^\times\cong\RF^\times \times \VG$. In particular, $\cK_k$ and $\cK_\Gamma$ induce a natural class $\cK_{\RVV}$, which in turn induces (going back to the original enrichment of $\cL_\Pas$) the same class $\cK$. As a corollary of Theorem \ref{thm:RV-pairs} we thus obtain:

\begin{corollary}
Let $\cK$ be the class of $\cL_P$-structures $\cM\in \cK_\Def$ induced by $\cK_k$ and $\cK_\Gamma$. Then, $\cK$ has the extension property and $\cK$-beautiful pairs exist. Moreover, $\cK$ has beauty transfer and $T_\BP(\cK)$ is axiomatized by the following conditions on $\cM=(M,P(\cM))$:
\begin{itemize}
    \item $\VF(M)/\VF(P(\cM))$ is $vs$-defectless;
    \item $(M,P(\cM))$ is an elementary pair of models of $T$;
    \item $(\RF(M), \RF(P(\cM)))\models T_\BP(\cK_{k})$ and $(\VG(M), \VG(P(\cM)))\models T_\BP(\cK_{\Gamma})$.\qed
\end{itemize}
\end{corollary}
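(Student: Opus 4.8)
The plan is to deduce the statement directly from Theorem~\ref{thm:RV-pairs}, after translating the present $\VG$-$\RF$-enrichment of $\cL_\Pas$ into the $\RV$-enrichment of $\cL_\RVV$ described in the paragraph preceding the statement. Since the angular component map $\ac$ provides a splitting of the short exact sequence $1\to\RF^\times\to\RV^\times\to\VG\to 0$, the associated $\RV$-enrichment carries a definable product decomposition $\RV=\VGi\times\RF$ in which, by part (3) of Fact~\ref{F:Benign-facts}, the sorts $\VGi$ and $\RF$ are orthogonal and purely stably embedded. Under this correspondence the classes $\cK_k$ and $\cK_\Gamma$ induce a single natural class $\cK_{\RVV}$ of \sse-pairs in the $\cL^-$-theory of $\RV$, and this $\cK_{\RVV}$ in turn induces exactly the class $\cK$ back in the $\cL_\Pas$-enrichment. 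Thus it suffices to check that $\cK_{\RVV}$ satisfies the three standing hypotheses of Theorem~\ref{thm:RV-pairs}: that it has the extension property, that $\cK_{\RVV}$-beautiful pairs exist, and that $T_\BP(\cK_{\RVV})$ has beauty transfer.

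I would establish these three properties of $\cK_{\RVV}$ by applying Theorem~\ref{thm:exact} to the (now split) exact sequence $1\to\RF^\times\to\RV^\times\to\VG\to 0$, taking $\cK_\mathscr{A}$ to be the class on the residue field (encoding its full field structure, after Morleyization, as a relational enrichment of the multiplicative sort and its power quotients $\RF^\times/(\RF^\times)^n$) corresponding to $\cK_k$, and $\cK_C=\cK_\Gamma$. Theorem~\ref{thm:exact} yields precisely that the induced class $\cK_{\RVV}$ has the extension property, that $\cK_{\RVV}$-beautiful pairs exist, and that $T_\BP(\cK_{\RVV})$ has beauty transfer, together with its axiomatization stating that the residue-field pair is a model of $T_\BP(\cK_k)$ and the value-group pair a model of $T_\BP(\cK_\Gamma)$. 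Alternatively, because the sequence splits, $\RV$ is simply the orthogonal product $\RF\times\VGi$, and one may argue directly: amalgamation, joint embedding and the extension property for $\cK_{\RVV}$ decompose coordinatewise over the two orthogonal factors, and by Lemma~\ref{lem:beauty-transfer-cardinal} a $\lambda$-saturated model of the product theory is $\lambda$-$\cK_{\RVV}$-beautiful exactly when each factor is $\lambda$-beautiful, giving beauty transfer.

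With $\cK_{\RVV}$ verified, I would invoke Theorem~\ref{thm:RV-pairs} to conclude that $\cK$ has the extension property, that $\cK$-beautiful pairs exist, that $T_\BP(\cK)$ has beauty transfer, and that $T_\BP(\cK)$ is axiomatised by the three displayed bullets with the third reading $(M|\cL^-,P(\cM)|\cL^-)\models T_\BP(\cK_{\RVV})$. The final step is purely to unfold this last condition: using the product decomposition $\RV=\VGi\times\RF$ and the axiomatization of $T_\BP(\cK_{\RVV})$ obtained above, $(M|\cL^-,P(\cM)|\cL^-)\models T_\BP(\cK_{\RVV})$ is equivalent to the conjunction $(\RF(M),\RF(P(\cM)))\models T_\BP(\cK_k)$ and $(\VG(M),\VG(P(\cM)))\models T_\BP(\cK_\Gamma)$, while the first two bullets ($vs$-defectlessness and being an elementary pair of models of $T$) carry over verbatim.

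The main obstacle is the bookkeeping in the second paragraph: confirming that the residue field's full field structure is faithfully transported through the $\ac$-versus-splitting bi-interpretation into the abelian-group-with-quotients framework of Section~\ref{sec:SES}, so that $\cK_\mathscr{A}$ genuinely corresponds to $\cK_k$ and Theorem~\ref{thm:exact} applies with the intended data. Once the orthogonal product structure of $\RV$ is correctly matched to the hypotheses of Theorem~\ref{thm:exact} (or handled by the direct product argument), everything else is a formal consequence of Theorem~\ref{thm:RV-pairs} and the already-established reduction.
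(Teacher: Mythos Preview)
Your proposal is correct and follows essentially the same route as the paper: translate the $\cL_\Pas$-enrichment to the associated $\RV$-enrichment via the splitting provided by $\ac$, note the definable product decomposition $\RV=\VGi\times\RF$ with orthogonal factors, let $\cK_k$ and $\cK_\Gamma$ induce $\cK_{\RVV}$, and then apply Theorem~\ref{thm:RV-pairs}. The paper's proof is a single sentence (``As a corollary of Theorem~\ref{thm:RV-pairs}''), leaving implicit precisely the verification you spell out; your direct product argument for beauty transfer of $\cK_{\RVV}$ is what the paper tacitly relies on, while your alternative via Theorem~\ref{thm:exact} is unnecessary here (the sequence is split, so the quotients $\bA/n\bA$ and the maps $\rho_n$ play no role) but not wrong.
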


\subsection{Ax-Kochen-Ershov principle}\label{sec:apl_hen_fields}
Let $T$ be the theory of the benign valued field $(K,v)$. Consider the (definitional) $\cL_\RVV$-expansion where we add sorts $\bbA$ for $k^*/(k^*)^n$ for all $n\geqslant 0$ and $\VG$ (with the natural quotient maps). We are in the setting of Section \ref{sec:enrichments}. 

Let $\cK_{\mathscr{A}}$ and $\cK_\Gamma$ be natural classes of \sepa-pairs in the theory of the residue field $k$ (in the sorts $\bbA$) and of the value group $\Gamma$ of $(K,v)$, respectively. Assume both $\cK_{\mathscr{A}}$ and $\cK_\Gamma$ have extension and amalgamation. By Theorem~\ref{thm:charct-beaut-pairs}, amalgamation is equivalent to the fact that $\cK_{\mathscr{A}}$-beautiful pairs and $\cK_\Gamma$-beautiful pairs exist. Assume in addition that $\cK_{\mathscr{A}}$ and $\cK_\Gamma$ have beauty transfer.

\begin{theorem}\label{thm:red-to-k-gamma}
Let $\cK$ be the class of $\cL_P$-structures $\cM\in \cK_\Def$ induced by $\cK_{\mathscr{A}}$ and $\cK_\Gamma$. Then, $\cK$-beautiful pairs exist. Moreover, $\cK$ has beauty transfer and $T_\BP(\cK)$ is axiomatized by the following conditions on an $\cL_P$-structure $\cM=(M,P(\cM))$:
\begin{itemize}
    \item $\VF(M)/\VF(P(\cM))$ is vs-defectless;
    \item $P(\cM)\preccurlyeq M\models T$;
    \item $(\bbA(M),\bbA(P(\cM)))\models T_\BP(\cK_{\mathscr{A}})$ and $(\VG(M),\VG(P(\cM)))\models T_\BP(\cK_\Gamma)$.
\end{itemize}
\end{theorem}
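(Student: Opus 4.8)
The plan is to reduce Theorem~\ref{thm:red-to-k-gamma} to the already-established $\RV$-enrichment result, Theorem~\ref{thm:RV-pairs}, by packaging the two separate classes $\cK_{\mathscr{A}}$ and $\cK_\Gamma$ into a single natural class $\cK_{\RVV}$ of \sepa-pairs on the $\RV$-sorts. Since we are working in the definitional $\cL_\RVV$-expansion with the sorts $\bbA$ for $k^*/(k^*)^n$ and $\VG$, the relevant short exact sequence is $1\rightarrow \RF^{\times}\rightarrow \RV^\times\rightarrow\VG\rightarrow 0$, and the quotients $\bbA$ are exactly the $\bA/n\bA$-sorts appearing in the framework of Section~\ref{sec:SES}. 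First I would invoke Fact~\ref{F:Benign-facts}(1), which gives that any $\RV$-enrichment of a benign valued field eliminates $\VF$-quantifiers and that $\RV$ is purely stably embedded. Thus the theory of $K|\cL^-$ on the $\RV$-sorts is precisely the enriched short exact sequence theory $T^*_{abcq}$ of Section~\ref{sec:SES}, with $\bbA$ playing the role of $\{\bA/n\bA\}_n$ and $\VG$ the role of $\bC$.

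The heart of the argument is then Theorem~\ref{thm:exact}: taking $\cK_{\mathscr{A}}$ as the class of \sepa-pairs of $T^*_{aq}$ and $\cK_\Gamma$ as the class of \sepa-pairs of $T^*_c$, both of which have the extension and amalgamation properties with beauty transfer by hypothesis, Theorem~\ref{thm:exact} yields a natural class $\cK_{\RVV}$ of \sepa-pairs on the $\RV$-sorts, induced by $\cK_{\mathscr{A}}$ and $\cK_\Gamma$, for which $\cK_{\RVV}$-beautiful pairs exist, $T_\BP(\cK_{\RVV})$ has beauty transfer and satisfies the extension property, and is axiomatized by the conditions: $P\preccurlyeq$ a model of $T^*_{abcq}$ on $\RV$, together with $(\bbA(M),\bbA(P(\cM)))\models T_\BP(\cK_{\mathscr{A}})$ and $(\VG(M),\VG(P(\cM)))\models T_\BP(\cK_\Gamma)$. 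With this $\cK_{\RVV}$ in hand, the class $\cK$ of $\cL_\BP$-structures in $\cK_\Def$ induced by $\cK_{\mathscr{A}}$ and $\cK_\Gamma$ is exactly the class induced by $\cK_{\RVV}$ in the sense of Section~\ref{sec:enrichments}, since inducedness of $\cK$ only refers to the $\RV$-trace of the pair.

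At this point I would apply Theorem~\ref{thm:RV-pairs} directly. Its hypotheses are satisfied: $T$ is benign, $\cL$ is an $\RV$-enrichment of $\cL_{\RVV}$ containing $\VG$ among its auxiliary sorts, $\cK_{\RVV}$-beautiful pairs exist, $T_\BP(\cK_{\RVV})$ has beauty transfer, and $\cK_{\RVV}$ has the extension property. The conclusion of Theorem~\ref{thm:RV-pairs} then gives that $\cK$ has the extension property, $\cK$-beautiful pairs exist, $T_\BP(\cK)$ has beauty transfer, and $T_\BP(\cK)$ is axiomatized by: $\VF(M)/\VF(P(\cM))$ is $vs$-defectless; $(M,P(\cM))$ is an elementary pair of models of $T$; and $(M|\cL^-, P(\cM)|\cL^-)\models T_\BP(\cK_{\RVV})$. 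Unwinding the axiomatization of $T_\BP(\cK_{\RVV})$ from Theorem~\ref{thm:exact}, the last condition translates into the conjunction $(\bbA(M),\bbA(P(\cM)))\models T_\BP(\cK_{\mathscr{A}})$ and $(\VG(M),\VG(P(\cM)))\models T_\BP(\cK_\Gamma)$, which is precisely the desired axiomatization, completing the proof.

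The main obstacle I anticipate is the careful bookkeeping to verify that the class $\cK_{\RVV}$ produced by Theorem~\ref{thm:exact} genuinely matches the class of $\RV$-traces that defines $\cK$ in the statement, and that the sort $\VG$ (which must be an auxiliary sort in the $\RV$-enrichment for Theorem~\ref{thm:RV-pairs} to apply) is correctly identified with the quotient sort $\bC$ of the short exact sequence. One must also confirm that the enriched short exact sequence theory coming from the benign valued field really does coincide with $T^*_{abcq}$, including the compatibility of the $\rho_n$-maps with the residue structure on $\bbA=k^*/(k^*)^n$; this is where Fact~\ref{F:QE-SES} and the purity/stable embeddedness from Fact~\ref{F:Benign-facts} do the essential work. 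Everything else is a matter of transporting the three bullet-point axiomatizations through the reduction.
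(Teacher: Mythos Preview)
Your proposal is correct and follows essentially the same approach as the paper, whose proof consists of the single line ``This follows directly from Theorems~\ref{thm:RV-pairs} and~\ref{thm:exact}.'' You have simply spelled out in detail the bookkeeping behind that reduction: use Theorem~\ref{thm:exact} to assemble $\cK_{\mathscr{A}}$ and $\cK_\Gamma$ into a natural class $\cK_{\RVV}$ on the enriched short exact sequence, then feed this into Theorem~\ref{thm:RV-pairs} and unwind the resulting axiomatization.
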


\begin{proof}
This follows directly from Theorems \ref{thm:RV-pairs} and \ref{thm:exact}. 
\end{proof}

\begin{remark}
    Note that Theorem \ref{thm:red-to-k-gamma} is actually an equivalence. Indeed, if we do not assume beauty transfer for $\cK_{\mathscr{A}}$ and $\cK_\Gamma$ from the start, then beauty transfer for $\cK$ (trivially) implies beauty transfer for $\cK_{\mathscr{A}}$ and $\cK_\Gamma$.
\end{remark}

\begin{examples}\
\begin{enumerate}[leftmargin=*,label=(\arabic*)]
    \item If $K=k(\!(t)\!)$ where $k\in \{\mathbb{C},\R,\Q_p\}$ then all sorts $k^*/(k^*)^n$ for $n>0$ are finite and consist of $0$-definable elements. Therefore, in all three cases, $\cK_{\mathscr{A}}$ may be identified with $\cK_{k}$, and so, by inspection, one sees that all completions of stably embedded elementary pairs of models of $\Th(K)$ correspond to theories of beautiful pairs for appropriate choices of natural classes in the residue field and value group. Moreover, since $\Q_p(\!(t)\!)$ with the $t$-adic valuation is bi-interpretable with $\Q_p(\!(t)\!)$ with the total valuation, we get the mixed-characteristic result for $\Q_p(\!(t)\!)$ using the above machinery.
    \item Note that both 
    Theorem~\ref{thm:ACVF-Beauty} and Theorem~\ref{thm:RCVF-Beauty} can be recovered from Theorem~\ref{thm:red-to-k-gamma}. In the latter case, the only non-trivial quotients $k^*/(k^*)^n$ in $\bbA$ appear for even $n$ when they are of size 2, allowing to define the ordering on the valued field without $\VF$-quantifiers.
    \item The results can also be applied to any algebraically maximal Kaplansky field in positive characteristic. A particular example of interest is $K=\mathbb{F}^{\alg}_p(\!(\Gamma)\!)$, with $\Gamma=1/p^\infty\Z$ the subgroup of $\mathbb{Q}$ consisting of the elements that can be expressed as $m/p^n$. The beautiful pairs of models of $\Th(K)$ are axiomatized by the corresponding beautiful pairs of the residue fields and value groups. The residue field pairs are characterized by Proposition \ref{prop:ACF}. For the value group, due to the fact that stably embedded elementary pairs are not elementary (see Proposition~\ref{prop:reg-abel}), the situation is slightly more complicated since we need to restrict to convex elementary pairs in the sense of Theorem \ref{thm:ROAG-main}, where the class still has beauty transfer. Together with the beautiful pairs of $\mathrm{ACF}_p$ these give certain theories of beautiful pairs of $\Th(K)$ and beauty transfer by Theorem~\ref{thm:red-to-k-gamma}.
\end{enumerate}
\end{examples}

 \section{Concluding remarks and open questions}

\subsection{Variants in mixed characteristic}

We sketch in this section an analogous Ax-Kochen-Ershov type result in mixed characteristic. This applies in particular to the field of Witt vectors $W(\mathbb F_p^\mathrm{alg})$. The strategy being completely analogous to the one given for benign valued fields, we will only point to the main changes without giving any proof.    

We work in a slight variant of the language $\cL_\RVV$, with sorts $\RV_n$ for $n\geqslant 0$ as defined in \cite{fle-2011}. Abusing notation, we let $\RV$ denote the union of all sorts $\RV_n$ (including all new sorts when working in an $\RV$-enrichment) and $\cL_\RVV$ the corresponding language. See \cite[Section 7]{hils-mennuni} for more details about this context.

Let $\cL$ be an $\RV$-enrichment of $\cL_\RVV$ and $T$ be the $\cL$-theory of a mixed characteristic finitely ramified henselian valued field $K$ with perfect residue field. Recall $T$ eliminates $\VF$-quantifiers by \cite{fle-2011}. Suppose $\cL$ contains a sort for $\Gamma$. Let $\cK_{\RVV}$ be a natural class of \sepa-pairs in the $\cL^-$-theory of $K|\cL^-$ ($\cL^-$ as in Section \ref{sec:Ax-Kochen-RV}). Assume that $\cK_{\RVV}$-beautiful pairs exist, $\cK_{\RVV}$ has the extension property and $\cK_{\RVV}$ has beauty transfer. Let $\cK$ be the class of $\cL_P$-structures $\cM\in \cK_\Def$ induced by $\cK_{\RVV}$. As in Lemma \ref{lem:natu-RV}, $\cK$ is a natural class. 

\begin{theorem}\label{thm:RV-pairs-mixed} The analogue statements of Proposition \ref{prop:def-upto-RV}  and Theorem \ref{thm:RV-pairs} hold for $T$. 
\end{theorem}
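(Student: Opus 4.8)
The plan is to reduce the whole statement to the mixed-characteristic analogue of the domination Proposition~\ref{prop:def-upto-RV}: once that is in hand, the analogue of Theorem~\ref{thm:RV-pairs} — existence of $\cK$-beautiful pairs, beauty transfer, the explicit axiomatisation, and the extension property — will follow verbatim from the amalgamation argument of Theorem~\ref{thm:ACVF-Beauty}, via Lemma~\ref{lem:beauty-transfer-cardinal} and the fact that $\cK$ is a natural class (Lemma~\ref{lem:natu-RV}). Indeed, the three steps of that argument (Step~0, reducing to $P(\cA)=P(\cB)$ with $P(\cA)\models T$; Step~1, solving the $\RV$-part of the amalgamation problem by beauty transfer of $T_\BP(\cK_{\RVV})$; Step~2, lifting to an $\cL$-embedding using Flenner's $\VF$-quantifier elimination and the domination statement) use equicharacteristic nowhere beyond the domination input.

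So I would first establish the analogue of Proposition~\ref{prop:def-upto-RV}, splitting again into the two directions. For the forward direction I would show, as before, that definability of $\tp(L/K)$ forces $\VF(L)/\VF(K)$ to be $vs$-defectless; this requires the mixed-characteristic analogue of Lemma~\ref{lem:def-to-sep}, which in turn rests on Fact~\ref{lem:max-sep} applied to a maximally complete elementary extension of $K$. Definability of $\tp(\RV(L)\cup\VG(L)/\RV(K))$ is then immediate from pure stable embeddedness of $\RV$, valid here since $T$ eliminates $\VF$-quantifiers. For the backward direction I would mimic the reduction in the proof of Proposition~\ref{prop:def-upto-RV}: given an $\cL$-formula $\varphi(a,y)$ with $y$ a $\VF$-tuple, Flenner's $\VF$-quantifier elimination rewrites it as a condition on leading terms $\rv_n(P_i(y))$; expanding each $P_i$ over a valuation basis $c=(c_1,\dots,c_n)$ of the $K$-span of its coefficients and factoring the relevant $\rv_n$ through $c$ produces a formula over the enriched $\RV$-sorts whose definability is exactly the hypothesis.

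The main obstacle will be this factorisation step, namely the $\RV_n$-analogue of Fact~\ref{fact:rv-sum}. In mixed characteristic a single $\rv$ no longer captures enough arithmetic, which is precisely why Flenner's language carries the whole family $(\RV_n)_{n\geqslant 0}$, and one must verify that $\rv_m\!\left(\sum_i c_i a_i\right)$ is determined, for $K$-valuation independent $c_i$ and $a_i\in K$, by the $\rv_n$-data of the $c_i$ and $a_i$ together with the finitely ramified relations governing $v(p)$. The accompanying geometric amalgamation — the mixed-characteristic replacement for Proposition~\ref{prop:our12.11}, delivering $\VF(L)\ind^{\vd}_{\VF(K)}\VF(N)$ together with the compatibility of value groups and residue fields under composita (and built, as in Lemma~\ref{lem:vd-equiv}, from valuation-independence) — is available from \cite[Section~7]{hils-mennuni}, so I would invoke it rather than reprove it.

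What remains genuinely to be checked is largely bookkeeping of the $\RV_n$ sorts and of the distinguished element $p$, together with the input feeding Lemma~\ref{lem:def-to-sep}: one must confirm that finite ramification and perfectness of the residue field guarantee the maximal-immediate-extension theory needed for a maximally complete elementary extension of $K$ to exist. With those pieces in place, part~(2) of the domination (the implication yielding $\tp(L/K)\mid N$) follows, exactly as in Proposition~\ref{prop:def-upto-RV}(2), from the geometric amalgamation plus $\VF$-quantifier elimination, and feeding the resulting domination statement into the template of Theorem~\ref{thm:ACVF-Beauty} completes the proof.
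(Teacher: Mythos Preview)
Your proposal is correct and follows the same approach as the paper, which simply states that the result ``follows from an easy adaptation of the corresponding proofs.'' You have spelled out in detail what that adaptation involves---carrying the $\RV_n$-bookkeeping through Flenner's $\VF$-quantifier elimination, verifying the factorisation step for $\rv_n$ of valuation-independent sums, checking the hypothesis on maximally complete elementary extensions, and invoking the mixed-characteristic amalgamation from \cite[Section~7]{hils-mennuni}---whereas the paper leaves all of this implicit.
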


\begin{proof}
This follows from an easy adaptation of the corresponding proofs.
\end{proof}

The reduction from $\cL_\RVV$ to $\cLgk$ is a little more subtle as it needs a variant of the results in Section \ref{sec:SES} for short exact sequences. Indeed, one needs to consider more generally inverse systems as explained in \cite[Remark 7.6]{hils-mennuni}. For $T=\Th(W(\mathbb{F}_p^\alg))$ the situation is somewhat easier, as the sequence of kernels $W_n[\mathbb{F}_p^\alg]^\times$ for $n\geqslant 0$ of the maps $\RV_n\to\Gamma$ is bi-interpretable with the residue field, and hence no quotients are needed since the residue field eliminates imaginaries. In this special case, one obtains the following result.  

\begin{corollary}
Let $T_k=T_\BP(\cK_k)$ and $T_\Gamma=T_\BP(\cK_\Gamma)$ be completions of $(\ACF_p)_{\SE}$ and of $\mathrm{PRES}_{\SE}$, respectively, where $\cK_k$ and $\cK_\Gamma$ are the corresponding natural classes of \sepa-pairs. Let $\cK$ be the class of $\cL_P$-structures $\cM\in \cK_\Def$ induced by $\cK_k$ and $\cK_\Gamma$. Then, $\cK$ is a natural class satisfying the extension property and $\cK$-beautiful pairs exist. Moreover, $\cK$ has beauty transfer and $T_\BP(\cK)$ is axiomatized by the following conditions on $\cM=(M,P(\cM))$:
\begin{itemize}
    \item $\VF(M)/\VF(P(\cM))$ is $vs$-defectless;
    \item $(M,P(\cM))$ is an elementary pair of models of $T$;
    \item $(\RF(M), \RF(P(\cM)))\models T_k$ and $(\VG(M), \VG(P(\cM)))\models T_\Gamma$.
\end{itemize}

In particular, there are exactly 4 completions of the theory of stably embedded elementary pairs of models of $T=\Th(W(\mathbb F_p^\mathrm{alg}))$, all corresponding to theories of beautiful pairs whose classes have beauty transfer. \qed
\end{corollary}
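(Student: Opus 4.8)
The plan is to realise this corollary as an instance of the mixed-characteristic Ax--Kochen--Ershov machinery of Theorem~\ref{thm:RV-pairs-mixed}, the only extra ingredient being the particularly simple structure of $\RV$ for Witt vectors. Throughout we work in the variant of $\cL_\RVV$ with sorts $\RV_n$, $n\geqslant 0$, and a sort for $\VG$; since $T=\Th(W(\F_p^\alg))$ is the theory of a mixed-characteristic finitely ramified henselian valued field with perfect (indeed algebraically closed) residue field, $T$ eliminates $\VF$-quantifiers by \cite{fle-2011} and Theorem~\ref{thm:RV-pairs-mixed} applies. First I would produce a natural class $\cK_\RVV$ of \sepa-pairs in the $\cL^-$-theory of $K|\cL^-$ induced by $\cK_k$ and $\cK_\Gamma$, and verify that $\cK_\RVV$-beautiful pairs exist, that $\cK_\RVV$ has the extension property, and that $T_\BP(\cK_\RVV)$ has beauty transfer; once this is in place, feeding $\cK_\RVV$ into Theorem~\ref{thm:RV-pairs-mixed} yields $\cK$ (the class induced by $\cK_\RVV$, equivalently by $\cK_k$ and $\cK_\Gamma$) together with existence of $\cK$-beautiful pairs, the extension property, beauty transfer, and the first two displayed axioms.

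The heart of the argument is the analysis of the $\RV$-reduct. The sorts $\RV_n$ sit in the tower of short exact sequences $1\to W_n[\F_p^\alg]^\times\to \RV_n^\times\to\VG\to 0$, and, as observed in the discussion preceding the corollary, the inverse system of kernels $(W_n[\F_p^\alg]^\times)_{n\geqslant 0}$ is bi-interpretable with the residue field $\F_p^\alg\models\ACF_p$. Two features make this case substantially easier than the general mixed-characteristic one: first, $\F_p^\alg$ is algebraically closed, so the quotients $k^*/(k^*)^n$ are all trivial and no auxiliary sorts $\bbA$ are needed; second, $\ACF_p$ eliminates imaginaries. Consequently the $\RV$-structure decomposes, via the inverse-system variant of the short exact sequence results of Section~\ref{sec:SES} (as explained in \cite[Remark 7.6]{hils-mennuni}), into its residue-field part, governed by $\cK_k$ in $\ACF_p$, and its value-group part, governed by $\cK_\Gamma$ in $\mathrm{PRES}$. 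Running the proof of Theorem~\ref{thm:exact} through this decomposition (in the same spirit as Theorem~\ref{thm:red-to-k-gamma}) shows that $\cK_\RVV$ is a natural class (cf.\ Lemma~\ref{lem:natu-RV}) with the extension property, that $\cK_\RVV$-beautiful pairs exist, and that $T_\BP(\cK_\RVV)$ has beauty transfer and is axiomatised by requiring $(\RF(M),\RF(P(\cM)))\models T_k$ and $(\VG(M),\VG(P(\cM)))\models T_\Gamma$.

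Combining this with Theorem~\ref{thm:RV-pairs-mixed} gives the displayed axiomatisation: the condition that the $\cL^-$-reduct model $T_\BP(\cK_\RVV)$ unwinds precisely to the two conditions on $(\RF(M),\RF(P(\cM)))$ and $(\VG(M),\VG(P(\cM)))$, while the remaining axioms (namely $vs$-defectlessness of $\VF(M)/\VF(P(\cM))$ and $(M,P(\cM))$ being an elementary pair of models of $T$) are supplied directly by Theorem~\ref{thm:RV-pairs-mixed}. Beauty transfer for $T_\BP(\cK)$ and existence of $\cK$-beautiful pairs follow from the same theorem, which also gives naturality and the extension property of $\cK$.

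Finally, for the count of completions: by Proposition~\ref{prop:ACF} the theory $(\ACF_p)_\SE$ has exactly two completions, $T_\BP(\cK_\triv)$ and $T_\BP(\cK_\Def)$, both theories of beautiful pairs satisfying beauty transfer; by Theorem~\ref{thm:pres-complete} together with the trivial-pair case, $\mathrm{PRES}_\SE$ likewise has exactly two such completions. Since the Ax--Kochen--Ershov principle just established shows that a completion of $T_\SE$ is determined by, and realised by, each admissible pair $(T_k,T_\Gamma)$ of completions on the residue field and value group, there are exactly $2\times 2=4$ completions of the theory of stably embedded elementary pairs of models of $T$, each of the form $T_\BP(\cK)$ with beauty transfer. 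The main obstacle is the second paragraph: one must set up the inverse-system generalisation of Section~\ref{sec:SES} correctly, since there one handles a single exact sequence whereas here there is the whole tower $(\RV_n)_n$; this is exactly where the bi-interpretability of the kernels with the algebraically closed residue field is used to dispense with quotient sorts and reduce cleanly to $\ACF_p$ and $\mathrm{PRES}$.
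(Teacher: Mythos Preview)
Your proposal is correct and follows essentially the same approach as the paper, which leaves the proof implicit (the corollary is stated with \qed) after sketching in the preceding paragraph that one combines Theorem~\ref{thm:RV-pairs-mixed} with the observation that the tower of kernels $W_n[\F_p^\alg]^\times$ is bi-interpretable with the residue field, so no quotient sorts are needed and the inverse-system variant of Section~\ref{sec:SES} reduces directly to $\ACF_p$ and $\mathrm{PRES}$. Your write-up simply fleshes out these implicit steps, including the completion count via Proposition~\ref{prop:ACF} and Theorem~\ref{thm:pres-complete}.
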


\subsection{Valued differential fields}

We now briefly discuss a similar context of valued differential fields in which an Ax-Kochen-Ershov principle for beauty applies. Let $\mathrm{VDF}$ be the theory 
of existentially closed valued differential fields $(K,v,\partial)$ of 
residue characteristic 0 satisfying $v(\partial(x))\geq v(x)$ for all $x$.\footnote{This theory is sometimes denoted by $\mathrm{VDF}_\mathcal{EC}$.}  
This theory had first been studied by Scanlon \cite{Sca00}, and it was further investigated by Rideau-Kikuchi \cite{RidVDF}. The theory $\mathrm{VDF}$ is NIP. In $\mathrm{VDF}$ the value group is stably embedded and a pure model of $\DOAG$, and the residue field is stably embedded and a pure model of DCF$_0$, with the derivation induced by $\partial$. As DCF$_0$ is stable with nfcp (see for instance \cite[Section 3]{hru-pillay2000}), (DCF$_0)_\BP$ exists and its class has beauty transfer by Theorem~\ref{thm:poizat} and Proposition~\ref{prop:poizat_equiv}.

\begin{remark}\label{rem:VDF} 
Using the methods from \cite[Section~8]{hils-mennuni} and following the proof strategy for Theorem~\ref{thm:ACVF-Beauty} 
one may show an analogue of Theorem \ref{thm:ACVF-Beauty} for $\mathrm{VDF}$. Indeed, let $\cK_k$ and $\cK_\Gamma$ be natural classes of \sepa-pairs in DCF$_0$ and $\DOAG$, respectively. For ease of presentation, we will assume both classes $\cK_{k}$ and $\cK_\Gamma$ have the extension property and the amalgamation property. Assume moreover that $\cK_k$ and $\cK_\Gamma$ have beauty transfer. Let $\cK$ be the class of \sepa-pairs $\cM\in \cK_\Def$ induced by $\cK_k$ and $\cK_\Gamma$. Then, in the 3-sorted language with sorts $\VF$, $\RV$ and $\VG$, beautiful pairs exist. Moreover, $\cK$ has beauty transfer and $\mathrm{VDF}_\BP(\cK)$ is axiomatized by the following conditions on $\cM=(M,P(\cM))$:
\begin{itemize}
    \item $\VF(M)/\VF(P(\cM))$ is $vs$-defectless;
    \item $(M,P(\cM))$ is an elementary pair of models of $\mathrm{VDF}$;
    \item $(\RF(M), \RF(P(\cM)))\models (\mathrm{DCF}_0)_\BP(\cK_k)$ and $(\VG(M), \VG(P(\cM)))\models \DOAG_\BP(\cK_\Gamma)$.
\end{itemize}   
\end{remark}

\begin{remark}\label{rem:VDF2}
Let $X$ be a definable set in a model of $\mathrm{VDF}$, in sorts $\VF$, $\RV$ and $\VG$. Then the class of definable types concentrating on $X$ in the classes appearing in Remark~\ref{rem:VDF} are strict pro-definable (this uses Theorem \ref{thm:strictness} together with Remark \ref{rem:VDF}). In particular, this holds for the space $\widehat{X}$ of all stably dominated types concentrating on $X$, and for the space $\D{X}$ of all definable types. 

Note that, for $\widehat{X}$, strict pro-definability has already been shown in work by the second author, Kamensky and Rideau-Kikuchi \cite{HKR2016}.
\end{remark}

\subsection{Some open questions}\label{sec:questions} We gather in this section some open questions. 

\medskip

\begin{question}\label{Q1}
Is there a theory~$T$ (with $\mathrm{UDDT}$) such that $\cK_\Def$ does not have the amalgamation property?  
\end{question}

\begin{remark}
Since the first version of this manuscript has been made public, R. Mennuni and the second author have found a theory $T$ in which definable types do not amalgamate (\cite{hils-mennuni2}). Although, in this example the definable types are not uniformly definable, so this only provides a partial answer to Question~\ref{Q1}.
\end{remark}

When $T$ is stable, by Theorem \ref{thm:poizat}, both strict pro-definability and beauty transfer are equivalent to nfcp. This suggests the following question.  

\begin{question}\label{question:nfcp} Is there a theory $T$ such that beautiful pairs exist, $\D{}$ (resp. a natural subclass $\cF$ of definable types) is strict pro-definable, but $\cK_\Def$ (resp. $\cK_\cF$) does not have beauty transfer?
\end{question}

Corollary \ref{cor:NIP-transfer} shows that, assuming $\cK$ has beauty transfer and extension property, NIP is preserved from $T$ to $T_\BP(\cK)$. It seems plausible that, using the methods from \cite{ChHi14}, the same holds for $\mathrm{NTP}_2$. We leave it as a question.  

\begin{question}\label{Q:NTP2-transfer}
Suppose $\cK$ has beauty transfer. Does it hold that if $T$ has $\mathrm{NTP}_2$ so does $T_\BP(\cK)$? If not, does this hold if $\cK$ has the extension property? 
\end{question}

Recall that Theorem \ref{thm:DOAG-complete} shows that $\DOAG_{\SE}$ has only four completions. Moreover, $\RCF_{\SE}$ has only two completions by Remark \ref{rem:omin_dries}. This suggests the following.  

\begin{question}\label{question:amalg-omin} Suppose $T$ is an o-minimal expansion of $\DOAG$. Is every completion of $T_{\SE}$ one of the following: $T_\BP(\cK_\triv)$, $T_\BP(\cK_\bdd)$, $T_\BP(\cK_{\conv})$ or $T_\BP$? 
\end{question}

Let $p$ be a prime and $e\in \mathbb{N}$. Let SCVF$_{p,e}$ be the theory of separably closed non-trivially
valued fields of characteristic $p$ and imperfection degree $e$. In \cite[Corollary~7.7]{HKR2016} it is shown that SCVF$_{p,e}$ is metastable and that for any interpretable set $X$, the space $\widehat{X}$ of stably dominated types concentrating on $X$ is strict pro-definable  (see also \cite{HKR2016}). Here, being stably dominated is equivalent to being orthogonal to $\Gamma$, hence we have a natural class of definable types. We expect that the following question does have a positive answer, which would yield a generalization of this result. 

\begin{question}
Does $(\mathrm{SCVF}_{p,e})_\BP$ exist? If so, does $\cK_\Def$ have beauty transfer? Similarly, denoting by $\cK$ the natural class of \sepa-pairs corresponding to the class of stably dominated types, does $(\mathrm{SCVF}_{p,e})_\BP(\cK)$ exist and does $\cK$ have beauty transfer?
\end{question}

The proof of Theorem~\ref{thm:Beauty-geom-ACVF} relies on generic resolutions, and thus the pair of residue fields needs to be non-trivial. This leads to the following question. 

\begin{question}
Does the analogous statement of Theorem~\ref{thm:Beauty-geom-ACVF} also hold in the cases when the pair of residue fields is assumed to be trivial? 
\end{question}

\bibliographystyle{siam}
\bibliography{biblio}

\end{document}